\newcommand{\red}[1]{\textcolor{red}{#1}}
\newcommand{\AJC}[1]{\marginpar{\red{\scriptsize \textbf{AJC:} #1}}}
\newtheorem{ThA}{Theorem}
\newtheorem{Th}{Theorem}[section]
\newtheorem{Prop}[Th]{Proposition}
\newtheorem{Lem}[Th]{Lemma}
\newtheorem{Cor}[Th]{Corollary}
\newtheorem{Rem}[Th]{Remark}
\numberwithin{equation}{section}
\newcommand{\N}{\mathbb{N}}
\newcommand{\Z}{\mathbb{Z}}
\newcommand{\MM}{\mathcal{M}}
\newcommand{\PP}{\mathcal{P}}
\DeclareMathOperator{\supp}{supp}
\def\mean#1{\mathchoice%
          {\mathop{\kern 0.2em\vrule width 0.6em height 0.69678ex depth -0.58065ex
                  \kern -0.8em \intop}\nolimits_{\kern -0.4em#1}}%
          {\mathop{\kern 0.1em\vrule width 0.5em height 0.69678ex depth -0.60387ex
                  \kern -0.6em \intop}\nolimits_{#1}}%
          {\mathop{\kern 0.1em\vrule width 0.5em height 0.69678ex
              depth -0.60387ex
                  \kern -0.6em \intop}\nolimits_{#1}}%
          {\mathop{\kern 0.1em\vrule width 0.5em height 0.69678ex depth -0.60387ex
                  \kern -0.6em \intop}\nolimits_{#1}}}
\title[Hardy spaces with variable exponent on graphs]
      {Variable exponent Hardy spaces associated with discrete Laplacians on graphs}
\author[V. Almeida]{V. Almeida}
\author[J. J. Betancor]{J. J. Betancor}
\author[A. J. Castro]{A. J. Castro}
\author[L. Rodr\'iguez-Mesa]{L. Rodr\'iguez-Mesa}
\address{\newline
V\'ictor Almeida,
Jorge J. Betancor,
Lourdes Rodr\'iguez-Mesa \newline
Departamento de An\'alisis Matem\'atico,
Universidad de La Laguna, \newline
Campus de Anchieta, Avda. Astrof\'isico S\'anchez, s/n, \newline
        38721 La Laguna (Sta. Cruz de Tenerife), Spain}
\email{valmeida@ull.es, jbetanco@ull.es, lrguez@ull.es}
\address{\newline
       Alejandro J. Castro \newline
       Department of  Mathematics, Nazarbayev University, \newline
        010000 Astana, Kazakhstan}
\email{alejandro.castilla@nu.edu.kz}
\thanks{
The authors are partially supported by Spanish Government grant
MTM2016-79436-P. The third author is also supported by Nazarbayev
University Social Policy Grant.}
\keywords{Graphs, discrete Laplacian, Hardy spaces, variable exponent, square functions, spectral multipliers}
\subjclass[2010]{42B30, 42B35, 60J10}
\begin{document}

\footnotetext{Last modification: \today.}

\begin{abstract}
In this paper we develop the theory of variable exponent Hardy spaces associated with discrete Laplacians on infinite graphs. Our Hardy spaces are defined by square integrals, atomic and molecular decompositions. Also we study boundedness properties of Littlewood-Paley functions, Riesz transforms, and spectral multipliers for discrete Laplacians on variable exponent Hardy spaces.
\end{abstract}

\maketitle


\section{Introduction}\label{sec:intro}

Variable exponent $L^{p(\cdot)}$-spaces were consider for the first
time by Orlicz \cite{Orl}. Later, Nakano \cite[p. 284]{Nak} pointed
out that those spaces appear as special cases of the ones studied in
\cite{Nak}. Inspired by Nakano's results, modular spaces were
investigated by several authors. An important step in the study of
variable exponent spaces was the paper of Kov\'a\u cik and R\'akosn\'{\i}k
\cite{KR} where the main properties of the Lebesgue and Sobolev
spaces with variable exponent were established. In this century the
variable exponent spaces have been studied systematically and
intensively. Interesting connections between variable exponent
spaces and other areas have been developed (see, for instance,
\cite{AbMesSou}, \cite{AcMi}, \cite{CheLeRa}, \cite{DieThesis},
\cite{RaRu} and \cite{Ruz}). In the recent monographs \cite{CrF} and
\cite{DHHR} the theory of variable exponent Lebesgue and Sobolev
spaces is developed in an exhaustive way.

In this paper we are interested in variable exponent Hardy spaces.
The classical theory of Hardy spaces in ${\mathbb{R}}^n$ has been
extended to a variable exponent setting by Nakai and Sawano
\cite{NS}, Sawano \cite{Sa}, and Cruz-Uribe and Wang \cite{CrW}.
Variable exponent Hardy spaces $H^{p(\cdot)}({\mathbb{R}}^n)$ were
characterized by using Riesz transforms by Yang, Zhuo and Nakai
\cite{YZN}, and intrinsic square functions by Zhuo, Yang and Liang
\cite{ZYL}. Hardy spaces of variable exponent
$H^{p(\cdot)}_L({\mathbb{R}}^n)$ associated with certain classes of
operators $L$ have been investigated by Zhuo and Yang (\cite{YZ2}
and \cite{ZY}) and Yang, Zhang and Zhuo \cite{YZZ}.

Our objective is to study variable exponent Hardy spaces on graphs.
Our graphs will be spaces of homogeneous type in the sense of
Coifman and Weiss \cite{CoW1}. Hardy spaces on spaces of
homogeneous type were firstly studied by Coifman and Weiss
\cite{CoW2} and Mac\'{\i}as and Segovia (\cite{MS} and
\cite{MS0}).

Let $X$ be a set equipped with a non negative quasidistance $d$. Suppose that $\mu$ is a $\sigma$-finite measure defined on a $\sigma$-algebra on $X$ containing the open balls associated with $d$. We assume that $\mu(B)>0$, for every $d$-ball $B$. We say that the triple $(X,\mu,d)$ is a space of homogeneous type when $\mu$ satisfies the following doubling property: there exists $C>0$ such that, for every $x\in X$ and $r>0$, $\mu(B(x,2r))\leq C\mu(B(x,r))$. Note that this implies that there exist $C,D>0$ such that $\mu(B(x,r))\leq C(r/s)^D\mu(B(x,s))$, for every $x\in X$ and $0<s<r$.

Assume that $p:\;X\;\rightarrow\;(0,\infty)$ is a $\mu$-measurable function. If $f$ is a complex $\mu$-measurable function on $X$ we define the modular $\rho _{p(\cdot )}(f)$ of $f$ by
    $$\rho _{p(\cdot )}(f)=\int_X{|f(x)|^{p(x)}d\mu(x)}.$$
The variable exponent Lebesgue space $L^{p(\cdot)}(X)$ is the collection of all complex $\mu$-measurable functions $f$ such that,
there exists $\lambda>0$ for which  $\rho_{p(\cdot)}(\lambda f)<\infty$. The quasinorm $\|\cdot\|_{ L^{p(\cdot)}(X)}$ on $L^{p(\cdot)}(X)$ is defined by
$$\|f\|_{L^{p(\cdot)}(X)}
    =\inf\left\{\lambda>0\ : \ \ \rho _{p(\cdot )}\left(\frac{f}{\lambda}\right)\leq 1\right\},\;\;\;f\in L^{p(\cdot)}(X).$$

As it was mentioned, the theory of variable exponent Lebesgue spaces
in $\mathbb{R}^n$ can  be found in the monographs \cite{CrF} and
\cite{DHHR}. A very important problem in this field was to find
conditions for the exponent $p(\cdot)$ such that the
Hardy-Littlewood maximal operator $\mathcal{M}$ is bounded in
$L^{p(\cdot)}(\Omega)$ when $\Omega$ is a subset of $\mathbb{R}^n$.
Diening \cite{Die1} and Pick and Ruzicka \cite{PR} obtained the
first results concerning this question. Later, Nekvinda \cite{Ne}
and Cruz-Uribe, Fiorenza and Neugebauer \cite{CUFN} improve them.

Maximal functions are useful tools in our developments. The above
results on  $\mathbb{R}^n$ were extended to spaces of homogeneous
type in \cite{HHP}, \cite{Kha} and \cite{KoS} by considering bounded
spaces or using a ball condition as in \cite{Die1}. The most general
result for the Hardy-Littlewood maximal operator $\mathcal{M}$ on
$L^{p(\cdot)}(X)$ when $X$ is unbounded was established by
Adamowicz, Harjulehto and H\"ast\"o \cite{AHH}. We say that
$p:\;X\;\rightarrow\;(0,\infty)$ is log-H\"older continuous in $X$
when the two following properties are satisfied
\begin{enumerate}
 \item[(i)] There exists $C>0$ such that
 $$|p(x)-p(y)|\leq\displaystyle\frac{C}{\log(e+1/d(x,y))},\;\;x,y\in X,\;\;x\neq y.$$
 \item[(ii)] There exist $x_0\in X$, $C>0$ and $a\in \mathbb{R}$ such that
 $$|p(x)-a|\leq\displaystyle\frac{C}{\log(e+d(x,x_0))},\;\;x\in X.$$
\end{enumerate}

As it was proved in \cite[Lemma 2.1]{AHH} the property (ii) does not depend on $x_0$. We consider the following class of exponent $\mathcal{P}^{log}(X)$ defined by
$$\mathcal{P}^{log}(X)=\{p:\;X\;\rightarrow\;(0,\infty)\;:\;1/p\;\mbox{is log-H\"older continuous in }\;X\}.$$
Note that if $p\in\mathcal{P}^{log}(X)$, then $rp\in \mathcal{P}^{log}(X)$, for every $r>0$. If $p:\;X\;\rightarrow\;(0,\infty)$ is a function we define
$$p_+= ess\,sup_{X}p\;\;\mbox{and}\;\;p_-=ess\,inf_{X}p.$$
We always assume that $0<p_-\le p_+<\infty$.

The following result, \cite[Corollary 1.8]{AHH}, will be very useful in the sequel.

\begin{ThA}\label{ThA}
Let $X$ be a space of homogeneous type. Assume that $p\in
\mathcal{P}^{log}(X)$ with $p_->1$. Then, there exists $C>0$ such
that
$$\|\mathcal{M}f\|_{L^{p(\cdot)}(X)}\leq C\|f\|_{L^{p(\cdot)}(X)},\;\;f\in L^{p(\cdot)}(X).$$
\end{ThA}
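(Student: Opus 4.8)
The plan is to deduce the norm inequality from a modular one and then from a pointwise ``key estimate''. Since $\|\cdot\|_{L^{p(\cdot)}(X)}$ is positively homogeneous and $\mathcal{M}$ commutes with multiplication by scalars, it suffices to produce a constant $C$ with $\rho_{p(\cdot)}(\mathcal{M}f)\le C$ for every $f\ge 0$ with $\rho_{p(\cdot)}(f)\le 1$. Because $\mathcal{M}$ fails to be bounded on $L^1$, I would use the hypothesis $p_->1$ at the very outset by fixing an auxiliary exponent $p_0$ with $1<p_0<p_-$. On the space of homogeneous type $(X,\mu,d)$ the classical Coifman--Weiss theory gives that $\mathcal{M}$ is of weak type $(1,1)$ and bounded on $L^\infty$, hence bounded on $L^{p_0}(X)$; this fixed-exponent bound is what will ultimately absorb all error terms.

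The core is a pointwise estimate of the form
\[
\mathcal{M}f(x)^{p(x)}\le C\,\big[\mathcal{M}\!\big(f^{p(\cdot)/p_0}\big)(x)\big]^{p_0}+C\,\big[\mathcal{M}h(x)\big]^{p_0},\qquad x\in X,
\]
valid for $f\ge0$ with $\rho_{p(\cdot)}(f)\le1$, where $h$ is a fixed nonnegative function depending only on $\mu$, $d$ and the log-H\"older data. To obtain it I would, for each ball $B\ni x$, first freeze the exponent by Jensen's inequality: since $p(x)/p_0\ge p_-/p_0>1$,
\[
\Big(\mean{B}f\,d\mu\Big)^{p(x)}\le\Big(\mean{B}f^{p(x)/p_0}\,d\mu\Big)^{p_0}.
\]
The second, and decisive, building block is an exponent-transfer lemma: using that $1/p$ is log-H\"older, and hence so is $p_0/p$, one has, for $y\in B$,
\[
f(y)^{p(x)/p_0}\le C\big(f(y)^{p(y)/p_0}+h(x)+h(y)\big).
\]
Averaging this over $B$, using $h(x)\le\mathcal{M}h(x)$, and taking the supremum over balls $B\ni x$ then yields the displayed key estimate.

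With the key estimate established I would integrate over $X$ and pass the power $p_0$ outside the maximal operators:
\[
\rho_{p(\cdot)}(\mathcal{M}f)\le C\big\|\mathcal{M}\!\big(f^{p(\cdot)/p_0}\big)\big\|_{L^{p_0}(X)}^{p_0}+C\|\mathcal{M}h\|_{L^{p_0}(X)}^{p_0}.
\]
Now the $L^{p_0}$-boundedness of $\mathcal{M}$ turns the first term into $C\big\|f^{p(\cdot)/p_0}\big\|_{L^{p_0}(X)}^{p_0}=C\,\rho_{p(\cdot)}(f)\le C$, while the second becomes $C\|h\|_{L^{p_0}(X)}^{p_0}$, a finite constant as soon as $h$ is chosen in $L^{p_0}(X)$. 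Undoing the homogeneity normalization then gives the theorem.

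The main obstacle is precisely the exponent-transfer lemma in the unbounded regime, together with the construction of $h$. For small balls condition (i) controls $|p(x)-p(y)|$ directly, but for large balls and for $x$ far from $x_0$ one must instead compare both $p(x)$ and $p(y)$ to the limiting value $a$ of condition (ii) and show the discrepancy is dominated by $h(x)+h(y)$. The delicate point is that the transfer inequality is \emph{false} as a pointwise statement for arbitrary $f$ (when $p(x)>p(y)$ and $f$ is large); it holds only because $\rho_{p(\cdot)}(f)\le1$ forces $f$ to be controlled on large sets, and because the decay of $h$ must be calibrated against both the $1/\log(e+d(\cdot,x_0))$ decay in (ii) and the doubling growth of $\mu$. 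Verifying that $h$ can be taken in $L^{p_0}(X)$ while keeping the transfer uniform, without imposing any extra ball condition on $X$, is where the real work lies.
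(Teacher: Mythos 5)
First, a point of reference: the paper does not prove this statement at all. Theorem A is imported verbatim from Adamowicz--Harjulehto--H\"ast\"o \cite{AHH} (their Corollary 1.8), so the benchmark is that proof, which is itself a descendant of Diening's argument. Your skeleton matches the known strategy: reduction to a modular estimate, choice of $p_0\in(1,p_-)$, Jensen's inequality to freeze the exponent, an error function $h\in L^{p_0}(X)$ built from the decay condition (ii) (a power $(e+d(\cdot,x_0))^{-N}$ with $Np_0>D$, summable over annuli by doubling), and absorption of all terms by the $L^{p_0}$-boundedness of $\mathcal{M}$ coming from Coifman--Weiss. The reduction, the final integration step, and the construction of $h$ are all fine.

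The genuine gap is the ``exponent-transfer lemma,'' which is the entire core of your argument, and it is false as stated. If $f(y)=t>1$ and $p(x)>p(y)$, then $t^{p(x)/p_0}=t^{p(y)/p_0}\,t^{(p(x)-p(y))/p_0}$, and the factor $t^{(p(x)-p(y))/p_0}$ tends to infinity with $t$ while $h(x)+h(y)$ is fixed; no constant $C$ can work. Your proposed rescue --- that $\rho_{p(\cdot)}(f)\le 1$ ``forces $f$ to be controlled'' --- cannot repair a pointwise inequality: in a general space of homogeneous type points may be $\mu$-null, and for every $t>1$ the function $f=t\chi_E$ with $\mu(E)\le t^{-p_+}$ satisfies $\rho_{p(\cdot)}(f)\le 1$ yet violates the transfer inequality at a.e.\ point of $E$. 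The known proofs assert no such pointwise statement for the large part of $f$. They split $f=f\chi_{\{f\le 1\}}+f\chi_{\{f>1\}}$: for the small part your transfer inequality is true (and needs no normalization; when $t\le 1$ and $p(x)<p(y)$ one checks that either $t^{p(x)/p_0}\le h(x)+h(y)$ or $t^{-(p(y)-p(x))/p_0}\le \max\{h(x),h(y)\}^{-(p(y)-p(x))/(p(x))}\le C$, combining (i) and (ii)). For the large part one proves an \emph{averaged} estimate instead: since $\int_B f\chi_{\{f>1\}}\,d\mu\le\rho_{p(\cdot)}(f)\le 1$, the average over $B$ is at most $\mu(B)^{-1}$, and the problem becomes bounding $\mu(B)^{-(p(x)-p_B^-)}$, where $p_B^-=\inf_B p$. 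This is precisely where the difficulty of the unbounded, purely doubling setting sits: there is no uniform lower mass bound $\mu(B)\gtrsim r_B^{N}$ in general, and one must combine the doubling estimate $\mu(B(x,r))\ge c\,\mu(B(x_0,1))\,(r/(1+r+d(x,x_0)))^{D}$ with condition (i) (to control the $\log(1/r_B)$ contribution) and condition (ii) (to control the $\log(1+d(x,x_0))$ contribution), through a case analysis over small and large balls. None of this appears in your sketch; you correctly sense this is ``where the real work lies,'' but as written the proof rests on a false lemma, and its correct replacement --- the actual content of \cite{AHH} --- is deferred entirely.
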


%
%

Recently, Zhuo, Sawano and Yang \cite{ZSY} have studied Hardy spaces
with variable exponents on RD spaces. RD spaces are special spaces
of homogeneous type satisfying an inverse doubling condition (see
\cite{HMY}, \cite{NY} and \cite{YZ}). If $\mu$ is a measure defining
a RD space $X$, then $\mu(\{x\})=0$, $x\in X$. This is an important
point because we are going to define Hardy spaces on graphs that are
spaces of homogeneous type but they are not RD spaces.

%
%
%

We now describe the graphs that we consider in this paper (see
\cite{B} and \cite{BD}). By $\Gamma$ we denote a countably infinite
set. The elements of $\Gamma$ are called vertices of the graph.
$\nu$ represents a nonnegative symmetric function defined on
$\Gamma\times\Gamma$. We say that two vertices $x,y\in\Gamma$ are
neighbors when $\nu(x,y)> 0$, and in that case we write $x\sim y$.
For every $x\in \Gamma$ we denote $deg(x)=card\{y\in \Gamma:\,x\sim
y\}$. We assume that $deg(x)>0$, for every $x\in \Gamma$, and
$\sup_{x\in \Gamma}deg(x)<\infty$. We now define a measure $\mu$ on
$\Gamma$ as follows. For every $x\in\Gamma$, $\mu(\{x\})$ is given
by
$$\mu(\{x\})=\sum_{x\sim y}\nu (x,y).$$
To simplify we write $\mu(x)$ instead of $\mu(\{x\})$, for every $x\in\Gamma$. For every $A\subset\Gamma$ we define $\mu(A)=\sum_{x\in A}\mu(x)$. Thus, $\mu$ is a measure on $\Gamma$. We assume that $\mu(\Gamma)=+\infty$.

If $x,y\in\Gamma$, we say that they are connected when $x\sim y$ or there exist $x_1,x_2,...,x_n\in\Gamma$ such that $x\sim x_1\sim x_2\sim$ ... $\sim x_n\sim y$, and in that case we call $[x,x_1,...,x_n,y]$
a path of length $n+1$ joining $x$ and $y$. We assume that each $x,y\in\Gamma$ are connected. Note that this fact implies that $\mu(x)\neq 0$, for every $x\in\Gamma$. We now define a distance $d$ on $\Gamma$ as follows
\begin{itemize}
\item $d(x,x)=0$;
\item $d(x,y)=1$ if, and only if, $x\neq y$ and $x\sim y$;
\item $d(x,y)=n\geq 2$ when $x\neq y$, $\nu(x,y)=0$ and $n$ is the minimum length of all possible paths  joining $x$ and $y$.
\end{itemize}
As usual we denote by $B(x,r)=\{y\in\Gamma\;:\;d(x,y)<r\}$, for every $x\in\Gamma$ and $r>0$. Note that, in general, a ball has not a unique center and a unique radius. In particular, $B(x,r)=B(x,[r])$, $x\in \Gamma$ and $r>0$, where $[r]=n$ provided that $n-1<r\le n$, with $n\in \mathbb{N}_+$. Here and in the sequel $\mathbb{N}_+=\mathbb{N}\setminus\{0\}$. The distance $d$ defines the discrete topology on $\Gamma$.

We define on $\Gamma$ the Markov kernel $p$ by
$$p(x,y)=\frac{\nu(x,y)}{\mu(x)}, \;\;\;x,y\in\Gamma.$$

The following two useful properties hold
\begin{enumerate}
\item[(i)] $\sum_{y\in\Gamma}p(x,y)=1,\;\;x\in\Gamma$,
\item[(ii)] $p(x,y)\mu(x)=p(y,x)\mu(y), \;\;x,y\in\Gamma$.
\end{enumerate}
By using the Markov kernel we define the Markov operator $P$ by
$$P(f)(x)=\sum_{y\in\Gamma}p(x,y)f(y),\;\;x\in\Gamma,$$
where $f$ is a complex function defined on $\Gamma$. The operator $L=I-P$ is the discrete Laplace operator on $\Gamma$ associated with $\nu$.

For every $n\in\mathbb{N}$, $n\geq 2$, we consider the $n$-th convolution power of the Markov kernel $p_n$ defined by
$$p_n(x,y)=\sum_{z\in\Gamma}p(x,z)p_{n-1}(z,y)\;\;\;x,y\in\Gamma.$$
We consider $p_1=p$. Then, for every $n\in\mathbb{N}_+$, we have that
$$P^n(f)(x)=\sum_{y\in\Gamma}p_n(x,y)f(y),\;\;x\in\Gamma,$$
and $P^0f=f$, where again $f$ is a complex function defined on $\Gamma$.

In our setting we assume as in \cite{BM} the following conditions.
\begin{enumerate}
\item[(a)] The triple $(\Gamma,\mu,d)$ is a space of homogeneous type. Hence, the measure $\mu$ is doubling with respect to the distance $d$ an there exist $C,D>0$ such that
    \begin{equation}\label{doubling}
    \frac{\mu(B(x,r))}{\mu(B(x,s))}\le C\Big(\frac{r}{s}\Big)^D,\,\,\,x\in \Gamma,\,\,\,0<s\le r.
    \end{equation}

\item[(b)] There exist $C,c>0$ such that
\begin{equation}\label{UE}
p_n(x,y)\leq C\frac{\mu(y)}{\mu(B(x,\sqrt{n}))}e^{-c
d^2(x,y)/n},\;\;\;x,y\in\Gamma\;\mbox{and}\;n\in\mathbb{N}_+.
\end{equation}
 If $\mu$ is doubling  you only need that (\ref{UE}) be true when $x=y$ in order to (\ref{UE}) hold for any pair of $x,y\in \Gamma$.
\item[(c)] For certain $\alpha>0$, $\Gamma$ satisfies property $\Delta(\alpha)$, that is, for every $x\in\Gamma$, $x\sim x$, and  $\nu(x,y)\ge \alpha\mu(x)$, for every $x,y\in \Gamma$ such that $x\sim y$.
\end{enumerate}

We recall (see \cite[(3)]{BD}) that when $\mu$ is doubling, $\Gamma$
satisfies  $\Delta(\alpha)$ with $\alpha >0$, and
$(p_n)_{n=1}^\infty$ verifies (\ref{UE}), then there exist $c_4,C_4>0$ such
that, for every $n\in \mathbb{N}_+$, $k\in\mathbb{N}$ and $x,y,\in\Gamma$,
\begin{equation}\label{compuesto}
|\widetilde{p_{n,k}}(x,y)|\leq
C_4\frac{\mu(y)}{n^k\mu(B(x,\sqrt{n}))}e^{-c_4 d^2(x,y)/n},
\end{equation}
where $\widetilde{p_{n,k}}$ represents the kernel of the operator
$(I-P)^kP^n$, for every $n\in \mathbb{N}_+$ and $k\in\mathbb{N}$.

Our first objective is to define Hardy spaces with variable exponents by using Littlewood-Paley square functions. We consider the square function $S_L$ defined by
$$
S_L(f)(x)=\Big(\sum_{k=1}^\infty \sum_{d(x,y)<k}\frac{|k(I-P)P^{[k/2]}(f)(y)|^2}{k\mu(B(y,k))}\mu (y)\Big)^{1/2}.$$
$S_L$ defines a bounded operator in $L^2(\Gamma)$ (\cite[p. 3460]{BD}).

In \cite{B} and \cite{BD} Hardy spaces on $\Gamma$ were defined. We
extend definitions and results in \cite{BD} to variable exponent
settings.

 We say that $f\in L^2(\Gamma)$ is in $\mathbb{H}^{p(\cdot)}_L(\Gamma)$ when $S_L(f)\in L^{p(\cdot)}(\Gamma)$. We define the Hardy space $H^{p(\cdot)}_L(\Gamma)$ as the completion of $\mathbb{H}^{p(\cdot)}_L(\Gamma)$ with respect to the quasi-norm $\|\cdot\|_{H^{p(\cdot)}_L(\Gamma)}$ where
$$\|f\|_{H^{p(\cdot)}_L(\Gamma)}=\|S_L(f)\|_{p(\cdot)},\;\;\;f\in \mathbb{H}^{p(\cdot)}_L(\Gamma).$$
Here and in the sequel we write $\|\cdot\|_{p(\cdot)}$ to refer to
$\|\cdot\|_{L^{p(\cdot)}(\Gamma)}$.

Before giving a characterization of $H^{p(\cdot)}_L(\Gamma)$ by using atoms we introduce tent spaces in our variable exponent setting.

If $\beta>0$ and $x\in \Gamma$, we denote by $\Upsilon_\beta (x)$
the cone with vertex $x$ and aperture $\beta$ defined by
$$
\Upsilon_\beta (x)=\Big\{(y,k)\in \Gamma \times \mathbb{N}_+: d(y,x)<\beta k\Big\}.
$$
To simplify we write $\Upsilon(x)$ to denote $\Upsilon_1(x)$. If $E\subset\Gamma$ and $\beta>0$ we define the tent $T_\beta(E)$ over $E$ with overture $\beta$ as follows:
$$
T_\beta (E)=(\bigcup_{x\in E^c}\Upsilon_\beta (x))^c.
$$
We also write $T(E)$ to refer to $T_1(E)$. If $x\in \Gamma$ and
$r>0$ we can see that the tent $T(B)$ over $B=B(x_0,r_0)$ is the set
given by
$$T(B)=\Big\{(x,k)\in \Gamma \times \mathbb{N}_+: d(x_0,x)\leq r_0-k\Big\}.$$
Assume that $f$ is a complex valued function defined on $\Gamma \times \mathbb{N}_+$. We consider the following sublinear operator
$$(\mathcal{A}f)(x)=\Big(\sum_{(y,k)\in \Upsilon (x)}\frac{|f(y,k)|^2}{k\mu(B(x,k))}\mu (y) \Big)^{1/2},\quad x\in \Gamma .$$

We define the tent space $T_2^{p(\cdot )}(\Gamma )$ as follows. A complex valued function $f$ defined on $\Gamma \times \mathbb{N}_+$ is said to be in $T_2^{p(\cdot )}(\Gamma )$ provided that $\mathcal{A}(f)\in L^{p(\cdot )}(\Gamma )$. On $T_2^{p(\cdot )}(\Gamma )$ we consider the quasi-norm $\|\cdot \|_{T_2^{p(\cdot )}(\Gamma )}$ given by
$$\|f\|_{T_2^{p(\cdot )}(\Gamma )}=\|\mathcal{A}(f)\|_{p(\cdot )},\quad f\in T_2^{p(\cdot )}(\Gamma ).$$

If $0<q<\infty$, we say that a complex valued function $a$ defined on $\Gamma \times \mathbb{N}_+$ is a $(T_2^{p(\cdot )},q)$-atom when there exists a ball $B=B(x_B,r_B)$, with $x_B\in\Gamma$ and $r_B\ge 1$, such that
\begin{enumerate}
\item[(i)] $\supp a\subseteq T(B)$,
\item[(ii)] $\|a\|_{T_2^q(\Gamma )}\leq \mu(B)^{1/q}\|\chi _B\|_{p(\cdot
)}^{-1}$.
\end{enumerate}
Here and in the sequel if $E\subset \Gamma$, we denote by $\chi_E$
the characteristic function supported in $E$. If $0<r<\infty$,
$T_2^r(\Gamma)$ represents the tent space $T_2^{p(\cdot )}(\Gamma)$
when $p(x)=r$, $x\in\Gamma$.

For every sequences $(\lambda _j)_{j\in \mathbb{N}}$ of complex numbers and $(B_j)_{j\in \mathbb{N}}$ of balls we define
$$
\mathcal{A}(\{\lambda _j\},\{B_j\})=\Big\|\Big(\sum_{j\in \mathbb{N}}\frac{|\lambda _j|^{\mathfrak{p}}\chi _{B_j}}{\|\chi _{B_j}\|_{p(\cdot )}^{\mathfrak{p}}}\Big)^{1/{\mathfrak{p}}}\Big\|_{p(\cdot )},
$$
where $\mathfrak{p}=\min \{1,p_-\}$.


We now establish a description of $T_2^{p(\cdot )}(\Gamma)$ by using atoms.

\begin{Th}\label{Th1.1}
 Let $p(\cdot)\in \mathcal{P}^{log}(\Gamma)$ and $1<q<\infty$.

 (i) For certain $C>0$ the following property is satisfied: if $f\in T_2^{p(\cdot )}(\Gamma )$ there exist, for each $j\in \mathbb{N}$, $\lambda_j>0$ and a $(T_2^{p(\cdot )},q)$-atom $a_j$ associated with a ball $B_j$ such that
 \begin{equation}\label{eqTh1.4}
 f(x,k)=\sum_{j\in \mathbb{N}}\lambda_ja_j(x,k),\quad (x,k)\in \Gamma \times \mathbb{N}_+,
 \end{equation}
 where the series converges absolutely for every $(x,k)\in \Gamma \times \mathbb{N}_+$ and
 $$
 \mathcal{A}(\{\lambda _j\},\{B_j\})\leq C\|f\|_{T_2^{p(\cdot )}(\Gamma )}.
 $$
 Moreover, $f=\sum_{j\in \mathbb{N}}\lambda _ja_j$ in the sense of  convergence in $T_2^{p(\cdot )}(\Gamma )$.

 (ii) There exists $C>0$ such that if, for every $j\in \mathbb{N}$, $\lambda_j\in \mathbb{C}$ and $a_j$ is
 a $(T_2^{p(\cdot )}q)$-atom associated with the ball $B_j$ such that $\mathcal{A}(\{\lambda _j\},\{B_j\})<\infty$, then the series $f=\sum_{j\in \mathbb{N}}\lambda _ja_j$  converges absolutely for every $(x,k)\in \Gamma \times \mathbb{N}_+$ and in $T_2^{p(\cdot )}(\Gamma )$. Furthermore
 $$
 \|f\|_{T_2^{p(\cdot )}(\Gamma )}\leq C\mathcal{A}(\{\lambda _j\},\{B_j\}).
 $$

 (iii) If $0<r<\infty$ and $f\in T_2^{p(\cdot )}(\Gamma )\cap T_2^r(\Gamma )$, then the series in (\ref{eqTh1.4}) also converges to $f$ in $T_2^r(\Gamma)$.

\end{Th}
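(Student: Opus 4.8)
The plan is to transplant the classical Coifman--Meyer--Stein tent-space decomposition to the discrete, variable-exponent setting, organizing the decomposition around the superlevel sets of the area functional $\mathcal{A}$ and a Whitney covering, and replacing the $\ell^p$-summation of the classical theory by the functional $\mathcal{A}(\{\lambda_j\},\{B_j\})$. For part (i), fix $f\in T_2^{p(\cdot)}(\Gamma)$ and set $O_k=\{x\in\Gamma:\mathcal{A}(f)(x)>2^k\}$, $k\in\mathbb{Z}$. Each $O_k$ has finite measure for $k$ large, $O_{k+1}\subseteq O_k$, and $T(O_{k+1})\subseteq T(O_k)$. I would first check that, up to the points lying in no tent (which contribute nothing to any cone sum), the support of $f$ in $\Gamma\times\mathbb{N}_+$ is covered by $\bigcup_k R_k$ with $R_k:=T(O_k)\setminus T(O_{k+1})$. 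I then take, for each $k$, a Whitney covering $O_k=\bigcup_i B_{k,i}$ by balls of bounded overlap whose radii are comparable to the distance to $O_k^c$, distribute $R_k$ among the $B_{k,i}$ to obtain sets $T_{k,i}\subseteq T(\widehat B_{k,i})$ (with $\widehat B_{k,i}$ a fixed dilate of $B_{k,i}$) and functions $b_{k,i}:=f\chi_{T_{k,i}}$, and normalize
$$
a_{k,i}:=\lambda_{k,i}^{-1}b_{k,i},\qquad \lambda_{k,i}:=\mu(B_{k,i})^{-1/q}\,\|\chi_{B_{k,i}}\|_{p(\cdot)}\,\|b_{k,i}\|_{T_2^q(\Gamma)},
$$
so that each $a_{k,i}$ is a $(T_2^{p(\cdot)},q)$-atom associated with $\widehat B_{k,i}$ and $f=\sum_{k,i}\lambda_{k,i}a_{k,i}$ pointwise.

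The heart of part (i) is the estimate $\|b_{k,i}\|_{T_2^q(\Gamma)}\le C\,2^k\mu(B_{k,i})^{1/q}$, which yields $\lambda_{k,i}\le C\,2^k\|\chi_{B_{k,i}}\|_{p(\cdot)}$. I would prove it first for $q=2$, exploiting that in the discrete setting $\|g\|_{T_2^2(\Gamma)}^2\approx\sum_{(y,t)}|g(y,t)|^2\mu(y)/t$ (obtained by summing $\mu(x)/\mu(B(x,t))$ over $x\in B(y,t)$ and using (\ref{doubling})). Combining this identity with the Whitney property --- a fixed proportion of $\widehat B_{k,i}$ lies in $O_{k+1}^c$, where $\mathcal{A}(f)\le 2^{k+1}$ --- and interchanging the order of summation gives $\|b_{k,i}\|_{T_2^2}^2\le C\,2^{2k}\mu(B_{k,i})$; the case $q\le 2$ then follows by H\"older, since $\mathcal{A}(b_{k,i})$ is supported in $\widehat B_{k,i}$, while $q>2$ needs a more careful argument that I would again base on the Whitney geometry. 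To sum over the atoms, bounded overlap gives $\sum_i\chi_{B_{k,i}}\le N\chi_{O_k}$, hence $\sum_{k,i}2^{k\mathfrak{p}}\chi_{B_{k,i}}\le N\sum_k 2^{k\mathfrak{p}}\chi_{\{\mathcal{A}(f)>2^k\}}\le C\,\mathcal{A}(f)^{\mathfrak{p}}$ by summing the geometric series, so that $\mathcal{A}(\{\lambda_{k,i}\},\{B_{k,i}\})\le C\|\mathcal{A}(f)\|_{p(\cdot)}=C\|f\|_{T_2^{p(\cdot)}(\Gamma)}$.

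For part (ii) I would use that $\mathcal{A}$ is sublinear, so $\mathcal{A}(f)\le\sum_j|\lambda_j|\mathcal{A}(a_j)$, that each $\mathcal{A}(a_j)$ is supported in a fixed dilate of $B_j$ and satisfies $\|\mathcal{A}(a_j)\|_{q}\le\mu(B_j)^{1/q}\|\chi_{B_j}\|_{p(\cdot)}^{-1}$, and then invoke the standard variable-exponent convexity estimate (which rests on Theorem~\ref{ThA} applied to the exponent $p(\cdot)/\mathfrak{p}$) to obtain $\big\|\sum_j|\lambda_j|\mathcal{A}(a_j)\big\|_{p(\cdot)}\le C\,\mathcal{A}(\{\lambda_j\},\{B_j\})$. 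Absolute convergence at each $(x,k)$ and convergence in $T_2^{p(\cdot)}(\Gamma)$ then follow by applying the same bound to the tails of the series. Part (iii) is obtained by re-running the estimate of part (ii) with the constant exponent $p(\cdot)\equiv r$: since $f\in T_2^r(\Gamma)$, the partial sums of $\sum_j\lambda_ja_j$ are Cauchy in $T_2^r(\Gamma)$, and their $T_2^r$-limit necessarily agrees with the pointwise limit $f$.

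The step I expect to be the main obstacle is the $T_2^q$-estimate on the pieces $b_{k,i}$: setting up the Whitney geometry on the graph so that, for each $(y,t)$ in the support of $b_{k,i}$, the portion of its cone reaching $T(O_{k+1})^c$ carries a definite fraction of the mass (making the averaging over $O_{k+1}^c\cap\widehat B_{k,i}$ lossless up to constants), and handling the full range $1<q<\infty$ uniformly --- in particular the contribution to $\mathcal{A}(b_{k,i})$ coming from cones rooted inside $O_{k+1}$ near its boundary. The variable-exponent summation lemma underlying parts (i) and (ii) is also delicate, but it is by now standard once the maximal operator bound of Theorem~\ref{ThA} is available.
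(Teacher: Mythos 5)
Your overall architecture (level sets of $\mathcal{A}(f)$, a Whitney covering, pieces supported in tents, normalization into atoms, a geometric-series coefficient bound, and Lemma \ref{LemaSum} for part (ii)) is the same as the paper's, but there is a genuine gap exactly at the step you flag as the main obstacle, and it is not something Whitney geometry alone can repair. Your key estimate $\|b_{k,i}\|_{T_2^2(\Gamma)}^2\le C2^{2k}\mu(B_{k,i})$ is supposed to follow by interchanging sums and averaging over points where $\mathcal{A}(f)\le 2^{k+1}$; for that interchange to be lossless you need, for \emph{every} $(y,t)$ in the support of $b_{k,i}$, the lower bound $\mu\big(O_{k+1}^c\cap B(y,t)\big)\ge c\,\mu(B(y,t))$. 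But the only information that $(y,t)\notin T(O_{k+1})$ gives is the existence of \emph{one} vertex of $O_{k+1}^c$ inside $B(y,t)$, and on these graphs a single vertex's measure can be an arbitrarily small fraction of $\mu(B(y,t))$ (there is no reverse doubling; this is precisely why $\Gamma$ is not an RD-space). Likewise your claim that ``a fixed proportion of $\widehat B_{k,i}$ lies in $O_{k+1}^c$'' is not a consequence of the Whitney construction --- the dilated Whitney ball is only guaranteed to \emph{touch} $O_k^c$, which may meet it in a set of tiny relative measure --- and even if it were true it would not suffice, since the density bound is needed in each ball $B(y,t)$, at all locations and heights, not in the single ball $\widehat B_{k,i}$. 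The missing idea is the $\gamma$-density enlargement of Coifman--Meyer--Stein, which the paper takes from \cite{CMS} and \cite{Ru}: replace $O_k$ by $\Omega_k=\{x\in\Gamma:\mathcal{M}(\chi_{O_k})(x)>1-\gamma\}=((F_k)_\gamma^*)^c$, run the Whitney covering on $\Omega_k$, and support the pieces in the reduced-aperture tents $T_{1-\eta}(\Omega_k)\setminus T_{1-\eta}(\Omega_{k+1})$. Then any $(y,t)$ outside $T_{1-\eta}(\Omega_{k+1})$ lies in $\Upsilon_{1-\eta}(x_0)$ for some point $x_0$ of global $\gamma$-density of $F_{k+1}$, whence $\mu(F_{k+1}\cap B(y,t))\ge\gamma\,\mu(B(x_0,\eta t))\ge c\,\mu(B(y,t))$ by (\ref{doubling}); and the weak $(1,1)$ bound (\ref{A1}) gives $\mu(\Omega_k)\le C\mu(O_k)$, so the coefficient estimate survives the enlargement. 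This same mechanism, combined with duality $(T_2^r(\Gamma))'=T_2^{r'}(\Gamma)$, is what yields the atom bound (\ref{Tr}) for \emph{all} $r\in(1,\infty)$ at once, so the case $q>2$, which you leave open, is handled as well.

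A second, smaller but real gap is part (iii) for $r>1$. You cannot ``re-run part (ii) with the constant exponent $r$'': the coefficients $\lambda_{k,i}$ are normalized by $\|\chi_{B_{k,i}}\|_{p(\cdot)}$, and nothing in parts (i)--(ii) bounds the constant-exponent coefficient functional (which involves $\mu(B_{k,i})^{1/r}$) by $\|f\|_{T_2^r(\Gamma)}$, so the Cauchy property of the partial sums in $T_2^r(\Gamma)$ does not follow. For $0<r\le 1$ one can renormalize, setting $\widetilde{\lambda_j^k}=2^k\mu(B(x_j^k,C_\eta r_j^k))^{1/r}$, and prove $\sum_{k,j}|\widetilde{\lambda_j^k}|^r\le C\|f\|_{T_2^r(\Gamma)}^r$ directly from the level-set structure, which is what the paper does; but for $r>1$ the paper needs a genuinely different tail argument (truncating in $k$ at levels $\pm m_0$ via monotone and dominated convergence on $F_{-m_0}$ and $\Omega_{m_0}$, then truncating in $j$ using $\sum_{j\in I_k}\mu(B(x_j^k,r_j^k))\le C\mu(O_k)<\infty$), and your proposal contains no substitute for it.
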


By applying Theorem \ref{Th1.1} we obtain an atomic decomposition for the Hardy space $H^{p(\cdot)}_L(\Gamma)$.

Let $M\in \mathbb{N}_+$ and $1<q<\infty$. We say that $a\in L^q(\Gamma )$ is a $(q,p(\cdot ), M)$-atom associated with a ball $B=B(x_B, r_B)$, with $x_B\in \Gamma $ and $r_B\geq 1$, when there exists $b\in L^q(\Gamma )$ satisfying that:

$(i)$ $a=L^Mb$;

$(ii)$ $\supp L^kb\subset B$, $k=0,...,M$;

$(ii)$ $\|L^kb\|_q\leq (r_B)^{M-k}(\mu(B))^{1/q}\|\chi _B\|_{p(\cdot )}^{-1}$, $k=0,...,M$.

A function $f\in L^2(\Gamma )$ is in $\mathbb{H}^{p(\cdot)}_{L,M,at}(\Gamma)$ when there exist, for every $j\in \mathbb{N}$, $\lambda _j\in \mathbb{C}$ and  a $(2,p(\cdot ),M)$-atom $a_j$ associated with the ball $B_j$ such that
$$
f=\sum_{j\in \mathbb{N}} \lambda_ja_j,\quad \mbox{ in }L^2(\Gamma ),
$$
and $\mathcal{A}(\{\lambda_j\},\{B_j\})<\infty$. For every $f\in \mathbb{H}^{p(\cdot)}_{L,M,at}(\Gamma)$ we define  $\|f \|_{H_{L,M,at}^{p(\cdot )}(\Gamma)}$ by
$$
\|f\|_{H_{L,M,at}^{p(\cdot )}(\Gamma)}=\inf \mathcal{A}(\{\lambda _j\},\{B_j\}),
$$
where the infimum is taken of over all the pair of sequences $\{\lambda _j\}_{j\in \mathbb{N}}$ and $\{B_j\}_{j\in \mathbb{N}}$ satisfying that, for every $j\in \mathbb{N}$, $\lambda _j\in \mathbb{C}$ and  there exists a $(2,p(\cdot ), M)$-atom $a_j$ associated with the ball $B_j$  such that $
f=\sum_{j\in \mathbb{N}} \lambda_ja_j,$ in $L^2(\Gamma )$ and $\mathcal{A}(\{\lambda_j\},\{B_j\})<\infty$. By $H^{p(\cdot)}_{L,M,at}(\Gamma)$ we represent the completion of $\mathbb{H}^{p(\cdot)}_{L,M,at}(\Gamma)$ with respect to the quasi-norm $\|\cdot \|_{H_{L,M,at}^{p(\cdot )}(\Gamma)}$.

By using Theorem \ref{Th1.1} we prove that the Hardy spaces $H^{p(\cdot)}_{L}(\Gamma)$ and $H^{p(\cdot)}_{L,M,at}(\Gamma)$ coincide.

\begin{Th}\label{Th1.2}
Let $p\in \mathcal{P}^{log}(\Gamma)$, $r\geq 2$, $r>p_+$, $M\in \mathbb{N}_+$, and $M>2D/p_-$. The following assertions hold.

(a) There exists $C>0$ satisfying that: if, for every $j\in \mathbb{N}$, $\lambda _j\in \mathbb{C}$ and $a_j$ is a $(r,p(\cdot ),M)$-atom associated with the ball $B_j$ such that $\mathcal{A}(\{\lambda_j\},\{B_j\})<\infty$, then the series $\sum_{j\in \mathbb{N}}\lambda _ja_j$ converges in $H^{p(\cdot )}_L(\Gamma )$ and
$$
\|f\|_{H^{p(\cdot )}_L(\Gamma )}\leq C\mathcal{A}(\{\lambda_j\},\{B_j\}),
$$
where $f=\sum_{j\in \mathbb{N}}\lambda _ja_j$.

(b) There exists $C>0$ such that, for every $f\in H^{p(\cdot )}_L(\Gamma )$, there exist, for each $j\in \mathbb{N}$, $\lambda _j\in \mathbb{C}$ and a $(r,p(\cdot ),M)$-atom $a_j$ associated with the ball $B_j$ such that
$$
f=\sum_{j\in \mathbb{N}}\lambda_ja_j,\quad \mbox { in }H^{p(\cdot )}_L(\Gamma ),
$$
and
$$
\mathcal{A}(\{\lambda _j\},\{B_j\})\leq C\|f\|_{H^{p(\cdot )}_L(\Gamma)}.
$$
\end{Th}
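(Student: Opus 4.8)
The strategy is to transfer everything to the tent space $T_2^{p(\cdot)}(\Gamma)$, where Theorem \ref{Th1.1} is available, by means of the square function itself. Write $\mathcal{Q}_kf=k(I-P)P^{[k/2]}f$ and $(\mathcal{Q}f)(y,k)=\mathcal{Q}_kf(y)$, $(y,k)\in\Gamma\times\mathbb{N}_+$. Since $d(x,y)<k$ together with the doubling property \eqref{doubling} gives $\mu(B(x,k))\sim\mu(B(y,k))$, the square function and the tent operator are related by $S_L(f)\sim\mathcal{A}(\mathcal{Q}f)$, so that $\|f\|_{H^{p(\cdot)}_L(\Gamma)}$ is comparable to $\|\mathcal{Q}f\|_{T_2^{p(\cdot)}(\Gamma)}$. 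The whole argument rests on two facts that I would establish first. The first is the finite propagation of the underlying operators: since $p(x,y)\ne0$ forces $x\sim y$ or $x=y$, the kernel $p_n$ of $P^n$ is supported in $\{(x,y):d(x,y)\le n\}$, whence the kernel $\widetilde{p_{n,k}}$ of $(I-P)^kP^n$ is supported in $\{d(x,y)\le k+n\}$; combined with \eqref{compuesto} this yields simultaneously compact support and Gaussian-type size control. The second is an $L^2$-reproducing (Calder\'on) formula $f=\sum_{k\ge1}\mathcal{R}_k(\mathcal{Q}_kf)$, valid for $f\in L^2(\Gamma)$, where the companion operators $\mathcal{R}_k$ are built from the discrete functional calculus of $L$ and carry a factor $L^M$, so that $\pi_{L,M}(F):=\sum_k\mathcal{R}_k(F(\cdot,k))=L^Mb$ with $b=\sum_k\mathcal{T}_k(F(\cdot,k))$ for operators $\mathcal{T}_k$ of finite propagation at scale $\sim k$, and normalised so that $\sum_k\mathcal{R}_k\mathcal{Q}_k=c\,I$ on $L^2(\Gamma)$.

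For part (a), fix an $(r,p(\cdot),M)$-atom $a=L^Mb$ associated with $B=B(x_B,r_B)$. Then $\mathcal{Q}_ka=k\,L^{M+1}P^{[k/2]}b$, so by \eqref{compuesto} (with the gain $[k/2]^{-(M+1)}$) and the support of $b$, the function $\mathcal{Q}a$ decays strongly off $T(2B)$ while spreading only at scale $k$. I would decompose $\mathcal{Q}a=\sum_{i\ge0}\mu_i\,\alpha_i$, where each $\alpha_i$ is a fixed multiple of a $(T_2^{p(\cdot)},r)$-atom supported in $T(2^iB)$, the size condition (ii) for $\alpha_i$ being checked through the $L^r$-bounds for $\mathcal{Q}_k$ coming from \eqref{compuesto} and the cancellation encoded in $L^M$. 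The gain $[k/2]^{-(M+1)}$ produces a factor $\sim 2^{-iM}$ after accounting for the $L^2$ normalisation and the measure growth $\mu(2^iB)\lesssim2^{iD}\mu(B)$, while $\|\chi_{2^iB}\|_{p(\cdot)}\lesssim2^{iD/p_-}\|\chi_B\|_{p(\cdot)}$. Applying this to each $a_j$, writing $\mathcal{Q}f=\sum_j\lambda_j\mathcal{Q}a_j=\sum_{i,j}\lambda_j\mu_{i,j}\alpha_{i,j}$, and invoking Theorem \ref{Th1.1}(ii), reduces matters to the inequality $\mathcal{A}(\{\lambda_j\mu_{i,j}\},\{2^iB_j\})\le C\,\mathcal{A}(\{\lambda_j\},\{B_j\})$; the sum over $i$ converges precisely because $M>2D/p_-$ dominates the combined growth, giving $\|f\|_{H^{p(\cdot)}_L(\Gamma)}=\|\mathcal{Q}f\|_{T_2^{p(\cdot)}(\Gamma)}\le C\,\mathcal{A}(\{\lambda_j\},\{B_j\})$.

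For part (b), let $f\in\mathbb{H}^{p(\cdot)}_L(\Gamma)$. Then $\mathcal{Q}f\in T_2^{p(\cdot)}(\Gamma)$ and, since $S_L$ is bounded on $L^2(\Gamma)$, also $\mathcal{Q}f\in T_2^2(\Gamma)\subseteq T_2^r(\Gamma)$ after a routine density reduction. Theorem \ref{Th1.1}(i),(iii) yields $\mathcal{Q}f=\sum_j\lambda_jA_j$ with $A_j$ a $(T_2^{p(\cdot)},r)$-atom on $B_j$, $\mathcal{A}(\{\lambda_j\},\{B_j\})\le C\|\mathcal{Q}f\|_{T_2^{p(\cdot)}(\Gamma)}$, and convergence in both $T_2^{p(\cdot)}(\Gamma)$ and $T_2^r(\Gamma)$. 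Applying the reproducing formula, $f=\pi_{L,M}(\mathcal{Q}f)=\sum_j\lambda_j\pi_{L,M}(A_j)$ in $L^2(\Gamma)$. It remains to check that $a_j:=\pi_{L,M}(A_j)$ is, up to a harmless constant, an $(r,p(\cdot),M)$-atom on $B_j$. Writing $\pi_{L,M}(A_j)=L^Mb_j$ with $b_j=\sum_k\mathcal{T}_k(A_j(\cdot,k))$, the finite propagation at scale $\sim k$ and the support constraint $A_j(y,k)\ne0\Rightarrow d(x_{B_j},y)\le r_{B_j}-k$ confine $\supp L^mb_j$ to $B_j$ (up to a fixed dilation, absorbed into constants) for $m=0,\dots,M$; the bound $\|L^mb_j\|_r\le r_{B_j}^{M-m}\mu(B_j)^{1/r}\|\chi_{B_j}\|_{p(\cdot)}^{-1}$ follows from the boundedness of $F\mapsto L^mb$ from $T_2^r(\Gamma)$ into $L^r(\Gamma)$ (proved by duality against $T_2^{r'}(\Gamma)$ using \eqref{compuesto}), the restriction $k\le r_{B_j}$ inside $T(B_j)$ supplying the factor $r_{B_j}^{M-m}$, together with the atom bound $\|A_j\|_{T_2^r(\Gamma)}\le\mu(B_j)^{1/r}\|\chi_{B_j}\|_{p(\cdot)}^{-1}$. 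This delivers the decomposition with $\mathcal{A}(\{\lambda_j\},\{B_j\})\le C\|f\|_{H^{p(\cdot)}_L(\Gamma)}$.

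The \emph{main obstacle} is the construction and justification of the reproducing operator $\pi_{L,M}$ and, above all, the tent-to-Lebesgue estimate $T_2^r(\Gamma)\to L^r(\Gamma)$ with the precise scaling $r_B^{M-m}$ that the atom size condition demands; this is where the kernel bounds \eqref{compuesto}, the finite propagation, and the hypotheses $r\ge2$, $r>p_+$ and $M>2D/p_-$ must be combined with care. The remaining points are bookkeeping: matching $L^2$-convergence with $H^{p(\cdot)}_L(\Gamma)$- (resp.\ $T_2^r(\Gamma)$-) convergence on the dense class $\mathbb{H}^{p(\cdot)}_L(\Gamma)$, and passing to the completions.
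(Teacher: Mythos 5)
Your proposal is sound and, in substance, it is the paper's own proof. For part (b) the agreement is essentially literal: the reproducing operator you call $\pi_{L,M}$ is the paper's $\Pi_M$ of Proposition \ref{AD}, built from the Calder\'on-type expansion of Proposition \ref{Representation}; the tent-space atomic decomposition of $(x,k)\mapsto k(I-P)P^{[k/2]}f(x)$ comes from Theorem \ref{Th1.1}; the confinement of $\supp L^mb_j$ to a fixed dilate $B(x_{B_j},(M+2)r_{B_j})$ of $B_j$ (absorbed by Lemma \ref{LemaSumCoc}) uses exactly the finite propagation you invoke; and your key tent-to-Lebesgue bound with the scaling $r_{B_j}^{M-m}$ is the paper's estimate (\ref{ABC}), proved there by the same duality against $T_2^{r'}(\Gamma)$ through the square functions $S_{k,L}$ and the kernel bound (\ref{compuesto}). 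For part (a) your packaging differs slightly: you decompose $\mathcal{Q}a_j$ into $(T_2^{p(\cdot)},r)$-atoms supported in tents over the dilates $2^iB_j$ and then invoke Theorem \ref{Th1.1}(ii), whereas Proposition \ref{H6} estimates $\|S_L(\sum_j\lambda_ja_j)\|_{p(\cdot)}$ directly on the annuli $S_i(B_j)$ and sums using Lemmas \ref{LemaSum}, \ref{LemaCoc} and \ref{LemaSumCoc}. The analytic content is identical in both routes: the $2^{-iM}$ decay extracted from (\ref{compuesto}) via the split into $k\le d(x,x_{B_j})/2$ and $k>d(x,x_{B_j})/2$, the $\beta^{D/w}$ growth of $\|\chi_{B(x,\beta r)}\|_{p(\cdot)}$, and the convergence of the sum over $i$ precisely because $M>2D/p_-$; so nothing substantive is gained or lost either way.

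One slip to correct: the inclusion $T_2^2(\Gamma)\subseteq T_2^r(\Gamma)$ you assert in part (b) is false on a graph of infinite measure, and you should not route the argument through it. It is also unnecessary within your own scheme: Theorem \ref{Th1.1}(i) applied to $\mathcal{Q}f\in T_2^{p(\cdot)}(\Gamma)\cap T_2^2(\Gamma)$ already produces atoms which, by (\ref{Tr}), are $(T_2^{p(\cdot)},r)$-atoms for every $r\in(1,\infty)$ simultaneously, so each individual atom lies in $T_2^r(\Gamma)$ and your $T_2^r(\Gamma)\to L^r(\Gamma)$ bound applies to it atomwise; moreover the atomic series converges in $T_2^2(\Gamma)$, and the identification $\sum_j\lambda_j\Pi_M(A_j)=cf$ then follows from the boundedness of $\Pi_M$ from $T_2^2(\Gamma)$ into $L^2(\Gamma)$ (Proposition \ref{AD}, (i)), which is exactly how Proposition \ref{Hardy} argues. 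With that correction your argument closes.
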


The Hardy space $H^{p(\cdot )}_L(\Gamma)$ coincides with the space
$L^{p(\cdot)}(\Gamma)$ provided that $p\in
\mathcal{P}^{\log}(\Gamma)$ and $p_->1$ (see Proposition
\ref{Equal}).
%

Next we introduce the molecules in our setting that will be useful
to study the boundedness of operators in $H^{p(\cdot )}_L(\Gamma )$
(see Section 5).

Let $M\in \mathbb{N}_+$, $1<q<\infty$, and $\varepsilon >0$. We say
that a function $m:\Gamma\longrightarrow \mathbb{C}$ is a
$(q,p(\cdot ),M,\varepsilon )$-molecule when there exist a function
$b:\Gamma\longrightarrow \mathbb{C}$ and a ball $B=B(x_B,r_B)$ with
$x_B\in \Gamma$ and $r_B\geq 1$ such that

(i) $m=L^Mb$ and,

(ii) For every $k=0,...,M$,
$$
\|L^kb\|_{L^q(\mathfrak{S}_j(B))}\leq (r_B)^{M-k}2^{-j\varepsilon}(\mu(B(x_B,2^jr_B))^{1/q}\|\chi _{B(x_B,2^jr_B)}\|_{p(\cdot )}^{-1},\quad j\in \mathbb{N},
$$
where, for every $j\in \mathbb{N}_+$, $\mathfrak{S}_j(B)=B(x_B,2^{j+1}r_B)\setminus B(x_B,2^{j-1}r_B)$, and $\mathfrak{S}_0(B)=B$.

Every $(q,p(\cdot ),M)$-atom is
also a $(q,p(\cdot ),M, \varepsilon)$-molecule, for every
$\varepsilon >0$.

\begin{Th}\label{Th1.4}
Let $p\in \mathcal{P}^{log}(\Gamma)$, $M\in \mathbb{N}_+$,
$M>2D/p_-$ and $\varepsilon>D/p_-$, $q\ge 2$, and $q>p_+$. There
exists $C>0$ satisfying that: if, for every $j\in \mathbb{N}$,
$\lambda _j\in \mathbb{C}$ and $m_j$ is a
$(q,p(\cdot),M,\varepsilon)$-molecule associated with the ball
$B_j$, such that $\mathcal{A}(\{\lambda _j\},\{B_j\})<\infty$, then
the series $\sum_{j\in \mathbb{N}} \lambda _jm_j\in H^{p(\cdot
)}_L(\Gamma )$ converges in $H^{p(\cdot )}_L(\Gamma )$ and
$$
\|f\|_{H^{p(\cdot )}_L(\Gamma )}\leq
C\mathcal{A}(\{\lambda_j\},\{B_j\}),
$$
where $f=\sum_{j\in \mathbb{N}}\lambda _jm_j$.
\end{Th}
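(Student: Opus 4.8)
The plan is to reduce Theorem \ref{Th1.4} to the atomic result Theorem \ref{Th1.2}(a) by decomposing each molecule into a sum of $(q,p(\cdot),M)$-atoms with scale-invariant, summable coefficients. Fix a $(q,p(\cdot),M,\varepsilon)$-molecule $m=L^Mb$ associated with $B=B(x_B,r_B)$. The first move is to choose a Lipschitz partition of unity $\{\phi_i\}_{i\in\mathbb{N}}$ adapted to the annuli $\mathfrak{S}_i(B)$: $\sum_i\phi_i\equiv 1$, $\supp\phi_i\subset B(x_B,2^{i+1}r_B)\setminus B(x_B,2^{i-2}r_B)$, $0\le\phi_i\le 1$, and, crucially, $|\phi_i(x)-\phi_i(y)|\le C(2^ir_B)^{-1}d(x,y)$. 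Writing $m=\sum_i L^M(\phi_ib)$, I set $a_i=\lambda_i^{-1}L^M(\phi_ib)$ with $\lambda_i=C2^{-i\varepsilon}$ and claim that each $a_i$ is (a fixed multiple of) a $(q,p(\cdot),M)$-atom associated with the dilated ball $\widetilde{B}_i=B(x_B,2^{i+c_0}r_B)$, where $c_0$ depends only on $M$. (A direct estimate of the square function $S_L(m)$ on annuli, parallel to the atomic case, is an alternative route, but reducing to Theorem \ref{Th1.2} is the most economical given the flow of the paper.)

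The support property is immediate from the locality of $L=I-P$: since $P$ moves mass only between neighbours, $L^k(\phi_ib)$ is supported within the $M$-neighbourhood of $\supp\phi_i$, which (as $r_B\ge 1$) is contained in $\widetilde{B}_i$ for all $k\le M$. The size bounds are the heart of the matter. Expanding $L^k(\phi_ib)=\phi_iL^kb+[L^k,\phi_i]b$ and iterating the elementary identity $[L,\phi_i]=[\phi_i,P]$, one writes $L^k(\phi_ib)$ as $\phi_iL^kb$ plus a sum of terms carrying $j\ge 1$ factors $[\phi_i,P]$ together with $L^{k-j}b$. The kernel of $[\phi_i,P]$ is $(\phi_i(x)-\phi_i(y))p(x,y)$, nonzero only when $x\sim y$, so the Lipschitz bound gives $\|[\phi_i,P]\|_{q\to q}\le C(2^ir_B)^{-1}$. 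Thus the $j$-th term is controlled by $(2^ir_B)^{-j}\|L^{k-j}b\|_{L^q(\widetilde{\mathfrak{S}}_i)}$, and inserting the molecular estimates $\|L^{k-j}b\|_{L^q(\mathfrak{S}_i)}\le r_B^{M-(k-j)}2^{-i\varepsilon}\mu(B(x_B,2^ir_B))^{1/q}\|\chi_{B(x_B,2^ir_B)}\|_{p(\cdot)}^{-1}$, the factors $r_B^{\,j}$ and $(2^ir_B)^{-j}$ combine to $2^{-ij}$; summing in $j$, the leading $j=0$ term dominates and yields $\|L^k(\phi_ib)\|_q\le C r_B^{M-k}2^{-i\varepsilon}\mu(\widetilde{B}_i)^{1/q}\|\chi_{\widetilde{B}_i}\|_{p(\cdot)}^{-1}$ (using doubling to pass from $B(x_B,2^ir_B)$ to $\widetilde{B}_i$). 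Since $r_B^{M-k}\le(2^{c_0}2^ir_B)^{M-k}$, dividing by $\lambda_i=C2^{-i\varepsilon}$ produces exactly the atomic size estimate. The essential point, and the step I expect to be most delicate, is precisely this interplay: the gradual (Lipschitz) cutoff supplies a gain $(2^ir_B)^{-1}$ per commutator that cancels the $r_B$-growth of the lower-order molecular bounds; a sharp cutoff $\chi_{\mathfrak{S}_i}$ would not, and the resulting coefficients would fail to be scale invariant.

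With the decomposition in hand, $f=\sum_j\lambda_jm_j=\sum_{j,i}\lambda_j\lambda_{j,i}a_{j,i}$, where $a_{j,i}$ is a $(q,p(\cdot),M)$-atom on $\widetilde{B}_{j,i}=B(x_{B_j},2^{i+c_0}r_{B_j})$ and $\lambda_{j,i}\le C2^{-i\varepsilon}$. It then remains to prove the change-of-balls estimate $\mathcal{A}(\{\lambda_j\lambda_{j,i}\},\{\widetilde{B}_{j,i}\})\le C\,\mathcal{A}(\{\lambda_j\},\{B_j\})$. The summability over $i$ is governed by the competition between the decay $2^{-i\varepsilon\mathfrak{p}}$ and the growth of $\|\chi_{B(x_{B_j},2^ir_{B_j})}\|_{p(\cdot)}$ relative to $\|\chi_{B_j}\|_{p(\cdot)}$, which by the doubling property (\ref{doubling}) and the lower bound $p_-$ is at most of order $2^{iD\mathfrak{p}/p_-}$; the hypothesis $\varepsilon>D/p_-$ makes the series converge. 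Controlling the overlap of the dilated balls inside the variable-exponent quasi-norm is then handled, by the same argument used for atoms in Theorem \ref{Th1.2}, through a pointwise domination of $\sum_i\chi_{\widetilde{B}_{j,i}}$ by the Hardy–Littlewood maximal operator and the invocation of its boundedness on a suitable exponent such as $(p(\cdot)/\mathfrak{p})'$ — this is where Theorem \ref{ThA} and the condition $q>p_+$ enter. Finally, since $q\ge 2$, $q>p_+$ and $M>2D/p_-$, Theorem \ref{Th1.2}(a) applies to the atomic family $\{a_{j,i}\}$, giving both the convergence of $\sum_{j,i}\lambda_j\lambda_{j,i}a_{j,i}$ in $H^{p(\cdot)}_L(\Gamma)$ and the bound $\|f\|_{H^{p(\cdot)}_L(\Gamma)}\le C\,\mathcal{A}(\{\lambda_j\lambda_{j,i}\},\{\widetilde{B}_{j,i}\})\le C\,\mathcal{A}(\{\lambda_j\},\{B_j\})$, which is the assertion.
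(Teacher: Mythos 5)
Your reduction to Theorem \ref{Th1.2}(a) breaks down at the atomization step, and the failure is specific to the discrete setting. Write $M_{\phi_i}$ for multiplication by $\phi_i$. Expanding $L^k(\phi_i b)$ produces either (a) terms $L^{j}[\phi_i,P]L^{k-1-j}b$, $0\le j\le k-1$, each carrying exactly \emph{one} commutator, or, after regrouping, (b) terms $\mathrm{ad}_L^j(M_{\phi_i})L^{k-j}b$ with iterated commutators $\mathrm{ad}_L^j(M_{\phi_i})=[L,[L,\dots,[L,M_{\phi_i}]\dots]]$. In case (a) the outer factor $L^j$ is merely bounded on $L^q(\Gamma)$ and contributes no gain, so the term is only controlled by $C(2^ir_B)^{-1}\|L^{k-1-j}b\|_{L^q}$ near the annulus; inserting the molecular bound leaves an uncompensated factor $r_B^{\,j}$, and the atomic size condition would require $r_B^{\,j}\le C2^{i(M-k+1)}$, which is false for large $r_B$ (already for $k=M$, $j=1$, $i=1$). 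In case (b) your argument needs $\|\mathrm{ad}_L^j(M_{\phi_i})\|_{q\to q}\le C(2^ir_B)^{-j}$ --- this is exactly what the phrase ``the factors $r_B^{\,j}$ and $(2^ir_B)^{-j}$ combine to $2^{-ij}$'' presupposes --- but on a graph this fails for $j\ge 2$: the kernel of $[L,[L,M_{\phi_i}]]$ is $\sum_{y}p(x,y)p(y,z)\bigl(\phi_i(x)-2\phi_i(y)+\phi_i(z)\bigr)$, a \emph{second} difference of $\phi_i$ along paths $x\sim y\sim z$, and for any cutoff built from the graph distance (which is Lipschitz but never ``$C^2$'') this is generically of size $(2^ir_B)^{-1}$, not $(2^ir_B)^{-2}$; e.g.\ on the diagonal $x=z$ it equals $2\sum_y p(x,y)p(y,x)(\phi_i(x)-\phi_i(y))$. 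So iterated commutators gain only one factor of the Lipschitz constant, the growing powers $r_B^{M-(k-j)}$ in the molecular bounds for $L^{k-j}b$ are not cancelled, and your functions $a_i$ do not satisfy condition (iii) of a $(q,p(\cdot),M)$-atom with constants uniform in $r_B$. This is precisely why operator-adapted molecules $m=L^Mb$ cannot in general be chopped into atoms by cutoffs: the support condition (ii) forces the cutoff inside every $L^k$, and on graphs the commutator errors are not small.

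The surrounding parts of your argument are sound: the support property via locality of $P$ is correct, the one-commutator bound $\|[\phi_i,P]\|_{q\to q}\le C(2^ir_B)^{-1}$ is correct, and the coefficient summation is correct (by Lemma \ref{LemaSumCoc} with $0<w<p_-$, $\varepsilon>D/w$, one gets $\sum_i 2^{-i(\varepsilon-D/w)\mathfrak{p}}<\infty$, which is where $\varepsilon>D/p_-$ enters). But the gap above is fatal to the reduction, and it is exactly the route you set aside in parentheses that the paper takes and that is actually needed: estimate $S_L(m_j)$ directly, splitting over the annuli $\mathcal{S}_i(B_j)$ and over the regimes $k\le d(x,x_{B_j})/2$ and $k>d(x,x_{B_j})/2$ of the square function, and decomposing the molecule itself as $m_j=m_{j,1}+m_{j,2}+m_{j,3}$ (inner ball, transition annulus, far part). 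There the Gaussian bounds (\ref{compuesto}) yield decay of order $2^{-iM}$ and $e^{-c2^i}$ for the off-diagonal pieces, while the molecular decay $2^{-i\varepsilon}$ controls $m_{j,2}$ through the $L^q$-boundedness of $S_L$; the two hypotheses $M>2D/p_-$ and $\varepsilon>D/p_-$ enter separately in these two mechanisms. This explains why the molecular theorem requires its own proof rather than a formal reduction to the atomic one.
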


We now consider, for every $f:\Gamma\longrightarrow \mathbb{C}$, the
radial maximal function ${\mathcal M}_+(f)$ given by
$${\mathcal M}_+(f)(x)=\sup_{k\in\mathbb{N}}|P^k(f)(x)|,\;\;\;x\in\Gamma.$$
We define
$$\mathbb{H}^{p(\cdot)}_{L,+}(\Gamma)=\{f\in L^2(\Gamma)\;:\;\mathcal{M}_+(f)\in L^{p(\cdot)}(\Gamma)\},$$
and we denote by $H^{p(\cdot)}_{L,+}(\Gamma)$ the completion of
$\mathbb{H}^{p(\cdot)}_{L,+}(\Gamma)$ with respect to the quasi-norm
$\|\cdot\|_{H^{p(\cdot)}_{L,+}(\Gamma)}$ defined by
$$\|f\|_{H^{p(\cdot)}_{L,+}(\Gamma)}=\|\mathcal{M}_+(f)\|_{L^{p(\cdot)}(\Gamma)},\;\;\;f\in L^2(\Gamma).$$
We establish (see Proposition \ref{M+}) that
$H^{p(\cdot)}_L(\Gamma)$ is a subspace of
$H^{p(\cdot)}_{L,+}(\Gamma)$.

As it was mentioned Zhuo, Sawano and Yang \cite{ZSY} defined Hardy
spaces with variable exponent on homogeneous spaces $(X,\mu,d)$ of
$RD$-type. If $(X,\mu,d)$ is a $RD$-space of homogeneous type,
$\mu(x)=0$, for every $x\in X$. Hence, our graphs are not
$RD$-spaces.

 In \cite{CoW2} in the context of homogeneous type spaces atomic Hardy
spaces are considered. Mac\'{\i}as and Segovia \cite{MS}
characterized those Hardy spaces by using a grand-maximal function
provided that $(X,\mu,d)$ is a normal homogeneous type space. We say
that a homogeneous type space is normal when the following property
holds: there exist $A_1,A_2,K>0$ such that, for every $x\in X$,
\begin{enumerate}
\item[(i)] $A_1r\le \mu(B(x,r))$, $r>0$,
\item[(ii)] $A_2r\ge \mu(B(x,r))$, $r\ge K\mu(x)$,
\item[(iii)] $B(x,r)=\{x\}$, $0<r<K\mu(x)$.
\end{enumerate}
Also, a normal space $(X,d,\mu)$ is said to have order $\alpha>0$
when
$$
|d(x,z)-d(y,z)|\le Cr^{1-\alpha}d(x,y)^{\alpha},
$$
for every $x,y,z\in X$, $d(x,z)<r$, $d(y,z)<r$. Mac\'{\i}as and
Segovia \cite{MS0} proved that if $(X,\mu,d)$ is a space of
homogeneous type there exists a quasimetric $d_1$ on $X$ defining
the same topology as $d$ on $X$ such that $(X,\mu,d_1)$ is a normal
space of order $\alpha$, for some $\alpha>0$. In general, $d$ and
$d_1$ are not comparable.

Later Uchiyama (\cite[Theorem 1 and Corollary 1]{Uchi})
characterized Hardy spaces $H^p(X)$ by using radial maximal
functions associated to certain nonnegative continuous functions
provided that $(X,\mu,d)$ is a space of homogeneous type such that
$\mu(B(x,r))\sim r$, for every $x\in X$ and $r>0$. This result was
extended to $RD$-spaces by Grafakos, Liu, and Yang \cite{GLY1}.

By putting all together the above ideas (see also \cite{Ru1}) we
could think on proving a characterization of our
$H^{p(\cdot)}_L(\Gamma)$ by using the radial maximal function
$\mathcal{M}_+$. The problem is that we would need to consider a
quasimetric $d_1$ topologically equivalent to the graph metric $d$
for which the space $(\Gamma,\mu,d_1)$ is normal and then proceed,
for instance, as in the proof of \cite[Theorem 1.6]{GLY}. But when
we change the quasimetric we can not be sure that our Markov kernel
$p_n$ satisfies the sufficient estimates (exponential upper
bounds,...). At this moment we do not know how to prove the
characterization of the Hardy space $H^{p(\cdot)}_L(\Gamma)$ by
using the radial maximal function $\mathcal{M}_+$.

Yang and Zhuo \cite{YZ2} and \cite {ZY}, and Yang, Zhang and Zhuo
\cite{YZZ} studied variable Hardy spaces associated with operators
$\mathcal{L}$ on $\mathbb{R}^n$ such that the semigroup generated by
$\mathcal{L}$ satisfies some kind of Gaussian or off diagonal
estimates. However, it is not clear for the semigroup generated by
the discrete Laplacian $L$ whether those kind of estimates hold or
not (see \cite{CoMS}, \cite{Pa1} and \cite{Pa2}). Our study relies on
the upper Gaussian estimates for the iterates of the Markov
operators (\ref{UE}).

Bui, Cao, Ky, Yang and Yang \cite{BCKYY} and D. Yang and S. Yang
(\cite{YY1} and \cite{YY2}) studied Musielak-Orlicz-Hardy spaces
$\mathcal{H}_{\varphi,L}(\mathbb{R}^n)$ associated with operators.
Here $\varphi$ is known as a Musielak-Orlicz function. It is an
interesting question to define Musielak-Orlicz-Hardy spaces in our
discrete settings. When we consider as Musielak-Orlicz function
$\varphi(x,t)=t^{p(x)}$, $t>0$ and $x\in \mathbb{R}^n$, the
Musielak-Orlicz-Hardy space $\mathcal{H}_{\varphi,L}(\mathbb{R}^n)$
reduces to the variable exponent Hardy space
$\mathcal{H}_{L}^{p(\cdot)}(\mathbb{R}^n)$. However,
the Musielak-Orlicz-Hardy spaces $\mathcal{H}_{\varphi,L}(\mathbb{R}^n)$
are defined requiring certain conditions for $\varphi$ (for
instance, $\varphi$ is a uniform Muckenhoupt weight) that are not
always satisfied when $\varphi(x,t)=t^{p(x)}$ and $p$ is log-Holder
continuous. Hence, the studies about Musielak-Orlicz-Hardy spaces
and variable exponent Hardy spaces associated with operators do not
cover each other.

This paper is organized in the following way. Section 2 is dedicated
to establish some results that will be very useful throughout this
work. Theorem \ref{Th1.1} is proved in Section 3 while the proofs of
Theorems \ref{Th1.2} and \ref{Th1.4} are established in Section 4. Theorem \ref{Th1.2} is separated in Propositions \ref{H6} and \ref{Hardy}.
In Section 5 we use atomic and molecular characterizations of
$H^{p(\cdot)}_L(\Gamma)$ to study
$H^{p(\cdot)}_L(\Gamma)$-boundedness properties of certain
Littlewood-Paley square functions, Riesz transforms, and spectral
multipliers for the discrete Laplacian $L$.

From now on $C$ and $c$ represent positive constants that can change in each occurrence.

$\mathbf{Acknowledgements}$. The authors would strongly like to give
thanks to Professor Dachun Yang for sending us his paper \cite{ZSY}
(jointly with C. Zhuo and Y. Sawano).

\section{Auxiliary results}

In this section we present some results that will be very useful in
the sequel.

By using Rubio de Francia extrapolation theorem (see, for instance,
\cite{CFMP}) we can obtain the following Fefferman-Stein vector
valued inequality in our variable exponent setting. This property
also can be seen as a special case of \cite[Theorem 2.7]{ZSY}.

\begin{Lem} \label{LemaFS} Assume that $p\in \mathcal{P}^{log}(\Gamma)$, $p_->1$, and $1<q<\infty$. Then,
there exists $C>0$ such that, for every sequence $\{f_j\}_{j\in
\mathbb{N}}\subset L^{p(\cdot)}(\Gamma)$, we have that
$$
\Big\|\Big(\sum_{j\in
\mathbb{N}}\mathcal{M}(f_j)^q\Big)^{1/q}\Big\|_{p(\cdot)}\le
C\Big\|\Big(\sum_{j\in
\mathbb{N}}|f_j|^q\Big)^{1/q}\Big\|_{p(\cdot)}.
$$
\end{Lem}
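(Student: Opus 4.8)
The plan is to deduce the vector-valued Fefferman-Stein inequality from the scalar weighted theory for the Hardy-Littlewood maximal operator $\mathcal{M}$ via Rubio de Francia extrapolation, exactly as announced. The key observation is that Theorem~A of the excerpt can be upgraded to a weighted statement: for $p\in\mathcal{P}^{\log}(\Gamma)$ with $p_->1$, the boundedness of $\mathcal{M}$ on $L^{p(\cdot)}(\Gamma)$ means that $\mathcal{M}$ is bounded on $L^{p(\cdot)}(\Gamma)$, and the abstract extrapolation machinery of \cite{CFMP} converts a \emph{single} such bound into a family of weighted $L^{p_0}$ estimates and, simultaneously, into vector-valued bounds. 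So the strategy is: (1) recall the $A_{p_0}$-weighted boundedness of $\mathcal{M}$ on $(\Gamma,\mu,d)$, which holds on any space of homogeneous type by the classical Coifman--Weiss / Calder\'on--Zygmund theory; (2) invoke the extrapolation theorem in the variable-exponent, vector-valued form.

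Concretely, I would proceed as follows. First I would fix an exponent $p_0$ with $1<p_0<\infty$ and recall that on a space of homogeneous type the Hardy--Littlewood maximal operator satisfies the weighted norm inequality
$$
\int_\Gamma (\mathcal{M}f)(x)^{p_0}\,w(x)\,d\mu(x)\le C\int_\Gamma |f(x)|^{p_0}\,w(x)\,d\mu(x)
$$
for every Muckenhoupt weight $w\in A_{p_0}$, with $C$ depending only on the $A_{p_0}$-constant of $w$. This is the standard input for extrapolation. Next, the hypotheses $p\in\mathcal{P}^{\log}(\Gamma)$ and $p_->1$ guarantee, via Theorem~A, that $\mathcal{M}$ is bounded on $L^{p(\cdot)}(\Gamma)$, which is precisely the structural condition on the exponent that the abstract extrapolation theorem requires in order to transfer scalar weighted bounds to the variable-exponent scale. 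Applying the vector-valued version of Rubio de Francia extrapolation (see \cite{CFMP}), in which one replaces the pair $(|f|,|\mathcal{M}f|)$ by the sequence-valued pair $\big((\sum_j|f_j|^q)^{1/q},(\sum_j\mathcal{M}(f_j)^q)^{1/q}\big)$, yields directly the claimed inequality
$$
\Big\|\Big(\sum_{j}\mathcal{M}(f_j)^q\Big)^{1/q}\Big\|_{p(\cdot)}\le C\Big\|\Big(\sum_{j}|f_j|^q\Big)^{1/q}\Big\|_{p(\cdot)}.
$$
The parameter $q\in(1,\infty)$ enters only through the choice of the exponent governing the $\ell^q$-valued extrapolation, which is admissible for all such $q$.

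The main point to verify carefully is that the general extrapolation framework of \cite{CFMP} applies to our discrete space of homogeneous type $(\Gamma,\mu,d)$ rather than to $\mathbb{R}^n$. This requires that the underlying measure space support the classical $A_{p_0}$ weighted theory for $\mathcal{M}$ and that the variable-exponent Lebesgue spaces $L^{p(\cdot)}(\Gamma)$ satisfy the duality and lattice properties used in the abstract argument; both hold on any space of homogeneous type, the former by Calder\'on--Zygmund theory and the latter because the $L^{p(\cdot)}$ construction is measure-theoretic and does not use the Euclidean structure. Alternatively, as the excerpt notes, the statement is a special case of \cite[Theorem~2.7]{ZSY}, so one may simply cite that reference after checking that the graph $(\Gamma,\mu,d)$ falls within the scope of the hypotheses there. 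The only genuine obstacle is confirming that the log-H\"older continuity of $1/p$ together with $p_->1$ supplies exactly the self-improving condition (boundedness of $\mathcal{M}$ on $L^{p(\cdot)}$) demanded by the extrapolation theorem; once that is in place the conclusion is automatic.
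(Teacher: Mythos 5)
Your proposal is correct and follows essentially the same route as the paper, which likewise obtains the lemma by combining the weighted $A_{p_0}$ theory for $\mathcal{M}$ on spaces of homogeneous type with Rubio de Francia extrapolation in the variable-exponent setting (citing \cite{CFMP}), and alternatively notes that the statement is a special case of \cite[Theorem 2.7]{ZSY}. The only technical refinement worth recording is that the extrapolation theorem formally requires boundedness of $\mathcal{M}$ on the associate space (e.g.\ $L^{p'(\cdot)}(\Gamma)$ or $L^{(p(\cdot)/p_0)'}(\Gamma)$), which here follows from Theorem A as well, since $p\in\mathcal{P}^{\log}(\Gamma)$ with $p_->1$ implies the conjugate exponent satisfies the same hypotheses.
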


The following result is a special case of \cite[Proposition
2.11]{ZSY}.

\begin{Lem} \label{LemaSum} Let $p\in \mathcal{P}^{log}(\Gamma)$ and $q\in [1,\infty]\cap(p_+,\infty]$.
There exists $C>0$ such that if, for every $j\in \mathbb{N}$,
$\lambda_j\in \mathbb{C}$, $a_j\in L^q(\Gamma)$ and $B_j$ is a ball
in $\Gamma$ satisfying that

(i) $supp(a_j)\subset B_j$,

(ii) $\|a_j\|_{q}\le \mu(B_j)^{1/q}\|\chi_{B_j}\|_{p(\cdot)}^{-1}$,

\noindent then
$$
\Big\|\Big(\sum_{j=0}^\infty
|\lambda_ja_j|^\mathfrak{p}\Big)^{1/\mathfrak{p}}\Big\|_{p(\cdot)}\le
C\mathcal{A}(\{\lambda_j\},\{B_j\}).
$$
\end{Lem}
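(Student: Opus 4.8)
While this is stated as a special case of \cite[Proposition 2.11]{ZSY}, let me indicate a direct argument based on duality and the scalar maximal estimate in Theorem \ref{ThA}. The plan is to pass to the rescaled exponent $P(\cdot)=p(\cdot)/\mathfrak{p}$, for which $P_-=p_-/\mathfrak{p}\ge 1$, and to use the scaling identity $\big\||F|^{\mathfrak{p}}\big\|_{P(\cdot)}=\|F\|_{p(\cdot)}^{\mathfrak{p}}$. With $F=\big(\sum_j|\lambda_ja_j|^{\mathfrak{p}}\big)^{1/\mathfrak{p}}$ this reduces the claim to
\[
\Big\|\sum_{j}|\lambda_j|^{\mathfrak{p}}|a_j|^{\mathfrak{p}}\Big\|_{P(\cdot)}\le C\,\mathcal{A}(\{\lambda_j\},\{B_j\})^{\mathfrak{p}}=C\Big\|\sum_j\frac{|\lambda_j|^{\mathfrak{p}}\chi_{B_j}}{\|\chi_{B_j}\|_{p(\cdot)}^{\mathfrak{p}}}\Big\|_{P(\cdot)}.
\]
Since $P_-\ge1$, I would invoke the norm-conjugate (associate space) formula, so that it is enough to bound $\sum_j|\lambda_j|^{\mathfrak{p}}\int_{B_j}|a_j|^{\mathfrak{p}}g\,d\mu$ by the right-hand side, uniformly over $g\ge0$ with $\|g\|_{P'(\cdot)}\le1$.

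First I would localize to each ball with constant-exponent tools. Setting $\sigma=(q/\mathfrak{p})'$ (so $\sigma=1$ when $q=\infty$), Hölder's inequality with exponents $q/\mathfrak{p}$ and $\sigma$ together with the size estimate (ii) gives
\[
\int_{B_j}|a_j|^{\mathfrak{p}}g\,d\mu\le\|a_j\|_{q}^{\mathfrak{p}}\,\|g\|_{L^\sigma(B_j)}\le \mu(B_j)^{\mathfrak{p}/q}\|\chi_{B_j}\|_{p(\cdot)}^{-\mathfrak{p}}\,\|g\|_{L^\sigma(B_j)}.
\]
Then I would replace the local $L^\sigma$-average of $g$ by its Hardy--Littlewood maximal function: the doubling property (\ref{doubling}) lets one compare $B_j$ with balls centred at points of $B_j$, yielding $\tfrac{1}{\mu(B_j)}\int_{B_j}|g|^\sigma\le C\,\mathcal{M}(|g|^\sigma)(x)$ for every $x\in B_j$, and hence $\|g\|_{L^\sigma(B_j)}\le C\,\mu(B_j)^{1/\sigma}\inf_{x\in B_j}[\mathcal{M}(|g|^\sigma)(x)]^{1/\sigma}$. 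Because $\mathfrak{p}/q+1/\sigma=1$, the powers of $\mu(B_j)$ combine to $\mu(B_j)$, and bounding $\mu(B_j)\inf_{B_j}[\cdots]\le\int\chi_{B_j}[\cdots]\,d\mu$ leads to
\[
\sum_j|\lambda_j|^{\mathfrak{p}}\int_{B_j}|a_j|^{\mathfrak{p}}g\,d\mu\le C\int_\Gamma\Big(\sum_j\frac{|\lambda_j|^{\mathfrak{p}}\chi_{B_j}}{\|\chi_{B_j}\|_{p(\cdot)}^{\mathfrak{p}}}\Big)[\mathcal{M}(|g|^\sigma)]^{1/\sigma}\,d\mu.
\]

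To finish I would apply the variable-exponent Hölder inequality for the pair $(P(\cdot),P'(\cdot))$, which bounds the last integral by the product of $\big\|\sum_j|\lambda_j|^{\mathfrak{p}}\chi_{B_j}/\|\chi_{B_j}\|_{p(\cdot)}^{\mathfrak{p}}\big\|_{P(\cdot)}$ and $\big\|[\mathcal{M}(|g|^\sigma)]^{1/\sigma}\big\|_{P'(\cdot)}=\|\mathcal{M}(|g|^\sigma)\|_{P'(\cdot)/\sigma}^{1/\sigma}$. The scalar maximal estimate (Theorem \ref{ThA}) applied to the exponent $P'(\cdot)/\sigma\in\mathcal{P}^{log}(\Gamma)$ then controls this by $\|g\|_{P'(\cdot)}^{\sigma}\le1$, and taking the supremum over $g$ yields the claim. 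The computation that makes the whole scheme work is that $(P')_-=P_+/(P_+-1)=p_+/(p_+-\mathfrak{p})$, so the hypothesis of Theorem \ref{ThA}, namely $(P'/\sigma)_->1$, becomes $p_+/(p_+-\mathfrak{p})>q/(q-\mathfrak{p})$, which after clearing denominators is exactly the assumption $q>p_+$.

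The routine parts are the scaling identity, the two Hölder inequalities, and the maximal comparison on each ball. The point requiring the most care is the exponent bookkeeping in the last step, where one must verify that $q>p_+$ is precisely the condition guaranteeing that $\mathcal{M}$ is bounded on the rescaled dual space $L^{P'(\cdot)/\sigma}(\Gamma)$; the borderline situation $P_-=1$ (which occurs whenever $p_-\le1$) and the limiting case $q=\infty$ also deserve a separate, but straightforward, check.
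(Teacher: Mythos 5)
Your proof is correct, but note that the paper itself does not prove this lemma at all: it simply records it as a special case of \cite[Proposition 2.11]{ZSY}. What you have written is, in effect, a self-contained reconstruction of the argument behind that cited result (the same scheme appears in Nakai--Sawano and Sawano for $\mathbb{R}^n$): rescale by $\mathfrak{p}$ so that $P(\cdot)=p(\cdot)/\mathfrak{p}$ has $P_-\ge 1$, dualize, apply constant-exponent H\"older with $q/\mathfrak{p}$ on each ball, dominate the local average of $g$ by $\mathcal{M}(|g|^\sigma)$ via doubling, and close with Theorem \ref{ThA} on the rescaled conjugate exponent. Your exponent bookkeeping is right: $(P')_-=p_+/(p_+-\mathfrak{p})$, $\sigma=q/(q-\mathfrak{p})$, and $(P'/\sigma)_->1$ is equivalent to $q>p_+$, which is exactly the hypothesis; also $P'(\cdot)/\sigma$ inherits membership in $\mathcal{P}^{log}(\Gamma)$ because $1/P'=1-\mathfrak{p}/p$ is an affine function of $1/p$. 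Two small points you flagged do need the separate check you promise: at the borderline $P_-=1$ the norm-conjugate formula still holds (with a fixed constant), so duality is available even when $p_-\le 1$; and in the degenerate case $p_+=\mathfrak{p}$ (constant $p\le 1$, where $P'\equiv\infty$) the claim reduces to a one-line constant-exponent computation rather than an appeal to Theorem \ref{ThA}. With those checks spelled out, your argument is a legitimate replacement for the citation, which is useful since it keeps the paper independent of \cite{ZSY} at this point.
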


The results proved in the next lemma are consequence of Theorem A and Lemma
\ref{LemaSum}.

\begin{Lem} \label{LemaCoc} Let $p\in \mathcal{P}^{log}(\Gamma)$.

(i) If  $0<w<p_-$, there exists $C>0$ such that, for every $x\in \Gamma$,
$\beta>1$ and $r>0$, we have that
$$
\frac{\|\chi_{B(x,\beta r)}\|_{p(\cdot)}}{\|\chi_{B(x,
r)}\|_{p(\cdot)}}\le C\beta^{D/w}.
$$

(ii) If $q\in [1,\infty)\cap (p_+,\infty)$, there exists $C>0$ such that, for every $x\in \Gamma$,
$\beta>1$ and $r>0$, we have that
$$
\frac{\|\chi_{B(x,r)}\|_{p(\cdot)}}{\|\chi_{B(x, \beta
r)}\|_{p(\cdot)}}\le C\Big(\frac{\mu(B(x,r))}{\mu(B(x,\beta
r))}\Big)^{1/q}.
$$
\end{Lem}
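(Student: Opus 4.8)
The plan is to prove the two estimates by independent arguments. For (i) I would dominate the characteristic function of the large ball pointwise by the Hardy--Littlewood maximal function of the characteristic function of the small ball, and then transfer this bound to $L^{p(\cdot)}(\Gamma)$ through a change of exponent that makes Theorem \ref{ThA} applicable even though we do not assume $p_->1$. For (ii) the idea is that a suitably normalized multiple of $\chi_{B(x,r)}$ satisfies the hypotheses of Lemma \ref{LemaSum} relative to the ball $B(x,\beta r)$, so that a single-term application of that lemma gives the claim directly.

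For part (i), the first step is the pointwise inequality
$$\chi_{B(x,\beta r)}(y)\le C\beta^D \mathcal{M}(\chi_{B(x,r)})(y),\quad y\in\Gamma.$$
Indeed, for $y\in B(x,\beta r)$ and $\beta>1$ one has $B(x,r)\subset B(y,2\beta r)\subset B(x,3\beta r)$, so by the doubling property \eqref{doubling},
$$\mathcal{M}(\chi_{B(x,r)})(y)\ge\frac{\mu(B(x,r))}{\mu(B(y,2\beta r))}\ge\frac{\mu(B(x,r))}{\mu(B(x,3\beta r))}\ge c\,\beta^{-D},$$
while for $y\notin B(x,\beta r)$ the left-hand side vanishes. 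Next, since $0<w<p_-$, the exponent $s(\cdot)=p(\cdot)/w$ lies in $\mathcal{P}^{log}(\Gamma)$ and has $s_-=p_-/w>1$, so Theorem \ref{ThA} yields the boundedness of $\mathcal{M}$ on $L^{s(\cdot)}(\Gamma)$. Using $\chi_E^w=\chi_E$ together with the homogeneity relation $\||g|^w\|_{s(\cdot)}=\|g\|_{p(\cdot)}^w$, I would then chain
$$\|\chi_{B(x,\beta r)}\|_{p(\cdot)}^w=\|\chi_{B(x,\beta r)}\|_{s(\cdot)}\le C\beta^D\|\mathcal{M}(\chi_{B(x,r)})\|_{s(\cdot)}\le C\beta^D\|\chi_{B(x,r)}\|_{s(\cdot)}=C\beta^D\|\chi_{B(x,r)}\|_{p(\cdot)}^w,$$
and taking $w$-th roots produces the factor $\beta^{D/w}$.

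For part (ii), set $B=B(x,\beta r)$ and define $a=c\,\chi_{B(x,r)}$ with $c=\mu(B)^{1/q}\mu(B(x,r))^{-1/q}\|\chi_B\|_{p(\cdot)}^{-1}$. Since $\beta>1$ we have $\supp a\subset B(x,r)\subset B$, and by construction $\|a\|_q=c\,\mu(B(x,r))^{1/q}=\mu(B)^{1/q}\|\chi_B\|_{p(\cdot)}^{-1}$, so $a$ satisfies hypotheses (i) and (ii) of Lemma \ref{LemaSum} with the single ball $B$ (the range $q\in[1,\infty)\cap(p_+,\infty)$ is precisely the one required there). Applying Lemma \ref{LemaSum} to the single term $\lambda_0=1$, $a_0=a$, $B_0=B$ gives $\|a\|_{p(\cdot)}\le C\,\mathcal{A}(\{1\},\{B\})=C$, since $\mathcal{A}(\{1\},\{B\})=\big\|\,\|\chi_B\|_{p(\cdot)}^{-1}\chi_B\,\big\|_{p(\cdot)}=1$. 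Unwinding the definition of $c$ then gives
$$\frac{\|\chi_{B(x,r)}\|_{p(\cdot)}}{\|\chi_{B(x,\beta r)}\|_{p(\cdot)}}\le C\Big(\frac{\mu(B(x,r))}{\mu(B(x,\beta r))}\Big)^{1/q}.$$

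The covering inclusions and the doubling estimate are routine. The step deserving the most care is the change-of-exponent device in (i): one must confirm that scaling $1/p$ by the constant $w$ preserves log-H\"older continuity, so that $s(\cdot)=p(\cdot)/w\in\mathcal{P}^{log}(\Gamma)$ (as already noted after the definition of $\mathcal{P}^{log}$), that $s_->1$ so Theorem \ref{ThA} genuinely applies, and that the modular identity $\||g|^w\|_{s(\cdot)}=\|g\|_{p(\cdot)}^w$ holds. This is exactly what lets us avoid the restriction $p_->1$, which would otherwise be needed to control $\mathcal{M}$ on $L^{p(\cdot)}(\Gamma)$ directly.
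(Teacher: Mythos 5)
Your proposal is correct and follows essentially the same route as the paper: part (i) uses the identical pointwise bound $\chi_{B(x,\beta r)}\le C\beta^{D}\mathcal{M}(\chi_{B(x,r)})$ combined with the power trick $\chi_E^w=\chi_E$ and Theorem \ref{ThA} applied to the rescaled exponent $p(\cdot)/w$ (whose lower bound exceeds $1$), and part (ii) is the paper's single-atom application of Lemma \ref{LemaSum} to a normalized multiple of $\chi_{B(x,r)}$ viewed relative to the larger ball $B(x,\beta r)$. If anything, your write-up of (ii) is slightly tidier, since you build the normalizing factor $\|\chi_{B(x,\beta r)}\|_{p(\cdot)}^{-1}$ into the definition of $a$ so that hypothesis (ii) of Lemma \ref{LemaSum} is verified exactly as stated.
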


\begin{proof}  We consider  $x_0\in
\Gamma$, $r_0>0$, and $\beta>1$.

$(i)$ We choose $0<w<p_-$. We have that, for every $x\in B(x_0,\beta
x_0)$,
\begin{align}\label{2.3.1}
\mathcal{M}(\chi_{B(x_0, r_0)})(x)&\ge \frac{1}{\mu(B(x,2\beta
r_0))}\sum_{y\in B(x,2\beta r_0)}\chi_{B(x_0, r_0)}(y)\mu(y)\nonumber\\
&\ge  \frac{1}{\mu(B(x_0,3\beta r_0))}\sum_{y\in B(x_0,2\beta
r_0)}\chi_{B(x_0, r_0)}(y)\mu(y)\ge C\beta^{-D}\chi_{B(x_0,\beta
r_0)}(x).
\end{align}
By Theorem A we deduce that
\begin{align*}
\|\chi_{B(x_0,\beta r_0)}\|_{p(\cdot)}&\le
C\beta^{D/w}\|(\mathcal{M}(\chi_{B(x_0, r_0)}))^{1/w}\|_{p(\cdot)}\\
&= C\beta^ {D/w}\|\mathcal{M}(\chi_{B(x_0,
r_0)})\|_{p(\cdot)/w}^{1/w}\\
&\le C\beta^{D/w}\|\chi_{B(x_0,r_0)}\|_{p(\cdot)/w}^{1/w}\\
&= C\beta^{D/w}\|\chi_{B(x_0,r_0)}\|_{p(\cdot)}.
\end{align*}

$(ii)$ Let $q\in [1,\infty)\cap (p_+,\infty)$. We define the function $a_0=\lambda_0\chi_{B(x_0, r_0)}$,
where $\lambda_0=(\mu(B(x_0,\beta r_0))/\mu(B(x_0, r_0)))^{1/q}$. It
is clear that $\supp(a_0)\subset B(x_0,\beta r_0)$ and $\|a_0\|_q\le
(\mu(B(x_0,\beta r_0)))^{1/q}$. Then, according to Lemma
\ref{LemaSum}, we get
$$
\|a_0\|_{p(\cdot)}\le C\|\chi_{B(x_0,\beta r_0))}\|_{p(\cdot)},
$$
and the proof is completed.
\end{proof}

By proceeding as in the last proof but using Lemma \ref{LemaFS}
instead of Theorem A we can obtain the following lemma.

\begin{Lem} \label{LemaSumCoc} Let $p\in \mathcal{P}^{log}(\Gamma)$ and $0<w<p_-$.Then, there exists
$C>0$ such that, for every $\beta\ge 1$ and every sequences
$\{x_j\}_{j\in \mathbb{N}}\subset \Gamma$, $\{r_j\}_{j\in \mathbb{N}}\subset
(0,\infty)$, and $\{\lambda_j\}_{j\in \mathbb{N}}\subset \mathbb{C}$ we
have that
$$
\mathcal{A}(\{\lambda_j\},\{B(x_j,\beta r_j)\})\le
C\beta^{D/w}\mathcal{A}(\{\lambda_j\},\{B(x_j,r_j)\}).
$$
\end{Lem}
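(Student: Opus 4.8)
The plan is to imitate the proof of Lemma \ref{LemaCoc}(i), replacing the scalar maximal theorem (Theorem A) by the vector valued Fefferman--Stein inequality of Lemma \ref{LemaFS}, applied after rescaling the exponent from $p(\cdot)$ to $p(\cdot)/w$. Throughout write $\mathfrak{p}=\min\{1,p_-\}$ and $B_j=B(x_j,r_j)$. The first reduction disposes of the denominators: since $\beta\ge 1$ we have $B_j\subset B(x_j,\beta r_j)$, so $\chi_{B_j}\le \chi_{B(x_j,\beta r_j)}$ and hence $\|\chi_{B(x_j,\beta r_j)}\|_{p(\cdot)}\ge \|\chi_{B_j}\|_{p(\cdot)}$ by monotonicity of the quasinorm. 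Replacing the larger denominator $\|\chi_{B(x_j,\beta r_j)}\|_{p(\cdot)}$ by the smaller $\|\chi_{B_j}\|_{p(\cdot)}$ only increases each summand, so it suffices to bound $\mathcal{A}(\{\lambda_j\},\{B(x_j,\beta r_j)\})$ with $\|\chi_{B_j}\|_{p(\cdot)}$ in place of $\|\chi_{B(x_j,\beta r_j)}\|_{p(\cdot)}$.

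The second ingredient is the pointwise maximal estimate, exactly as in (\ref{2.3.1}). If $x\in B(x_j,\beta r_j)$ then $B_j\subset B(x,2\beta r_j)$, so by the doubling property (\ref{doubling})
$$
\mathcal{M}(\chi_{B_j})(x)\ge \frac{\mu(B_j)}{\mu(B(x,2\beta r_j))}\ge C\beta^{-D},
$$
and therefore (trivially also for $x\notin B(x_j,\beta r_j)$) we get $\chi_{B(x_j,\beta r_j)}\le C\beta^{D}\mathcal{M}(\chi_{B_j})$ on $\Gamma$. Since $\chi_{B(x_j,\beta r_j)}$ is $\{0,1\}$-valued I may raise this to the power $\mathfrak{p}/w$ and preserve the inequality, obtaining $\chi_{B(x_j,\beta r_j)}\le C\beta^{D\mathfrak{p}/w}(\mathcal{M}(\chi_{B_j}))^{\mathfrak{p}/w}$. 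Setting $h_j=|\lambda_j|^{w}\chi_{B_j}/\|\chi_{B_j}\|_{p(\cdot)}^{w}$ and using the homogeneity of $\mathcal{M}$ to absorb $|\lambda_j|$ and $\|\chi_{B_j}\|_{p(\cdot)}$, the combination of the denominator reduction and this bound gives
$$
\frac{|\lambda_j|^{\mathfrak{p}}\,\chi_{B(x_j,\beta r_j)}}{\|\chi_{B(x_j,\beta r_j)}\|_{p(\cdot)}^{\mathfrak{p}}}\le C\beta^{D\mathfrak{p}/w}\big(\mathcal{M}(h_j)\big)^{\mathfrak{p}/w}.
$$
Summing in $j$, taking the $1/\mathfrak{p}$ power (which pulls the constant out as $\beta^{D/w}$) and then the $\|\cdot\|_{p(\cdot)}$ norm, I arrive at
$$
\mathcal{A}(\{\lambda_j\},\{B(x_j,\beta r_j)\})\le C\beta^{D/w}\Big\|\Big(\sum_{j}\big(\mathcal{M}(h_j)\big)^{\mathfrak{p}/w}\Big)^{1/\mathfrak{p}}\Big\|_{p(\cdot)}.
$$

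The final step is to rescale and invoke Lemma \ref{LemaFS}. Put $Q=\mathfrak{p}/w$ and $\Psi=(\sum_j(\mathcal{M}h_j)^{Q})^{1/Q}$; the scaling identity $\|\,|g|^{1/w}\,\|_{p(\cdot)}=\|g\|_{p(\cdot)/w}^{1/w}$ and $Q/\mathfrak{p}=1/w$ give that the last norm equals $\|\Psi\|_{p(\cdot)/w}^{1/w}$. Since $p\in\mathcal{P}^{log}(\Gamma)$ implies $p(\cdot)/w\in\mathcal{P}^{log}(\Gamma)$ with $(p/w)_-=p_-/w>1$, Lemma \ref{LemaFS} applies at the exponent $p(\cdot)/w$ with vector index $Q$ and yields $\|\Psi\|_{p(\cdot)/w}\le C\|(\sum_j h_j^{Q})^{1/Q}\|_{p(\cdot)/w}$. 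Undoing the rescaling with the same identity, and using $Qw=\mathfrak{p}$ together with $\chi_{B_j}^{Q}=\chi_{B_j}$ so that $h_j^{Q}=|\lambda_j|^{\mathfrak{p}}\chi_{B_j}/\|\chi_{B_j}\|_{p(\cdot)}^{\mathfrak{p}}$, one recovers exactly $\mathcal{A}(\{\lambda_j\},\{B_j\})$, which proves the claim.

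The delicate point, and the step I expect to require the most care, is the bookkeeping of the exponents. The power $\mathfrak{p}/w$ in the pointwise bound is forced by the demand that the final constant be precisely $\beta^{D/w}$, and it is exactly this choice that must simultaneously make the base exponent $p(\cdot)/w$ have lower bound $p_-/w>1$ and the vector index $Q=\mathfrak{p}/w$ exceed $1$, so that Lemma \ref{LemaFS} is applicable; the latter requires $w<\mathfrak{p}$, which in the Hardy space range $p_-\le 1$ coincides with the hypothesis $w<p_-$ since there $\mathfrak{p}=p_-$. One should verify carefully that the two applications of the scaling identity are consistent and that the homogeneity of $\mathcal{M}$ is used with nonnegative scalars only.
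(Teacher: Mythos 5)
Your proof is correct and is precisely the paper's own argument: the paper disposes of Lemma \ref{LemaSumCoc} with the single remark that one should proceed as in the proof of Lemma \ref{LemaCoc} but with Lemma \ref{LemaFS} in place of Theorem A, and your write-up carries out exactly that plan (the pointwise bound (\ref{2.3.1}), the scaling identity $\|\,|g|^{1/w}\,\|_{p(\cdot)}=\|g\|_{p(\cdot)/w}^{1/w}$, and the Fefferman--Stein inequality at exponent $p(\cdot)/w$ with vector index $\mathfrak{p}/w$). The restriction $w<\mathfrak{p}$ that you flag as necessary for applying Lemma \ref{LemaFS} is equally, if tacitly, present in the paper's sketch, so both arguments cover the full stated range $0<w<p_-$ only when $p_-\le 1$ (where $\mathfrak{p}=p_-$); this is a shared limitation of the approach rather than a gap peculiar to your proposal.
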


\section{Tent spaces of variable exponents on graphs. (Proof of Theorem \ref{Th1.1})}
\label{Sect:Tent}

Tent spaces were introduced by Coifman, Meyer and Stein in \cite{CMS}. These spaces play an important role in the development of the theory of Hardy spaces in different settings. Discrete tent spaces were considered in \cite{BD} (see also \cite{B}) to define Hardy spaces associated with operators on graphs. In this section we study tent spaces of variable exponents on graphs. Discrete tent spaces in \cite{BD} are particular cases of our variable exponent tent spaces on graphs.\\

\begin{proof}[Proof of Theorem \ref{Th1.1}]
In order to prove this result we follow the ideas developed in
\cite[Proof of Theorem 1, (c)]{CMS} (see also \cite[Theorem
1.1]{Ru}). We need to make some modifications  because we have
variable exponents (see \cite[Theorem 2.16]{ZYL} for a proof in the
continuous case). We need to introduce the concept of
$\gamma$-density. Suppose that $F$ is a subset of $\Gamma$ such that
the complement $F^c$ of $F$ has finite $\mu$-measure. Let $0<\gamma
<1$. We say that $x\in \Gamma$ has global $\gamma$-density with
respect to $F$ when, for every $r>0$,
$$
\frac{\mu (F\cap B(x,r))}{\mu (B(x,r))}\geq \gamma.
$$
We denote by $F_\gamma ^*$ the set of all those elements of $\Gamma$ with global $\gamma$-density with respect to $F$. It is clear that $F_\gamma ^*\subset F$. Also we have that
$$
(F_\gamma ^*)^c=\{x\in \Gamma : \mathcal{M}(\chi _{F^c})(x)>1-\gamma \}.
$$
Here $\mathcal{M}$ denotes the centered Hardy-Littlewood maximal
function. Since $\mathcal{M}$ is of weak type (1,1) and we are
dealing with a space of homogeneous type,   there exists $C>0$ such
that
\begin{equation}\label{A1}
\mu ((F_\gamma ^*)^c)\leq C\frac{\mu (F^c)}{1-\gamma }.
\end{equation}
Here $C$ does not depend on $F$.

$(i)$ Suppose that $f\in T^{p(\cdot)}_2(\Gamma)\cap T^2_2(\Gamma)$. Let $k\in \mathbb{Z}$. We define
$$
O_k=\{x\in \Gamma : \mathcal{A}(f)(x)>2^k\},
$$
and $F_k=O_k^c$. Since $\mathcal{A}(f)\in L^{p(\cdot )}(\Gamma )$ and $p_+<\infty$, we have that $\sum_{x\in \Gamma }|\mathcal{A}(f)(x)|^{p(x)}\mu (x)<\infty$ and then $\mu (O_k)<\infty $.\\

Fix $\eta,\, \gamma \in (0,1)$. We could take for instance
$\eta=\gamma=1/2$ but to simplify we prefer keep writing $\eta$ and
$\gamma$. A careful reading of \cite[Lemma 2.1]{Ru} allows us to
ensure that there exists $C>0$ for which
\begin{align*}
\sum_{(y,t)\in \cup_{x\in (F_k)_\gamma ^*} \Upsilon_{1-\eta}(x)}|f(y,t)|^2\frac{\mu(y)}{t}&=\sum_{(y,t)\in \cup_{x\in (F_k)_\gamma ^*} \Upsilon_{1-\eta}(x)}|f(y,t)|^2\mu(B(y,t))\frac{\mu(y)}{t\mu(B(y,t))}\\
&\leq C\sum_{x\in F_k} \sum_{(y,t)\in \Upsilon (x)}|f(y,t)|^2\frac{\mu (y)}{t\mu(B(y,t))}\mu (x)\\
&\le C\sum_{x\in F_k}(\mathcal{A}(f)(x))^2\mu (x).
\end{align*}
Since $f\in T_2^2(\Gamma)$ by using dominated convergence theorem we get that
$$
\lim_{k\rightarrow -\infty}\sum_{x\in F_k}(\mathcal{A}(f)(x))^2\mu (x)=\lim_{k\rightarrow -\infty }\sum_{x\in \Gamma} \chi _{F_k}(x)(\mathcal{A}(f)(x))^2\mu (x)=0.
$$
We have that
$$
\sum_{(y,t)\in \cap _{k\in \mathbb{Z}}(\cup _{x\in (F_k)_\gamma ^*}\Upsilon_{1-\eta}(x))}|f(y,t)|^2\frac{\mu(y)}{t}=0.
$$
It follows that $f(y,t)=0$, $(y,t)\in \cap _{k\in \mathbb{Z}}(\cup _{x\in (F_k)_\gamma ^*}\Upsilon_{1-\eta}(x))$. Hence,
$$
\mbox{supp} \;f\subset \bigcup _{k\in \mathbb{Z}}T_{1-\eta }(((F_k)_\gamma ^*)^c).
$$
We apply \cite[Lemma 2.2]{Ru} to $\Omega _k=((F_k)_\gamma ^*)^c$,
$k\in \mathbb{Z}$. Note that $\mu(O_k)\le\mu (\Omega_k )\le \frac{C}{1-\gamma}\mu(O_k)<\infty$ and $\Omega_k
\not=\Gamma$, $k\in \mathbb{Z}$. There exists $\mathcal{C}>0$ such
that for every $k\in \mathbb{Z}$ there exists a set $I_k\subset
\mathbb{N}$, and, for every $n\in I_k$, $x_n^k\in \Gamma$ and
$\varphi _n^k:\Gamma \longrightarrow [0,\infty )$ satisfying, by
taking $r_n^k=d(x_n^k,\Omega_k^c)/10$, that:

\begin{itemize}
\item $\Omega _k=\cup _{n\in I_k}B(x_n^k, r_n^k)$;
\item $B(x_i^k,r_i^k/4)\cap B(x_j^k,r_j^k/4)=\emptyset$, provided that $i,j\in I_k$, $i\not=j$;
\item $ card\{m\in I_k: B(x_n^k, 5r_n^k)\cap B(x_m^k, 5r_m^k)\not=\emptyset \}\leq \mathcal{C},\quad n\in I_k;$
\item $\mbox{supp}\;\varphi _n^k\subset B(x_n^k,2r_n^k)$, $n \in I_k$;
\item $\varphi _n^k(x)\geq \mathcal{C}^{-1}$, $x\in B(x_n^k,r_n^k)$, $n\in I_k$;
\item $\sum_{n\in I_k}\varphi _n^k=\chi _{\Omega_k}$.
\end{itemize}

Since $\mbox{supp}\;f\subset \cup_{k\in \mathbb{Z}}T_{1-\eta}(\Omega_k)$ as in \cite[p. 131]{Ru} we have that, for every $(x,t)\in \Gamma \times \mathbb{N}_+$,
\begin{equation}\label{T1}
f(x,t)=\sum_{k\in \mathbb{Z}}\Big[\sum_{j\in I_k}f(x,t)\varphi _j^k(x)(\chi _{T_{1-\eta}(\Omega_k)}-\chi _{T_{1-\eta}(\Omega_{k+1})}(x,t)\Big].
\end{equation}

We now write the equality (\ref{T1}) as follows
\begin{equation}\label{T2}
f(x,t)=\sum_{k\in \mathbb{Z}}\sum_{j\in I_k}\lambda _j^ka_j^k(x,t),\,\,\,(x,t)\in \Gamma\times \mathbb{N}_+,
\end{equation}
where
$$
\lambda _j^k=2^k\|\chi _{B(x_j^k,C_\eta r_j^k)}\|_{p(\cdot)},
$$
with $C_\eta=(2+12/(1-\eta ))$ and, for each $(y,t)\in \Gamma\times \mathbb{N}_+$,
$$
a_j^k(y,t)=\frac{1}{\lambda_j^k}f(y,t)\varphi _j^k(y)(\chi _{T_{1-\eta}(\Omega_k)}-\chi _{T_{1-\eta}(\Omega_{k+1})})(y,t),
$$
for every $k\in \mathbb{Z}$ and $j\in I_k$.

Note that, for every $k\in \mathbb{Z}$ and $j\in I_k$, $\mbox{supp
}(a_j^k)\subset T(B(x_j^k, C_\eta r_j^k))\cap
T_{1-\eta}(\Omega _k) \cap (T_{1-\eta}(\Omega_{k+1}))^c$ \cite[p.
132]{Ru}.

We are going to see that there exists $C>0$ such that, for every $k\in \mathbb{Z}$ and $j\in I_k$, $Ca_j^k$ is a $(T_2^{p(\cdot )},2)$-atom. \\

Let $k\in \mathbb{Z}$ and $j\in I_k$. Suppose that $h\in T_2^2(\Gamma)$ with $\|h\|_{T_2^2(\Gamma )}\leq 1$. By using \cite[Lemma 2.1]{Ru} and H\"older's inequality we get
\begin{align*}
&\Big|\sum_{(y,t)\in \Gamma \times \mathbb{N}_+}a_j^k(y,t)h(y,t)\frac{\mu(y)}{t}\Big| \\
&\qquad =\Big|\sum_{(y,t)\in (T_{1-\eta}(\Omega_{k+1}))^c}a_j^k(y,t)h(y,t)\frac{\mu(y)}{t}\Big|\\
& \qquad \leq C\sum_{x\in F_{k+1}}\sum_{(y,t)\in \Upsilon (x)}|a_j^k(y,t)||h(y,t)|\frac{\mu (y)}{t\mu(B(y,t))}\mu (x)\\
& \qquad \leq C\sum_{x\in F_{k+1}}\mathcal{A}(a_j^k)(x)\mathcal{A}(h)(x)\mu (x)\\
& \qquad \leq \frac{C}{\lambda_j^k}\sum_{x\in F_{k+1}}\mathcal{A}(h)(x)\mathcal{A}(f\varphi _j^k\chi _{T(B(x_j^k, C_\eta r_j^k))})(x)\mu (x)\\
& \qquad \leq \frac{C}{\lambda_j^k}\sum_{x\in F_{k+1}\cap B(x_j^k, C_\eta r_j^k)}\mathcal{A}(h)(x)\mathcal{A}(f)(x)\mu (x),
\end{align*}
because $x\in B(x_j^k, C_\eta r_j^k)$ provided that $(y,t)\in \Upsilon (x) \cap T(B(x_j^k, C_\eta r_j^k)))$.\\

Then H\"older's inequality leads to
\begin{align*}
& \Big|\sum_{(y,t)\in \Gamma \times \mathbb{N}_+}a_j^k(y,t)h(y,t)\frac{\mu(y)}{t}\Big| \\
& \qquad \leq \frac{C}{\lambda_j^k}\Big(\sum_{x\in F_{k+1}\cap B(x_j^k, C_\eta r_j^k)}|\mathcal{A}(f)(x)|^2\mu (x)\Big)^{1/2}\nonumber\\
& \qquad \leq C\mu(B(x_j^k, C_\eta r_j^k))^{1/2}\|\chi _{B(x_j^k, C_\eta r_j^k)}\|_{p(\cdot )}^{-1}.
\end{align*}
Since $(T_2^2(\Gamma ))'=T_2^2(\Gamma )$ we conclude that
$$
\|a_j^k\|_{T_2^2(\Gamma )}\leq C\mu(B(x_j^k, C_\eta r_j^k))^{1/2}\|\chi _{B(x_j^k, C_\eta r_j^k)}\|_{p(\cdot )}^{-1}.
$$
Hence $a_j^k/C$ is a $(T_2^{p(\cdot )},2)$-atom. Note that $C$ does not depend on $k$ nor on $j$.\\

In a similar way we can see that, for every $r\in (1,\infty)$, since $(T_2^r(\Gamma))'=T_2^{r'}(\Gamma)$, there exists $C>0$ such that, for every $k\in \mathbb{Z}$ and $j\in I_k$,
\begin{equation}\label{Tr}
\|a_j^k\|_{T_2^r(\Gamma )}\leq C\mu(B(x_j^k, C_\eta r_j^k))^{1/r}\|\chi _{B(x_j^k, C_\eta r_j^k)}\|_{p(\cdot )}^{-1},
\end{equation}
and then $a_j^k/C$ is a $(T_2^{p(\cdot)},r)$-atom.

Our next objective is to see that $\mathcal{A}(\{\lambda _j^k\} ,\{B(x_j^k, C_\eta r_j^k)\} )\leq C\|f\|_{T_2^{p(\cdot )}(\Gamma )}$. In order to do this we proceed as in \cite[p. 1569]{ZYL}. By (\ref{2.3.1}), for every $j,k\in \mathbb{N}$, we have that
$$
\mathcal{M}(\chi _{B(x_j^k ,r_j^k)})(x) \ge C\chi_{B(x_j^k ,C_\eta r_j^k)})(x),\,\,\,x\in \Gamma.
$$


Then, we get
\begin{align*}
 \mathcal{A}(\{\lambda _j^k\},\{B(x_j^k,C_\eta r_j^k)\})
&=\Big\|\Big(\sum_{j,k}\frac{|\lambda _j^k|^{\mathfrak{p}}\chi _{B(x_j^k,C_\eta r_j^k)}}{\|\chi _{B(x_j^k,C_\eta r_j^k)}\|_{p(\cdot)}^{\mathfrak{p}}}\Big)^{1/\mathfrak{p}}\Big\|_{p(\cdot )}\\
& =\Big\|\Big(\sum_{j,k}\Big(2^{kr}\chi _{B(x_j^k,C_\eta r_j^k)})^{\mathfrak{p}/r}\Big)^{1/\mathfrak{p}}\Big\|_{p(\cdot )}\\
&\leq C\Big\|\Big(\sum_{j,k}\Big(\mathcal{M}(2^{kr}\chi _{B(x_j^k,r_j^k)})\Big)^{\mathfrak{p} /r}\Big)^{1/\mathfrak{p}}\Big\|_{p(\cdot )}.
\end{align*}
Here $0<r<\mathfrak{p}$. According to Lemma \ref{LemaFS} it follows that
\begin{align*}
& \mathcal{A}(\{\lambda _j^k\},\{B(x_j^k, C_\eta r_j^k)\})
\leq C\Big\|\Big(\sum_{j,k}\big(\mathcal{M}(2^{kr}\chi _{B(x_j^k,r_j^k)})\Big)^{\mathfrak{p} /r}\Big)^{r/\mathfrak{p}}\Big\|_{p(\cdot )/r}^{1/r}\\
& \qquad \leq C\Big\|\Big(\sum_{j,k}(2^{kr}\chi _{B(x_j^k,r_j^k)})^{\mathfrak{p} /r}\Big)^{r/\mathfrak{p}}\Big\|_{p(\cdot )/r}^{1/r}
\leq C\Big\|\Big(\sum_{k}2^{k\mathfrak{p}}\chi _{\Omega _k}\Big)^{1/\mathfrak{p}}\Big\|_{p(\cdot )}.
\end{align*}
If $x\in \Omega _k$, then $\mathcal{M}(\chi _{O_k})(x)>1-\gamma$. Hence, $\chi _{\Omega_k}\leq\frac{1}{1-\gamma}\mathcal{M}(\chi _{O_k})$. By proceeding as above we obtain
\begin{align*}
\mathcal{A}(\{\lambda _j^k\},\{B(x_j^k,C_\eta r_j^k)\})&\leq C\Big\|\Big(\sum_{k}\Big(\mathcal{M}(2^{kr}\chi _{O_k})\Big)^{\mathfrak{p} /r}\Big)^{r/\mathfrak{p}}\Big\|_{p(\cdot )/r}^{1/r}\\
&\leq C\Big\|\Big(\sum_{k}2^{k\mathfrak{p}}\chi _{O_k}\Big)^{1/\mathfrak{p}}\Big\|_{p(\cdot )}.
\end{align*}

Since $\{x\in \Gamma :\mathcal{A}(f)(x)=+\infty \}= \varnothing$, we can write
\begin{align*}
\sum_{k\in \mathbb{Z}}2^{k\mathfrak{p}}\chi _{O_k}&=\sum_{k\in \mathbb{Z}}2^{k\mathfrak{p}}\sum_{m=k}^\infty \chi _{O_m\setminus O_{m+1}}=\sum_{m\in \mathbb{Z}}\chi_{O_m\setminus O_{m+1}}\sum_{k=-\infty }^m2^{k\mathfrak{p}}\\
&=\frac{1}{1-2^{-\mathfrak{p}}}\sum_{m\in \mathbb{Z}}2^{m\mathfrak{p}}\chi _{O_m\setminus O_{m+1}}\leq \frac{1}{1-2^{-\mathfrak{p}}}(\mathcal{A}(f))^{\mathfrak{p}}.
\end{align*}

We get
\begin{equation}\label{EST}
\mathcal{A}(\{\lambda _j^k\},\{B(x_j^k,C_\eta r_j^k)\})\leq C\|\mathcal{A}(f)\|_{p(\cdot )}.
\end{equation}

We are going to see that the equality (\ref{T2}) also holds in
$T_2^{p(\cdot )}(\Gamma )$, that is, the series converges to $f$ in
$T_2^{p(\cdot )}(\Gamma )$. Assume that $\{(k_\ell ,j_\ell )\}_{\ell
\in \mathbb{N}} $ represents an ordenation in the set $\{(k,j): k\in
\mathbb{Z}, \,\,j\in I_k\}$. Note that the series in (\ref{T2}) is
absolutely pointwisely convergent in $\Gamma\times \mathbb{N}_+$.
Then, we can write
\begin{equation}\label{T4}
f(y,t)=\sum_{\ell\in \mathbb{N}} \lambda _{j_\ell }^{k_\ell }a_{j_\ell }^{k_\ell }(y,t),\quad\;\;(y,t)\in \Gamma \times \mathbb{N}_+.
\end{equation}

Since $f\in T_2^{p(\cdot)}(\Gamma)$, according to (\ref{EST}), the series
$$
\sum_{\ell \in \mathbb{N}} \frac{|\lambda _{j_\ell }^{k_\ell }|^{\mathfrak{p} }}{\|\chi _{B(x_{j_\ell}^{k_\ell }, C_\eta r_{j_\ell }^{k_\ell })}\|_{p(\cdot )}^\mathfrak{p}}
\chi _{B(x_{j_\ell}^{k_\ell }, C_\eta r_{j_\ell }^{k_\ell })}
$$
converges in $L^{p(\cdot )/\mathfrak{p} }(\Gamma )$.

For every $\ell \in \mathbb{N}$, since $\supp a_{j_\ell }^{k_\ell}\subset T(B(x_{j_\ell }^{k_\ell }, C_\eta r_{j_\ell }^{k_\ell }))$, we have that
$$
\supp \mathcal{A}(a_{j_\ell }^{k_\ell })\subset B(x_{j_\ell }^{k_\ell }, C_\eta r_{j_\ell }^{k_\ell }).
$$
Then, by using (\ref{Tr}), Lemma \ref{LemaSum} leads to
\begin{align*}
 \Big\|\mathcal{A}\Big(\sum_{\ell =\alpha }^\beta\lambda _{j_\ell }^{k_\ell }a_{j_\ell }^{k_\ell}\Big)\Big\|_{p(\cdot )}
&\leq \Big\|\sum_{\ell =\alpha }^\beta |\lambda _{j_\ell }^{k_\ell }|\mathcal{A}(a_{j_\ell }^{k_\ell })\Big\|_{p(\cdot )}\\
&\leq \Big\|\Big(\sum_{\ell =\alpha }^\beta (|\lambda _{j_\ell }^{k_\ell }|\mathcal{A}(a_{j_\ell }^{k_\ell }))^\mathfrak{p} \Big)^{1/\mathfrak{p}}\Big\|_{p(\cdot )}\\
&\leq C\Big\|\sum_{\ell =\alpha }^\beta \frac{|\lambda _{j_\ell }^{k_\ell }|^{\mathfrak{p} }}{\|\chi _{B(x_{j_\ell}^{k_\ell }, C_\eta r_{j_\ell }^{k_\ell })}\|_{p(\cdot )}^\mathfrak{p}}
\chi _{B(x_{j_\ell}^{k_\ell }, C_\eta r_{j_\ell }^{k_\ell })}\Big\|_{p(\cdot )/\mathfrak{p}}^{1/\mathfrak{p}},
\end{align*}
provided that $\alpha ,\beta \in \mathbb{N}$, $\alpha <\beta$.\\

 Hence the series $\sum_{\ell=0}^\infty \lambda _{j_\ell }^{k_\ell }a_{j_\ell }^{k_\ell }$ converges in $T_2^{p(\cdot )}(\Gamma )$ to a certain $g\in T_2^{p(\cdot )}(\Gamma )$. Then (see, for instance, \cite[Lemma 2.4]{CrW}),
 $$
\sum_{x\in \Gamma} \Big(\mathcal{A}\Big(\sum_{\ell =1}^{\beta}\lambda_{j_{\ell}}^{k_{\ell}}a_{j_{\ell}}^{k_{\ell}}- g\Big)(x)\Big)^{p(x)}\mu(x)\to 0,\,\,\,\rm{as}\,\,\, \beta\to\infty,
 $$
 and it follows that, for every $x\in \Gamma$,
 $$
 \mathcal{A}\Big(\sum_{\ell =1}^{\beta}\lambda_{j_{\ell}}^{k_{\ell}}a_{j_{\ell}}^{k_{\ell}}- g\Big)(x)\to 0,\,\,\,\rm{as}\,\,\, \beta\to\infty.
 $$
 We deduce that
 $$\sum_{\ell =1}^{\beta}\lambda _{j_{\ell}}^{k_{\ell}}a_{j_{\ell}}^{k_{\ell}}(y,t)\to g(y,t),\,\,\,\rm{as} \,\,\,\beta\to\infty,$$
 for every $(y,t)\in \Gamma\times \mathbb{N}_+$. Hence $g=f$. Thus, we prove that the series in (\ref{T2}) converges to $f$ in $T_2^{p(\cdot )}(\Gamma )$.


Suppose now that $1<r<\infty$ and $f\in T_2^{p(\cdot)}(\Gamma)$.
Since $T_2^{p(\cdot)}(\Gamma)\cap T_2^{2}(\Gamma)$ is dense in
$T_2^{p(\cdot)}(\Gamma)$, for every $k\in \mathbb{N}_+$, there
exists $f_k\in T_2^{p(\cdot)}(\Gamma)\cap T_2^{2}(\Gamma)$ such that
$\|f_k-f\|_{T_2^{p(\cdot)}(\Gamma)}\le
2^{-k}\|f\|_{T_2^{p(\cdot)}(\Gamma)}$. Also, we take $f_0=0$. We
have that $f=\sum_{k=1}^\infty (f_k-f_{k-1})$, where the series
converges in $T_2^{p(\cdot)}(\Gamma)$.

According to the first part of this proof, for every $k\in \mathbb{N}_+$, we can write
$$
f_k-f_{k-1}=\sum_{j=1}^\infty \lambda_j^ka_j^k,
$$
where the series is absolutely pointwisely convergent and it
converges in $T_2^{p(\cdot)}(\Gamma)$ where, for each $j\in
\mathbb{N}_+$, $a_j^k$ is a $( T_2^{p(\cdot)},r)$-atom associated
with the ball $B_j^k$ and $\lambda_j^k>0$ satisfying that
$\sum_{j=1}^\infty
(\lambda_j^k)^\mathfrak{p}\chi_{B_j^k}\|\chi_{B_j^k}\|^{-\mathfrak{p}}_{p(\cdot)}$
converges in $L^{p(\cdot)/\mathfrak{p}}(\Gamma)$ and
$$
\Big\|\Big(\sum_{j=1}^\infty
(\lambda_j^k)^\mathfrak{p}\chi_{B_j^k}\|\chi_{B_j^k}\|^{-\mathfrak{p}}_{p(\cdot)}\Big)^{1/\mathfrak{p}}\Big\|_{p(\cdot)}\le
C\|f_k-f_{k-1}\|_{T_2^{p(\cdot)}(\Gamma)}.
$$
Here $C>0$ does not depend on $k$.

 Assume that $\{(k_\ell,j_\ell):\,\ell\in \mathbb{N}\}$ is an ordenation of $\mathbb{N}_+\times\mathbb{N}_+$. We are going to see that  $\sum_{\ell=0}^\infty \lambda_{j_\ell}^{k_\ell}a_{j_\ell}^{k_\ell}=f$ in the sense of convergence in $T_2^{p(\cdot)}(\Gamma)$. Indeed, let $\epsilon>0$. There exists $\delta\in \mathbb{N}_+$ such that $\|\sum_{k=1}^\delta (f_k-f_{k-1})-f\|_{T_2^{p(\cdot)}(\Gamma)}<\epsilon$. By Lemma \ref{LemaSum} we can write, for every $m\in \mathbb{N}_+$,
\begin{align*}
\Big\|\sum_{k=1}^\delta\sum_{j=m}^\infty \lambda_j^ka_j^k\Big\|_{T_2^{p(\cdot)}(\Gamma)}&\le \Big\|\sum_{k=1}^\delta\sum_{j=m}^\infty \lambda_j^k\mathcal{A}(a_j^k)\Big\|_{p(\cdot)}\\
&\le \Big\|\Big(\sum_{k=1}^\delta\sum_{j=m}^\infty ( \lambda_j^k\mathcal{A}(a_j^k))^{\mathfrak{p}}\Big)^{1/\mathfrak{p}}\Big\|_{p(\cdot)}\\
&\le C \Big\|\Big(\sum_{k=1}^\delta\sum_{j=m}^\infty \frac{|\lambda_j^k|^\mathfrak{p}}{\|\chi_{B_j^k}\|_{p(\cdot)}^\mathfrak{p}}\chi_{B_j^k}\Big)^{1/\mathfrak{p}}\Big\|_{p(\cdot)}\\
&\le C \Big\|\sum_{k=1}^\delta\sum_{j=m}^\infty \frac{|\lambda_j^k|^\mathfrak{p}}{\|\chi_{B_j^k}\|_{p(\cdot)}^\mathfrak{p}}\chi_{B_j^k}\Big\|_{p(\cdot)/\mathfrak{p}}^{1/\mathfrak{p}}\\
&\le C \Big(\sum_{k=1}^\delta\Big\|\sum_{j=m}^\infty
\frac{|\lambda_j^k|^\mathfrak{p}}{\|\chi_{B_j^k}\|_{p(\cdot)}^\mathfrak{p}}\chi_{B_j^k}\big\|_{p(\cdot)/\mathfrak{p}}\Big)^{1/\mathfrak{p}}.
\end{align*}

Since, for every $k\in \mathbb{N}_+$, the series $\sum_{j=1}^\infty
\frac{|\lambda_j^k|^\mathfrak{p}}{\|\chi_{B_j^k}\|_{p(\cdot)}^\mathfrak{p}}\chi_{B_j^k}$
converges in $L^{p(\cdot)/\mathfrak{p}}(\Gamma)$, there exists
$m_0\in \mathbb{N}_+$ such that
$$
\Big\|\sum_{j=m_0+1}^\infty
\frac{|\lambda_j^k|^\mathfrak{p}}{\|\chi_{B_j^k}\|_{p(\cdot)}^\mathfrak{p}}\chi_{B_j^k}\Big\|_{p(\cdot)/\mathfrak{p}}<\frac{\epsilon^\mathfrak{p}}{\delta},
$$
for every $k\in \mathbb{N}_+$, $k=1,...,\delta$.

We now choose $L_0\in \mathbb{N}$ such that
$$
\{(k,j)\in \mathbb{N}_+\times\mathbb{N}_+:k=1,...,\delta,\,j=1,...,m_0\}\subset \{(k_\ell,j_\ell):\ell=0,...,L_0\}.
$$

We have that, for every $L>L_0$,
\begin{align*}
&\Big\|\sum_{\ell=0}^L \lambda_{j_\ell}^{k_\ell}a_{j_\ell}^{k_\ell}-f\Big\|_{T_2^{p(\cdot)}(\Gamma)}\\
&\le \Big\|\sum_{\ell=0}^L
\lambda_{j_\ell}^{k_\ell}a_{j_\ell}^{k_\ell}-\sum_{k=1}^\delta
(f_k-f_{k-1})\Big\|_{T_2^{p(\cdot)}(\Gamma)}+
\Big\|\sum_{k=1}^\delta (f_k-f_{k-1})-f\Big\|_{T_2^{p(\cdot)}(\Gamma)}\\
&\le\Big\|\sum_{\ell=0}^L \lambda_{j_\ell}^{k_\ell}a_{j_\ell}^{k_\ell}-\sum_{k=1}^\delta\sum_{j=1}^\infty \lambda_j^ka_j^k\Big\|_{T_2^{p(\cdot)}(\Gamma)}+\epsilon\\
&\le\Big\|\sum_{k=1}^\delta\sum_{j=m_0+1}^\infty
\lambda_j^k\mathcal{A}(a_j^k)\Big\|_{p(\cdot)}+\epsilon\le
C\epsilon.
\end{align*}
Thus, we proved that $\sum_{\ell=0}^\infty
\lambda_{j_\ell}^{k_\ell}a_{j_\ell}^{k_\ell}=f$ in
$T_2^{p(\cdot)}(\Gamma)$.

$(ii)$ Suppose now that $1<q<\infty$ and that, for every $j\in \mathbb{N}$, $\lambda _j\in \mathbb{C}$ and $a_j$ is a $(T^{p(\cdot )}_2,q)$-atom associated with the ball $B_j$, satisfying that $\mathcal{A}(\{\lambda _j\},\{B_j\})<\infty.$
By proceeding as above we get that $\sum_{j\in \mathbb{N}}\lambda_ja_j$ converges in $T_2^{p(\cdot )}(\Gamma )$ and if $f=\sum_{j\in \mathbb{N}}\lambda _ja_j$ in $T_2^{p(\cdot )}(\Gamma )$, we have that
$$
\|f\|_{T_2^{p(\cdot )}(\Gamma )}\leq C\Big\|\Big(\sum_{j\in \mathbb{N}}\frac{|\lambda _j|^\mathfrak{p}}{\|\chi _{B_j}\|_{p(\cdot )}^\mathfrak{p}}\chi _{B_j}\Big)^{1/\mathfrak{p}}\Big\|_{p(\cdot )}=C\mathcal{A}(\{\lambda _j\},\{B_j\}).
$$

On the other hand, for every $j\in \mathbb{N}$, $|a_j|$ is $(T^{p(\cdot )}_2,q)$-atom associated with the ball $B_j$, and $\mathcal{A}(\{|\lambda _j|\},\{B_j\})<\infty$. Then, the series $\sum_{j\in \mathbb{N}}|\lambda_j||a_j|$ converges in $T_2^{p(\cdot )}(\Gamma )$. By taking in mind \cite[Lemma 2.4]{CrW} that also holds in our setting, we deduce that $\sum_{j\in \mathbb{N}}|\lambda_j||a_j(x,k)|<\infty$, for every $(x,k)\in \Gamma\times \mathbb{N}_+$.

$(iii)$ Let $0<r<\infty$. Assume that $f\in T_2^{p(\cdot )}(\Gamma )\cap T_2^r(\Gamma )\cap T_2^2(\Gamma)$. We are going to see that the equality in (\ref{T2}) also holds in $T_2^r(\Gamma )$. As in (\ref{T4}) we write
$$
f(y,t)=\sum_{\ell \in \mathbb{N}}\lambda _{j_\ell }^{k_\ell }a_{j\ell
}^{k_\ell }(y,t),\,\,\;(y,t)\in \Gamma \times \mathbb{N}_+,
$$
where $\{(k_\ell ,j_\ell ):\ell \in \mathbb{N}\}$ represents an ordenation in the set $\{(k,j):k\in \mathbb{Z}, j\in I_k\}$.\\

Suppose firstly that $0<r\leq 1$. We define, for every $k\in
\mathbb{Z}$ and $j\in I_k$,
$$
\widetilde{\lambda _j^k}=2^k\mu (B(x_j^k, C_\eta r_j^k))^{1/r}.
$$
and
$$
\widetilde{a_j^k}=\frac{1}{\widetilde{\lambda _j^k}}f(y,t)\varphi
_j^k(y)\Big(\chi _{T_{1-\eta}(\Omega_k)}-\chi
_{T_{1-\eta}(\Omega_{k+1})}\Big)(y,t).
$$

It is clear that $\widetilde{a_j^k}\widetilde{\lambda
_j^k}=a_j^k\lambda _j^k$, $k\in \mathbb{Z}$, $j\in I_k$. By
proceeding as in the proof of (\ref{Tr}) we can see that there
exists $C>0$ such that for every $k\in \mathbb{Z}$ and $j\in I_k$,
$C\widetilde{a_j^k}$ is a $(T_2^q(\Gamma ),r)$-atom associated with
the ball $B(x_j^k,C_\eta r_j^k)$. Also, we have that
\begin{align*}
\sum_{\ell =0}^\infty |\widetilde{\lambda_{j_\ell}^{k_\ell}}|^r&=\sum_{k\in \mathbb{Z}}\sum_{j\in I_k}|\widetilde{\lambda_j^k}|^r\\
&=\sum_{k\in \mathbb{Z}}\sum_{j\in I_k}2^{kr}\mu (B(x_j^k,C_\eta r_j^k))\\
&\le C\sum_{k\in \mathbb{Z}}\sum_{j\in I_k}2^{kr}\mu (B(x_j^k, r_j^k/4))\\
&\leq C \sum_{k\in \mathbb{Z}}2^{kr}\mu(\Omega _k)\\
&\leq C \sum_{k\in \mathbb{Z}}2^{kr}\mu(O_k)\\
&= C \sum_{x\in O_k}\sum_{k\in \mathbb{Z}}2^{kr}\chi_{O_k}(x)\mu(x)\\
&\le C \sum_{x\in O_k}(\mathcal{A}(f)(x))^r\mu(x)\\
&= C\|\mathcal{A}(f)\|_r^r=C\|f\|_{T_2^q(\Gamma )}^r.
\end{align*}
From Lemma \ref{LemaSum} being $p(x)=r$, $x\in \Gamma$, we deduce that
\begin{align*}
\Big\|\mathcal{A}\Big(\sum_{\ell =\alpha }^\beta \lambda_{j_\ell}^{k_\ell }a_{j_\ell }^{k_\ell }\Big)\Big\|_r&\leq \Big\|\sum_{\ell =\alpha }^\beta|\widetilde{\lambda _{j_\ell }^{k_\ell }}|\mathcal{A}(\widetilde{a_{j_\ell }^{k_\ell }})\Big\|_r\\
&\leq \Big\|\Big(\sum_{\ell =\alpha }^\beta (|\widetilde{\lambda _{j_\ell }^{k_\ell}}|\mathcal{A}(\widetilde{a_{j_\ell }^{k_\ell }}))^r\Big)^{1/r}\Big\|_r\\
&\leq C\Big\|\Big(\sum_{\ell =\alpha }^\beta \frac{|\widetilde{\lambda _{j_\ell }^{k_\ell}}|^r}{\mu (B(x_{j_\ell }^{k_\ell }, C_\eta r_{j_\ell }^{k_\ell }))}\chi _{B(x_{j_\ell }^{k_\ell }, C_\eta r_{j_\ell }^{k_\ell })}\Big)^{1/r}\Big\|_r\\
&=C\Big\|\sum_{\ell =\alpha }^\beta \frac{|\widetilde{\lambda _{j_\ell }^{k_\ell}}|^r}{\mu (B(x_{j_\ell }^{k_\ell }, C_\eta r_{j_\ell }^{k_\ell }))}\chi _{B(x_{j_\ell }^{k_\ell }, C_\eta r_{j_\ell }^{k_\ell })}\Big\|^{1/r}_1\\
&\leq C\Big(\sum_{\ell =\alpha }^\beta \frac{|\widetilde{\lambda _{j_\ell }^{k_\ell}}|^r}{\mu (B(x_{j_\ell }^{k_\ell }, C_\eta r_{j_\ell }^{k_\ell }))}\|\chi _{B(x_{j_\ell }^{k_\ell }, C_eta r_{j_\ell }^{k_\ell })}\|_1\Big)^{1/r}\\
&=C\Big(\sum_{\ell =\alpha }^\beta |\widetilde{\lambda _{j_\ell }^{k_\ell }}|^r\Big)^{1/r},\quad \alpha ,\beta \in \mathbb{N},\;\beta >\alpha .
\end{align*}
Hence, the series $\sum_{\ell =0}^\infty \lambda _{j_\ell }^{k_\ell }a_{j_\ell }^{k_\ell }$ converges in $T_2^r(\Gamma )$ and $f=\sum_{\ell =0}^\infty \lambda_{j_\ell }^{k_\ell }a_{j_\ell }^{k_\ell }$ in $T_2^r(\Gamma )$.\\

Suppose now $r\in (1,\infty )$. We have that
$$
F_k=\{x\in \Gamma : \mathcal{A}(f)(x)\leq 2^k\},\quad k\in \mathbb{Z}.
$$
Then, $F_{k-1}\subset F_k$, $k\in \mathbb{Z}$, and $\cap_{k\in \mathbb{Z}}F_k=\{x\in \Gamma :\mathcal{A}(f)(x)=0\}$. Hence, since $f\in T_2^r(\Gamma )$, the monotone convergence theorem leads to
\begin{equation}\label{T4'}
\lim_{k\rightarrow -\infty }\sum_{x\in F_k}(\mathcal{A}(f)(x))^r\mu (x)=0.
\end{equation}

On the other hand, we recall that, for every $k\in \mathbb{Z}$, $\Omega_k =((F_k)^*_\gamma )^c$, and, by (\ref{A1}),
$$
\mu(\Omega_k )\leq C\mu (O_k), \quad k\in \mathbb{Z}.
$$
Also, $O_{k+1}\subset O_k$, $k\in \mathbb{Z}$, and $\cap _{k\in
\mathbb{Z}}O_k=\{x\in \Gamma : \mathcal{A}(f)(x)=+\infty
\}=\emptyset$. Hence, $\lim_{k\rightarrow +\infty }\mu (O_k)=0$, and
then $\lim_{k\rightarrow +\infty }\mu (\Omega_k )=0$. Since $f\in
T_2^r(\Gamma )$, the dominated convergence theorem implies that
\begin{equation}\label{T5}
\lim_{k\rightarrow +\infty }\sum_{x\in \Omega_k}(\mathcal{A}(f)(x))^r\mu (x)=0.
\end{equation}

Let $\varepsilon >0$. By (\ref{T4'}) and (\ref{T5}), there exists $m_0\in \mathbb{N}$ such that
$$
\Big(\sum_{x\in F_{-m_0}}(\mathcal{A}(f)(x))^r\mu (x)\Big)^{1/r}<
\varepsilon,
$$
and
$$
\Big(\sum_{x\in \Omega_{m_0} }(\mathcal{A}(f)(x))^r\mu (x)\Big)^{1/r}<\varepsilon.
$$

For $k\in \mathbb{Z}$ and $j\in I_k$, we define $A_{k,j}=\supp
(a_k^j)$. Then,
$$
A_{k,j}\subset T_1(B(x_j^k, C_\eta r_j^k))\cap (T_{1-\eta }(\Omega_k)\setminus T_{1-\eta }(\Omega_{k+1})),\quad k\in \mathbb{Z},\;j\in I_k.
$$
For every $k\in \mathbb{Z}$, by taking into account the Whitney
covering $\{B(x_j^k,r_j^k)\}_{j\in I_k}$ it follows that
$$
\sum_{k\in \mathbb{Z}}\sum_{j\in I_k}|f\chi _{A_{k,j}}|\varphi
_j^k\leq C|f|.
$$
We define
$$
H_0(x,t)=\sum_{k\in \mathbb{Z}, k<-m_0,\, j\in I_k}|f(x,t)|\varphi
_j^k(x)\chi _{A_{k,j}}(x,t),\quad (x,t)\in \Gamma \times
\mathbb{N}_+.
$$
We have that, for $k\in \mathbb{Z}$, $k<-m_0$ and $j\in I_k$,
$$
A_{k,j}\subset [T_{1-\eta}(\Omega_{k+1})]^c= \bigcup_{x\in
(F_{k+1})^*_\gamma }\Upsilon _{1-\eta}(x)\subset \bigcup_{x\in
(F_{-m_0})^*_\gamma }\Upsilon _{1-\eta}(x).
$$
By using \cite[Lemma 2.1]{Ru} we get, for every $h\in
T_2^{r'}(\Gamma )$ such that $\|h\|_{T_2^{r'}(\Gamma )}\leq 1$,
\begin{align*}
& \Big|\sum_{(x,t)\in \Gamma \times \mathbb{N}_+}H_0(x,t)h(x,t)\frac{\mu(x)}{t}\Big| \\
& \qquad \leq C\sum_{x\in F_{-m_0}}\sum_{(y,t)\in \Upsilon (x)}\sum_{k\in \mathbb{Z},k<-m_0}\sum_{j\in I_k}|f(y,t)|\varphi _j^k(y)\chi _{A_{k,j}}(y,t)|h(y,t)|\frac{\mu (y)}{t\mu(B(y,t))}\mu (x)\\
& \qquad \leq C\sum_{x\in F_{-m_0}}\sum_{(y,t)\in \Upsilon (x)}|f(y,t)||h(y,t)|\frac{\mu (y)}{t\mu(B(y,t))}\mu (x)\\
& \qquad \leq C\sum_{x\in F_{-m_0}}\mathcal{A}(f)(x)\mathcal{A}(h)(x)\mu (x)\\
&\qquad \leq C\Big(\sum_{x\in F_{-m_0}}(\mathcal{A}(f)(x))^r\mu (x)\Big)^{1/r}\|h\|_{T_2^{r'}(\Gamma )}<C\varepsilon.
\end{align*}
We obtain that
\begin{equation}\label{T6}
\|H_0\|_{T_2^r(\Gamma )}<C\varepsilon.
\end{equation}
We define
$$
\mathcal{H}_0(x,t)=\sum_{k\in \mathbb{Z}, k>m_0, j\in
I_k}|f(x,t)|\varphi _j^k(x)\chi _{A_{k,j}}(x,t),\quad (x,t)\in
\Gamma \times \mathbb{N}_+.
$$
Note that
$$
A_{k,j}\subset T_{1-\eta}(\Omega_k)\subset T_{1-\eta}(\Omega_{m_0}),\quad k\in \mathbb{Z}, \;k\geq m_0,\;j\in I_k.
$$
Then, $\supp \mathcal{A}(\mathcal{H}_0)\subset \Omega_{m_0} $. We can write
\begin{equation}\label{T7}
\|\mathcal{H}_0\|_{T_2^r(\Gamma)}^r=\sum_{x\in \Gamma}
(\mathcal{A}(\mathcal{H}_0)(x))^r\mu (x)\leq C\sum_{x\in
\Omega_{m_0}}(\mathcal{A}(f)(x))^r\mu (x)<C\varepsilon^r.
\end{equation}

By (\ref{T6}) and (\ref{T7}) we obtain, for every $m_1\in \mathbb{N}$,
\begin{align*}
& \Big\|\sum_{\stackrel{k\in \mathbb{Z}, j\in I_k}{|k|+j>m_0+m_1}}|f(y,t)|\varphi _j^k(y)\chi _{A_{k,j}}(y,t)\Big\|_{T_2^r(\Gamma )}
\leq \Big\|\sum_{\stackrel{k\in \mathbb{Z}, j\in I_k}{k>m_0, j>m_0+m_1-k}}|f(y,t)|\varphi _j^k(y)\chi _{A_{k,j}}(y,t)\Big\|_{T_2^r(\Gamma )}\\
&\qquad \qquad +\Big\|\sum_{\stackrel{k\in \mathbb{Z}, j\in I_k}{k<-m_0, j>m_0+m_1+k}}|f(y,t)|\varphi _j^k(y)\chi _{A_{k,j}}(y,t)\Big\|_{T_2^r(\Gamma )}\\
& \qquad \qquad +\Big\|\sum_{\stackrel{k\in \mathbb{Z}, j\in I_k}{-m_0\leq k\leq m_0, |k|+j>m_0+m_1}}|f(y,t)|\varphi _j^k(y)\chi _{A_{k,j}}(y,t)\Big\|_{T_2^r(\Gamma )}\\
&\qquad \leq C\varepsilon+\Big\|\sum_{\stackrel{k\in \mathbb{Z}, j\in I_k}{-m_0\leq k\leq m_0, j>m_1}}|f(y,t)|\varphi _j^k(y)\chi _{A_{k,j}}(y,t)\Big\|_{T_2^r(\Gamma )}.
\end{align*}
Let $m_1\in \mathbb{N}$. We define
$$
H_{0,m_1}(y,t)=\sum_{\stackrel{k\in \mathbb{Z}, j\in I_k}{-m_0\leq
k\leq m_0, j>m_1}}|f(y,t)|\varphi _j^k(y)\chi _{A_{k,j}}(y,t),\quad
(y,t)\in \Gamma\times\mathbb{N}_+.
$$
Since $\supp A_{k,j}\subset T_1(B(x_j^k, C_\eta r_j^k))$, we have that
$$
\supp \mathcal{A}(H_{0,m_1})\subset \mathfrak{W}_{m_1},
$$
where
$$
\mathfrak{W}_m=\bigcup_{\stackrel{k\in \mathbb{Z}, j\in
I_k}{-m_0\leq k\leq m_0, j>m}}B(x_j^k, C_\eta r_j^k),\,\,\,m\in \mathbb{N},
$$
and we get
\begin{align*}
\|H_{0,m_1}\|_{T_2^r(\Gamma )}^r&=\sum_{x\in\Gamma}\mathcal{A}\Big(\sum_{\stackrel{k\in \mathbb{Z}, j\in I_k}{-m_0\leq k\leq m_0, j>m_1}}|f(y,t)|\varphi _j^k(y)\chi _{A_{k,j}}(y,t)\Big)^r(x)\mu (x)\\
&\leq C\sum_{x\in\mathfrak{W}_{m_1}}[\mathcal{A}(f)(x)]^q\mu (x).
\end{align*}
By using (\ref{A1}), the doubling property of $\mu$ and the Whitney covering properties we obtain, for each $k\in \mathbb{Z}$,
$$
\sum_{j\in I_k}\mu (B(x_j^k,C_\eta r_j^k))\leq C\sum_{j\in I_k}\mu (B(x_j^k,r_j^k))\leq C\mu (\Omega _k)\leq C\mu (O_k)<\infty.
$$
Then, we have that
$$
\mu (\mathfrak{W}_{\ell}) \leq C\sum_{\stackrel{k\in
\mathbb{Z}}{-m_0\leq k\leq m_0}}\sum_{\stackrel{j\in
I_k}{j>\ell}}\mu (B(x_j^k,r_j^k))\longrightarrow 0,\quad \mbox{ as
}\;\ell \rightarrow +\infty,
$$
and since $f\in T_2^r(\Gamma )$ it follows that
$$
\lim_{\ell \rightarrow +\infty
}\sum_{x\in\mathfrak{W}_\ell}[\mathcal{A}(f)(x)]^r\mu (x)=0.
$$
Then, there exists $m_1\in \mathbb{N}$ such that
$$
\|H_{0,m_1}\|_{T_2^r(\Gamma )}<C\varepsilon.
$$
We conclude that
$$
\Big\|\sum_{\stackrel{k\in \mathbb{Z}, j\in I_k}{|k|+j>m_0+m_1}}|f(y,t)|\varphi _j^k(y)\chi _{A_{k,j}}(y,t)\Big\|_{T_2^r(\Gamma )}<C\varepsilon.
$$
There exists $\ell _0\in \mathbb{N}$ such that $|k_\ell|+j_\ell >m_0+m_1$ provided that $\ell >\ell_0$. We obtain that
$$
\Big\|\sum_{\ell >\ell _0}\lambda_{j_\ell }^{k_\ell }a_{j_\ell }^{k_\ell }\Big\|_{T_2^r(\Gamma )}\leq \Big\|\sum_{\stackrel{k\in \mathbb{Z},j\in I_k}{|k|+j>m_0+m_1}}|f(y,t)|\varphi _j^k(y)\chi _{A_{k,j}}(y,t)\Big\|_{T_2^r(\Gamma )}<C\varepsilon.
$$
Thus we establish that the series $\sum_{\ell\in \mathbb{N}}\lambda_{j_\ell }^{k_\ell }a_{j_\ell }^{k_\ell }$ converges in $T_2^r(\Gamma )$. Also, we have that $f=\sum_{\ell \in \mathbb{N}}\lambda_{j_\ell }^{k_\ell }a_{j_\ell }^{k_\ell }$ in $T_2^r(\Gamma )$.

We define $T_c(\Gamma )$ the space of complex functions $f$ defined
on $\Gamma \times \mathbb{N}_+$ such that $\supp f$ is finite. Note
that in $\Gamma \times \mathbb{N}_+$ the compact sets are the finite
sets. $T_c(\Gamma )$ is a dense subspace of $T_2^{p(\cdot )}(\Gamma
)$ and in $T_2^r(\Gamma)$. Suppose now that $f\in T_2^{p(\cdot)}(\Gamma )\cap T_2^q(\Gamma
)$. There exists a sequence $\{f_k\}_{k\in \mathbb{N}}\subset
T_c(\Gamma)$ such that $f_0=0$, and
$\|f_k-f\|_{T_2^{p(\cdot)}(\Gamma)}\le
2^{-k}\|f\|_{T_2^{p(\cdot)}(\Gamma)}$ and
$\|f_k-f\|_{T_2^{r}(\Gamma)}\le 2^{-k}\|f\|_{T_2^r(\Gamma)}$. By the
above arguments, for every $k\in \mathbb{N}_+$, we can
write
$$
f_k-f_{k-1}=\sum_{j=1}^\infty \lambda_j^ka_j^k,
$$
where the series is absolutely pointwisely convergent, and also in
both $T_2^{p(\cdot)}(\Gamma)$ and in $T^r_2(\Gamma)$, and, for each
$j\in \mathbb{N}_+$, $a_j^k$ is a $ (T_2^{p(\cdot)},r)$-atom
associated with the ball $B_j^k$ and $\lambda_j^k>0$ satisfying that
$\sum_{j=1}^\infty
(\lambda_j^k)^\mathfrak{p}\chi_{B_j^k}\|\chi_{B_j^k}\|^{-\mathfrak{p}}_{p(\cdot)}$
converges in $L^{p(\cdot)/\mathfrak{p}}(\Gamma)$ and
$$
\Big\|\Big(\sum_{j=1}^\infty
(\lambda_j^k)^\mathfrak{p}\chi_{B_j^k}\|\chi_{B_j^k}\|^{-\mathfrak{p}}_{p(\cdot)}\Big)^{1/\mathfrak{p}}\Big\|_{p(\cdot)}\le
C\|f_k-f_{k-1}\|_{T_2^{p(\cdot)}(\Gamma)}.
$$
Here $C>0$ does not depend on $k$.

By proceeding as in the end of part (i)  we can prove
that if $\{(k_\ell,j_\ell):\,\ell\in \mathbb{N}\}$ is an ordenation
of $\mathbb{Z}\times\mathbb{N}_+$, then $\sum_{\ell=0}^\infty
\lambda_{j_\ell}^{k_\ell}a_{j_\ell}^{k_\ell}=f$ in the sense of
convergence in $T_2^{p(\cdot)}(\Gamma)$ and in $T^q_2(\Gamma)$.

\end{proof}

\section{Variable exponents Hardy spaces on graphs}
\label{Sect:H}

We recall the definition of our Hardy spaces. We define, for every
$f\in L^2(\Gamma )$,
$$
S_L(f)(x)=\Big(\sum_{k=1}^\infty \sum_{d(x,y)<k}\frac{|k(I-P)P^{[k/2]}(f)(y)|^2}{k\mu(B(y,k))}\mu (y)\Big)^{1/2},\quad x\in \Gamma .
$$
The Hardy space $H^{p(\cdot )}_L(\Gamma)$ is defined as the
completion of
$$
\mathbb{H}^{p(\cdot )}_L(\Gamma)=\{f\in L^2(\Gamma ): S_L(f)\in
L^{p(\cdot )}(\Gamma )\},
$$
with respect to the quasinorm $\|\cdot \|_{H^{p(\cdot )}_L(\Gamma
)}$ given by
$$
\|f\|_{H^{p(\cdot )}_L(\Gamma )}=\|S_L(f)\|_{p(\cdot )},\quad f\in
\mathbb{H}^{p(\cdot )}_L(\Gamma).
$$

The following result was established in \cite[Theorem 3.7]{BD} (see
also \cite[Proposition 2.1]{Fe1}).
\begin{Prop}\label{Representation}
Let $M\in \mathbb{N}_+$. Then, for every $f\in L^2(\Gamma )$ we have
\begin{equation}\label{B1}
f=\sum_{k=0}^\infty c_{k,M}(I-P)^MP^kf,
\end{equation}
on $L^2(\Gamma )$, where the coefficients $c_{k,N}$, $N\in \mathbb{N}_+$, are defined as follows:

(i) $c_{k,1}=1$, $k\in \mathbb{N}$,

(ii) $c_{k,N+1}=\sum_{j=0}^kc_{j,N}$, $k\in \mathbb{N}$.
\end{Prop}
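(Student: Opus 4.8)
The plan is to reduce the operator identity to a scalar identity through the spectral theorem and then to control the remainder, the algebraic heart being a finite telescoping identity for the partial sums. First I would record the spectral facts. The symmetry $p(x,y)\mu(x)=p(y,x)\mu(y)$ makes $P$ self-adjoint on $L^2(\Gamma)$, and since $P$ is a Markov operator it is an $L^2$-contraction, so $\sigma(P)\subseteq[-1,1]$ and $P=\int_{[-1,1]}\lambda\,dE(\lambda)$ for a projection-valued measure $E$. Two refinements are decisive. Property $\Delta(\alpha)$ forces the spectrum away from $-1$: setting $Q=(1-\alpha)^{-1}(P-\alpha I)$, the kernel $p(x,y)-\alpha\delta_{xy}$ is nonnegative (because $p(x,x)=\nu(x,x)/\mu(x)\ge\alpha$) with constant row sums $1-\alpha$, so $Q$ is again a self-adjoint Markov contraction with $\sigma(Q)\subseteq[-1,1]$, whence $\sigma(P)=(1-\alpha)\sigma(Q)+\alpha\subseteq[2\alpha-1,1]$ with $2\alpha-1>-1$. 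Moreover $1$ is not an eigenvalue: if $Pf=f$ with $f\in L^2(\Gamma)$, then the Dirichlet form $\langle(I-P)f,f\rangle=\tfrac12\sum_{x,y}\nu(x,y)|f(x)-f(y)|^2$ vanishes, so $f$ is constant on the connected graph $\Gamma$, and $\mu(\Gamma)=\infty$ gives $f=0$; hence $E(\{1\})=0$.

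Next I would isolate the combinatorial core. With $g_{N,M}(\lambda)=\sum_{k=0}^N c_{k,M}\lambda^k$, swapping the order of summation and using $\sum_{j=0}^N c_{j,M}=c_{N,M+1}$ (definition (ii)) yields $(1-\lambda)g_{N,M+1}(\lambda)=g_{N,M}(\lambda)-c_{N,M+1}\lambda^{N+1}$. Multiplying by $(1-\lambda)^M$ and writing $r_N^{(M)}(\lambda)=1-(1-\lambda)^Mg_{N,M}(\lambda)$ gives the recursion $r_N^{(M+1)}=r_N^{(M)}+c_{N,M+1}(1-\lambda)^M\lambda^{N+1}$. Since $r_N^{(1)}(\lambda)=1-(1-\lambda)\sum_{k=0}^N\lambda^k=\lambda^{N+1}$, induction on $M$ produces the exact finite identity
$$
1-(1-\lambda)^M\sum_{k=0}^N c_{k,M}\lambda^k=\lambda^{N+1}\Big(1+\sum_{m=1}^{M-1}c_{N,m+1}(1-\lambda)^m\Big),\qquad\lambda\in[-1,1].
$$

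Evaluating this polynomial identity at $P$ via the functional calculus, and recalling $c_{N,1}=1$, gives
$$
f-\sum_{k=0}^N c_{k,M}(I-P)^MP^kf=P^{N+1}f+\sum_{m=1}^{M-1}c_{N,m+1}\,P^{N+1}(I-P)^mf,
$$
so it remains to show each term tends to $0$ in $L^2(\Gamma)$. By the spectral theorem, for $m\ge1$,
$$
\big\|c_{N,m+1}P^{N+1}(I-P)^mf\big\|_2^2=\int_{[2\alpha-1,1]}c_{N,m+1}^2\,\lambda^{2N+2}(1-\lambda)^{2m}\,d\|E(\lambda)f\|^2.
$$
The integrand vanishes pointwise as $N\to\infty$ for $\lambda\in[2\alpha-1,1)$ and at $\lambda=1$ (the factor $(1-\lambda)^{2m}$), while being uniformly bounded: on $[0,1]$ one has $\sup_\lambda\lambda^{2N+2}(1-\lambda)^{2m}\sim C_mN^{-2m}$, which is beaten by $c_{N,m+1}^2\sim (N^m/m!)^2$; on $[2\alpha-1,0)$ one has $|\lambda|\le1-2\alpha<1$, so the factor $\lambda^{2N+2}$ decays geometrically and absorbs the polynomial growth of $c_{N,m+1}^2$. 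Dominated convergence then gives the limit $0$. For the $m=0$ term, $\|P^{N+1}f\|_2^2=\int_{[2\alpha-1,1]}\lambda^{2N+2}d\|E(\lambda)f\|^2\to\|E(\{1\})f\|^2=0$.

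The main obstacle is precisely this last convergence step, where the coefficients $c_{N,m+1}=\binom{N+m}{m}$ grow like $N^m$ and must be defeated by the decay of $P^{N+1}(I-P)^m$. On the positive part of the spectrum this is a delicate balance controlled by the Beta-type bound $\lambda^{2N}(1-\lambda)^{2m}\lesssim N^{-2m}$, while on the negative part it is exactly condition $\Delta(\alpha)$ that saves the argument, by providing the spectral gap $\sigma(P)\subseteq[2\alpha-1,1]$ that rules out mass near $-1$ (where $|\lambda|^{2N}$ would not decay); hypotheses (c) and $\mu(\Gamma)=\infty$ thus enter in an essential way.
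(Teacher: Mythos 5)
Your proof is correct, and you have isolated exactly the right ingredients. One important point of comparison: the paper does not prove Proposition \ref{Representation} at all; it simply cites \cite[Theorem 3.7]{BD} (see also \cite[Proposition 2.1]{Fe1}), and those proofs are spectral-theoretic in the same spirit as yours: self-adjointness of $P$, the spectral inclusion $\sigma(P)\subseteq[2\alpha-1,1]$ forced by $\Delta(\alpha)$, the absence of an eigenvalue at $1$ (connectedness plus $\mu(\Gamma)=\infty$), and dominated convergence for the spectral integrals. Where your write-up is more laborious than necessary is the remainder estimate: the telescoped identity with the competing growth/decay $c_{N,m+1}^2\sim N^{2m}$ versus $\sup_{\lambda\in[0,1]}\lambda^{2N+2}(1-\lambda)^{2m}\lesssim N^{-2m}$ works, but one can avoid it entirely by noting that for $\lambda\in[0,1)$ the partial sums $s_N(\lambda)=\sum_{k=0}^{N}c_{k,M}(1-\lambda)^{M}\lambda^{k}$ have nonnegative terms and limit $1$ (since $\sum_{k\ge 0}c_{k,M}\lambda^{k}=(1-\lambda)^{-M}$), hence $0\le s_N(\lambda)\le 1$, while on $[2\alpha-1,0)$ one has $|s_N(\lambda)|\le (1-\lambda)^M(1-|\lambda|)^{-M}\le 2^M\bigl(1-|2\alpha-1|\bigr)^{-M}$; then $\bigl\|f-\sum_{k=0}^{N}c_{k,M}(I-P)^MP^kf\bigr\|_2^2=\int_{\sigma(P)}|1-s_N(\lambda)|^2\,d\|E(\lambda)f\|^2\to\|E(\{1\})f\|^2=0$ directly by dominated convergence. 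Both routes hinge on the two facts you correctly flagged as essential, namely the spectral gap at $-1$ coming from $\Delta(\alpha)$ and the triviality of the eigenspace at $1$ coming from connectedness and $\mu(\Gamma)=\infty$.
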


As above we denote by $T_c(\Gamma )$ the space of complex functions
$f$ defined on $\Gamma \times \mathbb{N}_+$ such that $\supp f$ is
finite.

Motivated by the representation (\ref{B1}) and the definition of
Hardy spaces we introduce the operator $\Pi _M$, $M\in
\mathbb{N}_+$, as follows
$$
\Pi _M(f)(x)=\sum_{k=0}^\infty \frac{c_{k,M+1}}{k+1}(I-P)^MP^{[k/2]}(f(\cdot , k+1))(x),\quad x\in \Gamma ,
$$
for every $f\in T_c(\Gamma )$. These operators were considered in
\cite{BD}. Next we generalize \cite[Proposition 3.14]{BD}.
\begin{Prop}\label{AD}
Let $M\in \mathbb{N}_+$. The operator $\Pi _M$ can be extended from
$T_c(\Gamma )$ to:

(i) $T_2^q(\Gamma )$ as a bounded operator from $T_2^q(\Gamma )$
into $L^q(\Gamma )$, for every $1<q<\infty $. Moreover, if we also
continue denoting $\Pi_M$ to the extension operator, for every $f\in
T_2^q(\Gamma )$, with $1<q<\infty$, we have that
\begin{equation}\label{XXX1}
\Pi _M(f)(x)=\sum_{k=0}^\infty
\frac{c_{k,M+1}}{k+1}(I-P)^MP^{[k/2]}(f(\cdot , k+1))(x),\quad
x\in \Gamma .
\end{equation}

(ii) $T_2^{p(\cdot )}(\Gamma )$ as a bounded operator from
$T_2^{p(\cdot )}(\Gamma )$ into $H^{p(\cdot )}_L(\Gamma )$, provided
that $p\in \mathcal{P}^{\log}(\Gamma)$ and $M>2D/p_-$.

\end{Prop}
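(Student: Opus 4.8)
The plan is to reduce every estimate for $\Pi_M$ to an $L^s$-bound for an auxiliary Littlewood--Paley square function together with the tent-space duality $(T_2^s)'=T_2^{s'}$, treating (i) and (ii) by the same mechanism. Throughout I would use that $P$ is self-adjoint on $L^2(\Gamma)$ (immediate from property (ii) of the Markov kernel), hence so is $L^jP^{[k/2]}$ for all $j,k$. For $j\in\mathbb{N}$ and $g:\Gamma\to\mathbb{C}$ I set
$$
\mathfrak{g}_j(g)(x)=\Big(\sum_{n=1}^\infty\sum_{d(x,y)<n}\frac{|n^jL^jP^{[(n-1)/2]}g(y)|^2}{n\,\mu(B(x,n))}\mu(y)\Big)^{1/2},\qquad x\in\Gamma,
$$
and recall that, exactly as in \cite{BD}, the Gaussian bound (\ref{compuesto}) makes $\mathfrak{g}_j$ bounded on $L^s(\Gamma)$ for every $1<s<\infty$ (the case $j=1$, $s=2$ being the $L^2$-boundedness of $S_L$). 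The only arithmetic input I need about the coefficients is that $c_{k,M+1}$ is a polynomial in $k$ of degree $M$, so $c_{n-1,M+1}\le Cn^M$.

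For part (i), I would fix $1<q<\infty$, $g\in L^{q'}(\Gamma)$ and $f\in T_c(\Gamma)$. Using the self-adjointness of $L^MP^{[k/2]}$ and writing $n=k+1$, the pairing becomes the tent-space pairing
$$
\langle\Pi_M f,g\rangle=\sum_{(y,n)\in\Gamma\times\mathbb{N}_+}f(y,n)\,\overline{H(y,n)}\,\frac{\mu(y)}{n},\qquad H(y,n)=c_{n-1,M+1}\,L^MP^{[(n-1)/2]}g(y).
$$
Since $c_{n-1,M+1}\le Cn^M$ matches the balanced normalization of $\mathfrak{g}_M$, one gets $\mathcal{A}(H)\le C\mathfrak{g}_M(g)$, hence $\|H\|_{T_2^{q'}}\le C\|g\|_{q'}$, and tent-space Hölder yields $|\langle\Pi_M f,g\rangle|\le C\|f\|_{T_2^q}\|g\|_{q'}$. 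By density of $T_c(\Gamma)$ this extends $\Pi_M$ boundedly to $T_2^q(\Gamma)\to L^q(\Gamma)$. Applying the same estimate to $f\chi_{\{N<n\le N'\}}$ shows that the partial sums of the series in (\ref{XXX1}) are Cauchy in $L^q(\Gamma)$, and their limit is a bounded operator agreeing with $\Pi_M$ on $T_c(\Gamma)$; by uniqueness of the extension it equals $\Pi_M f$, proving (\ref{XXX1}).

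For part (ii), assume $p\in\mathcal{P}^{\log}(\Gamma)$ and $M>2D/p_-$, and fix $q\ge2$ with $q>p_+$. The core step is to show that there is $C>0$ for which $C^{-1}\Pi_M(a)$ is a $(q,p(\cdot),M)$-atom whenever $a$ is a $(T_2^{p(\cdot)},q)$-atom associated with $B=B(x_B,r_B)$. Writing $\Pi_M(a)=L^Mb$ with $b=\sum_k\frac{c_{k,M+1}}{k+1}P^{[k/2]}(a(\cdot,k+1))$, the finite range of the kernels of $L^jP^{[k/2]}$ together with $\supp a\subset T(B)$ (which forces $n\le r_B$) gives $\supp L^jb\subset B(x_B,Cr_B)$ for $j=0,\dots,M$. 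For the size conditions I would dualize as in (i): pairing $L^jb$ with $g\in L^{q'}(\Gamma)$ produces $H_j(y,n)=c_{n-1,M+1}L^jP^{[(n-1)/2]}g(y)$, and since only $(y,n)\in T(B)$ contribute one has $n\le r_B$, so $c_{n-1,M+1}\le Cn^M\le Cr_B^{M-j}\,n^j$; therefore $\mathcal{A}(H_j\chi_{T(B)})\le Cr_B^{M-j}\mathfrak{g}_j(g)$, and combining with the atom bound $\|a\|_{T_2^q}\le\mu(B)^{1/q}\|\chi_B\|_{p(\cdot)}^{-1}$ gives $\|L^jb\|_q\le Cr_B^{M-j}\mu(B)^{1/q}\|\chi_B\|_{p(\cdot)}^{-1}$. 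Because $\mu(B)\approx\mu(B(x_B,Cr_B))$ and, by Lemma \ref{LemaCoc}, $\|\chi_B\|_{p(\cdot)}\approx\|\chi_{B(x_B,Cr_B)}\|_{p(\cdot)}$, this is exactly the required atomic estimate on the enlarged ball.

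Finally I would assemble the pieces. For $f\in T_2^{p(\cdot)}(\Gamma)\cap T_2^q(\Gamma)$, Theorem \ref{Th1.1} gives $f=\sum_j\lambda_ja_j$ with $(T_2^{p(\cdot)},q)$-atoms $a_j$ and $\mathcal{A}(\{\lambda_j\},\{B_j\})\le C\|f\|_{T_2^{p(\cdot)}}$, the series converging in $T_2^{p(\cdot)}(\Gamma)$ and, by Theorem \ref{Th1.1}(iii), in $T_2^q(\Gamma)$. The atom estimate above and Theorem \ref{Th1.2}(a) make $\sum_j\lambda_j\Pi_M(a_j)$ converge in $H^{p(\cdot)}_L(\Gamma)$ with norm $\le C\mathcal{A}(\{\lambda_j\},\{B_j\})\le C\|f\|_{T_2^{p(\cdot)}}$, while the $T_2^q$-convergence and part (i) give $\Pi_M f=\sum_j\lambda_j\Pi_M(a_j)$ in $L^q(\Gamma)$; hence the two limits coincide and $\|\Pi_M f\|_{H^{p(\cdot)}_L}\le C\|f\|_{T_2^{p(\cdot)}}$ on the dense subspace $T_2^{p(\cdot)}(\Gamma)\cap T_2^q(\Gamma)$, which extends $\Pi_M$ to all of $T_2^{p(\cdot)}(\Gamma)$. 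The hard part will be precisely this identification, where convergence in $L^q(\Gamma)$, $T_2^q(\Gamma)$, $T_2^{p(\cdot)}(\Gamma)$ and $H^{p(\cdot)}_L(\Gamma)$ must be reconciled at once; the other delicate point is bookkeeping the polynomial growth $c_{k,M+1}\approx k^M$ against the decay of $L^jP^{[k/2]}$ so as to land the exact power $r_B^{M-j}$ in the atomic size condition.
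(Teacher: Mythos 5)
Your part (i) is essentially the paper's own argument: dualize against $h\in L^{q'}(\Gamma)$, use self-adjointness of $P$ and $c_{k,M+1}\le (k+1)^M$ to recognize the dual object as the square function $S_{M,L}(h)$ (your $\mathfrak{g}_M(h)$), invoke its $L^{q'}$-boundedness from \cite[Proposition 4.6]{B}, and then identify the extension with the series \eqref{XXX1} by truncation; the paper's identification step additionally uses the elementary embedding $\|f(\cdot,t)\|_q\le Ct^{(D+1)/2}\|f\|_{T_2^q(\Gamma)}$ (proved there via doubling), which your sketch needs implicitly to make sense of the individual terms, so you should state it.

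For part (ii) you take a genuinely different, more modular route. The paper estimates $S_L(\Pi_M(a_j))$ directly: it writes $\Pi_M(a_j)=(I-P)^{M+1}$ applied to a compactly supported function $\beta_j$, splits the square function over dyadic annuli $S_i(B_j)$ into the pieces $I_{j,1}+I_{j,2}$, and runs the Gaussian bound \eqref{compuesto} to get the decay $2^{-iM}$, summing with Lemmas \ref{LemaSum}, \ref{LemaCoc} and \ref{LemaSumCoc}. You instead prove that $C^{-1}\Pi_M(a)$ is a $(q,p(\cdot),M)$-atom on a dilated ball (exactly the computation the paper performs later, inside the proof of Proposition \ref{Hardy}, culminating in \eqref{ABC}) and then quote Theorem \ref{Th1.2}(a), i.e.\ Proposition \ref{H6}, for the synthesis. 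This is legitimate and arguably cleaner, because it does the molecular-decay computation only once (inside Proposition \ref{H6}) rather than twice; but it requires reordering the paper's logic: Proposition \ref{H6} must be established \emph{before} Proposition \ref{AD}, which is possible since its proof, although it "proceeds as in" the proof of Proposition \ref{AD}(ii) and borrows the displayed estimate \eqref{RR3}, is a self-contained kernel computation that nowhere uses the boundedness of $\Pi_M$. Had you invoked Theorem \ref{Th1.2}(b) instead, the argument would have been circular, since Proposition \ref{Hardy} uses Proposition \ref{AD}.

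The one step you leave open, the "identification" of the $H^{p(\cdot)}_L(\Gamma)$-limit with $\Pi_M(f)$, is genuinely needed but can be closed: take $q=2$ (or use that the $a_j$ are also $(T_2^{p(\cdot)},2)$-atoms, as in \eqref{Tr}), so the partial sums $g_N=\sum_{j\le N}\lambda_j\Pi_M(a_j)$ converge to $\Pi_M(f)$ in $L^2(\Gamma)$; then $(I-P)P^{[k/2]}(g_{N'}-g_N)\to (I-P)P^{[k/2]}(\Pi_M(f)-g_N)$ pointwise, Fatou in the sum defining $S_L$ gives $S_L(\Pi_M(f)-g_N)\le\liminf_{N'}S_L(g_{N'}-g_N)$ pointwise, and Fatou for the $L^{p(\cdot)}$-quasinorm (as in \cite[Lemma 2.4]{CrW}) then yields $\|S_L(\Pi_M(f)-g_N)\|_{p(\cdot)}\to 0$, so $\Pi_M(f)\in\mathbb{H}^{p(\cdot)}_L(\Gamma)$ with the desired bound. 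This is in effect the same Fatou device the paper uses at the start of its own part (ii) proof, so your plan is complete once this is written out.
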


\begin{proof} We recall that $T_c(\Gamma )$ is a dense subspace of $T_2^{p(\cdot )}(\Gamma )$, and in particular of $T_2^q(\Gamma )$, $0<q<\infty$.

$(i)$ Let $1<q<\infty$. Assume that $f\in T_c(\Gamma )$ and $h\in L^{q'}(\Gamma )$. We have that
\begin{align*}
\sum_{x\in \Gamma}\Pi _M(f)(x)h(x)\mu (x)&=\sum_{x\in \Gamma }h(x)\sum_{k=0}^\infty \frac{c_{k,M+1}}{k+1}(I-P)^MP^{[k/2]}(f(\cdot , k+1))(x)\mu (x)\\
&=\sum_{k=0}^\infty \frac{c_{k,M+1}}{k+1}\sum_{x\in
\Gamma}h(x)(I-P)^MP^{[k/2]}(f(\cdot , k+1))(x)\mu (x).
\end{align*}
Note that the series appearing above are actually finite sums because $f\in T_c(\Gamma )$.\\

We can write
\begin{align*}
& \sum_{x\in \Gamma}h(x)(I-P)^MP^{[k/2]}(f(\cdot , k+1))(x)\mu (x)\\
& \qquad =\sum_{y\in \Gamma }f(y,k+1)(I-P)^MP^{[k/2]}(h)(y)\mu
(y),\quad k\in \mathbb{N},
\end{align*}
because $p_n(x,y)\mu (x)=p_n(y,x)\mu (y)$, $x,y\in \Gamma $ and
$n\in \mathbb{N}$.

 Since
$0\le c_{k,M+1}\leq (k+1)^M$, $k\in \mathbb{N}$, by using
\cite[Proposition 3.2, (a)]{BD}, we get
\begin{align*}
\Big|\sum_{x\in \Gamma}\Pi _M(f)(x)h(x)\mu (x)\Big|&=\Big|\sum_{y\in \Gamma }\sum_{k=0}^\infty  \frac{c_{k,M+1}}{k+1}f(y,k+1)(I-P)^MP^{[k/2]}(h)(y)\mu (y)\Big|\\
&\leq \sum_{y\in \Gamma }\sum_{k=0}^\infty \frac{c_{k,M+1}}{k+1}|f(y,k+1)||(I-P)^MP^{[k/2]}(h)(y)|\mu (y)\\
&\leq \sum_{y\in \Gamma }\sum_{k=0}^\infty |(k+1)^M (I-P)^MP^{[k/2]}(h)(y)||f(y,k+1)|\frac{\mu (y)}{k+1}\\
&\leq \sum_{y\in \Gamma }\sum_{k=1}^\infty |k^M (I-P)^MP^{[(k-1)/2]}(h)(y)||f(y,k)|\frac{\mu (y)}{k}\\
&\leq \sum_{y\in \Gamma }\mathcal{A}(f)(y)\mathcal{A}(g)(y)\mu (y),
\end{align*}
where
$$
g(x,k)=k^M(I-P)^MP^{[(k-1)/2]}(h)(x),\quad x\in \Gamma, \;k\in
\mathbb{N}_+.
$$

Note that, since
\begin{equation}\label{YY1}
\mathcal{A}(g)(x)=\Big(\sum_{(y,k)\in \Upsilon
(x)}\frac{|k^M(I-P)^MP^{[(k-1)/2]}(h)(y)|^2}{k\mu(B(x,k))}\mu
(y)\Big)^{1/2}=S_{M,L}(h)(x),\quad x\in \Gamma ,
\end{equation}
according to \cite[Proposition 4.6]{B} we obtain that $\mathcal{A}(g)\in L^{q'}(\Gamma)$ and by using H\"older's inequality we
obtain
$$
\Big|\sum_{x\in \Gamma }\Pi _M(f)(x)h(x)\mu (x)\Big|\leq \|\mathcal{A}(f)\|_q\|S_{M,L}(h)\|_{q'}\leq C\|\mathcal{A}(f)\|_q\|h\|_{q'}.
$$
Hence,
$$
\|\Pi_M(f)\|_q\leq C\|f\|_{T_2^q(\Gamma )}.
$$
Thus we prove that $\Pi_M$ can be extended from $T_c(\Gamma)$ to $T_2^q(\Gamma)$ as a bounded operator from $T_2^q(\Gamma)$ into $L^q(\Gamma)$. We continue denoting by $\Pi_M$ to a such extension.\\

Let now $f\in T_2^q(\Gamma)$. For every $\ell, m\in \mathbb{N}_+$,
$$
f_\ell(x,t)=f(x,t)\chi_{(0,\ell]}(t),\,\,\,(x,t)\in \Gamma\times \mathbb{N}_+,
$$
and
$$
f_{\ell,m}(x,t)=f(x,t)\chi_{(0,\ell]}(t)\chi_{B(x_0,m)}(x),\,\,\,(x,t)\in \Gamma\times \mathbb{N}_+.
$$
Here $x_0\in \Gamma$ is fixed.

Let $\ell\in \mathbb{N}_+$. By using dominated convergence theorem we deduce that $f_{\ell,m}\to f_\ell$, as $m\to\infty$, in $T_2^q(\Gamma)$. Then,
$$
\Pi_M(f_{\ell,m})=\sum_{k=0}^{\ell-1}
\frac{c_{k,M+1}}{k+1}(I-P)^MP^{[k/2]}(f_{\ell,m}(\cdot ,
k+1))\longrightarrow \Pi_M(f_\ell),\,\,\,\rm{as}\,\,\,m\to\infty,
$$
in the sense of convergence in $L^q(\Gamma)$. For every $t\in
\mathbb{N}_+$, $f_{\ell,m}(.,t)\to f_\ell(.,t)$, as $m\to\infty$, in
$L^q(\Gamma)$. Indeed, let $t\in \mathbb{N}_+$. By taking into
account that $\mu$ is doubling (see (\ref{doubling})) we have that,
for every $m,\ell\in \mathbb{N}_+$,
\begin{align*}
&\Big(\sum_{x\in \Gamma}\Big(\sum_{(y,s)\in \Upsilon(x)}\frac{|f_{\ell,m}(y,s)-f_\ell(y,s)|^2}{s\mu(B(y,s))}\mu(y)\Big)^{q/2}\mu(x)\Bigg)^{1/q}\\
&\qquad\qquad\ge \Big(\sum_{x\in \Gamma}|f_{\ell,m}(x,t)-f_\ell(x,t)|^q\Big(\frac{\mu(x)}{t\mu(B(x,t))}\Big)^{q/2}\mu(x)\Bigg)^{1/q}\\
&\qquad\qquad\ge Ct^{-(D+1)/2}\Big(\sum_{x\in
\Gamma}|f_{\ell,m}(x,t)-f_\ell(x,t)|^q\mu(x)\Bigg)^{1/q}.
\end{align*}
Since $P$ is a bounded operator in $L^q(\Gamma)$, it follows that
$$
\Pi_M(f_\ell)(x)=\sum_{k=0}^{\ell-1}
\frac{c_{k,M+1}}{k+1}(I-P)^MP^{[k/2]}(f_\ell(\cdot , k+1))(x),\,\,\,x\in
\Gamma.
$$
Also, $f_\ell\to f$, as $\ell\to\infty$, in $T_2^q(\Gamma)$. Then,
$\Pi_M(f_\ell)\to \Pi_M(f)$, as $\ell\to\infty$, in $L^q(\Gamma)$.
Hence, we obtain (\ref{XXX1}).

$(ii)$ Let $f\in T_c(\Gamma )$. We take $q=2\max\{1,p_+\}$.
According to Theorem \ref{Th1.1}, for every $j\in \mathbb{N}$, there
exist $\lambda _j\in (0,\infty)$ and a $(T_2^{p(\cdot )},q)$-atom
$a_j$ associated to a ball $B_j=B(x_{B_j},r_{B_j})$, with
$x_{B_j}\in \Gamma$ and $r_{B_j}\ge 1$, such that
$$
f=\sum_{j\in \mathbb{N}}\lambda _ja_j,
$$
where the series converges in both $T_2^{p(\cdot )}(\Gamma )$ and
$T_2^2(\Gamma )$, and $\mathcal{A}(\{\lambda _j\},\{B_j\})\leq
C\|f\|_{T_2^{p(\cdot )}(\Gamma )}$, where $C$ does not depend on
$f$. Note that as it was shown in (\ref{Tr}), for every $j\in
\mathbb{N}$, $a_j$ is a $(T_2^{p(\cdot)},r)$-atom for every
$1<r<\infty$.

By Proposition \ref{AD}, $(i)$ $\Pi _M$ can be extended to
$T_2^2(\Gamma )$ as a bounded operator from $T_2^2(\Gamma )$ to
$L^2(\Gamma )$. We have that
$$
\Pi _M(f)=\sum_{j=0}^\infty \lambda _j\Pi _M(a_j),\quad \mbox{ in
}\;L^2(\Gamma ).
$$

We are going to see that $S_L(\Pi _M(f))\in L^{p(\cdot )}(\Gamma )$ and that
$$\|S_L(\Pi _M(f))\|_{p(\cdot )}\leq C\|f\|_{T_2^{p(\cdot )}(\Gamma )},$$ for a certain $C>0$ that does not depend on $f$.\\

For every $j\in \mathbb{N}$, we write $\alpha _j=\Pi _M(a_j)$. The operator $P$ is bounded in $L^2(\Gamma )$. Then, for every $k\in \mathbb{N}_+$,
$$
(I-P)P^{[k/2]}(\Pi _M(f))= \sum_{j=0}^{\infty}\lambda
_j(I-P)P^{[k/2]}(\alpha _j), \,\,\rm{in}\,\, L^2(\Gamma).
$$
Hence, for every $k\in \mathbb{N}$,
$$
(I-P)P^{[k/2]}(\Pi _M(f))(x)=\lim_{\ell \rightarrow \infty}
\sum_{j=0}^{\ell}\lambda _j(I-P)P^{[k/2]}(\alpha _j)(x),\quad x\in
\Gamma .
$$
By using Fatou's lemma we get
\begin{align*}
S_L(\Pi _M(f))(x)&=\Big(\sum_{k=1}^\infty \sum_{d(x,y)<k}\lim_{\ell \rightarrow \infty}\frac{|k\sum_{j=0}^{\ell }\lambda _j(I-P)P^{[k/2]}(\alpha _j)(y)|^2}{k\mu(B(y,k))}\mu (y)\Big)^{1/2}\\
&\leq \lim_{\ell \rightarrow \infty }\sum_{j=0}^{\ell }S_L(\lambda
_j\alpha _j)(x)\\
&\leq \sum_{j=0}^{\infty }|\lambda _j|S_L(\alpha _j)(x),\quad x\in
\Gamma .
\end{align*}
We recall that, according to \cite[Proposition 4.6]{B}, $S_L$ is a bounded operator from $L^q(\Gamma )$ into itself.\\

If $B=B(x_B,r_B)$, with $x_B\in \Gamma$ and $r_B\ge 1$, we define $S_0(B)=B(x_B,8Mr_B)$, and, for every $i\in \mathbb{N}_+$, $S_i(B)=B(x_B,2^{i+3}Mr_B)\setminus B(x_B,2^{i+2}Mr_B)$. As above we denote $\mathfrak{p} =\min\{1,p_-\}$. We can write
\begin{align}\label{RR1}
\|S_L(\Pi _M(f))\|_{p(\cdot )}&\leq \Big\|\sum_{i=0}^\infty \sum_{j=0}^\infty |\lambda _j|S_L(\alpha _j)\chi _{S_i(B_j)}\Big\|_{p(\cdot )}\nonumber\\
&\leq \Big\|\sum_{i=0}^\infty \Big(\sum_{j=0}^\infty \Big(|\lambda _j|S_L(\alpha _j)\chi _{S_i(B_j)}\Big)^\mathfrak{p} \Big)^{1/\mathfrak{p}}\Big\|_{p(\cdot )}\nonumber\\
&\leq \Big(\sum_{i=0}^\infty \Big\|\Big(\sum_{j=0}^\infty
\Big(|\lambda _j|S_L(\alpha _j)\chi
_{S_i(B_j)}\Big)^\mathfrak{p}\Big)^{1/\mathfrak{p}}\Big\|_{p(\cdot
)}^\mathfrak{p} \Big)^{1/\mathfrak{p}}.
\end{align}

By using $(i)$ we get
$$
\|S_L(\alpha _j)\|_{L^q(S_0(B_j))}\leq C\|\alpha _j\|_q\leq C\|a_j\|_{T_2^q(\Gamma )}\leq C\mu(B_j)^{1/q}\|\chi _{B_j}\|_{p(\cdot )}^{-1},\quad j\in \mathbb{N}.
$$

Since $q\in [1,\infty)\cap (p_+,\infty)$, by Lemmas \ref{LemaSum}, \ref{LemaCoc}, (i),  and
\ref{LemaSumCoc}, and Theorem \ref{Th1.1} it follows that
\begin{align}\label{RR2}
\Big\|\Big(\sum_{j=0}^\infty \Big(|\lambda _j|S_L(\alpha _j)\chi _{S_0(B_j)}\Big)^\mathfrak{p}\Big)^{1/\mathfrak{p}}\Big\|_{p(\cdot )}&\leq C\Big\|\Big(\sum_{j=0}^\infty \Big(\frac{|\lambda _j|}{\|\chi _{B_j}\|_{p(\cdot )}}\chi _{S_0(B_j)}\Big)^\mathfrak{p}\Big)^{1/\mathfrak{p}}\Big\|_{p(\cdot )}\nonumber\\
&\leq C\mathcal{A}(\{\lambda _j\},\{B_j\})\leq C\|f\|_{T_2^{p(\cdot )}(\Gamma )}.
\end{align}

Let $j\in \mathbb{N}_+$. In order to study $S_L(\alpha _j)$ in $\Gamma \setminus S_0(B_j)$ we write
$$
S_L(\alpha_j)(x)=\Big(\sum_{(y,k)\in \Upsilon(x)}\frac{|k(I-P)^{M+1}P^{[k/2]}(\beta _j)(y)|^2}{k\mu(B(y,k))}\mu (y)\Big)^{1/2},\quad x\in \Gamma ,
$$
where
$$
\beta _j(y)=\sum_{k=0}^{r_{B_j}-1}\frac{c_{k,M+1}}{k+1}P^{[k/2]}(a_j(\cdot , k+1))(y),\quad y\in \Gamma.
$$
Note that $\supp(\beta_j)\subset B(x_{B_j},2r_{B_j})$. We have that, for every $x\in \Gamma$,
\begin{align*}
(S_L(\alpha_j)(x))^2&=|(I-P)^{M+1}(\beta_j)(x)|^2\\
&+ \sum_{k=2}^\infty\sum_{d(y,x)<k}\Big|\sum_{z\in B(x_{B_j},2r_{B_j})}S_{M,k}(y,z)\beta_j(z)\Big|^2\frac{k\mu(y)}{\mu(B(y,k))}.
\end{align*}
Here $S_{M,k}$ denotes the kernel of the operator $(I-P)^{M+1}P^{[k/2]}$, for every $k\in \mathbb{N}$, $k\ge 2$.

Since $\supp(\beta_j)\subset B(x_{B_j},2r_{B_j})$, we obtain
$$
\supp((I-P)^{M+1}\beta_j)\subset \bigcup_{\ell=0}^{M+1}\supp(P^\ell(\beta_j))\subset \bigcup_{\ell=0}^{M+1}B(x_{B_j},2r_{B_j}+\ell+1)\subset B(x_{B_j},(M+4)r_{B_j}).
$$
Also, $d(x,x_{B_j})\ge 8Mr_{B_j}\ge (M+4)r_{B_j}$, provided that $x\in \Gamma\setminus S_0(B_j)$. Then, $(I-P)^{M+1}(\beta_j)(x)=0$, for every $x\in \Gamma\setminus S_0(B_j)$. We can write
\begin{align*}
(S_L(\alpha_j)(x))^2&=\sum_{k=2}^\infty\sum_{d(y,x)<k}\Big|\sum_{z\in B(x_{B_j},2r_{B_j})}S_{M,k}(y,z)\beta_j(z)\Big|^2\frac{k\mu(y)}{\mu(B(y,k))}\\
&=\Big(\sum_{k=2}^{d(x,x_{B_j})/2}+\sum_{k\in \mathbb{N},\,k>d(x,x_{B_j})/2}\Big)\sum_{d(y,x)<k}\Big|\sum_{z\in B(x_{B_j},2r_{B_j})}S_{M,k}(y,z)\beta_j(z)\Big|^2\frac{k\mu(y)}{\mu(B(y,k))}\\
&=I_{j,1}(x)+I_{j,2}(x), \,\,\,x\in \Gamma\setminus S_0(B_j).
\end{align*}
If $z\in B(x_{B_j},2r_{B_j})$, $y\in B(x,k)$, $k\le d(x,x_{B_j})/2$ and $x\in\Gamma\setminus S_0(B_j)$, then $d(y,z)\ge d(x,x_{B_j})/4$. Also, $B(x,d(x,x_{B_j}))\subset B(y,2d(x,x_{B_j}))$, provided that $y\in B(x,k)$, $k\le d(x,x_{B_j})/2$. By using (\ref{doubling}) and (\ref{compuesto}) we deduce that
\begin{align}\label{RR3}
I_{j,1}(x)&\le C\sum_{k=2}^{d(x,x_{B_j})/2}\sum_{d(y,x)<k}e^{-cd(x,x_{B_j})^2/k}\frac{\mu(y)}{k^{2M+1}\mu(B(y,k))(\mu(B(y,\sqrt{k})))^2}\nonumber\\
&\hspace{3cm}\times\Big(\sum_{z\in B(x_{B_j},2r_{B_j})}|\beta_j(z)|\mu(z)\Big)^2\nonumber\\
&\le Ce^{-cd(x,x_{B_j})}\sum_{k=2}^{d(x,x_{B_j})/2}e^{-cd(x,x_{B_j})^2/k}\Big(\frac{d(x,x_{B_j})}{\sqrt{k}}\Big)^{2D}\frac{1}{\mu(B(x,k))}\nonumber\\
&\hspace{3cm}\times\sum_{d(y,x)<k}\frac{\mu(y)}{(\mu(B(y,d(x,x_{B_j}))))^2}\Big(\sum_{z\in B(x_{B_j},2r_{B_j})}|\beta_j(z)|\mu(z)\Big)^2\nonumber\\
&\le C \frac{e^{-cd(x,x_{B_j})}d(x,x_{B_j})}{(\mu(B(x,d(x,x_{B_j}))))^2} \Big(\sum_{z\in B(x_{B_j},2r_{B_j})}|\beta_j(z)|\mu(z)\Big)^2\nonumber\\
&\le C \frac{e^{-cd(x,x_{B_j})}}{(\mu(B(x,d(x,x_{B_j}))))^2} \Big(\sum_{z\in B(x_{B_j},2r_{B_j})}|\beta_j(z)|\mu(z)\Big)^2\\
&\le C \frac{e^{-cd(x,x_{B_j})}}{(\mu(B(x,d(x,x_{B_j}))))^2}\mu(B_j)\|\beta_j\|_2^2, \,\,\,x\in\Gamma\setminus S_0(B_j).\nonumber
\end{align}

On the other hand, by invoking again (\ref{compuesto}), we get
\begin{align*}
I_{j,2}(x)&\le C\sum_{k\in \mathbb{N},\,k>d(x,x_{B_j})/2}\frac{1}{k^{2M+1}\mu(B(x,k))}\\&
\times\sum_{d(y,x)<k}\mu(y)\Big(\sum_{z\in B(x_{B_j},2r_{B_j})}e^{-cd(y,z)^2/k}\frac{|\beta_j(z)|}{\mu(B(y,\sqrt{k}))}\mu(z)\Big)^2,\,\,\,x\in\Gamma\setminus S_0(B_j).
\end{align*}
We have that
\begin{align}\label{max1}
\sum_{y\in \Gamma} &\frac{e^{-cd(x,y)^2/k}}{\mu(B(x,\sqrt{k}))}\mu (y)\nonumber\\
&= \frac{1}{\mu(B(x,\sqrt{k}))}\Big(\sum_{d(x,y)<\sqrt{k}}+\sum_{i=1}^\infty \sum_{d(x,y)\in [i\sqrt{k}, (i+1)\sqrt{k})}\Big)e^{-cd(x,y)^2/k}\mu (y)\nonumber\\
&\leq \frac{1}{\mu(B(x,\sqrt{k}))}\Big(\mu(B(x,\sqrt{k}))+\sum_{i=1}^\infty e^{-ci^2}\mu(B(x,(i+1)\sqrt{k}))\Big)\nonumber\\
&\hspace{3cm}\leq C\sum_{i=0}^\infty (i+1)^De^{-ci^2}\leq C,\quad x\in \Gamma ,
\end{align}
and
\begin{align}\label{max2}
\sum_{y\in \Gamma }&\frac{1}{\mu(B(y,\sqrt{k}))}e^{-cd(y,z)^2/k}\Big)\mu (y)\nonumber\\
&\hspace{3mm}=\Big(\sum_{d(y,z)<\sqrt{k}}+\sum_{i=1}^\infty \sum_{d(y,z)\in [i\sqrt{k}, (i+1)\sqrt{k})}\Big)\frac{1}{\mu(B(y,\sqrt{k}))}e^{-cd(y,z)^2/k}\mu (y)\nonumber\\
&\hspace{3mm}\leq C\Big(\frac{1}{\mu(B(z,\sqrt{k}))}\sum_{d(y,z)<\sqrt{k}}\mu(y)\nonumber\\
&\hspace{1cm}+\sum_{i=1}^\infty \frac{(i+2)^D}{\mu(B(z,\sqrt{k}))}\sum_{d(y,z)\in [i\sqrt{k}, (i+1)\sqrt{k})}e^{-cd(y,z)^2/k}\mu (y)\Big)\nonumber\\
&\hspace{3mm}\leq C\sum_{i=0}^\infty (i+2)^{2D}e^{-i^2}\leq C,\quad z\in \Gamma ,\;k\in \mathbb{N}_+.
\end{align}

Estimates (\ref{max1}), (\ref{max2}) and Jensen's inequality lead to
\begin{align*}
I_{j,2}(x)&\le C\sum_{k\in \mathbb{N},\,k>d(x,x_{B_j})/2}\frac{1}{k^{2M+1}\mu(B(x,k))}\\&
\times\sum_{d(y,x)<k}\mu(y)\sum_{z\in B(x_{B_j},2r_{B_j})}e^{-cd(y,z)^2/k}\frac{|\beta_j(z)|^2}{\mu(B(y,\sqrt{k}))}\mu(z)\\
&\le C\sum_{k\in \mathbb{N},\,k>d(x,x_{B_j})/2}\frac{1}{k^{2M+1}\mu(B(x,d(x,x_{B_j})))}\sum_{z\in B(x_{B_j},2r_{B_j})}|\beta_j(z)|^2\mu(z)\\
&\le \frac{C}{d(x,x_{B_j})^{2M}\mu(B(x,d(x,x_{B_j})))}\|\beta_j\|_2^2,\,\,\,x\in\Gamma\setminus S_0(B_j).
\end{align*}

If $x\in\Gamma\setminus S_0(B_j)$, $B_j\subset B(x_{B_j}, d(x,x_{B_j}))$, and then
$$\mu(B_j)\le \mu(B(x_{B_j}, d(x,x_{B_j}))\le \mu(B(x,2d(x,x_{B_j})))\le C\mu(B(x,d(x,x_{B_j})).$$
By combining the above estimates we obtain
\begin{align*}
S_L(\alpha_j)(x)&\le C\Big(e^{-cd(x,x_{B_j})}\frac{(\mu(B_j))^{1/2}}{\mu(B(x,d(x,x_{B_j})))}+\frac{1}{d(x,x_{B_j})^{M}(\mu(B(x,d(x,x_{B_j}))))^{1/2}}\Big)\|\beta_j\|_2\\
&\le C\frac{\|\beta_j\|_2}{d(x,x_{B_j})^{M}(\mu(B(x,d(x,x_{B_j}))))^{1/2}}\Big(\Big(\frac{\mu(B_j)}{\mu(B(x,d(x,x_{B_j})))}\Big)^{1/2}+1\Big)\\
&\le C\frac{\|\beta_j\|_2}{d(x,x_{B_j})^{M}(\mu(B(x,d(x,x_{B_j}))))^{1/2}},\,\,\,x\in\Gamma\setminus S_0(B_j).
\end{align*}
Hence,
\begin{align*}
\|S_L(\alpha_j)\|_{L^q(S_i(B_j)}&\le C\|\beta_j\|_2\Big(\sum_{x\in S_i(B_j)}\frac{\mu(x)}{d(x,x_{B_j})^{Mq}(\mu(B(x,d(x,x_{B_j}))))^{q/2}}\Big)^{1/q}\\
&\le C\|\beta_j\|_2\frac{1}{(2^ir_{B_j})^M}\frac{(\mu(B(x_{B_j},2^{i+3}Mr_{B_j})))^{1/q}}{(\mu(B(x_{B_j},2^{i+2}Mr_{B_j})))^{1/2}}\\
&\le C \|\beta_j\|_2\frac{(\mu(B(x_{B_j},2^{i}r_{B_j})))^{1/q-1/2}}{(2^ir_{B_j})^M}.
\end{align*}

We now study $\|\beta_j\|_2$. We recall that
$$
\beta_j(y)=\sum_{k=0}^{r_{B_j}-1}\frac{c_{k,M+1}}{k+1}P^{[k/2]}(a_j(\cdot , k+1))(y),\quad y\in \Gamma .
$$
Assume that $h\in L^2(\Gamma )$. We have that
$$
\Big(\sum_{y\in \Gamma }\sum_{k=1}^{r_{B_j}}\frac{|a_j(y,k)|^2}{k}\mu (y)\Big)^{1/2}\le C\|a_j\|_{T^2_2(\Gamma)}.
$$
Since $P$ is a selfadjoint and contractive operator in $L^2(\Gamma )$ and $c_{k,M+1}\leq (k+1)^M$, $k\in \mathbb{N}$, we can write
\begin{align*}
& \Big|\sum_{y\in \Gamma}\beta _j(y)h(y)\mu (y)\Big|
=\Big|\sum_{y\in \Gamma }\sum_{k=0}^{r_{B_j}-1}\frac{c_{k,M+1}}{k+1}P^{[k/2]}(h)(y)a_j(y, k+1)\mu (y)\Big|\\
& \qquad \leq \sum_{y\in \Gamma }\sum_{k=0}^{r_{B_j}-1}(k+1)^{M-1}|P^{[k/2]}(h)(y)||a_j(y,k+1)|\mu (y)\\
& \qquad \leq \Big(\sum_{y\in \Gamma }\sum_{k=1}^{r_{B_j}}|a_j(y,k)|^2k^{M-1}\mu (y)\Big)^{1/2}\Big(\sum_{y\in \Gamma }\sum_{k=0}^{r_{B_j}-1}|P^{[k/2]}(h)(y)|^2(k+1)^{M-1}\mu (y)\Big)^{1/2}\\
&\qquad \le C (r_{B_j})^{M/2}\|a_j\|_{T_2^2(\Gamma)}\Big(\sum_{k=0}^{r_{B_j}-1}(k+1)^{M-1}\|P^{[k/2]}(h)\|_2^2\Big)^{1/2}\\
&\qquad \le  C(r_{B_j})^{M}\|a_j\|_{T_2^2(\Gamma)}\|h\|_2.
\end{align*}
Since $a_j$ is a $(T_2^{p(\cdot )},2)$-atom associated with $B_j$ it
follows that
$$
\|\beta_j\|_2\le C(r_{B_j})^{M}\|a_j\|_{T_2^2(\Gamma)}\le
C(r_{B_j})^{M} (\mu(B_j))^{1/2}\|\chi_{B_j}\|_{p(\cdot)}^{-1}.
$$
We get
\begin{align*}
\|S_L(\alpha_j)\|_{L^q(S_i(B_j))}&\le C(r_{B_j})^{M} (\mu(B_j))^{1/2}\|\chi_{B_j}\|_{p(\cdot)}^{-1}\frac{(\mu(B(x_{B_j},2^{i}r_{B_j})))^{1/q-1/2}}{(2^ir_{B_j})^M}\\
&\le C2^{-iM}(\mu(B(x_{B_j},2^{i+3}Mr_{B_j})))^{1/q}\|\chi_{B_j}\|_{p(\cdot)}^{-1}.
\end{align*}

We choose $0<w<p_-$. Since $q\in [1,\infty)\cap (p_+,\infty)$, according to Lemmas \ref{LemaSum}, \ref{LemaCoc}, (i), and \ref{LemaSumCoc}, we obtain
\begin{align}\label{RR4}
\Big\|\Big(\sum_{j\in \mathbb{N}}(|\lambda _j|&S_L(\alpha _j)\chi _{S_i(B_j)})^\mathfrak{p} \Big)^{1/\mathfrak{p}} \Big\|_{p(\cdot )}
\leq C2^{-i(M-D/w)}\mathcal{A}(\{\lambda_j\},\{B(x_{B_j},2^{i+3}Mr_{B_j})\})\nonumber\\
&\leq C2^{-i(M-2D/w)}\mathcal{A} (\{\lambda _j\},\{B_j\})\nonumber\\
&\le  C2^{-i(M-2D/w)}\|f\|_{T_2^{p(\cdot)}(\Gamma)}.
\end{align}
Since $M>2D/p_-$, by choosing above $0<w<p_-$ such that $M>2D/w$, it follows from (\ref{RR1}), (\ref{RR2}) and (\ref{RR4}) that
$$
\|S_L(\Pi _M(f)\|_{p(\cdot )}\leq \Big(\sum_{i=0}^\infty \Big\|\Big(\sum_{j=1}^\infty (|\lambda_j|S_L(\alpha _j)\chi _{S_i(B_j)})^\mathfrak{p}\Big)^{1/\mathfrak{p}}\Big\|_{p(\cdot) }^\mathfrak{p} \Big)^{1/\mathfrak{p} }\leq C\|f\|_{T_2^{p(\cdot )}(\Gamma )}.
$$

\end{proof}

We recall the definitions of atoms in $H_{L}^{p(\cdot )}(\Gamma)$. Let $M\in \mathbb{N}_+$ and $1<r<\infty$. We say that $a\in
L^r(\Gamma )$ is a $(r,p(\cdot ), M)$-atom associated with a ball
$B=B(x_B, r_B)$, with $x_B\in \Gamma $ and $r_B\geq 1$, when there
exists $b\in L^r(\Gamma )$ satisfying that:

$(i)$ $a=L^Mb$;

$(ii)$ $\supp L^kb\subset B$, $k=0,...,M$;

$(iii)$ $\|L^kb\|_r\leq (r_B)^{M-k}(\mu(B))^{1/r}\|\chi _B\|_{p(\cdot )}^{-1}$, $k=0,...,M$.

If $f\in L^2(\Gamma )$, we say that $f$ has a $(2,p(\cdot ),
M)$-atomic representation when
$$
f=\sum_{j=1}^\infty \lambda _ja_j,\quad \mbox{ in }L^2(\Gamma ),
$$
where, for every $j\in \mathbb{N}$, $\lambda _j\in \mathbb{C}$ and
$a_j$ is a $(2,p(\cdot ), M)$-atom associated with the ball $B_j$, satisfying
that $\mathcal{A}(\{\lambda_j\},\{B_j\})<\infty$.

We define the atomic Hardy space $H_{L,M,at}^{p(\cdot )}(\Gamma )$
as follows: $f\in L^2(\Gamma )$ is in
$\mathbb{H}_{L,M,at}^{p(\cdot )}(\Gamma )$ if and only if $f$ has
a $(2,p(\cdot),M)$-atomic representation. On
$\mathbb{H}_{L,M,at}^{p(\cdot )}(\Gamma )$ as usual we consider
the quasinorm $\|\cdot \|_{H_{L,M,at}^{p(\cdot )}(\Gamma)}$ given by
$$
\|f\|_{H_{L,M,at}^{p(\cdot )}(\Gamma)}=\inf \mathcal{A} (\{\lambda
_j\},\{B_j\}),
$$
where
$$
 \mathcal{A}(\{\lambda _j\},\{B_j\})=\Big\|\Big(\sum_{j\in \mathbb{N}}\Big(\frac{|\lambda _j|\chi _{B_j}}{\|\chi _{B_j}\|_{p(\cdot )}}\Big)^{\mathfrak{p}}\Big)^{1/\mathfrak{p}}\Big\|_{p(\cdot )},
$$
for every sequence $\{\lambda _j\}_{j\in \mathbb{N}}$ of complex
numbers and $\{B_j\}_{j\in \mathbb{N}}$ of balls, and where the
infimum is taken over all the sequences $\{\lambda _j\}_{j\in
\mathbb{N}}$ and $\{B_j\}_{j\in \mathbb{N}}$ such that $f=\sum_{j\in
\mathbb{N}}\lambda _ja_j$, being, for every $j\in \mathbb{N}$, $a_j$
a $(2, p(\cdot ), M)$-atom associated with $B_j$.

The atomic Hardy space $H_{L,M,at}^{p(\cdot )}(\Gamma )$ is
defined as the completion of $\mathbb{H}_{L,M,at}^{p(\cdot
)}(\Gamma )$ with respect to the quasinorm
$\|\cdot\|_{H_{L,M,at}^{p(\cdot )}(\Gamma)}$.

We now prove Theorem \ref{Th1.2} by establishing the next two propositions.
\begin{Prop}\label{H6}
 Assume that $p\in \mathcal{P}^{\log}(\Gamma)$, $r\ge 2$, $r>p_+$ and $M\in \mathbb{N}_+$, $M>2D/p_-$.
 There exists $C>0$ satisfying that: if, for every $j\in \mathbb{N}$, $\lambda _j\in \mathbb{C}$ and $a_j$ is a $(r,p(\cdot ),M)$-atom
 associated with the ball $B_j$ such that $\mathcal{A}(\{\lambda_j\},\{B_j\})<\infty$, then the series $\sum_{j\in \mathbb{N}}\lambda _ja_j$ converges in $H^{p(\cdot )}_L(\Gamma )$ and
$$
\|f\|_{H^{p(\cdot )}_L(\Gamma )}\leq
C\mathcal{A}(\{\lambda_j\},\{B_j\}),
$$
where $f=\sum_{j\in \mathbb{N}}\lambda _ja_j$.
\end{Prop}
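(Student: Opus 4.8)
The plan is to reduce the whole statement to a single uniform estimate on finite partial sums and then pass to the limit in the completion $H^{p(\cdot)}_L(\Gamma)$. Write $f_N=\sum_{j=1}^N\lambda_ja_j$; since $\deg$ is bounded, every ball in $\Gamma$ is finite, so each $a_j$ has finite support and $f_N\in L^2(\Gamma)$. Because $S_L$ is subadditive and $S_L(\lambda g)=|\lambda|S_L(g)$, one has $S_L(f_N-f_{N'})\le\sum_{N'<j\le N}|\lambda_j|S_L(a_j)$, so it suffices to prove
$$\Big\|\sum_{j\in J}|\lambda_j|S_L(a_j)\Big\|_{p(\cdot)}\le C\,\mathcal{A}(\{\lambda_j\}_{j\in J},\{B_j\}_{j\in J})$$
with $C$ independent of the finite index set $J$. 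Granting this, $f_N\in\mathbb{H}^{p(\cdot)}_L(\Gamma)$, and since $\mathcal{A}(\{\lambda_j\},\{B_j\})<\infty$ forces the modular tails of $\sum_j|\lambda_j|^{\mathfrak{p}}\chi_{B_j}\|\chi_{B_j}\|_{p(\cdot)}^{-\mathfrak{p}}$ to vanish, $\{f_N\}$ is Cauchy for $\|\cdot\|_{H^{p(\cdot)}_L(\Gamma)}$; its limit $f$ then satisfies the asserted bound. To prove the displayed inequality I would split $\Gamma$ into $S_0(B_j)=B(x_{B_j},8Mr_{B_j})$ and $S_i(B_j)=B(x_{B_j},2^{i+3}Mr_{B_j})\setminus B(x_{B_j},2^{i+2}Mr_{B_j})$, $i\ge1$, and apply the $\mathfrak{p}$-subadditivity of the modular exactly as in (\ref{RR1}), reducing matters to bounding $\|(\sum_j(|\lambda_j|S_L(a_j)\chi_{S_i(B_j)})^{\mathfrak{p}})^{1/\mathfrak{p}}\|_{p(\cdot)}$ for each fixed $i$.

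For the local term $i=0$ I would use the $L^r$-boundedness of $S_L$ (\cite[Proposition 4.6]{B}, legitimate since $r\ge2$, $r>p_+$) together with the atom condition with $k=M$, namely $\|a_j\|_r=\|L^Mb_j\|_r\le(\mu(B_j))^{1/r}\|\chi_{B_j}\|_{p(\cdot)}^{-1}$, to obtain $\|S_L(a_j)\|_{L^r(S_0(B_j))}\le C\mu(B_j)^{1/r}\|\chi_{B_j}\|_{p(\cdot)}^{-1}$. Because $r\in[1,\infty)\cap(p_+,\infty)$, Lemma \ref{LemaSum} then bounds this contribution by $C\,\mathcal{A}(\{\lambda_j\},\{B_j\})$, precisely as in (\ref{RR2}).

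The heart of the argument is the far term $i\ge1$, and here the computation is essentially the one already performed for $S_L(\alpha_j)$ in the proof of Proposition \ref{AD}(ii), with $b_j$ replacing $\beta_j$. Since $a_j=L^Mb_j$, one has $(I-P)P^{[k/2]}a_j=(I-P)^{M+1}P^{[k/2]}b_j$, so $S_L(a_j)$ is governed by the kernel $\widetilde{p_{[k/2],M+1}}$ and the Gaussian bound (\ref{compuesto}). Splitting the sum over $k$ at $k\sim d(x,x_{B_j})/2$ and using (\ref{doubling}), (\ref{compuesto}) together with the elementary summations (\ref{max1})--(\ref{max2}) yields, for $x\in\Gamma\setminus S_0(B_j)$,
$$S_L(a_j)(x)\le C\frac{\|b_j\|_2}{d(x,x_{B_j})^{M}\,(\mu(B(x,d(x,x_{B_j}))))^{1/2}}.$$
The factor $\|b_j\|_2$ is controlled by H\"older's inequality (here $r\ge2$ is used) and the atom condition with $k=0$, since $\supp b_j\subset B_j$: $\|b_j\|_2\le\mu(B_j)^{1/2-1/r}\|b_j\|_r\le(r_{B_j})^M\mu(B_j)^{1/2}\|\chi_{B_j}\|_{p(\cdot)}^{-1}$. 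Integrating the pointwise bound over $S_i(B_j)$ with $q=r$ gives $\|S_L(a_j)\|_{L^r(S_i(B_j))}\le C2^{-iM}(\mu(B(x_{B_j},2^{i+3}Mr_{B_j})))^{1/r}\|\chi_{B_j}\|_{p(\cdot)}^{-1}$.

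Finally, choosing $0<w<p_-$ with $M>2D/w$ (possible because $M>2D/p_-$), I would apply Lemmas \ref{LemaSum}, \ref{LemaCoc}(i) and \ref{LemaSumCoc} exactly as in (\ref{RR4}) to get $\|(\sum_j(|\lambda_j|S_L(a_j)\chi_{S_i(B_j)})^{\mathfrak{p}})^{1/\mathfrak{p}}\|_{p(\cdot)}\le C2^{-i(M-2D/w)}\mathcal{A}(\{\lambda_j\},\{B_j\})$, and summing the geometric series in $i$ closes the estimate. The only genuinely delicate point is the far-field bound on $S_L(a_j)$: one must track the Gaussian factor $e^{-cd(x,x_{B_j})}$ against the competing polynomial powers of $d(x,x_{B_j})$ and $\sqrt{k}$ when summing over $k$ and over the cone. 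This is exactly the bookkeeping already carried out for $I_{j,1}$ and $I_{j,2}$ in Proposition \ref{AD}(ii), so no new idea is needed beyond substituting the support and the $L^2$-bound of $b_j$.
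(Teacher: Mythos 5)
Your proposal is correct and follows essentially the same route as the paper's proof: the same $S_0(B_j)$/$S_i(B_j)$ decomposition and reduction to uniform bounds on finite partial sums, the same local estimate via the $L^r$-boundedness of $S_L$, the same far-field analysis through the kernel of $(I-P)^{M+1}P^{[k/2]}$ acting on $b_j$, and the same closing use of Lemmas \ref{LemaSum}, \ref{LemaCoc}(i) and \ref{LemaSumCoc} with a choice $2D/M<w<p_-$ to sum the geometric series in $i$. The only (harmless) deviation is that you route the far-field bound through $\|b_j\|_2$ and H\"older's inequality on the finite ball $B_j$, whereas the paper estimates $I_{j,1}$ and $I_{j,2}$ directly in terms of $\|b_j\|_r$ via Jensen's inequality with exponent $2/r$; both variants exploit the hypothesis $r\ge 2$ at exactly the same point and yield the same bound $\|S_L(a_j)\|_{L^r(S_i(B_j))}\le C2^{-i(M-D/w)}(\mu(B(x_{B_j},2^{i+3}Mr_{B_j})))^{1/r}\|\chi_{B(x_{B_j},2^{i+3}Mr_{B_j})}\|_{p(\cdot)}^{-1}$.
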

\begin{proof}
In order to proof this fact we proceed as in the proof of
Proposition \ref{AD}, (ii). If $B=B(x_B,r_B)$ is a ball we recall that $S_0(B)=B(x_B,8Mr_B)$, and, for every $i\in \mathbb{N}_+$,
$S_i(B)=B(x_B,2^{i+3}Mr_B)\setminus B(x_B,2^{i+2}Mr_B)$. Assume that, for every $j\in \mathbb{N}$, $\lambda _j\in \mathbb{C}$ and $a_j$ is a $(r,p(\cdot ),M)$-atom
 associated with the function $b_j$ and  the ball $B_j=B(x_{B_j},r_{B_j})$, with $x_{B_j}\in \Gamma$ and $r_{B_j}\ge 1$,  such that $\mathcal{A}(\{\lambda_j\},\{B_j\})<\infty$. Let
$\ell_1,\ell_2\in \mathbb{N}$, $\ell_1<\ell_2$. We have that
$$
\Big\|S_L\Big(\sum_{j=\ell_1}^{\ell_2}\lambda_ja_j\Big)\Big\|_{p(\cdot)}\le
\Big(\sum_{i=0}^\infty\Big\|\Big(\sum_{j=\ell_1}^{\ell_2}\Big(|\lambda_j|S_L(a_j)\chi_{S_i(B_j)}\Big)^\mathfrak{p}\Big)^{1/\mathfrak{p}}\Big\|_{p(\cdot)}^\mathfrak{p}\Big)^{1/\mathfrak{p}}.
$$
Since $S_L$ is a bounded operator in $L^r(\Gamma)$, the properties
of the $(r,p(\cdot),M)$-atoms lead to
$$
\|S_L(a_j)\|_{L^r(S_0(B_j))}\le C\|a_j\|_r\le C\mu(B_j)^{1/r}\|\chi_{B_j}\|_{p(\cdot)}^{-1},\,\,\,j\in \mathbb{N}.
$$
Since $r>\max\{1,p_+\}$, by using Lemmas \ref{LemaSum},
\ref{LemaCoc}, (i), and \ref{LemaSumCoc} we get
\begin{align*}
\Big\|\Big(\sum_{j=\ell_1}^{\ell_2}\Big(|\lambda_j|S_L(a_j)\chi_{S_0(B_j)}\Big)^\mathfrak{p}\Big)^{1/\mathfrak{p}}\Big\|_{p(\cdot)}&\le
C\Big\|\Big(\sum_{j=\ell_1}^{\ell_2}\Big(\frac{|\lambda_j|}{\|\chi_{S_0(B_j)}\|_{p(\cdot)}}\chi_{S_0(B_j)}\Big)^\mathfrak{p}\Big)^{1/\mathfrak{p}}\Big\|_{p(\cdot)}\nonumber\\
&\le
C\Big\|\Big(\sum_{j=\ell_1}^{\ell_2}\Big(\frac{|\lambda_j|}{\|\chi_{B_j}\|_{p(\cdot)}}\chi_{B_j}\Big)^\mathfrak{p}\Big)^{1/\mathfrak{p}}\Big\|_{p(\cdot)}.
\end{align*}

Let $j\in \mathbb{N}_+$. We can write
\begin{align*}
(S_L(a_j)(x))^2&=|(I-P)^{M+1}(b_j)(x)|^2\\
&+ \sum_{k=2}^\infty\sum_{d(y,x)<k}\Big|\sum_{z\in B(x_{B_j},r_{B_j})}S_{M,k}(y,z)b_j(z)\Big|^2\frac{k\mu(y)}{\mu(B(y,k))}, \,\,\,x\in \Gamma.
\end{align*}
Here $S_{M,k}$ denotes as above the kernel of the operator $(I-P)^{M+1}P^{[k/2]}$, for every $k\in \mathbb{N}$, $k\ge 2$.

Since $\supp(b_j)\subset B_j$, then $\supp((I-P)^{M+1}(b_j))\subset
B(x_{B_j},(M+2)r_{B_j})$, so $(I-P)^{M+1}(b_j)(x)=0$, $x\in
\Gamma\setminus S_0(B_j)$. Hence,
\begin{align*}
(S_L(a_j)(x))^2&=
\Big( \sum_{k=2}^{d(x,x_{B_j})/2}+\sum_{k\in \mathbb{N},\,k>d(x,x_{B_j})/2}\Big)\sum_{d(y,x)<k}\Big|\sum_{z\in B(x_{B_j},2r_{B_j})}S_{M,k}(y,z)b_j(z)\Big|^2\frac{k\mu(y)}{\mu(B(y,k))}\\
&=I_{j,1}(x)+I_{j,2}(x), \,\,\,x\in \Gamma\setminus S_0(B_j).
\end{align*}

We have that (see (\ref{RR3}))
$$
I_{j,1}(x)\le C\frac{e^{-cd(x,x_{B_j})}}{(\mu(B(x,d(x,x_{B_j})))^2}(\mu(B_j))^{2/r'}\|b_j\|_r^2, \,\,\,x\in\Gamma\setminus S_0(B_j).
$$
Also, since $r\ge 2$, by using Jensen's inequality we get
\begin{align*}
I_{j,2}(x)&\le C\sum_{k\in \mathbb{N},\,k>d(x,x_{B_j})/2}\sum_{d(y,x)<k}\Big(\sum_{z\in \Gamma}\frac{e^{-cd(y,z)^2/k}}{\mu(B(y,\sqrt{k}))}|b_j(z)|^r\mu(z)\Big)^{2/r}\frac{\mu(y)}{k^{2M+1}\mu(B(y,k))}\\
&\le C\sum_{k\in \mathbb{N},\,k>d(x,x_{B_j})/2}\frac{1}{k^{2M+1}\mu(B(x,k))}\sum_{d(y,x)<k}\Big(\sum_{z\in \Gamma}\frac{e^{-cd(y,z)^2/k}}{\mu(B(y,\sqrt{k}))}|b_j(z)|^r\mu(z)\Big)^{2/r}\mu(y)\\
&\le C\sum_{k\in \mathbb{N},\,k>d(x,x_{B_j})/2}\frac{1}{k^{2M+1}}\Big(\sum_{d(y,x)<k}\sum_{z\in \Gamma}\frac{e^{-cd(y,z)^2/k}}{\mu(B(y,\sqrt{k}))}|b_j(z)|^r\mu(z)\frac{\mu(y)}{\mu(B(x,k))}\Big)^{2/r}\\
&\le C\sum_{k\in \mathbb{N},\,k>d(x,x_{B_j})/2}\frac{1}{k^{2M+1}\mu(B(x,k))^{2/r}}\Big(\sum_{z\in \Gamma}|b_j(z)|^r\mu(z)\sum_{y\in \Gamma}\frac{e^{-cd(y,z)^2/k}}{\mu(B(y,\sqrt{k}))}\mu(y)\Big)^{2/r}\\
&\le C\sum_{k\in \mathbb{N},\,k>d(x,x_{B_j})/2}\frac{1}{k^{2M+1}\mu(B(x,k))^{2/r}}\|b_j\|_r^2\\
&\le \frac{C}{d(x,x_{B_j})^{2M}\mu(B(x,d(x,x_{B_j})))^{2/r}}\|b_j\|_r^2,\,\,\,x\in \Gamma\setminus S_0(B_j).
\end{align*}

The properties of $b_j$ and Lemma \ref{LemaCoc}, (i), lead to
\begin{align*}
\|S_L(a_j)\|_{L^r(S_i(B_j))}&\le C\|b_j\|_r\Big(\sum_{x\in S_i(B_j))} \frac{1}{d(x,x_{B_j})^{rM}\mu(B(x,d(x,x_{B_j})))}\\
&\qquad +(\mu(B_j))^{r/r'}\sum_{x\in S_i(B_j))}\frac{e^{-cd(x,x_{B_j})}}{(\mu(B(x,d(x,x_{B_j})))^r}\Big)^{1/r}\\
&\le C(r_{B_j})^{M}(\mu(B_j))^{1/r}\|\chi_{B_j}\|_{p(\cdot)}^{-1}\Big(\frac{\mu(B(x_{B_j},2^ir_{B_j}))}{(2^ir_{B_j})^{Mr}\mu(B(x_{B_j},2^ir_{B_j}))}\Big)^{1/r}\\
&\le C2^{-iM}(\mu(B_j))^{1/r}\|\chi_{B_j}\|_{p(\cdot)}^{-1}\\
&\le C2^{-i(M-D/R)}(\mu(B(x_{B_j},2^{i+3}Mr_{B_j})))^{1/r}\|\chi_{B(x_{B_j},2^{i+3}Mr_{B_j})}\|_{p(\cdot)}^{-1},
\end{align*}
where $0<R<p_-$.

According to Lemmas \ref{LemaSum} and \ref{LemaSumCoc}, since $r>\max\{p_+,1\}$, we can
write
$$
\Big\|\Big(\sum_{j=\ell_1}^{\ell_2}\Big(|\lambda_j|S_L(a_j)\chi_{S_i(B_j)}\Big)^\mathfrak{p}\Big)^{1/\mathfrak{p}}\Big\|_{p(\cdot)}\le
C2^{-i(M-2D/R)}\Big\|\Big(\sum_{j=\ell_1}^{\ell_2}\Big(\frac{|\lambda_j|\chi_{B_j}}{\|\chi_{B_j}\|_{p(\cdot)}}\Big)^\mathfrak{p}\Big)^{1/\mathfrak{p}}\Big\|_{p(\cdot)}.
$$

Then, we deduce that
$$
\Big\|S_L\Big(\sum_{j=\ell_1}^{\ell_2}\lambda_ja_j\Big)\Big\|_{p(\cdot)}\le
C\Big\|\Big(\sum_{j=\ell_1}^{\ell_2}\Big(\frac{|\lambda_j|\chi_{B_j}}{\|\chi_{B_j}\|_{p(\cdot)}}\Big)^\mathfrak{p}\Big)^{1/\mathfrak{p}}\Big\|_{p(\cdot)},
$$
because $M>2D/p_-$.

Since the series $\sum_{j\in \mathbb{N}}
\Big(\frac{|\lambda_j|\chi_{B_j}}{\|\chi_{B_j}\|_{p(\cdot)}}\Big)^\mathfrak{p}$
converges in $L^{p(\cdot)/\mathfrak{p}}(\Gamma)$, we conclude that
the series $\sum_{j\in \mathbb{N}} \lambda_ja_j$ converges in
$H^{p(\cdot)}_L(\Gamma)$, and
$$
\|f\|_{H^{p(\cdot)}_L(\Gamma)}\le
\mathcal{A}(\{\lambda_j\},\{B_j\}),
$$
where $f=\sum_{j\in \mathbb{N}} \lambda_ja_j$.

\end{proof}

Our next objective is to see that each $f\in H^{p(\cdot )}_L(\Gamma )$
admits $(r,p(\cdot ),M)$-representations.

\begin{Prop}\label{Hardy}
 Assume that $p\in \mathcal{P}^{\log}(\Gamma)$, $r\ge 2$, $r>p_+$, $M\in \mathbb{N}_+$ and $M>2D/p_-$. There exists $C>0$ such that, for every $f\in H^{p(\cdot )}_L(\Gamma )$ there exist, for every $j\in \mathbb{N}$, $\lambda _j\in \mathbb{C}$ and a $(r,p(\cdot ),M)$-atom $a_j$ associated with $B_j$ such that
$$
f=\sum_{j\in \mathbb{N}}\lambda _ja_j,\quad \mbox { in }H^{p(\cdot
)}_L(\Gamma ),
$$
and
$$
\mathcal{A}(\{\lambda _j\},\{B_j\})\leq C\|f\|_{H^{p(\cdot
)}_L(\Gamma)}.
$$
\end{Prop}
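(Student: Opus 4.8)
The plan is to invert the construction of Proposition \ref{AD}, (ii): there the reconstruction operator $\Pi_M$ was shown to map $T_2^{p(\cdot)}(\Gamma)$ into $H^{p(\cdot)}_L(\Gamma)$, and here I will push $f$ forward into a tent space by the square-function lift, decompose it with Theorem \ref{Th1.1}, and then recover atoms by applying $\Pi_M$. First I would work on the dense subspace $\mathbb{H}^{p(\cdot)}_L(\Gamma)$ and take $f\in\mathbb{H}^{p(\cdot)}_L(\Gamma)\subset L^2(\Gamma)$. Define the lift $(Qf)(y,k)=k(I-P)P^{[k/2]}(f)(y)$, $(y,k)\in\Gamma\times\mathbb{N}_+$. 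Since $d(x,y)<k$ together with (\ref{doubling}) gives $\mu(B(x,k))\simeq\mu(B(y,k))$, one has $\mathcal{A}(Qf)\simeq S_L(f)$ pointwise, so $\|Qf\|_{T_2^{p(\cdot)}(\Gamma)}\simeq\|f\|_{H^{p(\cdot)}_L(\Gamma)}$, and since $S_L$ is bounded on $L^2(\Gamma)$ also $Qf\in T_2^2(\Gamma)$. The reproducing identity $f=\Pi_M(Qf)$ is the crux of the setup: substituting $Qf(\cdot,k+1)=(k+1)(I-P)P^{[(k+1)/2]}f$ into (\ref{XXX1}) and using $[k/2]+[(k+1)/2]=k$ collapses $\Pi_M(Qf)$ to $\sum_{k\ge0}c_{k,M+1}(I-P)^{M+1}P^kf$, which equals $f$ by Proposition \ref{Representation} applied with $M$ replaced by $M+1$. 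This is legitimate because $Qf\in T_2^2(\Gamma)$ and $\Pi_M$ is bounded from $T_2^2(\Gamma)$ into $L^2(\Gamma)$ by Proposition \ref{AD}, (i).

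Next I would apply Theorem \ref{Th1.1}, (i), with $q=r$, to write $Qf=\sum_j\lambda_jA_j$, where each $A_j$ is a $(T_2^{p(\cdot)},r)$-atom associated with a ball $B_j=B(x_{B_j},r_{B_j})$, the series converging in $T_2^{p(\cdot)}(\Gamma)$ and, by part (iii) with $r=2$ since $Qf\in T_2^2(\Gamma)$, also in $T_2^2(\Gamma)$, with $\mathcal{A}(\{\lambda_j\},\{B_j\})\le C\|Qf\|_{T_2^{p(\cdot)}(\Gamma)}\le C\|f\|_{H^{p(\cdot)}_L(\Gamma)}$. Applying the $L^2$-bounded operator $\Pi_M$ termwise yields $f=\Pi_M(Qf)=\sum_j\lambda_j\Pi_M(A_j)$ in $L^2(\Gamma)$. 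Writing $\Pi_M(A_j)=L^Mb_j$ with $b_j=\sum_{k=0}^{r_{B_j}-1}\tfrac{c_{k,M+1}}{k+1}P^{[k/2]}(A_j(\cdot,k+1))$, the support of $A_j$ in $T(B_j)$ together with $\supp P^{[k/2]}$ spreading by at most $[k/2]$ forces $\supp b_j\subset B_j$, hence $\supp L^ib_j\subset B(x_{B_j},r_{B_j}+M)\subset\widetilde B_j:=B(x_{B_j},(M+1)r_{B_j})$ for $0\le i\le M$, which gives the support condition of an atom associated with $\widetilde B_j$.

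The hard part will be the $L^r$-size estimate $\|L^ib_j\|_r\le C(r_{B_j})^{M-i}\mu(B_j)^{1/r}\|\chi_{B_j}\|_{p(\cdot)}^{-1}$ for $0\le i\le M$, uniformly in $j$; this is the $L^r$ analogue of the $\|\beta_j\|_2$ bound in the proof of Proposition \ref{AD}, (ii). I would prove it by duality: for $h\in L^{r'}(\Gamma)$ with $\|h\|_{r'}\le1$, move $P^{[k/2]}$ and $(I-P)^i$ onto $h$ (the Markov operator is self-adjoint with respect to $\mu$), bound $c_{k,M+1}/(k+1)\le(k+1)^{M-1}$, and use $k\le r_{B_j}$ to split $k^{M}\le(r_{B_j})^{M-i}k^{i}$. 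This produces $|\langle L^ib_j,h\rangle|\le C(r_{B_j})^{M-i}$ times a tent-space pairing of $A_j$ against $g_i(y,k)=k^i(I-P)^iP^{[(k-1)/2]}(h)(y)$. As in the proof of Proposition \ref{AD}, (i), this pairing is controlled by $\|\mathcal{A}(A_j)\|_r\,\|\mathcal{A}(g_i)\|_{r'}=\|A_j\|_{T_2^r(\Gamma)}\,\|S_{i,L}(h)\|_{r'}$, where $S_{i,L}$ is the order-$i$ square function (so that $\mathcal{A}(g_i)=S_{i,L}(h)$ as in (\ref{YY1})), bounded on $L^{r'}(\Gamma)$ by \cite[Proposition 4.6]{B}; the atom normalization $\|A_j\|_{T_2^r(\Gamma)}\le\mu(B_j)^{1/r}\|\chi_{B_j}\|_{p(\cdot)}^{-1}$ then closes the estimate. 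Passing from $B_j$ to $\widetilde B_j$ costs only bounded factors via Lemma \ref{LemaCoc}, (i), so $C^{-1}\Pi_M(A_j)$ is a genuine $(r,p(\cdot),M)$-atom associated with $\widetilde B_j$ for a uniform constant $C$.

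Finally I would set $a_j=C^{-1}\Pi_M(A_j)$ and $\widetilde\lambda_j=C\lambda_j$, so that $f=\sum_j\widetilde\lambda_ja_j$ in $L^2(\Gamma)$, and control $\mathcal{A}(\{\widetilde\lambda_j\},\{\widetilde B_j\})\le C\mathcal{A}(\{\lambda_j\},\{B_j\})\le C\|f\|_{H^{p(\cdot)}_L(\Gamma)}$ using Lemma \ref{LemaSumCoc} to absorb the enlargement from $B_j$ to $\widetilde B_j$. By Proposition \ref{H6} this atomic series converges in $H^{p(\cdot)}_L(\Gamma)$, and its sum is $f$, settling the case $f\in\mathbb{H}^{p(\cdot)}_L(\Gamma)$. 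For general $f\in H^{p(\cdot)}_L(\Gamma)$ I would use density of $\mathbb{H}^{p(\cdot)}_L(\Gamma)$: choose $f_n$ with $f_0=0$ and $\|f_n-f\|_{H^{p(\cdot)}_L(\Gamma)}\le2^{-n}\|f\|_{H^{p(\cdot)}_L(\Gamma)}$, decompose each $f_n-f_{n-1}$ atomically, and merge the resulting families exactly as in the telescoping argument at the end of the proof of Theorem \ref{Th1.1}, (i); the $\mathfrak{p}$-subadditivity of $\mathcal{A}(\{\cdot\},\{\cdot\})$ keeps the total coefficient bounded by $C\|f\|_{H^{p(\cdot)}_L(\Gamma)}$.
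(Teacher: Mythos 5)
Your proposal is correct and follows essentially the same route as the paper's own proof: lift $f$ to the tent space via $k(I-P)P^{[k/2]}f$, decompose with Theorem \ref{Th1.1}, reconstruct with $\Pi_M$ using Proposition \ref{Representation}, verify by the duality/square-function argument (boundedness of $S_{k,L}$ on $L^{r'}(\Gamma)$) that $C\Pi_M(A_j)$ is an $(r,p(\cdot),M)$-atom on a dilated ball, absorb the dilation with Lemma \ref{LemaSumCoc}, and finish for general $f$ by the density/telescoping argument combined with Proposition \ref{H6}. The only differences are cosmetic (naming the lift $Q$, using $(M+1)r_{B_j}$ instead of $(M+2)r_{B_j}$ for the enlarged ball).
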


\begin{proof}
Let $f\in L^2(\Gamma )\cap H^{p(\cdot )}_L(\Gamma )$. According to
Proposition \ref{Representation}, we have that
$$
f=\sum_{k=0}^\infty c_{k,M+1}(I-P)^{M+1}P^kf,\quad \mbox{ in
}L^2(\Gamma ),
$$
where $c_{k,1}=1$ and $c_{k,N+1}=\sum_{j=0}^kc_{j,N}$, $N\in \mathbb{N}_+$ and $k\in \mathbb{N}$.\\

We can write
$$
f=\sum_{k=0}^\infty \frac{c_{k,M+1}}{k+1}(I-P)^MP^{[k/2]}((k+1)(I-P)P^{[(k+1)/2]}f),\quad \mbox{ in }L^2(\Gamma ).
$$

Since $f\in L^2(\Gamma)\cap H^{p(\cdot )}_L(\Gamma )$,
$k(I-P)P^{[k/2]}(f)\in T_2^2(\Gamma)\cap T_2^{p(\cdot )}(\Gamma )$,
$k\in \mathbb{N}_+$, and according to Theorem \ref{Th1.1} there
exist, for every $j\in \mathbb{N}$, $\lambda _j\in \mathbb{C}$ and a
$(T^{p(\cdot )}_2,r)$-atom $a_j$ associated to
$B_j=B(x_{B_j},r_{B_j})$, with $x_{B_j}\in \Gamma$ and $r_{B_j}\ge 1$, satisfying that
$$
\mathcal{A}(\{\lambda _j\},\{B_j\})\leq
C\|k(I-P)P^{[k/2]}(f)\|_{T_2^{p(\cdot )}(\Gamma
)}=C\|f\|_{H^{p(\cdot )}_L(\Gamma )},
$$
and
$$
k(I-P)P^{[k/2]}(f)(x)=\sum_{j\in \mathbb{N}}\lambda_ja_j(x,k),\quad (x,k)\in \Gamma \times \mathbb{N}_+,
$$
where the series converges in both $T_2^{p(\cdot )}(\Gamma )$ and $T_2^2(\Gamma )$.\\

Note that, for every $m\in \mathbb{N}$, $\zeta_m=\sum_{j=0}^m\lambda
_ja_j\in T_c(\Gamma )$. By Proposition \ref{AD} we have that
$$
\Pi _M(\zeta_m)=\sum_{j=0}^m\lambda _j\Pi_M(a_j)\longrightarrow
\Pi_M(k(I-P)P^{[k/2]}(f)(y)),\quad \mbox{ as }m\rightarrow \infty,
$$
in $L^2(\Gamma )$, and in $H^{p(\cdot )}_L(\Gamma )$. Also, for every $x\in \Gamma$,
\begin{align*}
& \sum_{j\in \mathbb{N}}\lambda _j\Pi _M(a_j)(x)
=\Pi _M(k(I-P)P^{[k/2]}(f)(y))(x)\\
& \qquad =\sum_{k=0}^\infty
\frac{c_{k,M+1}}{k+1}(I-P)^MP^{[k/2]}((k+1)(I-P)P^{[(k+1)/2]}(f)(y))(x).
\end{align*}

%
%

We are going to see that there exists $C>0$ such that, for every $j\in \mathbb{N}$, $C\Pi _M(a_j)$ is a $(r,p(\cdot ),M)$-atom.\\

Let $j\in \mathbb{N}$. We write $\Pi _M(a_j)=(I-P)^Mb_j$, where
$$
b_j=\sum_{\ell=0}^{r_{B_j}-1}\frac{c_{\ell,M+1}}{\ell+1}P^{[\ell/2]}(a_j(\cdot , \ell+1)).
$$
Note that, for every $\ell\in \mathbb{N}_+$, $a_j(.,\ell)\in L^r(\Gamma)$,
because $\supp(a_j(.,\ell))$ is finite.


Hence, since $P$ is a contraction in $L^r(\Gamma )$,  $b_j\in L^r(\Gamma )$.
As it was proved in \cite[p. 3463]{BD}, $\supp L^kb_j\subset B(x_{B_j}, M+2r_{B_j})\subset B(x_{B_j},(M+2)r_{B_j})=\mathbb{B}_j$, for every $k=0,...,M$.\\

In order to estimate $\|L^kb_j\|_r$, $k=0,...,M$, we proceed as in
\cite[p. 828-829]{B}. Let $k\in \mathbb{N}$, $0\leq k\leq M$. Assume
that $h\in L^{r'}(\Gamma )$. By taking into account that
$c_{R,M+1}\leq (R+1)^M$, $R \in \mathbb{N}$, the operator $P$ is
selfadjoint, \cite[Proposition 3.2, (a)]{BD}, and H\"older's inequality, we obtain
$$
\Big|\sum_{x\in \Gamma}L^k(b_j)(x)h(x)\mu (x)\Big|\le
Cr_{B_j}^{M-k}\|a_j\|_{T^r_2(\Gamma)}\|S_{k,L}(h)\|_{r'},
$$
where, as in (\ref{YY1}) ,
$$
S_{k,L}(g)(x)=\Big(\sum_{(y,t)\in \Upsilon(x)}\frac{|t^k(I-P)^kP^{[(t-1)/2]}(g)(y)|^2}{t\mu(B(x,t))}\mu(y)\Big)^{1/2}.
$$
Since $\mathcal{S}_{k,L}$ defines a bounded operator from
$L^{r'}(\Gamma )$ into itself (\cite[Proposition 4.6]{B}), by using
Lemma \ref{LemaCoc}, (i), we get
$$
\Big|\sum_{x\in \Gamma}L^k(b_j)(x)h(x)\mu (x)\Big|\le
C(r_{\mathbb{B}_j})^{M-k}(\mu(\mathbb{B}_j))^{1/r}\|\chi_{\mathbb{B}_j}\|_{p(\cdot)}^{-1}\|h\|_{r'},
$$
where $r_{\mathbb{B}_j}=(M+2)r_{B_j}$. We deduce that
\begin{equation}\label{ABC}
\|L^k(b_j)\|_r\leq
C(r_{\mathbb{B}_j})^{M-k}(\mu(\mathbb{B}_j))^{1/r}\|\chi
_{\mathbb{B}_j}\|_{p(\cdot )}^{-1}.
\end{equation}
Thus our aim is proved.\\

On the other hand, by using Lemma \ref{LemaSumCoc}, we get
$$
 \mathcal{A}(\{\lambda _j\},\{\mathbb{B}_j\})\leq C\mathcal{A}(\{\lambda _j\},\{B_j\})\leq C\|f\|_{H^{p(\cdot )}_L(\Gamma )}.
$$

Suppose now that $f\in H^{p(\cdot )}_L(\Gamma )$. Since $H^{p(\cdot
)}_L(\Gamma ) \cap L^2(\Gamma )$ is a dense subspace of $H^{p(\cdot
)}_L(\Gamma )$, there exists a sequence $(f_k)_{k\in
\mathbb{N}}\subset H^{p(\cdot )}_L(\Gamma ) \cap L^2(\Gamma )$ such
that $f_0=0$, $f_k\longrightarrow f$, as $k\rightarrow \infty$, in
$H^{p(\cdot )}_L(\Gamma)$, and $\|f_k-f\|_{H^{p(\cdot )}_L(\Gamma
)}\leq 2^{-k}\|f\|_{H^{p(\cdot )}_L(\Gamma )}$, $k\in \mathbb{N}$.
According to the first part of this proof, for every $k\in
\mathbb{N}$,
$$
f_{k+1}-f_{k}=\sum_{j=0}^\infty \lambda _{j,k}a_{j,k},\quad \mbox{
in }L^2(\Gamma), \mbox{ and in }H^{p(\cdot )}_L(\Gamma ),
$$
where, for every $j\in \mathbb{N}$, $\lambda _{j,k}\in \mathbb{C}$ and $a_{j,k}$ is a $( r,p(\cdot ), M)$-atom associated with the ball $\mathbb{B}_{j,k}$, such that
$$
\mathcal{A} (\{\lambda _{j,k}\},\{\mathbb{B}_{j,k}\})\leq
C\|f_k-f_{k+1}\|_{H^{p(\cdot )}_L(\Gamma )}.
$$

We have that
$$
f=\sum_{k=0}^\infty (f_{k+1}-f_k),\quad \mbox{ in }H^{p(\cdot
)}_L(\Gamma ).
$$

Suppose that $\{(j_\ell ,k_\ell )\}_{\ell \in \mathbb{N}}$ is an ordenation of $\mathbb{N}\times \mathbb{N}$. By using Minkowski's inequality we can write
\begin{align*}
& \Big\|\Big(\sum_{\ell \in \mathbb{N}}\Big(\frac{|\lambda _{j_\ell ,k_\ell}|\chi _{\mathbb{B}_{j_\ell ,k_\ell}}}{\|\chi _{\mathbb{B}_{j_\ell ,k_\ell}}\|_{p(\cdot )}}\Big)^\mathfrak{p}\Big)^{1/\mathfrak{p}}\Big\|_{p(\cdot)}
=\Big\|\sum_{\ell \in \mathbb{N}}\Big(\frac{|\lambda _{j_\ell ,k_\ell}|\chi _{\mathbb{B}_{j_\ell ,k_\ell}}}{\|\chi _{\mathbb{B}_{j_\ell ,k_\ell}}\|_{p(\cdot )}}\Big)^\mathfrak{p}\Big\|_{p(\cdot)/\mathfrak{p}}^{1/\mathfrak{p}}\\
& \qquad \leq \Big\|\sum_{k=0}^\infty \sum_{j=0}^\infty
\Big(\frac{|\lambda _{j,k}|\chi _{\mathbb{B}_{j,k}}}{\|\chi
_{\mathbb{B}_{j,k}}\|_{p(\cdot
)}}\Big)^\mathfrak{p}\Big\|_{p(\cdot)/\mathfrak{p}}^{1/\mathfrak{p}}
\leq \Big(\sum_{k=0}^\infty \Big\|\sum_{j=0}^\infty \Big(\frac{|\lambda _{j,k}|\chi _{\mathbb{B}_{j,k}}}{\|\chi _{\mathbb{B}_{j,k}}\|_{p(\cdot )}}\Big)^\mathfrak{p}\Big\|_{p(\cdot)/\mathfrak{p}}\Big)^{1/\mathfrak{p}} \\
& \qquad =\Big(\sum_{k=0}^\infty \Big\|\Big(\sum_{j=0}^\infty
\Big(\frac{|\lambda _{j,k}|\chi _{\mathbb{B}_{j,k}}}{\|\chi
_{\mathbb{B}_{j,k}}\|_{p(\cdot
)}}\Big)^\mathfrak{p}\Big)^{1/\mathfrak{p}}\Big\|_{p(\cdot)}^\mathfrak{p}\Big)^{1/\mathfrak{p}}
 \leq C\|f\|_{H^{p(\cdot )}_L(\Gamma )}\Big(\sum_{k=0}^\infty 2^{-k\mathfrak{p}}\Big)^{1/\mathfrak{p}}
 \leq C\|f\|_{H^{p(\cdot )}_L(\Gamma )}.
\end{align*}

According to Proposition \ref{H6} we deduce that the series
$\sum_{\ell\in \mathbb{N}}\lambda _{j_\ell ,k_\ell }a_{j_\ell , k_\ell
}$ converges in $H^{p(\cdot )}_L(\Gamma )$.

We are going to see that $f=\sum_{\ell\in \mathbb{N}} \lambda _{j_\ell
,k_\ell }a_{j_\ell , k_\ell }$.

Let $\epsilon>0$. There exists $L_0\in \mathbb{N}$ such that if $\ell_0\in \mathbb{N}$, $\ell_0\ge L_0$, then
$$
\Big\|\sum_{\ell =\ell_0}^\infty
\Big(\frac{|\lambda_{j_\ell,k_\ell}|\chi_{\mathbb{B}_{j_\ell,k_\ell}}}{\|\chi_{\mathbb{B}_{j_\ell,k_\ell}}\|_{p(\cdot)}}\Big)^\mathfrak{p}\Big)^{1/\mathfrak{p}}\Big\|_{p(\cdot)}<\epsilon.
$$
Let $\ell_0\in \mathbb{N}$, $\ell_0\ge L_0$. We define
$\hat{k}_0=\max\{k_\ell,\,\,\ell=0,...,\ell_0\}$ and
$J_0=\max\{j_\ell,\,\,\ell=0,...,\ell_0\}$. There exists
$\hat{k}_1\in \mathbb{N}$ such that $\hat{k}_1>\hat{k}_0$ and
$$
\Big\|f-\sum_{k=0}^{\hat{k}_1}(f_k-f_{k+1})\Big\|_{H^{p(\cdot
)}_L(\Gamma )}<\epsilon.
$$
We choose $J_1\in \mathbb{N}$, $J_1>J_0$, and such that
$$
\Big\|f_{k+1}-f_{k}-\sum_{j=1}^{J_1}\lambda_{j,k}a_{j,k}\Big\|_{H^{p(\cdot
)}_L(\Gamma )}<\frac{\epsilon}{\hat{k}_1},
$$
for every $k=0,...,\hat{k}_1$.

Then, by defining $U_0=\{(j,k):j=1,...,J_1,\,k=0,...,\hat{k}_1\}$
and $W_0=\{(j_\ell,k_\ell):\,\,\ell\le \ell_0\}$ we have that
$W_0\subset U_0$ and
\begin{align*}
\Big\|\sum_{k=0}^{\hat{k}_1}\sum_{j=1}^{J_1}\lambda_{j,k}a_{j,k}-\sum_{\ell=0}^{\ell_0}\lambda_{j_\ell,k_\ell}a_{j_\ell,k_\ell}\Big\|_{H^{p(\cdot )}_L(\Gamma )}&=\Big\|\sum_{(j,k)\in U_0\setminus W_0}\lambda_{j,k}a_{j,k}\Big\|_{H^{p(\cdot )}_L(\Gamma )}\\
&\le C\Big\|\Big(\sum_{(j,k)\in U_0\setminus W_0}\Big(\frac{|\lambda_{j,k}|\chi_{\mathbb{B}_{j,k}}}{\|\chi_{\mathbb{B}_{j,k}}\|_{p(\cdot)}}\Big)^\mathfrak{p}\Big)^{1/\mathfrak{p}}\Big\|_{p(\cdot)}\\
&\le
C\Big\|\Big(\sum_{\ell=\ell_0}^\infty\Big(\frac{|\lambda_{j_\ell,k_\ell}|\chi_{\mathbb{B}_{j_\ell,k_\ell}}}{\|\chi_{\mathbb{B}_{j_\ell,k_\ell}}\|_{p(\cdot)}}\Big)^\mathfrak{p}\Big)^{1/\mathfrak{p}}\Big\|_{p(\cdot)}<C\epsilon.
\end{align*}
By combining the above estimates we conclude that
$$
\Big\|f-\sum_{\ell=0}^{\ell_0}\lambda_{j_\ell,k_\ell}a_{j_\ell,k_\ell}\Big\|_{H^{p(\cdot
)}_L(\Gamma )}\le C\epsilon.
$$
Note that the constant $C>0$ does not depend on $\ell_0$.

\end{proof}

As a consequence of Propositions \ref{H6} and \ref{Hardy} we deduce
the following result.

\begin{Cor}  Assume that $p\in \mathcal{P}^{\log}(\Gamma)$, $p_+<2$ and $M\in \mathbb{N}_+$, $M>2D/p_-$.
Then, $H^{p(\cdot )}_L(\Gamma )=H^{p(\cdot )}_{L,M,at}(\Gamma )$ and
the quasinorms are equivalent.
\end{Cor}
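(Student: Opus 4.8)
The plan is to prove that the two ``pre-Hardy'' spaces $\mathbb{H}^{p(\cdot )}_L(\Gamma)$ and $\mathbb{H}^{p(\cdot )}_{L,M,at}(\Gamma)$ coincide as subsets of $L^2(\Gamma)$ and carry equivalent quasinorms, and then to pass to the completions. The crucial point making everything fit together is that, since $p_+<2$, the value $r=2$ is admissible in both Proposition \ref{H6} and Proposition \ref{Hardy}: indeed $2\ge 2$ and $2>p_+$, while $M>2D/p_-$ is part of the hypothesis. Thus the atoms appearing in both results are exactly the $(2,p(\cdot ),M)$-atoms used to define $\mathbb{H}^{p(\cdot )}_{L,M,at}(\Gamma)$, and no mismatch between $r$-atoms and $2$-atoms arises.

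For the inclusion $\mathbb{H}^{p(\cdot )}_{L,M,at}(\Gamma)\subset \mathbb{H}^{p(\cdot )}_L(\Gamma)$ I would start from a $(2,p(\cdot ),M)$-atomic representation $f=\sum_j\lambda_ja_j$ in $L^2(\Gamma)$ with $\mathcal{A}(\{\lambda_j\},\{B_j\})<\infty$ and apply Proposition \ref{H6} with $r=2$: the partial sums $\sigma_N=\sum_{j\le N}\lambda_ja_j$ form a Cauchy sequence in $H^{p(\cdot )}_L(\Gamma)$, so $S_L(\sigma_N)$ converges in $L^{p(\cdot )}(\Gamma)$ to some $h$ with $\|h\|_{p(\cdot )}\le C\mathcal{A}(\{\lambda_j\},\{B_j\})$. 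For the reverse inclusion I would use the first part of the proof of Proposition \ref{Hardy} (again with $r=2$), which for $f\in L^2(\Gamma)\cap H^{p(\cdot )}_L(\Gamma)=\mathbb{H}^{p(\cdot )}_L(\Gamma)$ produces a decomposition $f=\sum_j\lambda_j\Pi_M(a_j)$ converging in both $L^2(\Gamma)$ and $H^{p(\cdot )}_L(\Gamma)$, where each $C\Pi_M(a_j)$ is a $(2,p(\cdot ),M)$-atom and $\mathcal{A}(\{\lambda_j\},\{B_j\})\le C\|f\|_{H^{p(\cdot )}_L(\Gamma)}$. The $L^2$-convergence is precisely what is needed to exhibit $f$ as an element of $\mathbb{H}^{p(\cdot )}_{L,M,at}(\Gamma)$, giving $\|f\|_{H^{p(\cdot )}_{L,M,at}(\Gamma)}\le C\|f\|_{H^{p(\cdot )}_L(\Gamma)}$.

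The step I expect to be the most delicate is the identification of limits in the first inclusion: one has two a priori different limits of the partial sums $\sigma_N$, namely $f$ in $L^2(\Gamma)$ and the abstract completion element in $H^{p(\cdot )}_L(\Gamma)$, and these must be reconciled. Here I would exploit that $\Gamma$ is discrete with $\mu(x)>0$ for every $x$, so that convergence in $L^2(\Gamma)$ and convergence in $L^{p(\cdot )}(\Gamma)$ (equivalently, modular convergence, since $p_+<\infty$) both force pointwise convergence on $\Gamma$. As $S_L$ is bounded and sublinear on $L^2(\Gamma)$, $\sigma_N\to f$ in $L^2$ yields $S_L(\sigma_N)\to S_L(f)$ in $L^2$, hence pointwise; comparing with the $L^{p(\cdot )}$-limit $h$ gives $S_L(f)=h\in L^{p(\cdot )}(\Gamma)$, so $f\in\mathbb{H}^{p(\cdot )}_L(\Gamma)$ with $\|f\|_{H^{p(\cdot )}_L(\Gamma)}\le C\mathcal{A}(\{\lambda_j\},\{B_j\})$. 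Taking infima over the representations yields the two quasinorm inequalities, hence $\mathbb{H}^{p(\cdot )}_L(\Gamma)=\mathbb{H}^{p(\cdot )}_{L,M,at}(\Gamma)$ with equivalent quasinorms; since $H^{p(\cdot )}_L(\Gamma)$ and $H^{p(\cdot )}_{L,M,at}(\Gamma)$ are by definition the completions of this common space under equivalent quasinorms, the identity extends to an isomorphism of the completions, proving $H^{p(\cdot )}_L(\Gamma)=H^{p(\cdot )}_{L,M,at}(\Gamma)$ with equivalent quasinorms.
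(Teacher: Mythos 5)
Your proof is correct and follows essentially the same route as the paper, which states this corollary as an immediate consequence of Propositions \ref{H6} and \ref{Hardy} applied with $r=2$ (admissible precisely because $p_+<2$). The extra work you do — identifying the abstract $H^{p(\cdot)}_L$-limit with the $L^2$-sum via pointwise convergence on the discrete space, and passing from the pre-Hardy spaces to their completions — is exactly the routine deduction the paper leaves implicit.
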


We now establish that $H^{p(\cdot )}_L(\Gamma
)=L^{p(\cdot)}(\Gamma)$ provided that $p_->1$. In order to prove
this property we need to see that $S_L$ defines a bounded sublinear
operator from $L^{p(\cdot)}(\Gamma)$ into itself when $p_->1$.

We recall the definitions of Muckenhoupt classes of weights $A_r(\Gamma)$, $1\le r\le\infty$, in our setting. If $1<r<\infty$ we say that a function $w:\,\Gamma\longrightarrow (0,\infty)$ is in $A_r(\Gamma)$ when there exists $C>0$ such that, for every ball $B\subset \Gamma$,
$$
\Big(\frac{1}{\mu(B)}\sum_{x\in B}w(x)\mu(x)\Big)\Big(\frac{1}{\mu(B)}\sum_{x\in B}w(x)^{1-r'}\mu(x)\Big)^{r-1}\le C,
$$
where $r'=\frac{r}{r-1}$ is the exponent conjugated of $r$.

A function $w:\Gamma\longrightarrow (0,\infty)$ is said to be in $A_1(\Gamma)$ when there is a constant $C>0$ such that, for every ball $B\subset \Gamma$,
$$
\frac{1}{\mu(B)}\sum_{x\in B}w(x)\mu(x)\le Cw(y),\,\,\,y\in B.
$$
By $A_\infty(\Gamma)$ is denoted, as usual, the union $\cup_{1\le p<\infty}A_p(\Gamma)$.

If $1\le r< \infty$ and $w:\Gamma\longrightarrow (0,\infty)$,
$L^r(\Gamma,w)$ denotes the weighted $L^p$-space on
$(\Gamma,\mu,d)$.

\begin{Prop}\label{SL}
(i) Let $1<q<\infty$ and $w\in A_q(\Gamma)$. Then, $S_L$ defines a bounded operator from $L^q(\Gamma,w)$ into itself.

(ii)  Let $p\in \mathcal{P}^{\log}(\Gamma)$ such that $p_->1$. Then, $S_L$ defines a bounded operator from $L^{p(\cdot)}(\Gamma)$ into itself.
\end{Prop}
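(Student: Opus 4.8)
The plan is to prove the weighted estimate (i) first and then deduce the variable exponent estimate (ii) from it by extrapolation, following the same philosophy as in the proof of Lemma \ref{LemaFS}.

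For (i) I would realise $S_L$ as the norm of a Hilbert space valued linear operator and then apply the weighted theory of vector valued Calderón-Zygmund operators on the space of homogeneous type $(\Gamma,\mu,d)$. Let $\mathcal{H}=\ell^2(\Gamma\times\mathbb{N}_+)$ (with the counting measure on the index set) and define, for $f$ defined on $\Gamma$,
$$
(Uf)(x)=\Big((y,k)\longmapsto \chi_{\{d(x,y)<k\}}\,\frac{k(I-P)P^{[k/2]}(f)(y)}{(k\mu(B(y,k)))^{1/2}}\,\mu(y)^{1/2}\Big),\quad x\in\Gamma,
$$
so that $S_L(f)(x)=\|(Uf)(x)\|_{\mathcal{H}}$. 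The boundedness of $S_L$ on $L^2(\Gamma)$ recalled in \cite[p.~3460]{BD} says exactly that $U\colon L^2(\Gamma)\to L^2(\Gamma;\mathcal{H})$ is bounded. It then remains to check that the $\mathcal{H}$-valued kernel $K(x,z)$ of $U$, whose components are (up to the normalisation above) the cone indicator times $k\,\widetilde{p_{[k/2],1}}(y,z)$, satisfies the standard size and regularity estimates. For the size estimate $\|K(x,z)\|_{\mathcal{H}}\le C/\mu(B(x,d(x,z)))$ I would square, insert the Gaussian bound \eqref{compuesto} for the kernel of $(I-P)P^{[k/2]}$, and sum over the cone using \eqref{doubling} and the elementary summations \eqref{max1}-\eqref{max2}; the Gaussian factor $e^{-cd^2(y,z)/k}$ localises the sum to $k\gtrsim d(x,z)$, which produces the required decay.

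The main obstacle is the Hörmander regularity of $K$. The dependence of $K(x,z)$ on the integration variable $z$ enters only through $\widetilde{p_{[k/2],1}}(y,z)$, so the regularity in $z$ reduces to the discrete Hölder estimates for the iterated Markov kernels that are available under hypothesis $\Delta(\alpha)$ (as in \cite{B} and \cite{BD}), which supply a gain of the form $(d(z,z')/\sqrt{k})^\sigma$ for some $\sigma>0$; summing these Gaussian tails over the cone and over $\{z:\,d(x,z)\ge 2d(z,z')\}$ by means of \eqref{doubling} gives the Hörmander condition in the second variable. The delicate point is the regularity in the cone vertex $x$, since there the non-smooth indicator $\chi_{\{d(x,y)<k\}}$ must be differenced; here I would again use that the Gaussian factor forces $k\gtrsim d(x,z)$, so that the set of indices $(y,k)$ on which $\chi_{\{d(x,y)<k\}}$ and $\chi_{\{d(x',y)<k\}}$ disagree has small relative width $d(x,x')/k$, which yields the needed smallness. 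With the $L^2$-boundedness and the two kernel conditions established, the weighted theory of $\mathcal{H}$-valued Calderón-Zygmund operators on spaces of homogeneous type yields that $U\colon L^q(\Gamma,w)\to L^q(\Gamma,w;\mathcal{H})$ is bounded for every $1<q<\infty$ and every $w\in A_q(\Gamma)$, and hence that $S_L$ is bounded on $L^q(\Gamma,w)$.

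For (ii), once (i) holds for every $1<q<\infty$ and every $w\in A_q(\Gamma)$, I would apply the Rubio de Francia extrapolation theorem in its variable exponent form (see \cite{CFMP}, exactly as in Lemma \ref{LemaFS}) to the family of pairs $(S_L f,f)$. Since $p\in\mathcal{P}^{\log}(\Gamma)$ with $1<p_-\le p_+<\infty$, this upgrades the weighted estimates to $\|S_L(f)\|_{p(\cdot)}\le C\|f\|_{p(\cdot)}$ for every $f\in L^{p(\cdot)}(\Gamma)$, which is the assertion of (ii).
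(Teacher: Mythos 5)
Your reduction of part (ii) to part (i) by variable exponent extrapolation (as in Lemma \ref{LemaFS}, via \cite{CFMP}) is exactly what the paper does, so that step is fine. The gap is in part (i), and it is essential, not technical. Your plan hinges on verifying H\"older--H\"ormander regularity of the $\mathcal{H}$-valued kernel in the integration variable $z$, and you claim this follows from ``discrete H\"older estimates for the iterated Markov kernels \dots available under hypothesis $\Delta(\alpha)$ (as in \cite{B} and \cite{BD})''. No such estimates exist under the standing hypotheses of this paper: assumptions (a), (b), (c) supply only the \emph{upper} Gaussian bounds (\ref{UE}) and (\ref{compuesto}). A H\"older-type bound such as
$$
|p_n(y,x)-p_n(y_0,x)|\le C\Big(\frac{d(y,y_0)}{\sqrt{n}}\Big)^{h}\frac{\mu(x)}{\mu(B(x,\sqrt{n}))}e^{-c\,d^2(x,y_0)/n}
$$
is precisely estimate (\ref{Poin}), which the paper obtains only under the \emph{additional} Poincar\'e property via \cite[Lemma 4]{Ru1}, and the Poincar\'e property is not assumed in Proposition \ref{SL}. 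Since $(I-P)P^{[k/2]}$ is self-adjoint on $L^2(\Gamma)$ and $p_n(x,y)\mu(x)=p_n(y,x)\mu(y)$, regularity of its kernel in the second variable is the same as regularity in the first, so the condition you need is exactly this unavailable estimate. Moreover, to get $A_q(\Gamma)$-weighted bounds for \emph{every} $1<q<\infty$ the vector-valued Calder\'on--Zygmund route requires H\"older/Dini-type kernel regularity, not merely an integral H\"ormander condition (which by itself only yields unweighted estimates); so even a weakened form of your claim would not suffice. The graph made of two copies of $\mathbb{Z}^2$ joined by an edge, recalled in Section 5 in connection with Riesz transforms, is the standard warning that under (a), (b), (c) alone one cannot expect kernel regularity or consequences of it.

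This is exactly why the paper's proof never differences a kernel. Following \cite{MP}, it (1) proves a weighted change-of-aperture inequality for the conical operators $\mathcal{A}^\beta$; (2) runs a good-$\lambda$ argument with a Whitney covering to dominate $\|\mathcal{A}(F)\|_{L^s(\Gamma,w)}$ by a Carleson-type maximal operator $C_r$; (3) establishes the pointwise bound $\mathfrak{C}_r(f)\le C(\mathcal{M}(|f|^r))^{1/r}$ by splitting $f=f\chi_{B(x_B,4r_B)}+f_2$, using the unweighted $L^r$-boundedness of $S_L$ on the local piece and only the upper bound (\ref{compuesto}) on the far piece; and (4) concludes from the weighted boundedness of $\mathcal{M}$, choosing $1<r<q$ with $w\in A_{q/r}(\Gamma)$. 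If you want to keep a kernel-based argument you must either add the Poincar\'e hypothesis (proving a strictly weaker statement than Proposition \ref{SL}) or replace the regularity condition by machinery that uses upper Gaussian bounds only --- which is what the good-$\lambda$/maximal-function scheme above accomplishes.
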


\begin{proof} (i) We adapt to our context some arguments developed in \cite[Sections 3.1, 3.2, and 4.1.1]{MP}. We divide the proof in four steps and we sketch the proof of each of them.

$\mathbf{Step\,\, 1}$. For every $\beta\ge 1$ we define the operator $\mathcal{A}^\beta$ as follows
$$
\mathcal{A}^\beta(F)(x)=\Big(\sum_{(y,k)\in
\Upsilon_\beta(x)}\frac{|F(y,k)|^2}{k\mu(B(y,k))}\mu(y)\Big)^{1/2},\,\,\,x\in
\Gamma,
$$
where $F:\Gamma\times \mathbb{N}_+\longrightarrow \mathbb{C}$.

Let $\beta\ge 1$, $1<s<\infty$ and $w\in A_s(\Gamma)$. There exists $C>0$ such that
\begin{equation}\label{ZZ1}
\|\mathcal{A}^\beta(F)\|_{L^s(\Gamma,w)}\le C\|\mathcal{A}(F)\|_{L^s(\Gamma,w)}.
\end{equation}
 Note that (\ref{ZZ1}) is non trivial when $\mathcal{A}(F)\in L^s(\Gamma,w)$.

To prove (\ref{ZZ1}) we can show firstly that (\ref{ZZ1}) holds when $s=2$ and $w\in A_r(\Gamma)$, $1<r<\infty$, and then to argue by using extrapolation (see the proof of \cite[Proposition 3.2]{MP}).

$\mathbf{Step \,\,2}$. We define, for every $r\in (0,\infty)$, the maximal operator $C_r$ by
$$
C_r(F)(x)=\sup_{x\in B=B(x_B,r_B)}\Big(\frac{1}{\mu(B)}\sum_{z\in B}\Big(\sum_{k=1}^{r_B}\sum_{d(y,z)<k}\frac{|F(y,k)|^2}{k\mu(B(y,k))}\mu(y)\Big)^{r/2}\mu(z)\Big)^{1/r},\,\,\,x\in \Gamma.
$$
For every $0<r<\infty$, $1<s<\infty$ and $w\in A_s(\Gamma)$, we have that there exists $C>0$ for which
\begin{equation}\label{ZZ2}
\|\mathcal{A}(F)\|_{L^s(\Gamma,w)}\le C\|C_r(F)\|_{L^s(\Gamma,w)},
\end{equation}
when $F:\Gamma\times \mathbb{N}_+\longrightarrow \mathbb{C}$. The proof of (\ref{ZZ2}) can be completed by following some ideas developed in the proof of \cite[Proposition 3.34]{MP}. We need to use a Whitney covering theorem in our setting that can be find in \cite[Lemma 2.9]{MS} to establish (\ref{ZZ2}) for every complex function $F$ with compact (finite) support in $\Gamma\times\mathbb{N}_+$ by employing a good $\lambda$ argument involving $\mathcal{A}^\beta$ for certain $\beta>1$ and using (\ref{ZZ1}). Then, (\ref{ZZ2}) can be proved for general complex valued functions $F$ on $\Gamma\times \mathbb{N}_+$ by approximation.

$\mathbf{Step \,\,3}$. We consider, for every $r\in (0,\infty)$, the maximal operator $\mathfrak{C}_r$ given by
$$
\mathfrak{C}_r(f)(x)=\sup_{x\in
B=B(x_B,r_B)}\Big(\frac{1}{\mu(B)}\sum_{z\in
B}\Big(\sum_{k=1}^{r_B}\sum_{d(y,z)<k}\frac{|k(I-P)P^{[k/2]}(f)(y)|^2}{k\mu(B(y,k))}\mu(y)\Big)^{r/2}\mu(z)\Big)^{1/r},
$$
for every $x\in \Gamma$, where $f:\Gamma\longrightarrow \mathbb{C}$.

For every $1<r<\infty$,
\begin{equation}\label{ZZ3}
\mathfrak{C}_r(f)(x)\le C(\mathcal{M}(|f|^r)(x))^{1/r},\,\,\,x\in \Gamma.
\end{equation}
We can prove (\ref{ZZ3}) arguing as in the proof of \cite[Proposition 4.1]{MP}. We need to make some modifications in our setting. Let $1<r<\infty$, $x\in \Gamma$ and $f:\Gamma\longrightarrow \mathbb{C}$. We take a ball $B=B(x_B,r_B)$ such that $x\in B$. We decompose $f$ in the following way $f=f_1+f_2$, where $f_1=f\chi_{B(x_B,4r_B)}$.

According to \cite[Proposition 4.6, (ii)]{B} $S_L$ is bounded from $L^r(\Gamma)$ into itself. Then, we can write
\begin{align*}
\Big(\frac{1}{\mu(B)}&\sum_{z\in B}\Big(\sum_{k=1}^{r_B}\sum_{d(y,z)<k}\frac{|k(I-P)P^{[k/2]}(f_1)(y)|^2}{k\mu(B(y,k))}\mu(y)\Big)^{r/2}\mu(z)\Big)^{1/r}\\
&\le \Big(\frac{1}{\mu(B)}\sum_{z\in \Gamma}|S_L(f_1)(z)|^r\mu(z)\Big)^{1/r}\\
&\le C\Big(\frac{1}{\mu(B)}\sum_{z\in \Gamma}|f_1(z)|^r\mu(z)\Big)^{1/r}\\
&\le C\Big(\frac{1}{\mu(B(x_B,4r_B))}\sum_{z\in B(x_B,4r_B)}|f(z)|^r\mu(z)\Big)^{1/r}\\
&\le C\Big(\mathcal{M}(|f|^r)(x)\Big)^{1/r}.
\end{align*}
We have taken into account that $\mu$ is doubling.

We define, for every $j\in \mathbb{N}$, $\mathcal{Z}_j(B)=B(x_B,2^{j+1}r_B)\setminus B(x_B,2^jr_B)$. By taking into account that $(I-P)f_2(y)=0$ when $y\in B(x_B,2r_B)$, and using (\ref{compuesto}) we get
\begin{align*}
\Big(\frac{1}{\mu(B)}&\sum_{z\in B}\Big(\sum_{k=1}^{r_B}\sum_{d(y,z)<k}\frac{|k(I-P)P^{[k/2]}(f_2)(y)|^2}{k\mu(B(y,k))}\mu(y)\Big)^{r/2}\mu(z)\Big)^{1/r}\\
&\le \Big(\sum_{k=1}^{r_B}\sum_{d(y,x_B)<2r_B}\frac{|k(I-P)P^{[k/2]}(f_2)(y)|^2}{k\mu(B(y,k))}\mu(y)\Big)^{1/2}\\
&\le \sum_{j=2}^\infty \Big(\sum_{k=2}^{r_B}\sum_{d(y,x_B)<2r_B}\frac{|k(I-P)P^{[k/2]}(f_2\chi_{\mathcal{Z}_j(B)})(y)|^2}{k\mu(B(y,k))}\mu(y)\Big)^{1/2}\\
&\le \sum_{j=2}^\infty \Big(\sum_{k=2}^{r_B}\sum_{d(y,x_B)<2r_B}\frac{\mu(y)}{k\mu(B(y,k))}\Big(\sum_{z\in \mathcal{Z}_j(B)}\frac{\mu(z)}{\mu(B(y,\sqrt{k}))}e^{-cd(y,z)^2/k}|f(z)|\Big)^2\Big)^{1/2}\\
&\le \sum_{j=2}^\infty \Big(\sum_{k=1}^{r_B}\sum_{d(y,x_B)<2r_B}\frac{\mu(y)e^{-c(2^jr_B)^2/k}}{k\mu(B(y,k))\mu(B(y,\sqrt{k}))^2}\\
&\hspace{2cm}\times\Big(\sum_{z\in \mathcal{Z}_j(B)}|f(z)|^r\mu(z)\Big)^{2/r}(\mu(\mathcal{Z}_j(B)))^{2(1-1/r)}\Big)^{1/2}\\
&\le \sum_{j=2}^\infty \Big(\frac{1}{\mu(B(x_B,2^{j+1}r_B))}\sum_{z\in B(x_B,2^{j+1}r_B)}|f(z)|^r\mu(z)\Big)^{1/r}\\
&\hspace{2cm}\times\Big(\sum_{k=1}^{r_B}\sum_{d(y,x_B)<2r_B}\frac{\mu(y)e^{-c(2^jr_B)^2/k}}{k}\frac{\mu(B(x_B,2^{j+1}r_B))^2}{\mu(B(y,k))\mu(B(y,\sqrt{k}))^2}\Big)^{1/2}.\\
\end{align*}
Since $B(x_B,r_B)\subset B(y,3r_B)$, when $y\in B(x_B,2r_B)$, and $\mu$ is doubling it follows that
\begin{align*}
\frac{\mu(B(x_B,2^{j+1}r_B))^2}{\mu(B(y,k))\mu(B(y,\sqrt{k}))^2}&\le C\frac{\mu(B(x_B,2^{j+1}r_B))^2\mu(B(y,r_B))^3}{\mu(B(y,k))\mu(B(y,\sqrt{k}))^2\mu(B(x_B,r_B))^3}\\
&\le C \Big(\frac{r_B^3}{k^{2}}\Big)^D \frac{2^{(j+1)2D}}{\mu(B(x_B,r_B))},
\end{align*}
for every $y\in B(x_B,2r_B)$, $j,k\in \mathbb{N}_+$, $k\le r_B$ and $j\ge 2$.

Hence, we obtain
\begin{align*}
\Big(&\frac{1}{\mu(B)}\sum_{z\in B}\Big(\sum_{k=1}^{r_B}\sum_{d(y,z)<k}\frac{|k(I-P)P^{[k/2]}(f_2)(y)|^2}{k\mu(B(y,k))}\mu(y)\Big)^{r/2}\mu(z)\Big)^{1/r}\\
&\le \sum_{j=2}^\infty \Big(\frac{1}{\mu(B(x_B,2^{j+1}r_B))}\sum_{z\in B(x_B,2^{j+1}r_B)}|f(z)|^r\mu(z)\Big)^{1/r}\\
&\hspace{1cm}\times\Big(\sum_{k=1}^{r_B}\frac{e^{-c(2^jr_B)^2/k}}{k}\Big(\frac{2^{j}r_B}{\sqrt{k}}\Big)^{2D}\Big(\frac{r_B}{k}\Big)^D\frac{1}{\mu(B(x_B,r_B))}\sum_{d(y,x_B)<2r_B}\mu(y)\Big)^{1/2}\\
&\le C\sum_{j=2}^\infty e^{-c2^{2j}r_B}r_B^{D+1}\Big(\mathcal{M}(|f|^r)(x)\Big)^{1/r}\\
&\le C\Big(\mathcal{M}(|f|^r)(x)\Big)^{1/r}.
\end{align*}
By taking the supremum over all the balls $B$ such that $x\in B$ we establish (\ref{ZZ3}).

$\mathbf{Step\,\,4}$. We now combine (\ref{ZZ2}) and (\ref{ZZ3}) to prove that $S_L$ is bounded from $L^q(\Gamma,w)$ into itself.

We choose $1<r<q$ such that $w\in A_{q/r}(\Gamma)$. Then we can write
\begin{align*}
\|S_L(f)\|_{L^q(\Gamma,w)}&\le C\|\mathfrak{C}_r(f)\|_{L^q(\Gamma,w)}\\
&\le C\Big\|\Big(\mathcal{M}(|f|^r)\Big)^{1/r}\Big\|_{L^q(\Gamma,w)}\\
&\le C\|f\|_{L^q(\Gamma,w)},\,\,\,f\in L^q(\Gamma,w).
\end{align*}
We have applied that $\mathcal{M}$ is bounded from $L^s(\Gamma,v)$ into itself, for every $s\in (1,\infty)$ and $v\in A_s(\Gamma)$.

(ii) This assertion can be proved by using (i) and extrapolation arguments as in \cite[Theorem 1.3]{CFMP}.
\end{proof}

Note that Proposition \ref{SL} is an extension of \cite[Proposition 4.6, (ii)]{B}.

\begin{Prop}\label{Equal}
Let $p\in \mathcal{P}^{\log}(\Gamma)$ such that $p_->1$. Then, $H^{p(\cdot)}_L(\Gamma)=L^{p(\cdot)}(\Gamma)$, algebraic and topologically.
\end{Prop}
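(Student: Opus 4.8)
The plan is to establish the norm equivalence $\|S_L(f)\|_{p(\cdot)}\approx\|f\|_{p(\cdot)}$ on $L^2(\Gamma)\cap L^{p(\cdot)}(\Gamma)$ and then pass to completions. The bound $\|S_L(f)\|_{p(\cdot)}\le C\|f\|_{p(\cdot)}$ is precisely Proposition \ref{SL}, (ii), so the essential point is the converse inequality $\|f\|_{p(\cdot)}\le C\|S_L(f)\|_{p(\cdot)}$, which I would prove by duality. First I would record the facts on the conjugate exponent. Since $1/p$ is log-H\"older continuous, so is $1/p'=1-1/p$; hence $p'\in\mathcal{P}^{\log}(\Gamma)$, and $(p')_-=(p_+)'>1$ because $p_+<\infty$. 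Thus the generalized H\"older inequality $\sum_{x\in\Gamma}|u(x)v(x)|\mu(x)\le C\|u\|_{p(\cdot)}\|v\|_{p'(\cdot)}$ and the duality estimate $\|u\|_{p(\cdot)}\le C\sup\{|\sum_{x\in\Gamma}u(x)v(x)\mu(x)|:\ \|v\|_{p'(\cdot)}\le 1\}$ are available, both holding because $p_+<\infty$ (see \cite{DHHR}). Moreover, since $p'\in\mathcal{P}^{\log}(\Gamma)$ with $(p')_->1$, the argument of Proposition \ref{SL} applies verbatim to the auxiliary square function
\[
\widetilde{S}_L(g)(x)=\Big(\sum_{m=1}^\infty\sum_{d(x,y)<m}\frac{|m(I-P)P^{[(m-1)/2]}(g)(y)|^2}{m\,\mu(B(y,m))}\mu(y)\Big)^{1/2},
\]
which is of the same type as $S_L$ (up to the harmless replacement of $\mu(B(x,\cdot))$ by the comparable $\mu(B(y,\cdot))$ allowed by \eqref{doubling}, it is the operator $S_{1,L}$ appearing in \eqref{YY1}); indeed the proof of Proposition \ref{SL} only uses \eqref{compuesto} and the $L^r$-boundedness of such square functions from \cite[Proposition 4.6]{B}. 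This gives $\|\widetilde{S}_L(g)\|_{p'(\cdot)}\le C\|g\|_{p'(\cdot)}$.

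Next I would set up a polarization identity. For $f,g\in L^2(\Gamma)$, Proposition \ref{Representation} with exponent $2$ gives $f=\sum_{k\ge 0}(k+1)(I-P)^2P^kf$ in $L^2(\Gamma)$. Factoring $(I-P)^2P^k=(I-P)P^{[(k+1)/2]}\circ(I-P)P^{[k/2]}$ (note $[(k+1)/2]+[k/2]=k$) and using that $P$, hence every operator involved, is self-adjoint on $L^2(\Gamma)$ (because $p_n(x,y)\mu(x)=p_n(y,x)\mu(y)$), the computation carried out in the proof of Proposition \ref{AD}, (i), leads, after reindexing, to
\[
\sum_{x\in\Gamma}f(x)g(x)\mu(x)=\sum_{(y,m)\in\Gamma\times\mathbb{N}_+}Q_mf(y)\,\widetilde{Q}_mg(y)\,\frac{\mu(y)}{m},
\]
where $Q_mf=m(I-P)P^{[m/2]}f$ (so that $\mathcal{A}(Q f)\approx S_L(f)$) and $\widetilde{Q}_mg=m(I-P)P^{[(m-1)/2]}g$ (so that $\mathcal{A}(\widetilde{Q}g)\approx\widetilde{S}_L(g)$). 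Applying the discrete Fubini identity \cite[Lemma 2.1]{Ru}, which rewrites $\sum_{(y,m)}F(y,m)\mu(y)/m$ as $\sum_{x}\big(\sum_{(y,m)\in\Upsilon(x)}F(y,m)\mu(y)/(m\mu(B(y,m)))\big)\mu(x)$, followed by the Cauchy--Schwarz inequality on each cone $\Upsilon(x)$, I obtain
\[
\Big|\sum_{x\in\Gamma}f(x)g(x)\mu(x)\Big|\le \sum_{x\in\Gamma}S_L(f)(x)\,\widetilde{S}_L(g)(x)\,\mu(x).
\]

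Finally I would combine the pieces. By the generalized H\"older inequality and the $L^{p'(\cdot)}$-boundedness of $\widetilde{S}_L$,
\[
\Big|\sum_{x\in\Gamma}f(x)g(x)\mu(x)\Big|\le C\|S_L(f)\|_{p(\cdot)}\|\widetilde{S}_L(g)\|_{p'(\cdot)}\le C\|S_L(f)\|_{p(\cdot)}\|g\|_{p'(\cdot)},
\]
and taking the supremum over $g\in L^2(\Gamma)\cap L^{p'(\cdot)}(\Gamma)$ with $\|g\|_{p'(\cdot)}\le 1$, together with duality, yields $\|f\|_{p(\cdot)}\le C\|S_L(f)\|_{p(\cdot)}$. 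With Proposition \ref{SL}, (ii), this proves $\mathbb{H}^{p(\cdot)}_L(\Gamma)=L^2(\Gamma)\cap L^{p(\cdot)}(\Gamma)$ with $\|S_L(\cdot)\|_{p(\cdot)}\approx\|\cdot\|_{p(\cdot)}$. Since the finitely supported functions belong to $L^2(\Gamma)\cap L^{p(\cdot)}(\Gamma)$ and are dense in $L^{p(\cdot)}(\Gamma)$ (as $p_+<\infty$), this common subspace is dense in $L^{p(\cdot)}(\Gamma)$, so the two equivalent quasinorms have the same completion; this identifies $H^{p(\cdot)}_L(\Gamma)$ with $L^{p(\cdot)}(\Gamma)$, algebraically and topologically.

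I expect the main obstacle to be the rigorous justification of the polarization identity beyond finitely supported data: the interchange of the summations in $k$ and $y$ and the absolute convergence of the resulting double series must be controlled. I would handle this through the $L^2$ tent-space pairing bound $\sum_{(y,m)}|Q_mf(y)||\widetilde{Q}_mg(y)|\mu(y)/m\le C\|S_L(f)\|_2\|\widetilde{S}_L(g)\|_2<\infty$ (both square functions being $L^2$-bounded) together with a density argument approximating $Qf$ in $T_2^2(\Gamma)$ by elements of $T_c(\Gamma)$ and invoking the continuity of $\Pi_1$ from $T_2^2(\Gamma)$ into $L^2(\Gamma)$ from Proposition \ref{AD}. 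A secondary, more routine point is checking that the proof of Proposition \ref{SL} transfers to $\widetilde{S}_L$ and to the exponent $p'$.
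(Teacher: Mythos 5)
Your proof is correct, but for the hard embedding $H^{p(\cdot)}_L(\Gamma)\subset L^{p(\cdot)}(\Gamma)$ it takes a genuinely different route from the paper's. The paper takes $f\in L^2(\Gamma)\cap H^{p(\cdot)}_L(\Gamma)$, invokes the atomic decomposition of Proposition \ref{Hardy} (with $r\ge 2$, $r>p_+$, $M>2D/p_-$), and then uses Lemma \ref{LemaSum} --- noting that $\mathfrak{p}=1$ because $p_->1$ --- to show that the atomic series converges in $L^{p(\cdot)}(\Gamma)$ with $\big\|\sum_j\lambda_ja_j\big\|_{p(\cdot)}\le C\,\mathcal{A}(\{\lambda_j\},\{B_j\})\le C\|f\|_{H^{p(\cdot)}_L(\Gamma)}$; the reverse embedding is, exactly as in your proposal, Proposition \ref{SL}, and both proofs end with the same closure argument. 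Your replacement of the atomic step by polarization and duality is sound: the factorization $(I-P)^2P^k=(I-P)P^{[(k+1)/2]}(I-P)P^{[k/2]}$ (valid since $[(k+1)/2]+[k/2]=k$), self-adjointness of $P$ with respect to $\mu$, the exact cone-averaging identity (for fixed $(y,m)$ one has $\sum_{x\in B(y,m)}\mu(x)=\mu(B(y,m))$, so no constant is lost), Cauchy--Schwarz on each cone, variable-exponent H\"older, and the norm-duality formula (available because $1<p_-\le p_+<\infty$, whence $p'\in\mathcal{P}^{\log}(\Gamma)$ with $(p')_->1$) all go through, and your $L^2$ tent-space bound does legitimize the rearrangement of the double series. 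What each approach buys: yours bypasses the entire tent-space/atomic machinery (Theorem \ref{Th1.1}, Propositions \ref{AD} and \ref{Hardy}) and is the classical dual-square-function argument, at the price of (a) verifying that Proposition \ref{SL} extends to the shifted square function $\widetilde{S}_L$ and to the exponent $p'(\cdot)$ --- routine, since \eqref{compuesto} and \cite[Proposition 4.6]{B} cover the operator $S_{1,L}$ of \eqref{YY1}, but it is a check the paper never needs to make --- and (b) being intrinsically confined to the Banach range $p_->1$, since it runs through duality. The paper's proof is a few lines given machinery it must build anyway for the rest of the paper, avoids introducing any auxiliary operator, and its atomic step is the one that works uniformly in $p_-$, even though the identification with $L^{p(\cdot)}(\Gamma)$ itself is only meaningful for $p_->1$.
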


\begin{proof} Suppose that $f\in L^2(\Gamma)\cap H^{p(\cdot)}_L(\Gamma)$. According to Proposition \ref{Hardy} by taking $r\ge 2$, $r>p_+$, $M\in \mathbb{N}_+$, and $>2D/p_-$, there exist, for every $j\in \mathbb{N}$, $\lambda_j\in \mathbb{C}$ and a $(r,p(\cdot),M)$-atom $a_j$ associated with the ball $B_j$ such that $f=\sum_{j=0}^\infty \lambda_ja_j$, where the series converges in $H^{p(\cdot)}_L(\Gamma)$ and pointwisely, and $\mathcal{A}(\{\lambda_j\},\{B_j\})\le C\|f\|_{H^{p(\cdot)}_L(\Gamma)}$. By using Lemma \ref{LemaSum} we obtain, for every $j_1,j_2\in \mathbb{N}$ and $j_1<j_2$,
$$
\Big\|\sum_{j=j_1}^{j_2}|\lambda_j||a_j|\Big\|_{p(\cdot)}\le C\Big\|\sum_{j=j_1}^{j_2}\frac{|\lambda_j|\chi_{B_j}}{\|\chi_{B_j}\|_{p(\cdot)}}\Big\|_{p(\cdot)}.
$$
Note that in this case $\mathfrak{p}=1$. Since the series
$$
\sum_{j=0}^{\infty}\frac{|\lambda_j|\chi_{B_j}}{\|\chi_{B_j}\|_{p(\cdot)}}
$$
is convergent in $L^{p(\cdot)}(\Gamma)$, the series $\sum_{j=0}^\infty \lambda_j a_j$ also converges in $L^{p(\cdot)}(\Gamma)$ and
$$
\Big\|\sum_{j=0}^\infty \lambda_j a_j\Big\|_{p(\cdot)}\le C\|f\|_{H^{p(\cdot)}_L(\Gamma)}.
$$
So we have that $f=\sum_{j=0}^\infty \lambda_j a_j$ in the sense of convergence in $L^{p(\cdot)}(\Gamma)$.

On the other hand, if $f\in L^2(\Gamma)\cap L^{p(\cdot)}(\Gamma)$, from Proposition \ref{SL} we deduce that $S_L(f)\in L^{p(\cdot)}(\Gamma)$ and $\|S_L(f)\|_{p(\cdot)}\le C\|f\|_{p(\cdot)}$, that is, $f\in L^2(\Gamma)\cap H^{p(\cdot)}_L(\Gamma)$ and $\|f\|_{H^{p(\cdot)}_L(\Gamma)}\le C\|f\|_{p(\cdot)}$.

By taking closures we conclude that $H^{p(\cdot)}_L(\Gamma)=L^{p(\cdot)}(\Gamma)$ and the two norms are equivalent.
\end{proof}

We recall the definition of molecules. Let $p\in \mathcal{P}^{\log}(\Gamma)$ and $M\in \mathbb{N}_+$, $1<q<\infty$, and $\varepsilon >0$. We say that a function $m\in L^q(\Gamma )$ is a $(q,p(\cdot ),M,\varepsilon )$-molecule when
there exists a function $b\in L^q(\Gamma )$ and a ball $B=B(x_B,r_B)$, with $x_B\in \Gamma$ and  $r_B\geq 1$, such that $m=L^Mb$ and, for every $k=0,...,M$,
$$
\|L^kb\|_{L^q(\mathfrak{S}_j(B))}\leq
(r_B)^{M-k}2^{-j\varepsilon}(\mu(B(x_B,2^jr_B)))^{1/q}\|\chi
_{B(x_B,2^jr_B)}\|_{p(\cdot )}^{-1},\quad j\in \mathbb{N},
$$
where $\mathfrak{S}_j(B)=B(x_B,2^{j+1}r_B)\setminus B(x_B,2^{j-1}r_B)$, $j\in \mathbb{N}_+$ and $\mathfrak{S}_0(B)=B$.\\

Suppose that $M\in \mathbb{N}$, $1<q<\infty$ and $a$ is a $(q,p(\cdot ),M)$-atom associated with the ball $B$ and
the function $b\in L^q(\Gamma )$. Let $k\in \mathbb{N}$, $k=0,...,M$. Since $\supp L^kb\subset B$, $\|L^kb\|_{L^q(\mathfrak{S}_j(B))}=0$, $j\in \mathbb{N}_+$. Also, $\|L^kb\|_{L^q(\mathfrak{S}_0(B))}=\|L^kb\|_q\leq r_B^{M-k}(\mu(B))^{1/q}\|\chi _B\|_{p(\cdot )}^{-1}$. Hence, $a$ is a $(q,p(\cdot ),M, \varepsilon)$-molecule, for every $\varepsilon >0$.\\

Also, there exists $C>0$ such that $C\Pi_M(a)$ is a $(q,p(\cdot),M,\varepsilon)$-molecule provided that $a$ is a $(T_2^{p(\cdot)},q)$-atom, $2\le q<\infty$, $\varepsilon>0$, and $M\in \mathbb{N}_+$. Indeed, assume that $2\le q<\infty$, $\varepsilon>0$, $M\in \mathbb{N}_+$, and $a$ is a $(T_2^{p(\cdot)},q)$-atom associated with the ball $B=B(x_B,r_B)$, with $x_B\in \Gamma$ and $r_B\ge 1$. We define $\alpha=\Pi_M(a)$. We can write $\alpha=L^Mb$, where
$$
b=\sum_{k=0}^{r_B-1}\frac{c_{k,M+1}}{k+1}P^{[k/2]}(a(.,k+1)).
$$
Assume that $j_0\in \mathbb{N}_+$ such that $2^{j_0-1}>M+1$. As it was established in (\ref{ABC}) we have that, for every $\ell\in \mathbb{N}$, $0\le \ell\le M$,
$$
\|L^\ell(b)\|_{q}\le Cr_B^{M-\ell}\mu(B)^{1/q}\|\chi_{B}\|_{p(\cdot)}^{-1}.
$$
Hence, according to Lemma \ref{LemaCoc}, (i), there exists $C>0$ such that
$$
\|L^\ell(b)\|_{L^q(\mathfrak{S}_0(B))}\le Cr_B^{M-\ell}2^{-j\varepsilon}\mu(B(x_B,2^jr_B))^{1/q}\|\chi_{B(x_B,2^jr_B)}\|_{p(\cdot)}^{-1},
$$
for every $\ell\in \mathbb{N}$, $0\le \ell\le M$, $j\in \mathbb{N}$ and $j\le j_0$.

Suppose now $\ell\in \mathbb{N}$, $0\le \ell\le M$, and $j\in \mathbb{N}$, $j>j_0$. Let $h\in L^{q'}(\Gamma)\cap L^2(\Gamma)$ such that $\supp(h)\subset \mathfrak{S}_j(B)$. By proceeding as in \cite[Proposition 3.2, (a)]{B} we obtain
\begin{align*}
\Big|\sum_{x\in \mathfrak{S}_j(B)}L^\ell(b)(x)h(x)\mu(x)\Big|&=\Big|\sum_{x\in \mathfrak{S}_j(B)}h(x)\sum_{k=0}^{r_B-1}\frac{c_{k,M+1}}{k+1}L^\ell P^{[k/2]}(a(.,k+1))(x)\mu(x)\Big|\\
&\le \sum_{x\in \Gamma}\sum_{k=0}^{r_B-1}\frac{c_{k,M+1}}{(k+1)^\ell} \frac{|a(x,k+1)||(k+1)^{\ell}L^\ell P^{[k/2]}(h)(x)|}{k+1}\mu(x)\\
&\le Cr_B^{M-\ell}\sum_{(x,k+1)\in T(B)}\frac{|a(x,k+1)||(k+1)^{\ell}L^\ell P^{[k/2]}(h)(x)|}{k+1}\mu(x)\\
&\le Cr_B^{M-\ell}\sum_{x\in \Gamma}\mathcal{A}(a)(x)\mathcal{S}_{\ell,B}(h)(x)\mu(x),
\end{align*}
where, for every $x\in \Gamma$,
$$
\mathcal{S}_{\ell,B}(h)(x)=\Big(\sum_{(y,k)\in \Upsilon(x)}\frac{\big(|k^{\ell}L^\ell P^{[(k-1)/2]}(h)(y)|\big)^2}{k\mu(B(x,k))}\chi_{T(B)}(y,k)\mu(y)\Big)^{1/2}.
$$
Then,
$$
\Big|\sum_{x\in \mathfrak{S}_j(B)}L^\ell(b)(x)h(x)\mu(x)\Big|\le C r_B^{M-\ell}\|a\|_{T_2^q(\Gamma)}\|\mathcal{S}_{\ell,B}(h)\|_{q'},
$$
being $q'=q/(q-1)$.

Note that $d(x,x_B)<r_B$ provided that $d(x,y)<k$ and $(y,k)\in T(B)$. Also, since $\supp(h)\subset B(x_B,2^{j+1}r_B)\setminus B(x_B,2^{j-1}r_B)$, we have that
\begin{align*}
\supp(L^\ell(h))&\subset B(x_B,2^{j+1}r_B+\ell)\setminus B(x_B,2^{j-1}r_B-\ell)\\
&\subset B(x_B,(2^{j+1}+M)r_B)\setminus B(x_B,(2^{j-1}-M)r_B)\subset B^c.
\end{align*}
 Hence $L^\ell(h)(y)=0$, $y\in B$. Since $q\ge 2$, Jensen's inequality leads to
\begin{align*}
\|&\mathcal{S}_{\ell,B}(h)\|_{q'}\\
&=\Big(\sum_{x\in \Gamma}\Big(\sum_{k=1}^\infty \sum_{d(y,x)<k}\frac{|k^{\ell}L^\ell P^{[(k-1)/2]}(h)(y)|^2}{k\mu(B(x,k))}\chi_{T(B)}(y,k)\mu(y)\Big)^{q'/2}\mu(x)\Big)^{1/q'}\\
&\le \Big(\sum_{x\in B}\Big(\sum_{k=3}^{r_B} \sum_{y\in B(x,k)\cap B}\frac{|k^{\ell}L^\ell P^{[(k-1)/2]}(h)(y)|^2}{k\mu(B(x,k))}\mu(y)\Big)^{q'/2}\mu(x)\Big)^{1/q'}\\
&\le \mu(B)^{1/q'-1/2}\Big(\sum_{x\in B}\sum_{k=3}^{r_B} \sum_{y\in B(x,k)\cap B}\frac{|k^{\ell}L^\ell P^{[(k-1)/2]}(h)(x)|^2}{k\mu(B(x,k))}\mu(y)\mu(x)\Big)^{1/2}\\
&\le \mu(B)^{1/q'-1/2}\Big(\sum_{k=3}^{r_B} \sum_{y\in B}\frac{|k^{\ell}L^\ell P^{[(k-1)/2]}(h)(y)|^2}{k}\mu(y)\Big)^{1/2}\\
&\le C\mu(B)^{1/q'-1/2}\Big(\sum_{k=1}^{r_B} \sum_{y\in B}\frac{1}{k}\Big(\sum_{z\in \mathfrak{S}_j(B)}e^{-cd(y,z)^2/k}|h(z)|\frac{\mu(z)}{\mu(B(y,\sqrt{k}))}\Big)^2\mu(y)\Big)^{1/2}\\
&\le C\mu(B)^{1/q'-1/2}\|h\|_{q'}\Big(\sum_{k=1}^{r_B} \sum_{y\in B}\frac{1}{k}e^{-c(2^jr_B)^2/k}\frac{\mu(\mathfrak{S}_j(B))^{2/q}}{\mu(B(y,\sqrt{k}))^2}\mu(y)\Big)^{1/2}\\
&\le C\mu(B)^{1/q'-1/2}\|h\|_{q'}\Big(\sum_{k=1}^{r_B} \sum_{y\in B}\frac{e^{-c(2^jr_B)^2/k}}{k}\\
&\hspace{5mm}\times\Big(\frac{\mu(B(y,2^jr_B))}{\mu(B(y,\sqrt{k}))}\Big)^2\frac{\mu(B(x_B,2^{j+1}r_B))^{2/q}}{\mu(B(y,2^jr_B))^2}\mu(y)\Big)^{1/2}\\
&\le C\mu(B)^{1/q'-1/2}\|h\|_{q'}\Big(\sum_{k=1}^{r_B} \frac{1}{k}\Big(\frac{2^jr_B}{\sqrt{k}}\Big)^{2D}e^{-c(2^jr_B)^2/k}\mu(B(x_B,2^jr_B))^{-2/q'}\mu(B)\Big)^{1/2}\\
&\le C\mu(B)^{1/q'}\|h\|_{q'}r_B^{1/2}e^{-c2^{2j}r_B}\mu(B(x_B,2^jr_B))^{-1/q'}\\
&\le C\|h\|_{q'}e^{-c2^{2j}}\mu(B)^{1/q'}\mu(B(x_B,2^jr_B))^{-1/q'}.
\end{align*}
We deduce that
$$
\Big|\sum_{x\in \mathfrak{S}_j(B)}L^\ell(b)(x)h(x)\mu(x)\Big|\le C r_B^{M-\ell}e^{-c2^{2j}}\|\chi_B\|_{p(\cdot)}^{-1}\mu(B)\mu(B(x_B,2^jr_B))^{-1/q'}\|h\|_{q'}.
$$
By Lemma \ref{LemaCoc}, (i), it follows that
$$
\|L^\ell(b)\|_{L^q(\mathfrak{S}_j(B))}\le Cr_B^{M-\ell}e^{-c2^{2j}}\|\chi_{B(x_B,2^jr_B)}\|_{p(\cdot)}^{-1}\mu(B(x_B,2^jr_B))^{1/q}.
$$
Note that $C$ does not depend on $b$ and $j$. Thus, we have shown
that $\alpha$ is a $(q,p(\cdot),M,\varepsilon)$-molecule.

Assume now that $q\in [1,\infty)\cap(p_+,\infty)$ and $m$ is a
$(q,p(\cdot ),M,\varepsilon )$-molecule associated with the ball
$B=B(x_B,r_B)$ and the function $b$ as in the definition. By using
Lemma \ref{LemaCoc}, (ii), we deduce, for every
$k=0,...,M$, that
\begin{align}\label{PMol}
\|L^kb\|_q^q&=\sum_{j=0}^\infty
\|L^kb\chi_{\mathfrak{S}_j(B)}\|_q^q\nonumber\\
&\le (r_B)^{(M-k)q}\sum_{j=0}^\infty2^{-j\varepsilon
q}\mu(B(x_B,2^jr_B))\|\chi
_{B(x_B,2^jr_B)}\|_{p(\cdot )}^{-q}\nonumber\\
&\le (r_B)^{(M-k)q}\mu(B)\|\chi _{B}\|_{p(\cdot
)}^{-q}\sum_{j=0}^\infty2^{-j\varepsilon q}\nonumber\\
&\le C(r_B)^{(M-k)q}\mu(B)\|\chi _{B}\|_{p(\cdot
)}^{-q}.
\end{align}


We are going to prove Theorem \ref{Th1.4}.

\begin{proof}[Proof of Theorem \ref{Th1.4}.]
In order to establish this result we proceed as in the proof of Proposition \ref{H6}. Suppose that, for every $j\in \mathbb{N}$,  $\lambda_j\in \mathbb{C}$ and $m_j$ is a $(q, p(\cdot ), M,\varepsilon )$-molecule associated to the function $b_j$ and the ball $B_j=B(x_{B_j},r_{B_j})$, with $x_{B_j}\in \Gamma$ and $r_{B_j}\ge 1$, such that $\mathcal{A}(\{\lambda_j\},\{B_j\})<\infty$. \\

We are going to see that, for every $\epsilon >0$, there exists $j_0\in \mathbb{N}$ such that, for every $j_1, j_2\in \mathbb{N}$, $j_0\leq j_1 <j_2$,
$$
\Big\|S_L\Big(\sum_{j=j_1}^{j_2}\lambda _j
m_j\Big)\Big\|_{p(\cdot)}<\epsilon.
$$
Let $j_1, j_2\in \mathbb{N}$, $j_1 <j_2$. We have that
$$
S_L\Big(\sum_{j=j_1}^{j_2}\lambda _j m_j\Big)\leq \sum_{j=j_1}^{j_2}|\lambda _j|S_L(m_j).
$$

If $B=B(x_B,r_B)$ is a ball, we recall that
$\mathcal{S}_0(B)=B(x_B,8Mr_B)$ and, for every $i\in \mathbb{N}_+$,
$\mathcal{S}_i(B)=B(x_B,2^{i+3}Mr_B)\setminus B(x_B,2^{i+2}Mr_B)$.

%
%
%
%
%
Then, we get
\begin{equation}\label{ForSum}
\Big\|S_L\Big(\sum_{j=j_1}^{j_2}\lambda _j m_j\Big)\Big\|_{p(\cdot
)}\leq \Big(\sum_{i=0}^\infty \Big\|\Big(\sum_{j=j_1}^{j_2}
(|\lambda _j|S_L(m_j
)\chi_{\mathcal{S}_i(B_j)})^\mathfrak{p}\Big)^{1/\mathfrak{p}}\Big\|_{p(\cdot
)}^\mathfrak{p}\Big)^{1/\mathfrak{p}}.
\end{equation}

 Since $S_L$ is bounded from $L^q(\Gamma )$ into itself
(\cite[p. Proposition 4.6, (ii)]{B}), by (\ref{PMol}) we get,
\begin{align*}
\|S_L(m_j))\|_{L^q(\mathcal{S}_0(B_j))}&\leq C\|m_j\|_q \leq C
\mu(B_j)^{1/q}\|\chi _{B_j}\|_{p(\cdot )}^{-1},\quad j\in
\mathbb{N}.
\end{align*}

By using Lemmas \ref{LemaSum}, \ref{LemaCoc}, (i), and \ref{LemaSumCoc} we
obtain
\begin{align*}
\Big\|\Big(\sum_{j=j_1}^{j_2} \Big(|\lambda _j|S_L(m_j)\chi
_{\mathcal{S}_0(B_j)}\Big)^{\mathfrak{p}}
\Big)^{1/\mathfrak{p}}\Big\|_{p(\cdot )}&\leq
C\Big\|\Big(\sum_{j=j_1}^{j_2}\Big(\frac{|\lambda_j|\chi_{B_j}}{\|\chi_{B_j}\|_{p(\cdot)}}\Big)^{\mathfrak{p}}\Big)^{1/\mathfrak{p}}\Big\|_{p(\cdot)},
\end{align*}
because $q\ge 2$ and $q>p_+$.

We are going to estimate
\begin{align*}
\Big\|\Big(\sum_{j=j_1}^{j_2} \Big(|\lambda _j|S_L(m_j)\chi
_{\mathcal{S}_i(B_j)}\Big)^{\mathfrak{p}}
\Big)^{1/\mathfrak{p}}\Big\|_{p(\cdot )}
\end{align*}
for every $i\in \mathbb{N}_+$.

We decompose, for every $j\in \mathbb{N}$, $S_L(m_j)$ as follows
\begin{align*}
S_L(m_{j})(x)&\leq
 \Big(\sum_{k=1}^{d(x,x_{B_j})/2}\sum_{d(x,y)<k}\frac{|k(I-P)P^{[k/2]}m_{j}(y)|^2}{k\mu(B(y,k))}\mu (y)\Big)^{1/2}\\
&\qquad + \Big(\sum_{k=d(x_{B_j},x)/2}^{\infty}\sum_{d(x,y)<k}\frac{|k(I-P)P^{[k/2]}m_{j}(y)|^2}{k\mu(B(y,k))}\mu (y)\Big)^{1/2}\\
&=I_{1,j }(x)+I_{2,j}(x),\quad x\in \Gamma.
\end{align*}

Let $j\in \mathbb{N}$ and $i\in \mathbb{N}_+$. Since $q\ge 2$ and $q>p_+$, by
(\ref{PMol}) and (\ref{compuesto}), we obtain
\begin{align*}
\sum_{x\in S_i(B_j)}&I_{2,j}(x)^{q}\mu(x)=\sum_{x\in
S_i(B_j)}\Big(\sum_{k>d(x_{B_j},x)/2}\sum_{d(x,y)<k}\frac{|k(I-P)P^{[k/2]}m_{j}(y)|^2}{k\mu(B(y,k))}\mu
(y)\Big)^{q/2}\mu(x)\\
&=\sum_{x\in
S_i(B_j)}\Big(\sum_{k>d(x_{B_j},x)/2}\sum_{d(x,y)<k}\frac{|k^{M+1}(I-P)^{M+1}P^{[k/2]}(b_{j})(y)|^2}{k^{2M+1}\mu(B(y,k))}\mu
(y)\Big)^{q/2}\mu(x)\\
&\le C\sum_{x\in
S_i(B_j)}\Big(\sum_{k>d(x_{B_j},x)/2}\sum_{d(x,y)<k}\frac{1}{k^{2M+1}\mu(B(x,k))}\\
&\hspace{3cm}\times\Big(\sum_{z\in
\Gamma}\frac{e^{-cd(y,z)^2/k}}{\mu(B(y,\sqrt{k}))}|b_j(z)|\mu(z)\Big)^2\mu(y)\Big)^{q/2}\mu(x)\\
&\le C\sum_{x\in
S_i(B_j)}\Big(\sum_{k>d(x_{B_j},x)/2}\frac{1}{k^{2M+1}\mu(B(x,k))^{2/q}}\|b_j\|_q^2\Big)^{q/2}\mu(x)\\
&\le C\sum_{x\in
S_i(B_j)}\frac{1}{d(x,x_{B_j})^{Mq}\mu(B(x,d(x,x_{B_j})))}\mu(x)\|b_j\|_q^q\\
 &\le
\frac{\mu(B(x_{B_j},2^{i+3}Mr_{B_j}))}{(2^ir_{B_j})^{Mq}\mu(B(x_{B_j},2^ir_{B_j}))}\|b_j\|_q^q\\
&\le C2^{-iMq}\mu(B_j)\|\chi_{B_j}\|_{p(\cdot)}^{-q}.
\end{align*}

In order to study $\sum_{x\in S_i(B_j)}I_{1,j}(x)^{q}\mu(x)$ we
split $m_j$ in the following way
\begin{align*}
m_j&=m_j\chi_{B(x_{B_j},2^{i-3}Mr_{B_j})}+m_j\chi_{B(x_{B_j},2^{i+4}Mr_{B_j})\setminus
B(x_{B_j},2^{i-3}Mr_{B_j})}+m_j\chi_{B(x_{B_j},2^{i+4}Mr_{B_j})^c}\\
&=m_{j,1}+m_{j,2}+m_{j,3}.
\end{align*}

Since $S_L$ is bounded in $L^q(\Gamma)$ we obtain
\begin{align*}
J_2&=\sum_{x\in
S_i(B_j)}\Big(\sum_{k=1}^{k=d(x_{B_j},x)/2}\sum_{d(x,y)<k}\frac{|k(I-P)P^{[k/2]}(m_{j,2})(y)|^2}{k\mu(B(y,k))}\mu
(y)\Big)^{q/2}\mu(x)\\
&\le C\|m_{j,2}\|_q^q\\
&\le
C\sum_{\ell=\ell_0+i-2}^{\ell=\ell_0+i+4}\|m_j\chi_{\mathfrak{S}_\ell(B_j)}\|_q^q,
\end{align*}
where $\ell_0\in \mathbb{N}$, $2^{\ell_0}\le M<2^{\ell_0+1}$. Molecular properties of $m_j$
and Lemma \ref{LemaCoc}, (i), imply that
\begin{align*}
J_2&\le C\sum_{\ell=\ell_0+i-2}^{\ell=\ell_0+i+4}2^{-\ell\varepsilon
q}\mu(B(x_{B_j},2^\ell r_{B_j}))\|\chi_{B(x_{B_j},2^\ell
r_{B_j})}\|_{p(\cdot)}^{-q}\\
&\le C2^{-i\varepsilon q}\mu(B(x_{B_j},2^{i+3}M
r_{B_j}))\|\chi_{B(x_{B_j},2^{i+3}M r_{B_j})}\|_{p(\cdot)}^{-q}.
\end{align*}

According to (\ref{compuesto}), (\ref{PMol}) and by taking into
account that $\supp(L(m_{j,1}))\cap S_i(B_j)=\varnothing$ and that $d(y,z)\ge c2^ir_{B_j}$ provided that $z\in
B(x_{B_j},2^{i-3}Mr_{B_j})$, $x\in S_i(B_j)$, $d(y,x)<k\le
d(x,x_{B_j})/2$, we can write
\begin{align*}
J_1&=\sum_{x\in
S_i(B_j)}\Big(\sum_{k=2}^{d(x_{B_j},x)/2}\sum_{d(x,y)<k}\frac{|k(I-P)P^{[k/2]}(m_{j,1})(y)|^2}{k\mu(B(y,k))}\mu
(y)\Big)^{q/2}\mu(x)\\
&\le C\sum_{x\in
S_i(B_j)}\Big(\sum_{k=2}^{d(x_{B_j},x)/2}\sum_{d(x,y)<k}\frac{1}{k\mu(B(x,k))}\\
&\hspace{1cm}
\times\Big(\sum_{z\in B(x_{B_j},2^{i-3}Mr_{B_j})}\frac{e^{-cd(y,z)^2/k}}{\mu(B(y,\sqrt{k}))}|m_j(z)|\mu(z)\Big)^2\mu(y)\Big)^{q/2}\mu(x)\\
&\le C\sum_{x\in
S_i(B_j)}\Big(\sum_{k=2}^{d(x_{B_j},x)/2}\sum_{d(x,y)<k}\frac{1}{k\mu(B(x,k))}\\
&\hspace{1cm}
\times\Big(\sum_{z\in B(x_{B_j},2^{i-3}Mr_{B_j})}\frac{e^{-cd(y,z)^2/k}}{\mu(B(y,\sqrt{k}))}|m_j(z)|^q\mu(z)\Big)^{2/q}\mu(y)\Big)^{q/2}\mu(x)\\
&\le C\sum_{x\in
S_i(B_j)}\Big(\sum_{k=2}^{d(x_{B_j},x)/2}\frac{1}{k}\|m_j\|_q^2\\
&\hspace{1cm}
\times\Big(\sum_{d(x,y)<k}\frac{\mu(y)e^{-c2^{2i}r_{B_j}^2/k}}{\mu(B(x,k))\mu(B(y,\sqrt{k}))}\Big)^{2/q}\Big)^{q/2}\mu(x)\\
&\le C\|m_j\|_q^q\sum_{x\in
S_i(B_j)}\Big(\sum_{k=2}^{d(x_{B_j},x)/2}\frac{1}{k}\Big(\sum_{d(x,y)<k}\frac{\mu(y)e^{-c2^{2i}r_{B_j}^2/k}}{\mu(B(x,k))}\\
&\hspace{1cm}\times\frac{\mu(B(y,d(x,x_{B_j})))}{\mu(B(y,\sqrt{k}))}\frac{1}{\mu(B(y,d(x,x_{B_j})))}\Big)^{2/q}\Big)^{q/2}\mu(x)\\
&\le C\|m_j\|_q^q\sum_{x\in
S_i(B_j)}\Big(\sum_{k=2}^{d(x_{B_j},x)/2}\frac{1}{k}\Big(\sum_{d(x,y)<k}\frac{\mu(y)e^{-c2^{2i}r_{B_j}^2/k}}{\mu(B(x,k))}\\
&\hspace{1cm}\times\Big(\frac{d(x,x_{B_j})}{\sqrt{k}}\Big)^D\frac{1}{\mu(B(y,d(x,x_{B_j})))}\Big)^{2/q}\Big)^{q/2}\mu(x)\\
&\le C\|m_j\|_q^q\sum_{x\in
S_i(B_j)}\Big(\sum_{k=2}^{d(x_{B_j},x)/2}\frac{1}{k}\Big(\sum_{d(x,y)<k}\frac{\mu(y)e^{-c2^{2i}r_{B_j}^2/k}}{\mu(B(x,k))}\\
&\hspace{1cm}\times\Big(\frac{r_{B_j}2^{i+3}}{\sqrt{k}}\Big)^D\frac{1}{\mu(B(x,d(x,x_{B_j})))}\Big)^{2/q}\Big)^{q/2}\mu(x)\\
&\le C\|m_j\|_q^q\sum_{x\in
S_i(B_j)}\Big(\sum_{k=2}^{d(x_{B_j},x)/2}\frac{e^{-c2^{2i}r_{B_j}^2/k}}{k\mu(B(x,d(x,x_{B_j})))^{2/q}}\Big)^{q/2}\mu(x)\\
&\le C\|m_j\|_q^q\sum_{x\in
S_i(B_j)}d(x_{B_j},x)^{q/2}\frac{e^{-c2^{2i}r_{B_j}^2/d(x_{B_j},x)}}{\mu(B(x_{B_j},d(x,x_{B_j})))}\mu(x)\\
&\le C\|m_j\|_q^q(2^ir_{B_j})^{q/2}e^{-c2^{i}r_{B_j}}\\
&\le C e^{-c2^i}\mu(B_j)\|\chi_{B_j}\|_{p(\cdot)}^{-q}.
\end{align*}

On the other hand, $d(y,z)\ge c2^ir_{B_j}$ provided that $z\notin
B(x_{B_j},2^{i+4}Mr_{B_j})$, $x\in S_i(B_j)$, $d(y,x)<k\le
d(x,x_{B_j})/2$. By proceeding as above we deduce that
\begin{align*}
J_3&=\sum_{x\in
S_i(B_j)}\Big(\sum_{k=1}^{d(x_{B_j},x)/2}\sum_{d(x,y)<k}\frac{|k(I-P)P^{[k/2]}(m_{j,3})(y)|^2}{k\mu(B(y,k))}\mu
(y)\Big)^{q/2}\mu(x)\\
&\le C e^{-c2^i}\mu(B_j)\|\chi_{B_j}\|_{p(\cdot)}^{-q}.
\end{align*}

By combining the above estimates we get
\begin{align*}
\sum_{x\in S_i(B_j)}I_{1,j}(x)^{q}\mu(x)&\le
C\Big(e^{-c2^i}\mu(B_j)\|\chi_{B_j}\|_{p(\cdot)}^{-q}+2^{-i\varepsilon
q}\mu(B(x_{B_j},2^{i+3}M r_{B_j}))\|\chi_{B(x_{B_j},2^{i+3}M
r_{B_j})}\|_{p(\cdot)}^{-q}\Big).
\end{align*}

By using Lemma \ref{LemaCoc}, (i), we obtain
\begin{align*}
\|S_L(m_j)&\chi_{S_i(B_j)}\|_q\le C \Big(\Big(\sum_{x\in
S_i(B_j)}I_{1,j}^{q}\mu(x)\Big)^{1/q}+\Big(\sum_{x\in
S_i(B_j)}I_{2,j}^{q}\mu(x)\Big)^{1/q}\Big)\\
&\le
C\Big(2^{-iM}\mu(B_j)^{1/q}\|\chi_{B_j}\|_{p(\cdot)}^{-1}+2^{-i\varepsilon
}\mu(B(x_{B_j},2^{i+3}M r_{B_j}))^{1/q}\|\chi_{B(x_{B_j},2^{i+3}M
r_{B_j})}\|_{p(\cdot)}^{-1}\Big)\\
&\le C(2^{-i\varepsilon}+2^{-i(M-D/w)})\mu(B(x_{B_j},2^{i+3}M
r_{B_j}))^{1/q}\|\chi_{B(x_{B_j},2^{i+3}M
r_{B_j})}\|_{p(\cdot)}^{-1},
\end{align*}
where $2D/M<w<p_-$ and $D/\varepsilon<w$.

According to Lemmas \ref{LemaSum} and \ref{LemaSumCoc} we get
\begin{align*}
\Big\|\Big(\sum_{j=j_1}^{j_2} &\Big(|\lambda _j|S_L(m_j)\chi
_{\mathcal{S}_i(B_j)}\Big)^{\mathfrak{p}}
\Big)^{1/\mathfrak{p}}\Big\|_{p(\cdot )}\\
&\le
C(2^{-i\varepsilon}+2^{-i(M-D/w)})\Big\|\Big(\sum_{j=j_1}^{j_2}\Big(\frac{|\lambda_j|\chi_{B(x_{B_j},2^{i+3}Mr_{B_j})}}{\|\chi_{B(x_{B_j},2^{i+3}Mr_{B_j})}\|_{p(\cdot)}}\Big)^{\mathfrak{p}}\Big)^{1/\mathfrak{p}}\Big\|_{p(\cdot)}\\
&\le(2^{-i(\varepsilon-D/w)}+2^{-i(M-2D/w)})\Big\|\Big(\sum_{j=j_1}^{j_2}\Big(\frac{|\lambda_j|\chi_{B_j}}{\|\chi_{B_j}\|_{p(\cdot)}}\Big)^{\mathfrak{p}}\Big)^{1/\mathfrak{p}}\Big\|_{p(\cdot)}.
\end{align*}

By (\ref{ForSum}) we conclude that
$$
\Big\|S_L\Big(\sum_{j=j_1}^{j_2}\lambda_j m_j\Big)\Big\|_{p(\cdot
)}\leq \Big\|\Big(\sum_{j=j_1}^{j_2} (|\lambda _j|S_L(m_j
)\chi_{\mathcal{S}_i(B_j)})^\mathfrak{p}\Big)^{1/\mathfrak{p}}\Big\|_{p(\cdot
)}\le
C\Big\|\Big(\sum_{j=j_1}^{j_2}\Big(|\lambda_j|\chi_{B_j}\|\chi_{B_j}\|_{p(\cdot)}^{-1}\Big)^\mathfrak{p}\Big)^{1/\mathfrak{p}}\Big\|_{p(\cdot)}.
$$

Let $\epsilon >0$. Since the series
$$
\sum_{j=0}^\infty\Big(|\lambda _j|\chi _{B_j}\|\chi _{B_j}\|_{p(\cdot)}^{-1}\Big)^\frak{p}
$$
converges in $L^{p(\cdot )/\frak{p}}(\Gamma )$, there exists $j_0\in \mathbb{N}$ such that, for every $j_1,j_2\in \mathbb{N}$, $j_0\leq j_1 <j_2$,
$$
\Big\|S_L\Big(\sum_{j=j_1 }^{j_2}\lambda _jm_j\Big)\Big\|_{p(\cdot
)}<\epsilon.
$$
The completeness of $H^{p(\cdot )}_L(\Gamma )$ implies that the series $\sum_{j=1}^\infty \lambda _jm_j$ converges in $H^{p(\cdot )}_L(\Gamma )$. By writing $f=\sum_{j=1}^\infty \lambda _jm_j$ we have that
\begin{align*}
\|f\|_{H^{p(\cdot )}_L(\Gamma )} &=\lim_{n\rightarrow \infty
}\Big\|\sum_{j=1}^n\lambda_jm_j\Big\|_{H^{p(\cdot )}_L(\Gamma )}
\leq C\lim_{n\rightarrow \infty }\Big\|\Big(\sum_{j=1}^n(|\lambda _j|\chi _{B_j}\|\chi _{B_j}\|_{p(\cdot )}^{-1})^\frak{p}\Big)^{1/\frak{p}}\Big\|_{p(\cdot )}\\
&\leq C\Big\|\Big(\sum_{j=0}^\infty (|\lambda _j|\chi _{B_j}\|\chi _{B_j}\|_{p(\cdot )}^{-1})^\frak{p}\Big)^{1/\frak{p}}\Big\|_{p(\cdot )}.
\end{align*}

\end{proof}

 For every complex function $f$ defined on $\Gamma $ we consider the maximal function $\mathcal{M}_+(f)$ given by
$$
\mathcal{M}_+(f)(x)=\sup_{n\in \mathbb{N}}|P^n(f)(x)|,\quad x\in \Gamma .
$$
According to \cite[(2.3)]{Fe} we have that
$|P^k(f)|\le C\mathcal{M}(f)$, $k\in \mathbb{N}_+$. Then, $\mathcal{M}_+(f)\le C \mathcal{M}(f)$ and the maximal function $\mathcal{M}_+$ defines a bounded (sublinear) operator from $L^q(\Gamma)$ into itself, for every $1<q\le \infty$.

We say that $f\in L^2(\Gamma)$ is in
$\mathbb{H}^{p(\cdot)}_{L,+}(\Gamma)$ when $\mathcal{M}_+(f)\in
L^{p(\cdot)}(\Gamma)$. The maximal Hardy space
$H^{p(\cdot)}_{L,+}(\Gamma)$ is the completion of
$\mathbb{H}^{p(\cdot)}_{L,+}(\Gamma)$ with respect to the norm
$\|\cdot\|_{ H^{p(\cdot)}_{L,+}(\Gamma)}$ defined by
$$
\|f\|_{H^{p(\cdot)}_{L,+}(\Gamma)}=\|\mathcal{M}_+(f)\|_{p(\cdot)},\,\,\,f\in
\mathbb{H}^{p(\cdot)}_{L,+}(\Gamma).
$$
Next we establish that $H^{p(\cdot)}_{L}(\Gamma)$ is a subspace of
$H^{p(\cdot)}_{L,+}(\Gamma)$.
\begin{Prop}\label{M+}
Let $p\in \mathcal{P}^{\log}(\Gamma)$. There exists $C>0$ such that,
for every $f\in H^{p(\cdot )}_L(\Gamma )\cap L^2(\Gamma )$ we have
that
$$
\|\mathcal{M}_+(f)\|_{p(\cdot )}\leq C\|f\|_{H^{p(\cdot )}_L(\Gamma )}.
$$
Hence, $H^{p(\cdot)}_{L}(\Gamma)$ is contained in
$H^{p(\cdot)}_{L,+}(\Gamma)$.
\end{Prop}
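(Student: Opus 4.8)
The plan is to deduce the estimate from the atomic decomposition of $H^{p(\cdot)}_L(\Gamma)$ established in Proposition \ref{Hardy}, arguing exactly as in the proof of Proposition \ref{H6} but with $\mathcal{M}_+$ in place of $S_L$. Fix $f\in H^{p(\cdot)}_L(\Gamma)\cap L^2(\Gamma)$ and choose $r\ge 2$, $r>p_+$ and $M\in\mathbb{N}_+$ with $M>2D/p_-$. By the first part of the proof of Proposition \ref{Hardy} there exist complex numbers $\lambda_j$ and $(r,p(\cdot),M)$-atoms $a_j$, associated with balls $B_j=B(x_{B_j},r_{B_j})$ and functions $b_j$ with $a_j=L^Mb_j$, such that $f=\sum_{j}\lambda_ja_j$ in $L^2(\Gamma)$ and $\mathcal{A}(\{\lambda_j\},\{B_j\})\le C\|f\|_{H^{p(\cdot)}_L(\Gamma)}$. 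Since $P^n$ is bounded on $L^2(\Gamma)$ and points have positive $\mu$-measure, $L^2$-convergence forces $P^n(f)(x)=\sum_j\lambda_jP^n(a_j)(x)$ for every $x\in\Gamma$ and $n\in\mathbb{N}$; taking absolute values and then the supremum over $n$ yields the pointwise bound $\mathcal{M}_+(f)\le\sum_j|\lambda_j|\mathcal{M}_+(a_j)$. As in (\ref{RR1}) I would then split $\Gamma$ into the annuli $S_i(B_j)$ and use the $\ell^{\mathfrak{p}}$-triangle inequality to reduce everything to estimating $\|\mathcal{M}_+(a_j)\|_{L^r(S_i(B_j))}$ for each $i\in\mathbb{N}$.

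For the local term $i=0$ I would use that $\mathcal{M}_+(g)\le C\mathcal{M}(g)$, so that $\mathcal{M}_+$ is bounded on $L^r(\Gamma)$ (here $r>1$); the atom property with $k=M$ then gives $\|\mathcal{M}_+(a_j)\|_{L^r(S_0(B_j))}\le C\|a_j\|_r\le C\mu(B_j)^{1/r}\|\chi_{B_j}\|_{p(\cdot)}^{-1}$, and Lemmas \ref{LemaSum}, \ref{LemaCoc}\,(i) and \ref{LemaSumCoc} bound the corresponding term by $C\mathcal{A}(\{\lambda_j\},\{B_j\})$, exactly as in (\ref{RR2}).

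The heart of the argument, and the step I expect to be the main obstacle, is the pointwise decay of $\mathcal{M}_+(a_j)$ off $B_j$. Writing $a_j=(I-P)^Mb_j$ with $\supp b_j\subset B_j$, the kernel of $P^n(I-P)^M$ is $\widetilde{p_{n,M}}$, so for $x\in\Gamma\setminus S_0(B_j)$, putting $R=d(x,x_{B_j})$ and noting $d(x,y)\ge R/2$ for $y\in B_j$, estimate (\ref{compuesto}) gives
$$
|P^n(a_j)(x)|\le C\,\frac{e^{-cR^2/n}}{n^M\mu(B(x,\sqrt n))}\sum_{y\in B_j}|b_j(y)|\mu(y),\quad n\in\mathbb{N}_+.
$$
I would bound the supremum over $n$ by splitting into $n\le R^2$ and $n>R^2$: in the first, (\ref{doubling}) controls $\mu(B(x,\sqrt n))^{-1}$ by $C(R/\sqrt n)^D\mu(B(x,R))^{-1}$, and the substitution $s=R^2/n$ reduces matters to $\sup_{s\ge1}s^{M+D/2}e^{-cs}<\infty$; in the second, $\sqrt n\ge R$ and $n^M\ge R^{2M}$ give the same bound. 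This produces the uniform estimate
$$
\mathcal{M}_+(a_j)(x)\le \frac{C}{R^{2M}\mu(B(x,R))}\sum_{y\in B_j}|b_j(y)|\mu(y),\quad x\in\Gamma\setminus S_0(B_j).
$$
Using $\sum_{y\in B_j}|b_j(y)|\mu(y)\le\|b_j\|_r\mu(B_j)^{1/r'}\le Cr_{B_j}^M\mu(B_j)\|\chi_{B_j}\|_{p(\cdot)}^{-1}$ (the atom property with $k=0$) and summing over $x\in S_i(B_j)$, where $R\sim 2^ir_{B_j}$ and $\mu(B(x,R))\sim\mu(B(x_{B_j},2^ir_{B_j}))$, I would obtain, for $0<w<p_-$ and after Lemma \ref{LemaCoc}\,(i),
$$
\|\mathcal{M}_+(a_j)\|_{L^r(S_i(B_j))}\le C2^{-i(2M-D/w)}\mu(B(x_{B_j},2^{i+3}Mr_{B_j}))^{1/r}\|\chi_{B(x_{B_j},2^{i+3}Mr_{B_j})}\|_{p(\cdot)}^{-1}.
$$

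Finally, Lemmas \ref{LemaSum} and \ref{LemaSumCoc} turn this into $\big\|\big(\sum_j(|\lambda_j|\mathcal{M}_+(a_j)\chi_{S_i(B_j)})^{\mathfrak{p}}\big)^{1/\mathfrak{p}}\big\|_{p(\cdot)}\le C2^{-i(2M-2D/w)}\mathcal{A}(\{\lambda_j\},\{B_j\})$, and choosing $w<p_-$ close enough to $p_-$ makes $2M-2D/w>0$ (since $M>2D/p_-$), so the sum over $i$ converges. Adding the contributions of all annuli gives $\|\mathcal{M}_+(f)\|_{p(\cdot)}\le C\mathcal{A}(\{\lambda_j\},\{B_j\})\le C\|f\|_{H^{p(\cdot)}_L(\Gamma)}$, which is the asserted inequality on the dense subspace $\mathbb{H}^{p(\cdot)}_L(\Gamma)=H^{p(\cdot)}_L(\Gamma)\cap L^2(\Gamma)$. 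The inclusion $H^{p(\cdot)}_L(\Gamma)\subset H^{p(\cdot)}_{L,+}(\Gamma)$ then follows because both spaces are completions of their respective dense subspaces and the identity map extends continuously.
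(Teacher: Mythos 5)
Your proposal is correct, and its skeleton coincides with the paper's: decompose $f$ into $(r,p(\cdot),M)$-atoms via Proposition \ref{Hardy}, dominate $\mathcal{M}_+(f)\le\sum_j|\lambda_j|\mathcal{M}_+(a_j)$ pointwise using $L^2$-convergence, split into the annuli $S_i(B_j)$, handle $S_0(B_j)$ by the $L^q$-boundedness of $\mathcal{M}_+$, and conclude with Lemmas \ref{LemaSum}, \ref{LemaCoc}\,(i) and \ref{LemaSumCoc}. Where you genuinely diverge is in the far-field estimate, and your version is in fact sharper. The paper first observes that $P^n(a_j)$ vanishes on $S_i(B_j)$ for $n<2^i$ (finite propagation speed of $P$), so that $\mathcal{M}_+(a_j)=\sup_{n\ge 2^i}|P^n(a_j)|$ there; it then runs a Jensen-inequality argument in $L^q$ on the kernel $\widetilde{p}_{n,M}$ and extracts the decay from the time parameter via $(r_{B_j}/n)^{Mq}e^{-cr_{B_j}^2/n}\le Cn^{-Mq/2}$, which yields only $2^{-iM/2}$ per annulus and is why the paper's proof takes $M>4D/p_-$ (and $q=2\max\{2,p_+\}$). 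You instead prove a single bound uniform in $n$, exploiting the full Gaussian factor $e^{-cd(x,x_{B_j})^2/n}$ at the spatial scale $R=d(x,x_{B_j})$ together with doubling (the $n\le R^2$ versus $n>R^2$ split), which gives the spatial decay $R^{-2M}$, hence $2^{-2iM}$ per annulus, and an $L^1$--H\"older treatment of $b_j$ in place of Jensen. This buys you two things: you can keep $M>2D/p_-$, i.e.\ exactly the hypothesis under which Proposition \ref{Hardy} is stated, and you never need the finite-propagation-speed restriction of the supremum. Since $M$ is a free parameter in this proposition, both routes prove the statement; yours is the more economical of the two.
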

\begin{proof}
Let $f\in H^{p(\cdot )}_L(\Gamma )\cap L^2(\Gamma )$. We take
$q=2\max\{2,p_+\}$ and $M\in \mathbb{N}$, $M>4D/p_-$. According to the proof of Proposition \ref{Hardy}, there
exists, for every $j\in \mathbb{N}$, $\lambda _j\in \mathbb{C}$ and
a $(q,p(\cdot ),M)$-atom $a_j$ associated to the ball
$B_j=B(x_{B_j},r_{B_j})$, with $x_{B_j}\in \Gamma$ and $r_{B_j}\ge
1$, such that
$$
f=\sum_{j=0}^\infty \lambda _ja_j,\quad \mbox{ in }L^2(\Gamma
)\mbox{ and in }H^{p(\cdot )}_L(\Gamma ),
$$
and $\mathcal{A} (\{\lambda _j\},\{B_j\})\le
C\|f\|_{H^{p(\cdot)}_L(\Gamma)}$. Since, for every $n\in
\mathbb{N}$, $P^n$ is bounded in $L^2(\Gamma )$ we have that, for
every $n\in \mathbb{N}$,
$$
P^n(f)=\sum_{j=0}^\infty \lambda _jP^n(a_j),\quad \mbox{ in
}L^2(\Gamma ),
$$
and then
$$
P^n(f)(x)=\sum_{j=0}^\infty \lambda_j P^n(a_j)(x),\quad x\in \Gamma .
$$
Hence, we can write
$$
\mathcal{M}_+(f)(x)\leq \sum_{j=0}^\infty |\lambda _j|\mathcal{M}_+(a_j)(x),\quad x\in \Gamma .
$$
We deduce that
$$
\|\mathcal{M}_+(f)\|_{p(\cdot )}\leq \Big(\sum_{i=0}^\infty \Big\|\Big(\sum_{j=0}^\infty (|\lambda _j|\mathcal{M}_+(a_j)\chi _{S_i(B_j)})^\frak{p}\Big)^{1/\frak{p}}\Big\|_{p(\cdot )}^\frak{p}\Big)^{1/\frak{p}}.
$$
Since $\mathcal{M}_+$ is bounded in $L^q(\Gamma )$ we obtain
$$
\|\mathcal{M}_+(a_j)\|_{L^q(S_0(B_j))}^q\leq C\|a_j\|_{L^q(\Gamma )}^q\leq C(\mu(B_j)^{1/q}\|\chi _{B_j}\|_{p(\cdot )}^{-1})^q,\quad j\in \mathbb{N}.
$$
By Lemmas \ref{LemaSum}, \ref{LemaCoc}, (i), and \ref{LemaSumCoc} we get
$$
\Big\|\Big(\sum_{j=1}^\infty (|\lambda _j|\mathcal{M}_+(a_j)\chi _{S_0(B_j)})^\frak{p}\Big)^{1/\frak{p}}\Big\|_{p(\cdot )}\leq C\mathcal{A}(\{\lambda _j\},\{B_j\}).
$$
Let $j\in \mathbb{N}$ and $i\in \mathbb{N}_+$. Since $P^n(a_j)(x)=0$, when $x\in S_i(B_j)$ and $n\in \mathbb{N}$, $n<2^i$, we can write
$$
\mathcal{M}_+(a_j)(x)= \sup_{n\ge 2^i}|P^n(a_j)(x)|,\quad x\in S_i(B_j).
$$

Since $a_j$ is a $(q,p(\cdot),M)$-atom we can write $a_j=L^Mb_j$,
where $\mbox{supp }b_j\subset B_j$ and $\|b_j\|_q\leq
r_{B_j}^M\mu(B_j)^{1/q}\|\chi _{B_j}\|_{p(\cdot )}^{-1}$. According
to (\ref{compuesto}) and (\ref{max1}), for every $n\in \mathbb{N}_+$, the kernel
$\widetilde{p}_{n,M}$ of the operator $L^MP^n$ satisfies that
$$
\sum_{y\in \Gamma}|\widetilde{p}_{n,M}(x,y)|\leq \frac{C}{n^M},\quad
x\in \Gamma.
$$
We define $\mathbb{P}_{n,M}(x)=\sum_{y\in \Gamma
}|\widetilde{p}_{n,M}(x,y)|$, $x\in \Gamma $. By using Jensen's
inequality we obtain, for every complex function $g$ defined on
$\Gamma$,
\begin{align}\label{Z11}
 \Big(\sum_{y\in \Gamma }|\widetilde{p}_{n,M}(x,y)|g(y)\Big)^q
&\leq \Big(\sum_{y\in \Gamma }\frac{|\widetilde{p}_{n,M}(x,y)|}{\mathbb{P}_{n,M}(x)}g(y)\Big)^q\mathbb{T}_{M,n}(x)^q\nonumber\\
&  \leq \mathbb{P}_{n,M}(x)^{q-1}\sum_{y\in \Gamma }|\widetilde{p}_{n,M}(x,y)||g(y)|^q\nonumber\\
&\leq \frac{C}{n^{M(q-1)}}\sum_{y\in \Gamma
}|\widetilde{p}_{n,M}(x,y)||g(y)|^q,\quad x\in \Gamma .
\end{align}
By (\ref{Z11}) it follows that, for every $n\in \mathbb{N}_+$,
\begin{align*}
|P^n(a_j)(x)|^q&\leq \frac{C}{n^{M(q-1)}}\sum_{y\in B_j}|\widetilde{p}_{n,M}(x,y)||b_j(y)|^q\\
&\leq \frac{C}{n^{Mq}\mu(B(x,\sqrt{n}))}\sum_{y\in B_j}\exp\Big(-c\frac{d(x,y)^2}{n}\Big)|b_j(y)|^q\mu (y)\\
&\le  \frac{C}{n^{Mq}}\frac{e^{-cd(x_{B_j},x)^2/n}}{\mu(B(x,\sqrt{n}))}\|b_j\|_q^q\\
&\le C\Big(\frac{r_{B_j}}{n}\Big)^{Mq}\frac{e^{-cd(x_{B_j},x)^2/n}}{\mu(B(x,\sqrt{n}))}\mu(B_j)\|\chi_{B_j}\|_{p(\cdot)}^{-q}\\
&\le C\Big(\frac{r_{B_j}}{n}\Big)^{Mq}e^{-cr_{B_j}^2/n}\frac{e^{-cd(x_{B_j},x)^2/n}}{\mu(B(x,\sqrt{n}))}\mu(B_j)\|\chi_{B_j}\|_{p(\cdot)}^{-q},\quad x\in \Gamma\setminus S_0(B_j) .
\end{align*}
We have taken into account that $d(x,y)\ge d(x,x_{B_j})/2\ge 4r_{B_j}$, when $x\in \Gamma\setminus S_0(B_j)$ and $y\in B_j$.

By putting $D_n(x)=D$, when $d(x,x_{B_j})>\sqrt{n}$, and $D_n(x)=0$,
when $d(x,x_{B_j})\le \sqrt{n}$, we obtain
\begin{align*}
\sup_{n\ge 2^i}|P^n(a_j)(x)|&\le C\sup_{n\ge 2^i}\frac{e^{-cd(x_{B_j},x)^2/n}}{n^{M/2}\mu(B(x,\sqrt{n}))^{1/q}}\mu(B_j)^{1/q}\|\chi_{B_j}\|_{p(\cdot)}^{-1}\\
&\le C\sup_{n\ge 2^i}\frac{e^{-cd(x_{B_j},x)^2/n}\mu(B(x,d(x_{B_j},x))^{1/q}}{n^{M/2}\mu(B(x,\sqrt{n})))^{1/q}}\Big(\frac{\mu(B_j)}{\mu(B(x,d(x,x_{B_j})))}\Big)^{1/q}\|\chi_{B_j}\|_{p(\cdot)}^{-1}\\
&\le C\sup_{n\ge 2^i}\frac{e^{-cd(x_{B_j},x)^2/n}}{n^{M/2}}\Big(\frac{d(x,x_{B_j})}{\sqrt{n}}\Big)^{D_n(x)/q}\Big(\frac{\mu(B_j)}{\mu(B(x_{B_j},d(x,x_{B_j})))}\Big)^{1/q}\|\chi_{B_j}\|_{p(\cdot)}^{-1}\\
&\le C2^{-iM/2}\Big(\frac{\mu(B_j)}{\mu(B(x_{B_j},d(x,x_{B_j})))}\Big)^{1/q}\|\chi_{B_j}\|_{p(\cdot)}^{-1},\quad x\in \Gamma\setminus S_0(B_j) .
\end{align*}

Then,
\begin{align*}
\|\mathcal{M}_+(a_j)\|_{L^q(S_i(B_j))}&\le C2^{-iM/2}\Big(\frac{\mu(B_j)}{\mu(B(x_{B_j},2^{i+2}Mr_{B_j}))}\Big)^{1/q}(\mu(S_i(B_j)))^{1/q}\|\chi_{B_j}\|_{p(\cdot)}^{-1}\\
&\le C2^{-iM/2}(\mu(B_j))^{1/q}\|\chi_{B_j}\|_{p(\cdot)}^{-1}\\
&\le C2^{-i(M/2-D/w)}(\mu(B(x_{B_j},2^{i+3}Mr_{B_j})))^{1/q}\|\chi_{B(x_{B_j},2^{i+3}Mr_{B_j})}\|_{p(\cdot)}^{-1}.
\end{align*}

%

We take $2D/M<w<p_-$. We recall that $2D/M<p_-$. Lemmas
\ref{LemaSum} and \ref{LemaSumCoc} lead to
$$
\Big\|\Big(\sum_{j\in \mathbb{N}}(|\lambda _j|\mathcal{M}_+(a_j)\chi _{S_i(B_j)})^\frak{p}\Big)^{1/\frak{p}}\Big\|_{p(\cdot )}\leq C2^{i(2D/w-M/2)}\|f\|_{H^{p(\cdot )}_L(\Gamma )}.
$$
Then,
$$
\|\mathcal{M}_+(f)\|_{p(\cdot )}\leq C\|f\|_{H^{p(\cdot
)}_L(\Gamma)}
$$

\end{proof}

We now consider other type of atoms (see \cite[Definition
4.1]{ZSY}). We say that a complex valued function $a$ defined in
$\Gamma$ is a $(2,p(\cdot))$-atom associated with the ball $B$ when
the following properties are satisfied

(i) $\supp(a)\subset B$,

(ii) $\|a\|_2\le (\mu(B))^{1/2}\|\chi_B\|_{p(\cdot)}^{-1}$,

(iii) $\sum_{x\in B}a(x)\mu(x)=0$.

Note that if $M\in \mathbb{N}_+$ and $a$ is a $(2,p(\cdot),M)$-atom
associated with the ball $B$, then $a$ is a $(2,p(\cdot))$-atom
(\cite[Remark 3.4, (i)]{BD}).

A function $f\in L^2(\Gamma)$ is in the Hardy space
$\mathbb{H}^{p(\cdot)}_{atom}(\Gamma)$ when, for every $j\in
\mathbb{N}$, there exist $\lambda_j\in \mathbb{C}$ and a
$(2,p(\cdot))$-atom $a_j$ associated to the ball $B_j$ such that
$f=\sum_{j=0}^\infty \lambda_ja_j$, in $L^2(\Gamma)$, and
$\mathcal{A}(\{\lambda_j\},\{B_j\})<\infty$. For every $f\in
\mathbb{H}^{p(\cdot)}_{atom}(\Gamma)$ we define
$\|f\|_{H^{p(\cdot)}_{atom}(\Gamma)}$ by
$$
\|f\|_{H^{p(\cdot)}_{atom}(\Gamma)}=\inf
\,\mathcal{A}(\{\lambda_j\},\{B_j\}),
$$
where the infimum is taken over all the sequences
$\{\lambda_j\}_{j=0}^\infty$ of complex numbers and
$\{B_j\}_{j=0}^\infty$ of balls in $\Gamma$ such that, for every
$j\in \mathbb{N}$, there exists a $(2,p(\cdot))$-atom associated
with the ball $B_j$ and $f=\sum_{j=0}^\infty \lambda_ja_j$, in
$L^2(\Gamma)$. Thus, $\|.\|_{H^{p(\cdot)}_{atom}(\Gamma)}$ is a
(quasi)norm in $\mathbb{H}^{p(\cdot)}_{atom}(\Gamma)$.

We define the space $H^{p(\cdot)}_{atom}(\Gamma)$ as the completion
of $\mathbb{H}^{p(\cdot)}_{atom}(\Gamma)$ with respect to
$\|.\|_{H^{p(\cdot)}_{atom}(\Gamma)}$. We have that
$H^{p(\cdot)}_{L,M,at}(\Gamma)$ is a subspace of
$H^{p(\cdot)}_{atom}(\Gamma)$.

We say that a graph $(\Gamma,\mu,d)$ has the Poincar\'e property
when there exists $C>0$ such that for every
$f:\;\Gamma\;\rightarrow\;\mathbb{R}$, $x_0\in\Gamma$ and $r_0>0$,
$$\sum_{x\in B(x_0,r_0)} |f(x)-f_{B(x_0,r_0)}|^2\mu(x)\leq Cr_0^2\sum_{x,y\in B(x_0,2r_0)} |f(x)-f(y)|^2\nu(x,y),$$
where
$$f_{B(x_0,r_0)}=\frac{1}{\mu(B(x_0,r_0))}\sum_{x\in B(x_0,r_0)} f(x)\mu(x).$$

In (\cite[Lemma 4]{Ru1}) it was established that if $\mu$ is
doubling, $\Gamma$ satisfies the Poincar\'e property and property
$\Gamma$ satisfies $\Delta(\alpha)$, with $\alpha >0$, then there
exist $c_3,C_3>0$ and $h\in (0,1)$ such that, for every
$n\in\mathbb{N}$ and $x,y,y_0\in\Gamma$ being
$d(y_0,y)\leq\sqrt{n}$,
\begin{equation}\label{Poin}
 |p_n(y,x)-p_n(y_0,x)|
  \leq C_3\Big(\frac{d(y,y_0)}{\sqrt{n}}\Big)^h\frac{\mu(x)}{\mu(B(x,\sqrt{n}))}e^{-c_3d^2(x,y_0)/n}.
  \end{equation}

We now prove that, under certain conditions,
$H^{p(\cdot)}_{atom}(\Gamma)$ is a subspace of
$H^{p(\cdot)}_{L,+}(\Gamma)$.

\begin{Prop} Let $p\in \mathcal{P}^{log}(\Gamma)$. Assume that $(\Gamma,\mu,d)$ has, in addition to the properties we have adopted from the beginning, the Poincar\'e property, $p_+<2$, and $\frac{D}{D+h}<\mathfrak{p}$, where $h$ is the one in (\ref{Poin}). Then, $H^{p(\cdot)}_{atom}(\Gamma)$ is continuously contained in $H^{p(\cdot)}_{L,+}(\Gamma)$.
\end{Prop}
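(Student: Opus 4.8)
The plan is to argue exactly as in the proofs of Propositions \ref{Hardy} and \ref{M+}, replacing the higher-order cancellation $a=L^Mb$ of the $(2,p(\cdot),M)$-atoms by the single moment condition $\sum_{x\in B}a(x)\mu(x)=0$ of the $(2,p(\cdot))$-atoms, and compensating the loss of structure with the H\"older regularity of the transition kernel furnished by the Poincar\'e property through (\ref{Poin}). It suffices to find $C>0$ such that $\|\mathcal M_+(f)\|_{p(\cdot)}\le C\,\mathcal A(\{\lambda_j\},\{B_j\})$ whenever $f=\sum_{j}\lambda_ja_j$ in $L^2(\Gamma)$, with $a_j$ a $(2,p(\cdot))$-atom associated with $B_j=B(x_{B_j},r_{B_j})$ and $\mathcal A(\{\lambda_j\},\{B_j\})<\infty$; the statement for general $f\in H^{p(\cdot)}_{atom}(\Gamma)$ then follows by completion. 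Since $P^n$ is bounded on $L^2(\Gamma)$ we have $\mathcal M_+(f)\le\sum_j|\lambda_j|\mathcal M_+(a_j)$ pointwise, and, setting $S_0(B)=B(x_B,8r_B)$ and $S_i(B)=B(x_B,2^{i+3}r_B)\setminus B(x_B,2^{i+2}r_B)$ for $i\in\mathbb N_+$, the $\mathfrak p$-quasi-triangle inequality gives
\[
\|\mathcal M_+(f)\|_{p(\cdot)}\le\Big(\sum_{i=0}^\infty\Big\|\Big(\sum_j(|\lambda_j|\mathcal M_+(a_j)\chi_{S_i(B_j)})^{\mathfrak p}\Big)^{1/\mathfrak p}\Big\|_{p(\cdot)}^{\mathfrak p}\Big)^{1/\mathfrak p},\qquad \mathfrak p=\min\{1,p_-\}.
\]

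For the local term $i=0$ I would use that $\mathcal M_+(g)\le C\mathcal M(g)$, so $\mathcal M_+$ is bounded on $L^2(\Gamma)$ (recall $p_+<2$); together with the normalization $\|a_j\|_2\le\mu(B_j)^{1/2}\|\chi_{B_j}\|_{p(\cdot)}^{-1}$ this yields $\|\mathcal M_+(a_j)\|_{L^2(S_0(B_j))}\le C\mu(B_j)^{1/2}\|\chi_{B_j}\|_{p(\cdot)}^{-1}$, and Lemmas \ref{LemaSum}, \ref{LemaCoc}(i) and \ref{LemaSumCoc} bound the $i=0$ summand by $C\,\mathcal A(\{\lambda_j\},\{B_j\})$, verbatim as in Proposition \ref{M+}.

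The heart of the matter is the far terms $i\ge1$. Fix $j$ and $x\in S_i(B_j)$, and note that $P^n(a_j)(x)=0$ unless $n\gtrsim d(x,x_{B_j})\sim 2^ir_{B_j}$. Using the symmetry $p_n(x,y)\mu(x)=p_n(y,x)\mu(y)$ and the moment condition I would write $P^n(a_j)(x)=\frac1{\mu(x)}\sum_{y\in B_j}\big(p_n(y,x)-p_n(x_{B_j},x)\big)a_j(y)\mu(y)$; when $r_{B_j}\le\sqrt n$ the regularity estimate (\ref{Poin}) applies and produces a gain $(r_{B_j}/\sqrt n)^h$, whereas for the remaining range $n\lesssim r_{B_j}^2$ I would only use the Gaussian size bound (\ref{UE}). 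Taking the supremum over $n$, estimating $\sum_{y\in B_j}|a_j(y)|\mu(y)\le\mu(B_j)\|\chi_{B_j}\|_{p(\cdot)}^{-1}$ by Cauchy--Schwarz, and invoking the doubling property (\ref{doubling}), I expect the pointwise estimate
\[
\mathcal M_+(a_j)(x)\le C\,\omega_i\,\frac{\mu(B_j)}{\mu(B(x_{B_j},d(x,x_{B_j})))}\,\|\chi_{B_j}\|_{p(\cdot)}^{-1},\qquad x\in S_i(B_j),
\]
where $\omega_i=2^{-ih}$ when $2^i\ge r_{B_j}$ and $\omega_i=1$ otherwise. The smoothness factor $\omega_i$ together with the measure ratio $\mu(B_j)/\mu(B(x_{B_j},2^ir_{B_j}))$ (comparable, on $S_i(B_j)$, to the value of $\mathcal M(\chi_{B_j})$; cf.\ (\ref{2.3.1})) produces the geometric decay in $i$.

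Finally I would pass to $p(\cdot)$-norms: estimate $\|\mathcal M_+(a_j)\chi_{S_i(B_j)}\|_2$ from the pointwise bound, feed the result into Lemma \ref{LemaSum} with $q=2$ over the dilated balls $B(x_{B_j},2^{i+3}r_{B_j})$, and return to the balls $B_j$ via Lemmas \ref{LemaCoc} and \ref{LemaSumCoc}. Each $i$-summand is thereby dominated by $C\,\tau_i\,\mathcal A(\{\lambda_j\},\{B_j\})$ for an explicit $\tau_i$ built from the exponents $h$ and $D$, and $\sum_i\tau_i^{\mathfrak p}<\infty$ precisely under the hypothesis $\frac D{D+h}<\mathfrak p$. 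The main obstacle will be the diagonal range $n\lesssim r_{B_j}^2$ (present when $r_{B_j}\gg 2^i$), where (\ref{Poin}) is unavailable and only the measure-ratio decay survives: one must check that this weaker decay, once measured in $L^{p(\cdot)}$ through Lemma \ref{LemaCoc}(ii), is still summable in the quasi-Banach regime $\mathfrak p<1$, in which $\mathcal M$ is no longer bounded on $L^{p(\cdot)}$ and so the naive domination $\mathcal M_+(a_j)\lesssim\|\chi_{B_j}\|_{p(\cdot)}^{-1}\mathcal M(\chi_{B_j})$ fails to close the argument. It is exactly the condition $\frac D{D+h}<\mathfrak p$ that rescues the summation.
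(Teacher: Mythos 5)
Your local part and your far-field pointwise analysis are essentially sound: finite propagation, the atom's cancellation combined with the symmetry $p_n(x,y)\mu(x)=p_n(y,x)\mu(y)$ and (\ref{Poin}), plus the Gaussian bound (\ref{UE}), do yield $\mathcal{M}_+(a_j)(x)\le C\,2^{-ih}\,\mu(B_j)\,\mu(B(x_{B_j},d(x,x_{B_j})))^{-1}\|\chi_{B_j}\|_{p(\cdot)}^{-1}$ on $S_i(B_j)$ --- and in fact this holds for \emph{all} $i\ge 1$: in your ``diagonal range'' $n\lesssim r_{B_j}^2$ one has $d(x,x_{B_j})^2/n\gtrsim 2^{2i}$, so the Gaussian factor alone gives $e^{-c2^{2i}}$, far better than $2^{-ih}$; that worry is misplaced. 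The genuine gap is the summation over annuli. Feeding the per-annulus bounds into Lemma \ref{LemaSum} over the dilated balls $B(x_{B_j},2^{i+3}r_{B_j})$ and returning to $B_j$ costs a factor $2^{iD/w}$ from Lemma \ref{LemaCoc}, (i), and another $2^{iD/w}$ from Lemma \ref{LemaSumCoc}, with $w<\mathfrak{p}$. Hence $\tau_i\approx 2^{-ih}2^{2iD/w}$, and $\sum_i\tau_i^{\mathfrak{p}}<\infty$ would require $h>2D/w>2D/\mathfrak{p}$; since $h\in(0,1)$ and $\mathfrak{p}\le 1$, this is impossible in any case of interest, and it is certainly not what the hypothesis $\mathfrak{p}>D/(D+h)$ gives (that hypothesis is equivalent to $h>D(1-\mathfrak{p})/\mathfrak{p}$, a much weaker condition). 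This is exactly why the annular scheme of Propositions \ref{H6}, \ref{Hardy} and \ref{M+} works for $(r,p(\cdot),M)$-atoms, whose gain $2^{-iM}$ with $M>2D/p_-$ beats the $2^{2iD/w}$ cost, but cannot close here, where the only regularity available is the H\"older exponent $h<1$. Nor can you rescue it by keeping measure ratios instead of powers of $2^i$: since $\Gamma$ is not an RD-space, the ratios $\mu(B_j)/\mu(B(x_{B_j},2^ir_{B_j}))$ need not decay geometrically in $i$, so no summable majorant uniform in $j$ exists.

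The paper's proof avoids annuli altogether in the far region. It proves a single global pointwise bound on $B(x_{B_j},2r_{B_j})^c$, namely $\mathcal{M}_+(a_j)\le C\|\chi_{B_j}\|_{p(\cdot)}^{-1}\big(\mathcal{M}(\chi_{B_j})\big)^{1+h/D}$ --- note the maximal function (equivalently, the measure ratio) raised to the power $1+h/D$, rather than the first power of the ratio multiplied by $2^{-ih}$ --- and then sums over $j$ in one stroke via the identity
\begin{equation*}
\frac{\lambda_j}{\|\chi_{B_j}\|_{p(\cdot)}}\big(\mathcal{M}(\chi_{B_j})\big)^{1+h/D}
=\Big(\mathcal{M}\Big(\Big(\frac{\lambda_j}{\|\chi_{B_j}\|_{p(\cdot)}}\Big)^{\frac{D}{D+h}}\chi_{B_j}\Big)\Big)^{\frac{D+h}{D}}
\end{equation*}
together with the Fefferman--Stein inequality (Lemma \ref{LemaFS}) applied in $L^{\frac{D+h}{D}p(\cdot)}(\Gamma)$ with inner exponent $\mathfrak{p}\frac{D+h}{D}$. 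The hypothesis $\mathfrak{p}>\frac{D}{D+h}$ is used precisely there: it guarantees $\mathfrak{p}\frac{D+h}{D}>1$ and $\big(\frac{D+h}{D}p\big)_->1$, so that Lemma \ref{LemaFS} applies; no geometric series in $i$ is ever needed. To repair your argument, keep the $(1+h/D)$-th power of $\mathcal{M}(\chi_{B_j})$ in the pointwise estimate and replace the annular summation by this Fefferman--Stein step; as written, your final summation does not close.
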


\begin{proof} Let $f\in H^{p(\cdot)}_{atom}(\Gamma)\cap L^2(\Gamma)$. Then, $f=\sum_{j=0}^\infty \lambda_ja_j$, in $L^2(\Gamma)$, where for every $j\in \mathbb{N}$, there exist $\lambda_j\in \mathbb{C}$ and a $(2,p(\cdot))$-atom $a_j$ associated to the ball $B_j=B(x_{B_j},r_{B_j})$ such that $\mathcal{A}(\{\lambda_j\},\{B_j\})<\infty$. For every $k\in \mathbb{N}$,
$$
P^k(f)=\sum_{j=0}^\infty \lambda_jP^k(a_j), \,\,\,\rm{in}\,\,\,L^2(\Gamma),
$$
and
$$
P^k(f)(x)=\sum_{j=0}^\infty \lambda_jP^k(a_j)(x), \,\,\,x\in \Gamma.
$$
Assume that $a$ is a $(2,p(\cdot))$-atom associated with the ball
$B=B(x_B,r_B)$.

In the following we are inspired by some ideas developed in \cite[p.
60-61]{Ru2}.

We now assume that $x\not\in B(x_B,2r_B)$. We take firstly $k\in\mathbb{N}$ such that $k\leq r_B^2$. According to (\ref{UE}) we have that
$$
|P^k(a)(x)|\leq C\sum_{d(y,x_B)<
r_B}\frac{1}{\mu(B(x,\sqrt{k}))}e^{-cd^2(x,y)/k}|a(y)|\mu(y).
$$
If $y\in B$, then $d(x,y)\geq \frac{1}{2}d(x,x_B)$ so, we have
\begin{align*}
   |P^k(a)(x)|&  \leq  C\frac{e^{-cd(x,x_B)^2/k}}{\mu(B(x,\sqrt{k}))}(\mu(B))^{1/2}\|a\|_2 \\
  & \leq C\frac{e^{-cd(x,x_B)^2/k}}{\mu(B(x,\sqrt{k}))}\mu(B)\|\chi_{B}\|_{p(\cdot)}^{-1} \\
  & \leq C\frac{e^{-cd(x,x_B)^2/k}}{\mu(B(x,\sqrt{k}))}\Big(\frac{r_{B}}{\sqrt{k}}\Big)^h\mu(B)\|\chi_{B}\|_{p(\cdot)}^{-1}.
  \end{align*}
We now assume that $k> r_B^2$. Since $\sum_{\Gamma}a(y)\mu(y)=0$, we
can write
$$P^k(a)(x)=\sum_{y\in\Gamma}\Big(\frac{p_k(x,y)}{\mu(y)}-\frac{p_k(x,x_B)}{\mu(x_B)}\Big)a(y)\mu(y).$$

%
%

  By taking into account that $p(x,y)\mu(x)=p(y,x)\mu(y)$, $x,y\in\Gamma$, we also obtain, for every $k\in\mathbb{N}$, $p_k(x,y)\mu(x)=p_k(y,x)\mu(y)$, $x,y\in\Gamma$. From (\ref{Poin}) and the relation we have just shown we deduce that
\begin{align*}
    \Big|\frac{p_k(x,y)}{\mu(y)}-  \frac{p_k(x,x_B)}{\mu(x_B)}\Big| & = \frac{|p_k(y,x)-p_k(x_B,x)|}{\mu(x)} \\
  & \leq C\Big(\frac{d(y,x_B)}{\sqrt{k}}\Big)^h\frac{1}{\mu(B(x,\sqrt{k}))}e^{-cd(x,x_B)^2/k},\;\;\;d(y,x_B)\leq\sqrt{k}.
\end{align*}
Since $k> r_B^2$, we get
\begin{align*}
    |P^k(a)(x)|  \leq & \frac{C}{\mu(B(x,\sqrt{k}))}e^{-cd(x,x_B)^2/k}\sum_{d(y,x_B)< r_B}\Big(\frac{d(y,x_B)}{\sqrt{k}}\Big)^h|a(y)|\mu(y) \\
  & \leq \frac{C}{\mu(B(x,\sqrt{k}))}e^{-cd(x,x_B)^2/k}\Big(\frac{r_B}{\sqrt{k}}\Big)^h \frac{\mu(B)}{\|\chi_{B}\|_{p(\cdot)}}.
\end{align*}

We define $D_k(x)=0$, when $r_B+d(x,x_B)<\sqrt{k}$, and $D_k(x)=D$, when $r_B+d(x,x_B)\ge\sqrt{k}$. It follows that, for every $k\in \mathbb{N}_+$,
\begin{align*}
|&P^k(a)(x)| \le \frac{C}{\mu(B(x,\sqrt{k}))}e^{-cd(x,x_B)^2/k}\Big(\frac{r_B}{\sqrt{k}}\Big)^h \frac{\mu(B)}{\|\chi_{B}\|_{p(\cdot)}}\\
&= C\frac{e^{-cd(x,x_B)^2/k}}{\|\chi_{B}\|_{p(\cdot)}}\Big(\frac{r_B}{\sqrt{k}}\Big)^h\frac{\mu(B(x,r_B+d(x,x_B)))}{\mu(B(x_B,\sqrt{k}))}\frac{\mu(B)}{\mu(B(x,r_B+d(x,x_B)))}\\
&\le C\frac{e^{-cd(x,x_B)^2/k}}{\|\chi_{B}\|_{p(\cdot)}}\Big(\frac{r_B+d(x,x_B)}{\sqrt{k}}\Big)^{D_k(x)}\frac{\mu(B)}{\mu(B(x,r_B+d(x,x_B)))}\\
&\le C\frac{e^{-cd(x,x_B)^2/k}}{\|\chi_{B}\|_{p(\cdot)}}\Big(\frac{r_B+d(x,x_B)}{\sqrt{k}}\Big)^{D_k(x)+h}\Big(\frac{r_B}{r_B+d(x,x_B)}\Big)^h\frac{\mu(B)}{\mu(B(x,r_B+d(x,x_B)))}\\
&\le \frac{C}{\|\chi_{B}\|_{p(\cdot)}}\Big(\frac{\mu(B)}{\mu(B(x_B,r_B+d(x,x_B))}\Big)^{h/D}\frac{\mu(B)}{\mu(B(x,r_B+d(x,x_B)))}\\
&\le \frac{C}{\|\chi_{B}\|_{p(\cdot)}}\Big(\frac{\mu(B)}{\mu(B(x_B,r_B+d(x,x_B))}\Big)^{1+h/D}\\
&\le \frac{C}{\|\chi_{B}\|_{p(\cdot)}}\Big(\mathcal{M}(\chi_B)(x)\Big)^{1+h/D},\,\,\,x\in \Gamma.
\end{align*}
We have used in the last line that $B\subset B(x,r_B+d(x,x_B))$.

Since $|P^k(a)|\le C\mathcal{M}(a)$, $k\in \mathbb{N}$, the above estimations lead to
\begin{align*}
    \mathcal{M}_+(a)(x)
    \leq & C\Big(\mathcal{M}(a)(x)\chi_{B(x_B,2r_B)}(x)\\
    &+\Big(\mathcal{M}(\chi_{B})(x)\Big)^{1+h/D}\chi_{B(x_B,2r_B)^c}(x)\|\chi_{B}\|_{p(\cdot)}^{-1}\Big), \;\;\;\;x\in\Gamma.
\end{align*}
We can write
\begin{align*}
    \mathcal{M}_+(f)(x)
  & \leq  C\Big(\sum_{j=0}^\infty\lambda_j\mathcal{M}(a_j)(x)\chi_{B(x_{B_j},2r_{B_j})}(x)   \\
& \qquad + \sum_{j=0}^\infty\frac{\lambda_j}{\|\chi_{B_j}\|_{p(\cdot)}}\Big(\mathcal{M}(\chi_{B_j})(x)\Big)^{1+h/D}\chi_{B(x_{B_j},2r_{B_j})^c}\Big) \\
  & \leq C\Big(\Big(\sum_{j=0}^\infty\Big(\lambda_j\mathcal{M}(a_j)(x)\chi_{B_j})(x)\Big)^{\mathfrak{p}}\Big)^{1/\mathfrak{p}}  \\
  & \qquad  + \Big(\sum_{j=0}^\infty\Big(\frac{\lambda_j}{\|\chi_{B_j}\|_{p(\cdot)}}\Big(\mathcal{M}(\chi_{B_j})(x)\Big)^{1+h/D}\Big)^{\mathfrak{p}}\Big)^{1/\mathfrak{p}}\Big).
\end{align*}
Let $j\in \mathbb{N}$. It is clear that
$\mbox{supp}\;(\mathcal{M}(a_j)\chi_{B(x_{B_j},2r_{B_j})})\subset
B(x_{B_j},2r_{B_j})$. Also, according to Lemma \ref{LemaCoc}, $(i)$,
there exists $C>0$ such that,
\begin{align*}
 \|\mathcal{M}(a_j)\chi_{B(x_{B_j},2r_{B_j})}\|_{2}&  \leq \|a_j\|_{2} \\
 &\leq (\mu(B_j))^{1/2}\|\chi_{B_j}\|_{p(\cdot)}^{-1}\\
 &\le C (\mu(B(x_{B_j},2r_{B_j})))^{1/2}\|\chi_{B(x_{B_j},2r_{B_j})}\|_{p(\cdot)}^{-1}.
\end{align*}

By Lemmas \ref{LemaSum}, \ref{LemaCoc}, $(i)$, and  \ref{LemaSumCoc}
we deduce that
$$
   \Big\| \Big(\sum_{j=0}^\infty\Big(\lambda_j\mathcal{M}(a_j)\chi_{B(x_j,2r_j)}\Big)^{\mathfrak{p}}\Big)^{1/\mathfrak{p}}\Big\|_{p(\cdot)}
  \leq  C\mathcal{A}(\{\lambda_j\},\{B_j\}).
$$

By using Lemma \ref{LemaFS}  we obtain
\begin{align*}
  & \Big\| \Big(\sum_{j=0}^\infty\Big(\frac{\lambda_j}{\|\chi_{B_j}\|_{p(\cdot)}}\Big(\mathcal{M}(\chi_{B_j})\Big)^{h/D+1}\Big)^{\mathfrak{p}}\Big)^{1/\mathfrak{p}}\Big\|_{p(\cdot)} \\
 & \qquad =  \Big\| \Big(\sum_{j=0}^\infty\Big(\mathcal{M}\Big(\Big(\frac{\lambda_j}{\|\chi_{B_j}\|_{p(\cdot)}}\Big)^{\frac{D}{D+h}}
  \chi_{B_j}\Big)\Big)^{\mathfrak{p}\frac{D+h}{D}}\Big)^{1/\mathfrak{p}}\Big\|_{p(\cdot)} \\
  & \qquad =  \Big\| \Big(\sum_{j=0}^\infty\Big(\mathcal{M}\Big(\Big(\frac{\lambda_j}{\|\chi_{B_j}\|_{p(\cdot)}}\Big)^{\frac{D}{D+h}}
  \chi_{B_j}\Big)\Big)^{\mathfrak{p}\frac{D+h}{D}}\Big)^{\frac{D}{(D+h)\mathfrak{p}}}\Big\|_{\frac{D+h}{D}p(\cdot)}^{\frac{D+h}{D}} \\
 & \qquad \leq C \Big\| \Big(\sum_{j=0}^\infty\Big(\frac{\lambda_j\chi_{B_j}}
  {\|\chi_{B_j}\|_{p(\cdot)}}\Big)^{\mathfrak{p}}\Big)^{1/\mathfrak{p}}\Big\|_{p(\cdot)},
\end{align*}
because $\mathfrak{p} >\frac{D}{D+h}$.\\

We conclude that
$$\|\mathcal{M}_+(f)\|_{p(\cdot)}\leq C\Big\| \Big(\sum_{j=0}^\infty\Big(\frac{\lambda_j\chi_{B_j}}
  {\|\chi_{B_j}\|_{p(\cdot)}}\Big)^{\mathfrak{p}}\Big)^{1/\mathfrak{p}}\Big\|_{p(\cdot)}.$$
\end{proof}

 \begin{Rem}\label{RemGmax3}
In \cite{Yan} (see also \cite{DP}) Hardy spaces associated with
operators are compared with the classical ones. In \cite[Theorem
6.1]{Yan} it is proved that imposing certain conditions on the heat
kernel is sufficient in order that Hardy operators, associated with
the operator they are dealing with,  coincide with the classical
ones. In Yang and Zhou obtained in \cite[Theorem 5.3]{YZ} a version
of \cite[Theorem 6.1]{Yan} in the variable exponent setting. As far
as we know this question has not been investigated for Hardy spaces
associated with operators on graphs even for constant exponents. Our
purpose is to study this question in a foregoing paper.
 \end{Rem}

 \section{Applications}

 In this section we present some applications of the atomic and molecular characterizations proved in the previous section. We establish $H^{p(\cdot)}_L(\Gamma)$-boundedness properties of some square functions, Riesz transforms, and L-spectral multipliers.

 \subsection{Square functions}

 Assume that $N\in \mathbb{N}_+$. The square function $G_{L,N}$ is defined as follows:
 $$
 G_{L,N}(f)(x)=\Big(\sum_{k=1}^\infty \frac{ |k^NL^NP^k(f)(x)|^2}{k}\Big)^{1/2},\,\,\,x\in
 \Gamma,
 $$
 where $f$ is complex function defined on $\Gamma$.

 The sublinear operator $G_{L,N}$ is bounded from $L^r(\Gamma)$ into itself, for every $1<r<\infty$ \cite[Proposition 4.6]{B} (see also \cite{BM}, \cite{BR}, and \cite{Fe}),
 and from $H^r_L(\Gamma)$ into $L^r(\Gamma)$, for every $0<r\le 1$ (\cite[Theorem 4.1]{B} and \cite[Theorem 6.1]{BD}). We now extend \cite[Theorem 6.1]{BD} to our variable exponent setting.

\begin{Prop} Let $p\in \mathcal{P}^{\log}(\Gamma)$ and $N\in \mathbb{N}_+$. Then, the square function $G_{L,N}$ can be extended from $H^{p(\cdot)}_L(\Gamma)\cap L^2(\Gamma)$ to $H^{p(\cdot)}_L(\Gamma)$ as a bounded operator from $H^{p(\cdot)}_L(\Gamma)$ to $L^{p(\cdot)}(\Gamma)$.
\end{Prop}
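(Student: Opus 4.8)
The plan is to follow the scheme used in the proofs of Propositions \ref{H6} and \ref{M+}: reduce to atoms, dominate $G_{L,N}$ of the sum by the sum of $G_{L,N}$ of the atoms, and then control the local and the tail contributions separately on the annuli $S_i(B_j)$. Since $H^{p(\cdot)}_L(\Gamma)\cap L^2(\Gamma)$ is dense in $H^{p(\cdot)}_L(\Gamma)$, it suffices to establish $\|G_{L,N}(f)\|_{p(\cdot)}\le C\|f\|_{H^{p(\cdot)}_L(\Gamma)}$ for $f\in H^{p(\cdot)}_L(\Gamma)\cap L^2(\Gamma)$ and then extend $G_{L,N}$ by density. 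Fix $r\ge 2$ with $r>p_+$ and $M\in\mathbb{N}_+$ with $M>2D/p_-$. By Proposition \ref{Hardy} I may write $f=\sum_{j\in\mathbb{N}}\lambda_ja_j$, where each $a_j=L^Mb_j$ is a $(r,p(\cdot),M)$-atom associated with a ball $B_j=B(x_{B_j},r_{B_j})$, the series converges in both $L^2(\Gamma)$ and $H^{p(\cdot)}_L(\Gamma)$, and $\mathcal{A}(\{\lambda_j\},\{B_j\})\le C\|f\|_{H^{p(\cdot)}_L(\Gamma)}$. Since $G_{L,N}$ is bounded on $L^2(\Gamma)$ (\cite[Proposition 4.6]{B}) and sublinear, arguing with Fatou's lemma as in the proof of Proposition \ref{M+} gives the pointwise bound $G_{L,N}(f)(x)\le\sum_{j\in\mathbb{N}}|\lambda_j|G_{L,N}(a_j)(x)$, $x\in\Gamma$.

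With $\mathfrak{p}=\min\{1,p_-\}$ and the annuli $S_0(B_j)=B(x_{B_j},8Mr_{B_j})$, $S_i(B_j)=B(x_{B_j},2^{i+3}Mr_{B_j})\setminus B(x_{B_j},2^{i+2}Mr_{B_j})$ for $i\ge1$, I split
\begin{equation*}
\|G_{L,N}(f)\|_{p(\cdot)}\le\Big(\sum_{i=0}^\infty\Big\|\Big(\sum_{j\in\mathbb{N}}\big(|\lambda_j|G_{L,N}(a_j)\chi_{S_i(B_j)}\big)^\mathfrak{p}\Big)^{1/\mathfrak{p}}\Big\|_{p(\cdot)}^\mathfrak{p}\Big)^{1/\mathfrak{p}}.
\end{equation*}
For the local term $i=0$ I use that $G_{L,N}$ is bounded on $L^r(\Gamma)$ and that $a_j$ is an atom, so $\|G_{L,N}(a_j)\|_{L^r(S_0(B_j))}\le C\|a_j\|_r\le C\mu(B_j)^{1/r}\|\chi_{B_j}\|_{p(\cdot)}^{-1}$; since $r\in[1,\infty)\cap(p_+,\infty)$, Lemmas \ref{LemaSum}, \ref{LemaCoc} (i) and \ref{LemaSumCoc} bound this contribution by $C\mathcal{A}(\{\lambda_j\},\{B_j\})$.

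For the tail terms $i\ge1$ I exploit the representation $a_j=L^Mb_j$ and the commutativity of $P$ and $L$, so that $k^NL^NP^ka_j=k^NL^{N+M}P^kb_j$, whose kernel $k^N\widetilde{p_{k,N+M}}$ obeys the bound (\ref{compuesto}) with an extra factor $k^{-M}$. Since $\supp b_j\subset B_j$ and $d(x,y)\ge c2^ir_{B_j}$ whenever $x\in S_i(B_j)$ and $y\in B_j$, splitting the $k$-sum and invoking the Gaussian decay $e^{-c(2^ir_{B_j})^2/k}$ together with Jensen's inequality (here $r\ge2$ is used) and the mass estimates (\ref{max1})--(\ref{max2}), as in the computation of $I_{j,2}$ in Proposition \ref{H6}, should yield
\begin{equation*}
\|G_{L,N}(a_j)\|_{L^r(S_i(B_j))}\le C2^{-i(M-D/w)}\mu\big(B(x_{B_j},2^{i+3}Mr_{B_j})\big)^{1/r}\|\chi_{B(x_{B_j},2^{i+3}Mr_{B_j})}\|_{p(\cdot)}^{-1},
\end{equation*}
for any $0<w<p_-$. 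Applying Lemmas \ref{LemaSum} and \ref{LemaSumCoc} converts the dilated balls back to $B_j$ at the cost of a further factor $2^{iD/w}$, so the $i$-th term is $\le C2^{-i(M-2D/w)}\mathcal{A}(\{\lambda_j\},\{B_j\})$. Choosing $w<p_-$ with $M>2D/w$, possible because $M>2D/p_-$, makes the series in $i$ geometrically convergent, and altogether $\|G_{L,N}(f)\|_{p(\cdot)}\le C\mathcal{A}(\{\lambda_j\},\{B_j\})\le C\|f\|_{H^{p(\cdot)}_L(\Gamma)}$; a density argument then extends $G_{L,N}$ to all of $H^{p(\cdot)}_L(\Gamma)$.

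The main obstacle I expect is the tail estimate for $i\ge1$: one must extract enough powers of $2^{-i}$ from the Gaussian kernel bound (\ref{compuesto}) to beat the growth $2^{iD/w}$ coming from the ratio of $\|\chi\|_{p(\cdot)}$-norms of dilated balls in Lemma \ref{LemaSumCoc}, and the hypothesis $M>2D/p_-$ is precisely what guarantees this. The absence of a cone in $G_{L,N}$ (in contrast to $S_L$) makes the pointwise kernel manipulation somewhat simpler than in Proposition \ref{H6}, but the bookkeeping of the doubling factors relating $\mu(B(x,\sqrt{k}))$, $\mu(B(x,d(x,x_{B_j})))$ and $\mu(B_j)$ remains the delicate point.
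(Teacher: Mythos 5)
Your proposal is correct and follows essentially the same route as the paper's proof: atomic decomposition via Proposition \ref{Hardy}, the pointwise domination $G_{L,N}(f)\le\sum_j|\lambda_j|G_{L,N}(a_j)$, the annulus splitting with $L^q$-boundedness of $G_{L,N}$ for the local term, the kernel bound (\ref{compuesto}) with Gaussian decay for the tails, and Lemmas \ref{LemaSum}, \ref{LemaCoc} (i) and \ref{LemaSumCoc} to close the estimate under $M>2D/p_-$. The only (immaterial) deviation is that for the tail you invoke Jensen's inequality and (\ref{max1})--(\ref{max2}) as in $I_{j,2}$ of Proposition \ref{H6}, whereas the paper bounds the mass of $b_j$ by H\"older and absorbs the doubling factors into the Gaussian in a single $k$-sum; both computations yield the same $2^{-iM}$ gain.
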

 \begin{proof} We are going to see that there exists $C>0$ such that
 $$
 \|G_{L,N}(f)\|_{p(\cdot)}\le C\|f\|_{H^{p(\cdot)}_L(\Gamma)},\,\,\,f\in H^{p(\cdot)}_L(\Gamma)\cap L^2(\Gamma).
 $$
 Assume that $f\in H^{p(\cdot)}_L(\Gamma)\cap L^2(\Gamma)$, $M\in \mathbb{N}$, $M>D/p_-$, and $q=2\max\{2,p_+\}$.
 According to Proposition \ref{Hardy}, there exist, for every $j\in \mathbb{N}$, $\lambda_j\in \mathbb{C}$ and a $(q,p(\cdot),M)$-atom $a_j$ associated
 to the ball $B_j=B(x_{B_j},r_{B_j})$, with $x_{B_j}\in\Gamma$ and $r_{B_j}\ge 1$, such that $f=\sum_{j=0}^\infty\lambda_ja_j$, where the series converges
 in $L^2(\Gamma)$ and in $H^{p(\cdot)}_L(\Gamma)$, and such that $\mathcal{A}(\{\lambda_j\},\{B_j\})\le C\|f\|_{H^{p(\cdot)}_L(\Gamma)}$. Here $C>0$ does not depend on $f$.

 Our objective is to see that
 $$
 \|G_{L,N}(f)\|_{p(\cdot)}\le C\mathcal{A}(\{\lambda_j\},\{B_j\}),
 $$
 for a certain $C>0$ independent of $f$. In order to do this we adapt the arguments in the proof of Proposition \ref{AD}.

 As above if $B=B(x_B,r_B)$, with $x_B\in\Gamma$ and $r_B\ge 1$, we denote $S_0(B)=B(x_B,8Mr_B)$ and, for every $i\in \mathbb{N}_+$, $S_i(B)=B(x_B,2^{i+3}Mr_B)\setminus B(x_B,2^{i+2}Mr_B)$. Since $P$ is bounded in $L^2(\Gamma)$ we can write
 $$
 (I-P)^NP^k(f)=\sum_{j=0}^\infty \lambda_j(I-P)^NP^k(a_j), \,\,\rm{in}\,\,\,L^2(\Gamma),
 $$
  and then
 $$
 (I-P)^NP^k(f)(x)=\sum_{j=0}^\infty \lambda_j(I-P)^NP^k(a_j)(x), \,\,x\in \Gamma,
 $$
 for every $k\in \mathbb{N}_+$. It follows that
 \begin{align*}
 \|G_{L,N}(f)\|_{p(\cdot)}&\le \Big\|\sum_{i=0}^\infty \sum_{j=0}^\infty |\lambda_j|G_{L,N}(a_j)\chi_{S_i(B_j)}\Big\|_{p(\cdot)}\nonumber\\
 &\le \Big(\sum_{i=0}^\infty \Big\|\Big(\sum_{j=0}^\infty ( |\lambda_j|G_{L,N}(a_j)\chi_{S_i(B_j)})^\mathfrak{p}\Big)^{1/\mathfrak{p}}\Big\|_{p(\cdot)}^\mathfrak{p}\Big)^{1/\mathfrak{p}}.
 \end{align*}

 Since $G_{L,N}$ is bounded in $L^q(\Gamma)$ we deduce that
 $$
 \|G_{L,N}(a_j)\|_{L^q(S_0(B_j))}\le C\|a_j\|_q\le C\mu(B_j)^{1/q}\|\chi_{B_j}\|_{p(\cdot)}^{-1},\,\,\,j\in \mathbb{N}.
 $$
 By taking into account Lemmas \ref{LemaSum}, \ref{LemaCoc}, (i), and \ref{LemaSumCoc} we get
 $$
 \Big\|\Big(\sum_{j=0}^\infty ( |\lambda_j|G_{L,N}(a_j)\chi_{S_0(B_j)})^\mathfrak{p}\Big)^{1/\mathfrak{p}}\Big\|_{p(\cdot)}\le \Big\|\Big(\sum_{j=0}^\infty ( \frac{|\lambda_j|\chi_{S_0(B_j)}}{\|\chi_{B_j}\|_{p(\cdot)}}\Big)^\mathfrak{p}\Big)^{1/\mathfrak{p}}\Big\|_{p(\cdot)}\le C\mathcal{A}(\{\lambda_j\},\{B_j\}).
$$

 Let $j\in \mathbb{N}$ and $i\in \mathbb{N}_+$. We have that $a_j=L^Mb_j$, where $b_j\in L^q(\Gamma)$
satisfying that, for every $\ell=0,...,M$,


(i) $\supp(L^\ell b_j)\subset B_j$,

(ii) $\|L^\ell b_j\|_q\le
r_{B_j}^{M-\ell}\mu(B_j)^{1/q}\|\chi_{B_j}\|_{p(\cdot)}^{-1}$.

Since $d(x,z)\ge d(x,x_{B_j})/2$, when $z\in B_j$ and $x\in
S_i(B_j)$, (\ref{compuesto}) leads to
\begin{align*}
(G_{L,N}(a_j)(x))^2&=
\sum_{k=1}^\infty\frac{|k^N(I-P)^NP^k(a_j)(x)|^2}{k}\\
&=\sum_{k=1}^\infty k^{2N-1}|(I-P)^{N+M}P^k(b_j)(x)|^2\\
&=\sum_{k=1}^\infty k^{2N-1}\Big|\sum_{z\in
B_j}\widetilde{p}_{k,N+M}(x,z)b_j(z)\Big|^2\\
&\le C\sum_{k=1}^\infty k^{2N-1}\Big(\sum_{z\in
B_j}\frac{|b_j(z)|e^{-cd(x,z)^2/k}}{k^{N+M}\mu(B(x,\sqrt{k}))}\mu(z)\Big)^2\\
&\le
C\sum_{k=1}^\infty\frac{e^{-cd(x,x_{B_j})^2/k}}{k^{2M+1}\mu(B(x,\sqrt{k}))^2}\Big(\sum_{z\in
B_j}|b_j(z)|\mu(z)\Big)^2\\
&\le
C\sum_{k=1}^\infty\frac{e^{-c2^{2i}r_{B_j}^2/k}}{k^{2M+1}\mu(B(x,\sqrt{k}))^2}\mu(B_j)^{2(1-1/q)}\|b_j\|_q^2\\
&\le
C\frac{r_{B_j}^{2M}\mu(B_j)^2}{\|\chi_{B_j}\|_{p(\cdot)}^2\mu(B(x_{B_j},2^ir_{B_j}))^2}\sum_{k=1}^\infty\frac{e^{-c2^{2i}r_{B_j}^2/k}}{k^{2M+1}}\Big(\frac{2^ir_{B_j}}{\sqrt{k}}\Big)^{D_k},\,\,\,x\in
S_i(B_j),
\end{align*}
where $D_k=2D$, when $\sqrt{k}\le 2^{i+1}Mr_{B_j}$, and $D_k=0$,
when $\sqrt{k}> 2^{i+1}Mr_{B_j}$.

Then,
\begin{align*}
(G_{L,N}(a_j)(x))^2&\le C
\frac{r_{B_j}^{2M}\mu(B_j)^2}{\|\chi_{B_j}\|_{p(\cdot)}^2\mu(B(x_{B_j},2^ir_{B_j}))^2}\sum_{k=1}^\infty\frac{e^{-c2^{2i}r_{B_j}^2/k}}{k^{2M+1}}\\
&\le
C\frac{r_{B_j}^{2M}\mu(B_j)^2}{\|\chi_{B_j}\|_{p(\cdot)}^2\mu(B(x_{B_j},2^ir_{B_j}))^2}\frac{1}{(2^{2i}r_{B_j}^2)^M}\sum_{k=1}^\infty
k^{-(M+1)}\\
&\le
C\frac{\mu(B_j)^22^{-i2M}}{\|\chi_{B_j}\|_{p(\cdot)}^2\mu(B(x_{B_j},2^ir_{B_j}))^2},\,\,\,x\in
S_i(B_j).
\end{align*}

We can write
\begin{align*}
\|G_{L,N}(a_j)\|_{L^q(S_i(B_j))}&\le
C\frac{\mu(B_j)2^{-iM}}{\|\chi_{B_j}\|_{p(\cdot)}\mu(B(x_{B_j},2^ir_{B_j}))}\mu(B(x_{B_j},2^{i+3}Mr_{B_j}))^{1/q}\\
&\le
C\frac{2^{-iM}\mu(B(x_{B_j},2^{i+3}Mr_{B_j}))^{1/q}}{\|\chi_{B_j}\|_{p(\cdot)}}.
\end{align*}

According to Lemmas \ref{LemaSum}, \ref{LemaCoc}, (i), and \ref{LemaSumCoc}, we have that
\begin{align*}
\Big\|\Big(\sum_{j=0}^\infty ( |\lambda_j|G_{L,N}(a_j)\chi_{S_i(B_j)})^\mathfrak{p}\Big)^{1/\mathfrak{p}}\Big\|_{p(\cdot)}&\le C2^{-i(M-D/w)}\mathcal{A}(\{\lambda_j\},\{B(x_{B_j},2^{i+3}Mr_{B_j})\})\\
 &\le C2^{-i(M-2D/w)}\mathcal{A}(\{\lambda_j\},\{B_j\}),
 \end{align*}
where $0<w<p_-$.

By choosing $2D/M<w<p_-$, we conclude that
$$
\|G_{L,M}(f)\|_{p(\cdot)}\le C\Big(\sum_{i=1}^ \infty
2^{-i(M-2D/w)}\Big)^{1/\mathfrak{p}}\mathcal{A}(\{\lambda_j\},\{B_j\})\le
C\mathcal{A}(\{\lambda_j\},\{B_j\}).
$$

 \end{proof}

 \subsection{Spectral multipliers for the discrete Laplacian $L$}

 The discrete Laplacian $L$ is a positive operator in $L^2(\Gamma)$
 and $\|Lf\|_2\le 2\|f\|_2$, $f\in L^2(\Gamma)$. Then, the spectrum
 of $L$ is contained in $[0,2]$ and there exists an spectral
 measure $E_L$ such that $L=\int_{[0,2]}\lambda
 dE_L(\lambda)$. If $F$ is a complex bounded measurable function
 defined in $[0,2]$ the operator $F(L)$ defined by
 \begin{equation}\label{M3}
 F(L)=\int_{[0,2]}F(\lambda)dE_L(\lambda)
 \end{equation}
is bounded from $L^2(\Gamma)$ into itself. The operators $F(L)$ in
(\ref{M3}) are called spectral multipliers for the operator $L$. If
the graph $\Gamma$ satisfies the property $\Delta(\alpha)$, for some
$\alpha>0$, $-1$ is not in the spectrum of $P$ and then the exists
$a_0\in (0,2)$ such that the spectral measure $E_L$ is supported in
$[0,a_0]$ (see \cite[Lemma 1.3]{Dung}).

Our objective is to give conditions on $F$ that allow us to extend
the operator $F(L)$ from $L^2(\Gamma)\cap H^{p(\cdot)}_L(\Gamma)$ to
$H^{p(\cdot)}_L(\Gamma)$ as a bounded operator from
$H^{p(\cdot)}_L(\Gamma)$ into itself.

Let $s>0$. If $f:\mathbb{R}\longrightarrow \mathbb{C}$ is a
$C^{[s]}(\mathbb{R})$-function we define $\|f\|_{\mathcal{C}^s}$ by
$$
\|f\|_{\mathcal{C}^s}=\sum_{k=0}^{[s]}\|f^{(k)}\|_\infty+M_s(f),
$$
where
$$
M_s(f)=\sup\Big\{t^{[s]-s}|f^{([s])}(x+t)-f^{([s])}(x)|:t>0,\,\,x\in
\mathbb{R}\Big\}.
$$
The space $\mathcal{C}^s(\mathbb{R})$ is defined by
$$
\mathcal{C}^s(\mathbb{R})=\{f\in
\mathcal{C}^{[s]}(\mathbb{R}):\|f\|_{\mathcal{C}^s}<\infty\}.
$$
In order to get boundedness properties for spectral multipliers it
is usual to consider the following property. We say that a complex
bounded measurable function $F$ defined on $[0,\infty)$ satisfies
the property $R_s$ with $s>0$ when
$$
\sup_{t>0}\|\eta(\lambda)F(t\lambda)\|_{\mathcal{C}^s}<\infty.
$$
Here, $\eta$ is a fixed $C^\infty((0,\infty))$ with compact support
and it is not identically zero.

It is known that $F(L)$ can be extended from

(i) $L^p(\Gamma)\cap L^2(\Gamma)$ to $L^p(\Gamma)$ as a bounded
operator from $L^p(\Gamma)$ into itself when $F$ satisfies $R_s$
with $s>D/2$, for every $1<p<\infty$ (\cite[p. 266, (i)]{BD1}).

(ii) $H^p_L(\Gamma)\cap L^2(\Gamma)$ to $H^p_L(\Gamma)$ as a bounded
operator from $H^p_L(\Gamma)$ into itself when $F$ satisfies $R_s$
with $s>D(1/p-1/2)$, for every $0<p\le 1$ (\cite[Theorem 6.2]{BD} and \cite{KM}).

Next we generalize \cite[Theorem 6.2]{BD} to our variable exponent
setting.

\begin{Prop} Let $p\in \mathcal{P}^{\log}(\Gamma)$, $p_+<2$, and $s>2D/p_-$. Assume that $F$ is a complex
bounded measurable function on $(0,\infty)$ satisfying $R_s$. Then,
$F(L)$ is bounded from $H^{p(\cdot)}_L(\Gamma)$ into itself.
\end{Prop}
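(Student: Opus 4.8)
The plan is to reproduce, at the level of atoms and molecules, the scheme already used for Proposition \ref{H6} and Theorem \ref{Th1.4}: reduce the $H^{p(\cdot)}_L(\Gamma)$-boundedness of $F(L)$ to the statement that $F(L)$ maps atoms to molecules, and then invoke the molecular theorem. Since $H^{p(\cdot)}_L(\Gamma)\cap L^2(\Gamma)$ is dense in $H^{p(\cdot)}_L(\Gamma)$ and $F(L)$ is bounded on $L^2(\Gamma)$, it suffices to find $C>0$ with $\|F(L)f\|_{H^{p(\cdot)}_L(\Gamma)}\le C\|f\|_{H^{p(\cdot)}_L(\Gamma)}$ for every $f\in H^{p(\cdot)}_L(\Gamma)\cap L^2(\Gamma)$. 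First I would fix $M\in\mathbb{N}_+$ with $M>2D/p_-$, take $q=2$ (admissible because $p_+<2$, so $q\ge 2$ and $q>p_+$), and reserve a number $\varepsilon$ with $D/p_-<\varepsilon$, whose attainability will be guaranteed below by the hypothesis $s>2D/p_-$. By Proposition \ref{Hardy} I can write $f=\sum_{j}\lambda_j a_j$, convergent in both $L^2(\Gamma)$ and $H^{p(\cdot)}_L(\Gamma)$, where each $a_j$ is a $(q,p(\cdot),M)$-atom associated with a ball $B_j$ and $\mathcal{A}(\{\lambda_j\},\{B_j\})\le C\|f\|_{H^{p(\cdot)}_L(\Gamma)}$. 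As $F(L)$ is $L^2$-bounded, $F(L)f=\sum_j\lambda_j F(L)a_j$ in $L^2(\Gamma)$.

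The core claim I would establish is that there exists $C>0$, independent of the atom, such that $C^{-1}F(L)a$ is a $(q,p(\cdot),M,\varepsilon)$-molecule associated with the same ball $B=B(x_B,r_B)$ whenever $a=L^Mb$ is a $(q,p(\cdot),M)$-atom. Since $F(L)$ commutes with $L$ through the spectral calculus (\ref{M3}), I can write $F(L)a=L^M(F(L)b)$, so the molecule is built from $\tilde b=F(L)b$ with $L^k\tilde b=F(L)(L^kb)$, $k=0,\dots,M$. Writing $g:=L^kb$, which is supported in $B$ and satisfies $\|g\|_q\le (r_B)^{M-k}\mu(B)^{1/q}\|\chi_B\|_{p(\cdot)}^{-1}$ by the atom conditions, the whole matter reduces to the annular bound
\[
\|F(L)g\|_{L^q(\mathfrak{S}_j(B))}\le C\,2^{-j\varepsilon'}\Big(\frac{\mu(B(x_B,2^jr_B))}{\mu(B)}\Big)^{1/q}\|g\|_q,\qquad j\in\mathbb{N},
\]
for functions $g$ supported in $B$. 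Once this is available, Lemma \ref{LemaCoc}, (i), converts the factor $\|\chi_B\|_{p(\cdot)}^{-1}$ into $C\,2^{jD/w}\|\chi_{B(x_B,2^jr_B)}\|_{p(\cdot)}^{-1}$ for any $0<w<p_-$, so the molecular estimate of Theorem \ref{Th1.4} holds with net decay $\varepsilon=\varepsilon'-D/w$. With these molecular estimates in hand, Theorem \ref{Th1.4} applied to $F(L)f=\sum_j\lambda_j F(L)a_j$ yields $\|F(L)f\|_{H^{p(\cdot)}_L(\Gamma)}\le C\,\mathcal{A}(\{\lambda_j\},\{B_j\})\le C\|f\|_{H^{p(\cdot)}_L(\Gamma)}$, and a density argument completes the proof.

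The main obstacle is precisely the off-diagonal bound displayed above, which is where the regularity hypothesis $R_s$ enters and where the threshold $s>2D/p_-$ is explained. Recall that, by \cite[Lemma 1.3]{Dung}, the spectral measure $E_L$ is supported in $[0,a_0]$ with $a_0<2$, so only the behaviour of $F$ on a compact interval is relevant. I would resolve $F(L)$ into Littlewood--Paley pieces built from the fixed cut-off $\eta$, and estimate the kernel of each piece by combining the Gaussian upper bounds (\ref{UE}) and (\ref{compuesto}) for the iterated Markov kernels with the $\mathcal{C}^s$-control furnished by $R_s$. Summing the dyadic pieces against the $s$ units of smoothness produces off-diagonal decay of order essentially $2^{-js}$ between $B$ and $\mathfrak{S}_j(B)$, giving $\varepsilon'$ as close to $s$ as desired. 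The delicate point is the bookkeeping: one must keep the measure factors and the $\|\chi_\cdot\|_{p(\cdot)}$ factors aligned using the doubling property (\ref{doubling}) and Lemma \ref{LemaCoc} throughout the summation, and handle $F(L)L^k$ uniformly in $k=0,\dots,M$. The accounting explains the hypothesis exactly: the raw decay $\varepsilon'\lesssim s$ must absorb both the loss $D/w\approx D/p_-$ incurred when passing from $\|\chi_B\|_{p(\cdot)}$ to $\|\chi_{B(x_B,2^jr_B)}\|_{p(\cdot)}$ and the residual decay $\varepsilon>D/p_-$ required by Theorem \ref{Th1.4}, so that the total budget $s>D/p_-+D/p_-=2D/p_-$ is what is needed, with the large-$M$ assumption $M>2D/p_-$ providing the room for the commutation estimates.
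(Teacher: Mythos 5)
There is a genuine gap, and it sits exactly at the step you call ``the core claim.'' Your reduction asserts that for every $g\in L^q(\Gamma)$ supported in $B$ one has the off-diagonal bound $\|F(L)g\|_{L^q(\mathfrak{S}_j(B))}\le C\,2^{-j\varepsilon'}\big(\mu(B(x_B,2^jr_B))/\mu(B)\big)^{1/q}\|g\|_q$ with $\varepsilon'$ essentially as large as $s$. This is false in the range you need. The regularity hypothesis $R_s$ only controls each Littlewood--Paley piece $F_\ell(L)$ (spectral support near $\lambda\approx 2^{-\ell}$) at its own spatial scale $2^{\ell/2}$: the kernel of $F_\ell(L)$ decays like $(1+d(x,y)/2^{\ell/2})^{-s}$, so the pieces with $2^{\ell/2}\gtrsim 2^jr_B$ see no decay at all between $B$ and $\mathfrak{S}_j(B)$, and there are infinitely many of them. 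Their sum is controlled only by cancellation in $g$, not by smoothness of $F$. Concretely, $F(\lambda)=\lambda^{i\tau}$ satisfies $R_s$ for every $s$, yet $F(L)$ is a singular-integral type operator whose kernel decays only at the dimensional rate; applying it to a nonnegative $g=\chi_B$ (which occurs in your scheme, since the molecule condition at $k=0$ forces you to estimate $F(L)b$, and $b$ carries no cancellation) gives $\|F(L)g\|_{L^2(\mathfrak{S}_j(B))}\sim 2^{-jD/2}\|g\|_2$, i.e.\ your displayed bound can hold only with $\varepsilon'\le D$. But your own bookkeeping requires $\varepsilon'>\varepsilon+D/w>D/p_-+D/w>2D/p_->D$ (recall $p_-<2$), so the estimate on which the whole argument rests fails precisely where you need it.

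The paper's proof repairs exactly this point by reserving powers of $L$: it decomposes $f$ into $(2,p(\cdot),2M)$-atoms $a=L^{2M}b$ (Proposition \ref{Hardy} applied with order $2M$), writes $F(L)a=L^M W$ with $W=F(L)(L^Mb)$, and proves that $CW$ generates a $(2,p(\cdot),M,\varepsilon)$-molecule. Then every function hit by $F(L)$ is of the form $L^{M+k}b$, $k=0,\dots,M$, so in the dyadic decomposition $F=F_0+\sum_{\ell\ge1}F_\ell$ the spectral factor $\lambda^{M}$ contributes $2^{-\ell M}$ on the support of $F_\ell$; this is what makes the sum over $\ell$ converge and, as in (\ref{MM7}), yields net decay governed by $\min\{M,s\}-D/w$, explaining why \emph{both} hypotheses $M>2D/p_-$ and $s>2D/p_-$ are needed (in your sketch $M>2D/p_-$ plays no real role, which is a symptom of the gap). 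Two further points your outline omits: the identity $F(L)=\sum_{\ell\ge0}F_\ell(L)$ must be justified in the $L^2$ operator topology, which the paper does using $\mu(\Gamma)=\infty$ to rule out spectral mass at the origin; and the decay of the high-energy part $F_0(L)L^Mb$ is where the Gaussian bounds (\ref{UE})--(\ref{compuesto}) and the $\mathcal{C}^s$ control actually combine, following \cite[(19) and (20)]{BD}. If you rewrite your argument starting from atoms of order $2M$ and define $\tilde b=F(L)(L^Mb)$ instead of $F(L)b$, the rest of your scheme (Lemma \ref{LemaCoc}, (i), to shift the $\|\chi_B\|_{p(\cdot)}$ factors, then Theorem \ref{Th1.4}) goes through as intended.
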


\begin{proof} We use the procedure in the proof of \cite[Theorem 6.2]{BD} (see also \cite{Alex})
with the necessary modifications due to the variable exponent
context. We consider $M\in \mathbb{N}$, $M>2D/p_-$. Suppose
that $f\in L^2(\Gamma)\cap H^{p(\cdot)}_L(\Gamma)$. According to
Proposition \ref{Hardy} there exists, for every $j\in \mathbb{N}$,
$\lambda_j\in \mathbb{C}$ and a $(2,p(\cdot),2M)$-atom associated to
the ball $B_j=B(x_{B_j},r_{B_j})$, with $x_{B_j}\in \Gamma$ and
$r_{B_j}\ge 1$, such that $f=\sum_{j=0}^\infty \lambda_ja_j$ in the
sense of convergence in $L^2(\Gamma)$ and in
$H^{p(\cdot)}_L(\Gamma)$, and
$$
\mathcal{A}(\{\lambda_j\},\{B_j\})\le
C\|f\|_{H^{p(\cdot)}_L(\Gamma)}.
$$
Since $F(L)$ is bounded in $L^2(\Gamma)$ we have that
$$
F(L)(f)=\sum_{j=0}^\infty \lambda_jF(L)(a_j), \,\,\,\rm{in}\,\,\,
L^2(\Gamma).
$$
We are going to see that there exists $C>0$ such that, for every
$j\in \mathbb{N}$, $CF(L)(a_j)$ is a
$(2,p(\cdot),M,\varepsilon)$-molecule for some $\varepsilon>D/p_-$. When
this fact is proved the proof finishes because from Theorem
\ref{Th1.4} it follows that $F(L)(f)\in H^{p(\cdot)}_L(\Gamma)$ and
$$
\|F(L)(f)\|_{H^{p(\cdot)}_L(\Gamma)}\le C
\mathcal{A}(\{\lambda_j\},\{B_j\}).
$$

Suppose that $a$ is a  $(2,p(\cdot),2M)$-atom associated to the ball
$B=B(x_B,r_B)$, with $x_B\in \Gamma$ and $r_B\ge 1$, and the
function $b\in L^2(\Gamma)$. It is clear that
$F(L)(a)=L^MF(L)(L^M(b))$. Since $L$ and $F(L)$ are bounded
operators in $L^2(\Gamma)$, the function $w=F(L)(L^M(b))\in
L^2(\Gamma)$. We have to prove that, for some $\varepsilon>D/p_-$,
$$
\|L^kw\|_{L^2(\mathfrak{S}_j(B))}\leq
(r_B)^{M-k}2^{-j\varepsilon}(\mu(B(x_B,2^jr_B)))^{1/2}\|\chi _{B(x_B,2^jr_B)}\|_{p(\cdot
)}^{-1},
$$
for every $ j\in \mathbb{N}$ and $k=0,...,M$, where $\mathfrak{S}_j(B)=B(x_B,2^{j+1}r_B)\setminus B(x_B,2^{j-1}r_B)$, $j\in
\mathbb{N}_+$ and $\mathfrak{S}_0(B)=B$.

Let $k=0,...,M$. By using the properties of the function $b$ and
once more that $L$ and $F(L)$ are bounded operators in $L^2(\Gamma)$ we
deduce that
$$
\|L^kw\|_{L^2(\mathfrak{S}_0(B))}\leq
C(r_B)^{M-k}(\mu(B))^{1/2}\|\chi
_{B}\|_{p(\cdot )}^{-1}.
$$

Assume that $j\in \mathbb{N}_+$. We choose $0\le
\theta\in C^\infty(\mathbb{R})$ such that
$$
\theta(t)=1,\,\,\,t\in
(-\infty,1]\,\,\,\rm{and}\,\,\,\theta(t)=0,\,\,\,t\in [3/2,\infty),
$$
and, for every $\ell\in \mathbb{N}_+$, we define $\psi_\ell\in C^\infty(\mathbb{R})$ satisfying that
$$
\psi_\ell(t)=0,\,\,\,t\notin [2^{-\ell-1},3\times 2^{-\ell-1}]\,\,\,\rm{and}\,\,\,0<\psi_\ell(t)\le 1,\,\,\,t\in [2^{-\ell-1},3\times 2^{-\ell-1}],
$$
and
$$
\sum_{\ell=1}^\infty
\psi_\ell(t)=1, \,\,\,t\in (0,3/2).
$$
We can write the following decomposition \cite[p. 840]{B} (see also
\cite{Alex})
$$
F(\lambda)=\sum_{\ell=1}^\infty F(\lambda)\theta(\lambda)\psi_\ell(
\lambda)+(1-\theta(\lambda))F(\lambda)=\sum_{\ell=1}^\infty
F_\ell(\lambda)+F_0(\lambda),\,\,\,\lambda\in (0,2].
$$
It follows that
$$
F(L)=\sum_{\ell=0}^{\infty} F_\ell(L),
$$
where the convergence is understood in the space of operators in
$L^2(\Gamma)$. In order to prove this we consider firstly a complex valued function $f$ defined on $\Gamma$ with support in a finite set $\Omega$. According to (\ref{UE}) we have that, for every $n\in \mathbb{N}_+$,
\begin{align*}
|P^n(f)(x)|&\le C\sum_{y\in \Omega}e^{-d(x,y)^2/n}|f(y)|\frac{\mu(y)}{\mu(B(x,\sqrt{n}))}\\
&\le C\frac{1}{\mu(B(x,\sqrt{n}))},\,\,\,x\in \Gamma.
\end{align*}
Then, since $\mu(\Gamma)=\infty$, $\lim_{n\to\infty}P^n(f)(x)=0$, for every $x\in \Gamma$. By using monotone convergence theorem we deduce that
$$
\overline{\lim}_{n\to\infty}\|P^n(f)\|_2^2\le \lim_{k\to\infty}\sum_{x\in\Gamma}\sup_{n\ge k}|P^n(f)(x)|^2\mu(x)=0.
$$
Hence $\lim_{n\to\infty}\|P^n(f)\|_2=0$. A density argument allows us to conclude that $\lim_{n\to\infty}\|P^n(f)\|_2=0$, for every $f\in L^2(\Gamma)$.

As in \cite[p. 839]{Alex2} we can see that
$$
\Big\|\int_{[0,1/n]}dE_L(\lambda)\Big\|_2\to 0,\,\,\,\rm{as}
\,\,\,n\to\infty.
$$
Let $\eta>0$. There exists $\ell_0\in \mathbb{N}$ such that
$$
\Big\|\int_{[0,2^{-\ell}]}F(\lambda)dE_L(\lambda)\Big\|_2\le\|F(L)\|_2\Big\|\int_{[0,2^{-\ell}]}dE_L(\lambda)\Big\|_2<\eta,\,\,\,\ell\in
\mathbb{N},\,\,\,\ell\ge\ell_0.
$$
We have that
$$
\Big\|\int_{[0,2]}(F(\lambda)-\sum_{\ell=0}^{\ell_0}F_\ell(\lambda))dE_L(\lambda)\Big\|_2\le
\Big\|\int_{[0,2^{-\ell_0-1}]}F(\lambda)dE_L(\lambda)\Big\|_2<\eta.
$$

Then,
$$
F(L)(a)=\sum_{\ell=0}^{\infty} F_\ell(L)(a)= L^MW,
$$
being $W=\sum_{\ell=0}^{\infty} F_\ell(L)(L^Mb)$.

As in \cite[(19) and (20)]{BD} we get
$$
\|L^k(F_0(L)L^Mb)\|_{L^2(\mathfrak{S}_j(B))}\le
Cr_B^{M-k}2^{-sj}r_B^{-(s-D/2)}(\mu(B))^{1/2}\|\chi_{B}\|_{p(\cdot)}^{-1}.
$$
Then, since $s>2D/p_-$, $p_-<2$ and according to Lemma \ref{LemaCoc}, (i), if $0<w<p_-$ we obtain
\begin{equation}\label{1111}
\|L^k(F_0(L)L^Mb)\|_{L^2(\mathfrak{S}_j(B))}\le
Cr_B^{M-k}2^{-(s-D/w)j}\mu(B(x_B,2^jr_B))^{1/2}\|\chi_{B(x_B,2^jr_B)}\|_{p(\cdot)}^{-1}\\
\end{equation}

 Let $\ell\in \mathbb{N}_+$. We take $0<w<p_-$ such that $\delta=s-D/w>0$. We get
\begin{align}\label{MM7}
&\|L^kF_\ell(L)L^Mb\|_{L^2(\mathfrak{S}_j(B))}\nonumber\\
&\hspace{3cm}\le C\Big(\Big(\frac{2^jr_B}{2^\ell}\Big)^{M+k}2^{-j(M+k)}\chi_{[j+\log_2(r_B),+\infty)}(\ell)r_B^{M-k}\mu(B)^{1/2}\|\chi_B\|_{p(\cdot)}^{-1}\nonumber\\
&\hspace{4cm}+2^{-j\delta}\chi_{(0,j+\log_2(r_B))}(\ell)(2^jr_B)^{-s}r_B^{M-k}\mu(B)^{1/2}\|\chi_B\|_{p(\cdot)}^{-1}\Big)\nonumber\\
&\hspace{3cm}\le Cr_B^{M-k}\mu(B(x_B,2^jr_B))^{1/2}\|\chi_{B(x_B,2^jr_B)}\|_{p(\cdot)}^{-1}2^{jD/w}\nonumber\\
&\hspace{4cm}\times(2^{-j(M+k)}\chi_{[j+\log_2(r_B),+\infty)}(\ell)+2^{-j\delta}\chi_{(0,j+\log_2(r_B))}(\ell))\nonumber\\
&\hspace{3cm}\le C2^{-\varepsilon
j}r_B^{M-k}\mu(B(x_B,2^jr_B))^{1/2}\|\chi_{B(x_B,2^jr_B)}\|_{p(\cdot)}^{-1},
\end{align}
where $0<\varepsilon<\min\{M-D/w,s-D/w\}$ and $\varepsilon>D/p_-$. Note that $\varepsilon$ can be chosen in this way.

By combining (\ref{1111}) and (\ref{MM7}) we obtain that
$$
\|L^kW\|_{L^2(\mathfrak{S}_j(B))}\le C2^{-\varepsilon
j}r_B^{M-k}\mu(B(x_B,2^jr_B))\|\chi_{B(x_B,2^jr_B)}\|_{p(\cdot)}^{-1}.
$$
Thus the proof is finished.
\end{proof}

\subsection{Riesz transforms associated with $L$ on $\Gamma$}

The gradient $\nabla f$ of a complex function $f$ defined on
$\Gamma$ is defined by
$$
\nabla f(x)=\Big(\frac{1}{2}\sum_{y\in
\Gamma}p(x,y)|f(x)-f(y)|^2\Big)^{1/2},\,\,\,x\in \Gamma.
$$
It is clear that $\nabla$ is a sublinear operator.

By using spectral theory we can define, for every $\alpha\in \mathbb{R}$, the operator $L^\alpha$  as follows
$$
L^\alpha=\int_{[0,2]}\lambda^\alpha dE_L(\lambda).
$$
If $\alpha\ge 0$, then $L^\alpha$ is a bounded operator in $L^2(\Gamma)$. However, if $\alpha<0$, the operator $L^\alpha$ is unbounded in $L^2(\Gamma)$ and the domain $D(L^\alpha)$ of $L^\alpha$ is given by
$$
D(L^\alpha)=\{f\in L^2(\Gamma):\, \int_{[0,2]}\lambda^{2\alpha}d\langle E_L(\lambda)f,f\rangle <\infty\}.
$$
The square root $L^{1/2}$ of $L$ is bounded in $L^2(\Gamma)$ and $\|L^{1/2}f\|_2=\|\nabla f\|_2$, $f\in L^2(\Gamma)$. Thus the equality can be rewritten $\|\nabla L^{-1/2}f\|_2=\|f\|_2$, $f\in R_2(L^{1/2})$, where $R_2(L^{1/2})$ denotes the range of $L^{1/2}$ in $L^2(\Gamma)$. $R_2(L^{1/2})$ is dense in $L^2(\Gamma)$.

We define the Riesz transform $R_L$ associated with $L$ on $\Gamma$ by
$$
R_L=\nabla L^{-1/2}.
$$
$R_L$ can be extended from $R_2(L^{1/2})$ to $L^2(\Gamma)$ as an isometry in $L^2(\Gamma)$.

In \cite[Lemma 1.13]{BR} it was proved that, if $f\in R_2(L^{1/2})$ then
$$
\nabla\Big(\sum_{k=0}^n \beta_kP^kf\Big)\to \nabla L^{-1/2}(f), \,\,\,\rm{as}\,\,\,n\to\infty,
$$
in $L^2(\Gamma)$, where the sequence $\{\beta_k\}_{k\in \mathbb{N}}$ is defined by the expansion
$$
(1-z)^{-1/2}=\sum_{k=0}^\infty \beta_kz^k.
$$
Hence, we have that
$$
R_Lf=\lim_{n\to\infty}\nabla\Big(\sum_{k=0}^n \beta_kP^kf\Big), \,\,\,f\in R_2(L^{1/2}).
$$

If the properties (a), (c), and (\ref{UE}) for $x=y$ hold for
$(\Gamma,\mu,d)$, $R_L$ can be extended from $L^2(\Gamma)\cap
L^q(\Gamma)$ to $L^q(\Gamma)$ as a bounded operator from
$L^q(\Gamma)$ into itself, for every $1<q\le 2$, and from
$L^1(\Gamma)$ into $L^{1,\infty}(\Gamma)$, the weak $L^1(\Gamma)$
(see \cite{Ru3}). In order that $R_L$ can be extended from
$L^2(\Gamma)\cap L^q(\Gamma)$ to $L^q(\Gamma)$ as a bounded operator
from $L^q(\Gamma)$ into itself when $q>2$ it is not sufficient with
the properties (a), (c), and (\ref{UE}) for $x=y$, as the example of
two copies of $\mathbb{Z}^2$ linked between with an edge shows
\cite[Section 4]{Ru3}. In \cite[Theorem 1.3]{BR} it is established
that if $(\Gamma,\mu,d)$ satisfies (a), (c), Poincar\'e inequality and
there exists $C>0$ such that, for every $f\in L^{q_0}(\Gamma)$ and
$k\in \mathbb{N}_+$,
$$
\|\nabla P^k(f)\|_{q_0}\le \frac{C}{\sqrt{k}}\|f\|_{q_0},
$$
being $q_0>2$, then $R_L$ can be extended from $L^2(\Gamma)\cap L^q(\Gamma)$ to $L^q(\Gamma)$ as a bounded operator from $L^q(\Gamma)$ into itself when $2\le q<q_0$. Weighted $L^q$-inequalities for the Riesz transform $R_L$ were established by Badr and Martell \cite{BM}.

Boundedness properties for $R_L$ in Hardy spaces $H^q(\Gamma)$,
$0<q\le 1$, were proved in \cite[Theorem 4.2]{B} and \cite[Theorem
6.5]{BD}. In the next proposition we extend those results in
\cite{B} and \cite{BD} to our variable exponent Hardy spaces
$H^{p(\cdot)}_L(\Gamma)$.

\begin{Prop} Let $p\in \mathcal{P}^{\log}(\Gamma)$ such that $p_+<2$. Then, $R_L$ can be extended from $L^2(\Gamma)\cap H^{p(\cdot)}_L(\Gamma)$ to $H^{p(\cdot)}_L(\Gamma)$ as a bounded operator from $H^{p(\cdot)}_L(\Gamma)$
into $L^{p(\cdot)}(\Gamma)$.
\end{Prop}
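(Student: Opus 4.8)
The plan is to mirror the atomic arguments already used for the square function $G_{L,N}$ and for the maximal operator $\mathcal{M}_+$, since here too the target space is $L^{p(\cdot)}(\Gamma)$ rather than $H^{p(\cdot)}_L(\Gamma)$, so no molecular recognition is needed. First I would fix $f\in L^2(\Gamma)\cap H^{p(\cdot)}_L(\Gamma)$ and, using $p_+<2$, invoke Proposition \ref{Hardy} with $q=2$ and some $M\in\mathbb{N}_+$, $M>2D/p_-$, to write $f=\sum_{j}\lambda_j a_j$ where each $a_j=L^Mb_j$ is a $(2,p(\cdot),M)$-atom associated with $B_j=B(x_{B_j},r_{B_j})$, the series converging in $L^2(\Gamma)$ and in $H^{p(\cdot)}_L(\Gamma)$, and $\mathcal{A}(\{\lambda_j\},\{B_j\})\le C\|f\|_{H^{p(\cdot)}_L(\Gamma)}$. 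Because $R_L$ is an isometry of $L^2(\Gamma)$, $R_Lf=\sum_j\lambda_j R_La_j$ in $L^2(\Gamma)$ and pointwise, so with the annuli $S_0(B)=B(x_B,8Mr_B)$ and $S_i(B)=B(x_B,2^{i+3}Mr_B)\setminus B(x_B,2^{i+2}Mr_B)$ the quasi-norm inequality gives
\[
\|R_Lf\|_{p(\cdot)}\le\Big(\sum_{i=0}^\infty\Big\|\Big(\sum_{j}(|\lambda_j|R_La_j\,\chi_{S_i(B_j)})^{\mathfrak{p}}\Big)^{1/\mathfrak{p}}\Big\|_{p(\cdot)}^{\mathfrak{p}}\Big)^{1/\mathfrak{p}}.
\]
For the local term $i=0$ I would use that $R_L$ is bounded on $L^2(\Gamma)$, whence $\|R_La_j\|_{L^2(S_0(B_j))}\le\|a_j\|_2\le\mu(B_j)^{1/2}\|\chi_{B_j}\|_{p(\cdot)}^{-1}$; since $2>\max\{1,p_+\}$, Lemmas \ref{LemaSum}, \ref{LemaCoc}~(i) and \ref{LemaSumCoc} bound the corresponding summand by $C\mathcal{A}(\{\lambda_j\},\{B_j\})$, exactly as in the earlier propositions.

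The heart of the proof is the off-diagonal estimate for $i\ge1$. Here I would use the representation $R_La_j=\nabla L^{-1/2}a_j=\nabla\sum_{k\ge0}\beta_kP^ka_j$, valid because $a_j=L^{1/2}(L^{M-1}L^{1/2}b_j)\in R_2(L^{1/2})$ so that \cite{BR} applies, together with $P^ka_j=(I-P)^MP^kb_j$ and the Gaussian bounds (\ref{compuesto}) for the kernel $\widetilde{p_{k,M}}$ of $(I-P)^MP^k$. Since $\supp b_j\subset B_j$ and $d(x,z)\gtrsim 2^ir_{B_j}$ for $x\in S_i(B_j)$ and $z\in B_j$, summing the series against (\ref{compuesto}) with $|\beta_k|\le Ck^{-1/2}$ should yield a pointwise bound for $|L^{-1/2}a_j(x)|$ that decays polynomially in $2^ir_{B_j}$; the crude inequality $\nabla g(x)^2\le|g(x)|^2+\sum_{y\sim x}p(x,y)|g(y)|^2$ then transfers this decay to $R_La_j=\nabla(L^{-1/2}a_j)$ at $x$ and its neighbours, so that no Poincar\'e-type gradient estimate (which is not among the standing hypotheses of this proposition) is required. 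The outcome, after computations of the type already carried out for $G_{L,N}$, should be
\[
\|R_La_j\|_{L^2(S_i(B_j))}\le C\,2^{-i\delta}\,\mu\big(B(x_{B_j},2^{i+3}Mr_{B_j})\big)^{1/2}\big\|\chi_{B(x_{B_j},2^{i+3}Mr_{B_j})}\big\|_{p(\cdot)}^{-1},
\]
with $\delta$ as large as we wish by enlarging $M$. Feeding this into Lemmas \ref{LemaSum} and \ref{LemaSumCoc}, choosing $0<w<p_-$ with $\delta>2D/w$ (possible since $M>2D/p_-$), and summing the geometric series in $i$ gives $\|R_Lf\|_{p(\cdot)}\le C\mathcal{A}(\{\lambda_j\},\{B_j\})\le C\|f\|_{H^{p(\cdot)}_L(\Gamma)}$. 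A density argument then extends $R_L$ from $L^2(\Gamma)\cap H^{p(\cdot)}_L(\Gamma)$ to all of $H^{p(\cdot)}_L(\Gamma)$.

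The step I expect to be the main obstacle is precisely this off-diagonal $L^2$ bound: one must control the slowly convergent Riesz series $\sum_k\beta_kP^ka_j$ with only $|\beta_k|\sim k^{-1/2}$, so the spatial decay cannot come from the Gaussian factor alone (for $k\gtrsim(2^ir_{B_j})^2$ it is $\approx1$) and must instead be produced by the $k^{-M}$ smoothing hidden in the factor $(I-P)^M$ of $a_j=L^Mb_j$. Keeping track of the $\mu(B(x,\sqrt{k}))^{-1}$ volume factors while summing in $k$, and justifying the pointwise passage to the limit in $n$ of $\nabla\sum_{k\le n}\beta_kP^ka_j$ (for instance via Fatou's lemma, as was done for $S_L(\Pi_M(f))$), are the delicate points of the argument.
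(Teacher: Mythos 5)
Your proposal is correct, and its skeleton is exactly the paper's: the $(2,p(\cdot),M)$-atomic decomposition from Proposition \ref{Hardy} (legitimate with $r=2$ precisely because $p_+<2$), the annular splitting $S_i(B_j)$, the $L^2$-isometry of $R_L$ for the $i=0$ piece, and Lemmas \ref{LemaSum}, \ref{LemaCoc}\,(i) and \ref{LemaSumCoc} to resum. The genuine divergence is in the off-diagonal estimate. The paper splits the Riesz series at $k=r_{B_j}^2$, keeping the atom $a_j$ and the kernel $p_k$ for $7Mr_{B_j}<k\le r_{B_j}^2$ and passing to $b_j$ and the kernel $\widetilde{p_{k,M}}$ of $L^MP^k$ for $k>r_{B_j}^2$, and it estimates the \emph{kernel gradients} $\nabla_x p_k(x,y)$ and $\nabla_x\widetilde{p_{k,M}}(x,y)$ by quoting \cite[Lemma 6.6]{BD} and following the computation of \cite[p.~838]{B}. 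You instead write $P^ka_j=(I-P)^MP^kb_j$ for \emph{every} $k\ge 1$, so that (\ref{compuesto}) together with $\|b_j\|_1\le\mu(B_j)^{1/2}\|b_j\|_2\le r_{B_j}^M\mu(B_j)\|\chi_{B_j}\|_{p(\cdot)}^{-1}$ and the elementary bound $\sum_{k\ge1}k^{-M-1/2}e^{-cd^2/k}\mu(B(x,\sqrt k))^{-1}\le C_M d^{1-2M}\mu(B(x,d))^{-1}$ (split the sum at $k=d^2$ and use doubling) yields, for $d=d(x,x_{B_j})\ge 2^{i+2}Mr_{B_j}$, the pointwise estimate $|L^{-1/2}a_j(x)|\le C2^{-i(2M-1)}\mu(B_j)^{1/2}\,\mu(B(x,d))^{-1/2}\|\chi_{B_j}\|_{p(\cdot)}^{-1}$ (here $r_{B_j}^Md^{1-2M}\le C2^{-i(2M-1)}$ since $r_{B_j}\ge1$); the crude domination $\nabla g(x)^2\le|g(x)|^2+\sum_{y\sim x}p(x,y)|g(y)|^2$, which is valid because $\sum_y p(x,y)=1$ and applies since neighbours of points of $S_i(B_j)$ stay at comparable distance from $B_j$, then transfers this to $R_La_j=\nabla(L^{-1/2}a_j)$ (the pointwise identity holds because $L^2$-convergence is pointwise convergence on a discrete space with point masses and $\nabla$ at a point involves only finitely many neighbours). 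This gives $\|R_La_j\|_{L^2(S_i(B_j))}\le C2^{-i(2M-1)}\mu(B_j)^{1/2}\|\chi_{B_j}\|_{p(\cdot)}^{-1}$, which is ample: after the two $2^{iD/w}$ losses from Lemmas \ref{LemaCoc}\,(i) and \ref{LemaSumCoc} one needs $2M-1>2D/w$ for some $w<p_-$, available since $2M-1\ge M>2D/p_-$. What your route buys is self-containedness: no gradient estimates for the Markov kernel, hence nothing of Poincar\'e type, enter at all; the atom's cancellation alone produces the decay, and the crude bound on $\nabla$ suffices because the target space is $L^{p(\cdot)}(\Gamma)$ rather than a gradient-adapted space. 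What the paper's route buys is brevity given the external reference. One point worth stressing: your choice to use $b_j$ uniformly in $k$ is not cosmetic but essential to your method. Had you kept the bare atom $a_j$ in the small-time regime (as the paper does) with only the crude gradient bound, the sum $\sum_{k\le r_{B_j}^2}k^{-1/2}e^{-cR^2/k}$, $R\approx 2^iMr_{B_j}$, leaves an uncontrolled factor of order $r_{B_j}$ — removing it is exactly what the $k^{-1/2}$ gain in \cite[Lemma 6.6]{BD} is for. Your uniform use of the $k^{-M}$ decay sidesteps this, since the apparent loss $(r_{B_j}^2/k)^M$ is absorbed by the Gaussian: $(r_{B_j}^2/k)^M e^{-cR^2/(2k)}\le C(r_{B_j}/R)^{2M}\le C2^{-2iM}$.

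One small correction to your write-up: $R_L=\nabla L^{-1/2}$ is sublinear, not linear, so ``$R_Lf=\sum_j\lambda_jR_La_j$ in $L^2(\Gamma)$ and pointwise'' cannot be an equality. What holds, and is all you use, is the inequality $R_Lf\le\sum_j|\lambda_j|R_La_j$ pointwise, obtained from $|R_Lu-R_Lv|\le R_L(u-v)$, the $L^2$-convergence of the partial sums $\sum_{j\le\ell}\lambda_ja_j$ to $f$, and the fact that $L^2$-convergence implies pointwise convergence here; this is also how the paper phrases that step.
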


\begin{proof} Suppose that $f\in H^{p(\cdot)}_L(\Gamma)\cap L^2(\Gamma)$. Let $M\in \mathbb{N}$ and $M>2D/p_-$. According to Proposition \ref{Hardy}, for every $j\in \mathbb{N}$, there exist $\lambda_j\in \mathbb{C}$ and a $(2,p(\cdot),M)$-atom $a_j$ associated with the ball $B_j=B(x_{B_j},r_{B_j})$, such that $f=\sum_{j=0}^\infty \lambda_ja_j$ in $L^2(\Gamma)$ and in $H^{p(\cdot)}_L(\Gamma)$, and $\mathcal{A}(\{\lambda_j\},\{B_j\})\le C\|f\|_{H^{p(\cdot)}_L(\Gamma)}$. Since $R_L$ is bounded in $L^2(\Gamma)$ we have that
$$
R_L(f)=\lim_{\ell\to\infty} R_L\Big(\sum_{j=0}^\ell \lambda_ja_j\Big)\le \sum_{j=0}^\infty |\lambda_j|R_{L}(a_j).
$$
Also, we can write
\begin{align*}
\|R_L(f)\|_{p(\cdot)}&\le \Big\|\sum_{j=0}^\infty\sum_{i=0}^\infty |\lambda_j|R_L(a_j)\chi_{S_i(B_j)}\Big\|_{p(\cdot)}\\
&\le \Big(\sum_{i=0}^\infty\Big\|\Big(\sum_{j=0}^\infty
\Big(|\lambda_j|R_L(a_j)\chi_{S_i(B_j)}\Big)^\mathfrak{p}\Big)^{1/\mathfrak{p}}\Big\|_{p(\cdot)}^\mathfrak{p}\Big)^{1/\mathfrak{p}}.
\end{align*}
Since $R_L$ is bounded in $L^2(\Gamma)$ it follows that
$$
\|R_L(a_j)\|_{L^2(S_0(B_j))}\le C\|a_j\|_2\le C\mu(B_j)^{1/2}\|\chi_{B_j}\|_{p(\cdot)}^{-1},\,\,\,j\in \mathbb{N}.
$$
According to Lemmas \ref{LemaSum}, \ref{LemaCoc}, (i), and \ref{LemaSumCoc} we deduce that
$$
\Big\|\Big(\sum_{j=0}^\infty \Big(|\lambda_j|R_L(a_j)\chi_{S_0(B_j)}\Big)^\mathfrak{p}\Big)^{1/\mathfrak{p}}\Big\|_{p(\cdot)}^\mathfrak{p}\le C\mathcal{A}(\{\lambda_j\},\{B_j\}).
$$
Assume now that $a$ is a $(2,p(\cdot),M)$-atom associated with the ball $B=B(x_B,r_B)$ and the function $b\in L^2(\Gamma)$. Since $a=L^Mb=L^{1/2}L^{M-1/2}b\in R_2(L^{1/2})$ we can write
\begin{align*}
R_L(a)(x)&=\lim_{n\to\infty}\nabla\Big(\sum_{k=0}^n\beta_kP^k(a)\Big)(x)\\
&\le \sum_{k=0}^\infty |\beta_k|\nabla(P^ka)(x)\\
&=\sum_{k=0}^{r_B^2}|\beta_k|\nabla(P^ka)(x)+\sum_{k>r_B^2}|\beta_k|\nabla(L^MP^kb)(x)\\
&\le \sum_{k=0}^{r_B^2}|\beta_k|\sum_{y\in B}\nabla_xp_k(x,y)|a(y)|+\sum_{k>r_B^2}|\beta_k|\sum_{y\in B}\nabla_x\widetilde{p_{k,M}}(x,y)|b(y)|\\
&=I_1(x)+I_2(x),\,\,\,x\in \Gamma,
\end{align*}
where $\widetilde{p_{k,M}}$ represents the kernel of the operator $L^MP^k$, for every $k\in \mathbb{N}$. Note that, for every $k\in \mathbb{N}$, $\supp(P^ka)\subset B(x_B,r_B+k)$. Then, $\supp(\nabla P^k(a))\subset B(x_B,r_B+k+1)$. We have that
$$
I_1(x)=\sum_{7Mr_B<k<r_B^2}|\beta_k|\sum_{y\in B}\nabla_xp_k(x,y)|a(y)|,\,\,\,x\in \Gamma\setminus S_0(B).
$$
By \cite[Lemma 6.6]{BD}, Lemma \ref{LemaCoc}, (i), and by proceeding as in \cite[p. 838]{B} we get, for every $i\in \mathbb{N}_+$,
$$
\|I_1\|_{L^2(S_i(B))}+\|I_2\|_{L^2(S_i(B))}\le C2^{-i(M-D/w)}\mu(B(x_B,2^{i+3}Mr_{B_j}))\|\chi_{B(x_B,2^{i+3}Mr_{B_j})}\|_{p(\cdot)}^{-1}.
$$
Here $0<w<p_-$. From Lemma \ref{LemaSumCoc}, for every $i\in
\mathbb{N}_+$, we deduce that
$$
\Big\|\Big(\sum_{j=0}^\infty
\Big(|\lambda_j|R_L(a_j)\chi_{S_i(B_j)}\Big)^\mathfrak{p}\Big)^{1/\mathfrak{p}}\Big\|_{p(\cdot)}^\mathfrak{p}\le
C2^{-i(M-2D/w)}\mathcal{A}(\{\lambda_j\},\{B_j\}).
$$
Then, since $M>2D/p_-$ we conclude that
$$
\|R_Lf\|_{p(\cdot)}\le C\mathcal{A}(\{\lambda_j\},\{B_j\}),
$$
and the proof is finished.
\end{proof}


\def\cprime{$'$} \def\ocirc#1{\ifmmode\setbox0=\hbox{$#1$}\dimen0=\ht0
  \advance\dimen0 by1pt\rlap{\hbox to\wd0{\hss\raise\dimen0
  \hbox{\hskip.2em$\scriptscriptstyle\circ$}\hss}}#1\else {\accent"17 #1}\fi}

\end{document}